\documentclass[12pt]{amsart}

\usepackage{amsfonts,amssymb,enumerate,color,bm,hhline,makecell,array}
\usepackage{array}
\usepackage{mathrsfs}
\usepackage[all,ps,cmtip]{xy} 
\usepackage[normalem]{ulem}
\usepackage[colorlinks=true,citecolor=blue, urlcolor=blue, linkcolor=blue]{hyperref}

\usepackage{enumitem}
\setlist[itemize]{noitemsep,nolistsep}
\setlist[enumerate]{noitemsep,nolistsep}
\usepackage{colortbl}
\usepackage{enumerate}

\setlength{\parindent}{1 cm}
\setlength{\textwidth}{6.6 in}
\setlength{\topmargin} {-.4 in}
\setlength{\evensidemargin}{0 in}
\setlength{\oddsidemargin}{0 in}
\setlength{\footskip}{.1 in}
\setlength{\headheight}{.3 in}
\setlength{\textheight}{8.9 in}
\setlength{\parskip}{1 mm}

   \allowdisplaybreaks

\let\mathcal\mathscr

\def\P{\mathbb{P}}
\def\Z{\mathbb{Z}}
\def\R{\mathbb{R}}
\def\Q{\mathbb{Q}}

\def\C{\mathbb{C}}

\def\phi{{\varphi}}

\def\vv{{\mathbf v}}

\def\cA{\mathcal{A}}
\def\cB{\mathcal{B}}
\def\cC{\mathcal{C}}
\def\cD{\mathcal{D}}
\def\cE{\mathcal{E}}
\def\cF{\mathcal{F}}

\def\cH{\mathcal{H}}
\def\cI{\mathcal{I}}

\def\cO{\mathcal{O}}
\def\cP{\mathcal{P}}

\def\cT{\mathcal{T}}
\def\cU{\mathcal{U}}
\def\cW{\mathcal{W}}
\def\cX{\mathcal{X}}

\newcommand{\mor}[1][]{\xrightarrow{#1}}

\def\lra{\longrightarrow}
\def\llra{\hbox to 10mm{\rightarrowfill}}
\def\lllra{\hbox to 15mm{\rightarrowfill}}

\def\llla{\hbox to 10mm{\leftarrowfill}}
\def\lllla{\hbox to 15mm{\leftarrowfill}}

\def\thra{\twoheadrightarrow}
\def\hra{\hookrightarrow}

\def\K{\mathbb K}

\DeclareMathOperator{\isomto}{\stackrel{{}_{\scriptstyle\sim}}{\to}}

\DeclareMathOperator{\ch}{ch}

\DeclareMathOperator{\coh}{coh}

\DeclareMathOperator{\coker}{Coker}

\DeclareMathOperator{\Db}{D\textsuperscript{\rm b}}

\DeclareMathOperator{\Ext}{Ext}

\DeclareMathOperator{\HH}{HH}

\DeclareMathOperator{\Hom}{Hom}
\DeclareMathOperator{\Id}{Id}

\def\Im{\mathop{\rm Im}\nolimits}

\DeclareMathOperator{\Ker}{Ker}

\DeclareMathOperator{\NS}{NS}

\DeclareMathOperator{\rk}{rk}

\newcommand{\HO}{\mathrm{H}\Omega}
\newcommand{\HT}{\mathrm{HT}}
\def\Dd{\Delta}

\def\onto{\ensuremath{\twoheadrightarrow}}
 
\def\llra{\hbox to 10mm{\rightarrowfill}}
\def\lllra{\hbox to 15mm{\rightarrowfill}}
\def\Ku{\mathcal{K}\!u}

\newtheorem{lemm}{Lemma}[section]
\newtheorem{theo}[lemm]{Theorem}
\newtheorem{coro}[lemm]{Corollary}
\newtheorem{prop}[lemm]{Proposition}

\theoremstyle{definition}
\newtheorem{defi}[lemm]{Definition}
\newtheorem{rema}[lemm]{Remark}

\newtheorem{exam}[lemm]{Example}
\newtheorem{ques}[lemm]{Question}
\newtheorem{setup}[lemm]{Setup}

\theoremstyle{remark}
\newtheorem*{remark*}{Remark}
\newtheorem*{note*}{Note}

\def\blank{\underline{\hphantom{A}}}
\def\llambda{\ensuremath{\boldsymbol{\lambda}}}

%\DeclareRobustCommand{\SkipTocEntry}[4]{}

\newcommand{\changelocaltocdepth}[1]{%
  \addtocontents{toc}{\protect\setcounter{tocdepth}{#1}}%
  \setcounter{tocdepth}{#1}%
}

\makeatletter
\def\@tocline#1#2#3#4#5#6#7{\relax
  \ifnum #1>\c@tocdepth % then omit
  \else
    \par \addpenalty\@secpenalty\addvspace{#2}%
    \begingroup \hyphenpenalty\@M
    \@ifempty{#4}{%
      \@tempdima\csname r@tocindent\number#1\endcsname\relax
    }{%
      \@tempdima#4\relax
    }%
    \parindent\z@ \leftskip#3\relax \advance\leftskip\@tempdima\relax
    \rightskip\@pnumwidth plus4em \parfillskip-\@pnumwidth
    #5\leavevmode\hskip-\@tempdima
      \ifcase #1
       \or\or \hskip 1em \or \hskip 2em \else \hskip 3em \fi%
      #6\nobreak\relax
    \dotfill\hbox to\@pnumwidth{\@tocpagenum{#7}}\par
    \nobreak
    \endgroup
  \fi}
\makeatother

\setcounter{tocdepth}{1}

\begin{document}

\title[Non-commutative K3 surfaces, Bridgeland stability, and moduli spaces]{Lectures on non-commutative K3 surfaces, Bridgeland stability, and moduli spaces}

\author[E.~Macr\`i]{Emanuele Macr\`i}
\address{\parbox{0.9\textwidth}{Northeastern University
\\[1pt] 
Department of Mathematics
\\[1pt]
360 Huntington Avenue, Boston, MA 02115, USA\vspace{4pt}}}
\email{{e.macri@northeastern.edu}}
\urladdr{\url{https://web.northeastern.edu/emacri/}\vspace{0,1cm}}

\author[P.~Stellari]{Paolo Stellari}
\address{\parbox{0.9\textwidth}{Universit\`a degli Studi di Milano\\[1pt]
Dipartimento di Matematica ``F.~Enriques''\\[1pt]
Via Cesare Saldini 50, 20133 Milano, Italy\vspace{4pt}
}}
\email{paolo.stellari@unimi.it}
\urladdr{\url{http://users.unimi.it/stellari}}

%\date{\today}

\subjclass[2010]{14D20, 14F05, 14J32, 14M20, 18E30}
\keywords{Calabi-Yau categories, Bridgeland stability conditions, K3 surfaces, Moduli spaces, Fano varieties, Rationality}
\thanks{E.~M.~ was partially supported by the NSF grant DMS-1700751. P.~S.~ was partially supported by the ERC Consolidator Grant ERC-2017-CoG-771507-StabCondEn and by the research projects FIRB 2012 ``Moduli Spaces and Their Applications'' and PRIN 2015 ``Geometria delle variet\`a proiettive''.}

\begin{abstract}
We survey the basic theory of non-commutative K3 surfaces, with a particular emphasis to the ones arising from cubic fourfolds.
We focus on the problem of constructing Bridgeland stability conditions on these categories and we then investigate the geometry of the corresponding moduli spaces of stable objects. We discuss a number of consequences related to cubic fourfolds including new proofs of the Torelli theorem and of the integral Hodge conjecture, the extension of a result of Addington and Thomas and various applications to hyperk\"ahler manifolds.

These notes originated from the lecture series by the
first author at the school on \emph{Birational Geometry of Hypersurfaces}, Palazzo Feltrinelli - Gargnano del Garda (Italy), March 19-23, 2018.
\end{abstract}

\maketitle
\tableofcontents

\setcounter{tocdepth}{1}

\section{Introduction}
\label{sec:intro}

K3 surfaces have been extensively studied during the last decades of the previous century. The techniques used to understand their geometry include Hodge theory, lattice theory and homological algebra. In fact the lattice and Hodge structures on their second cohomology groups determine completely their geometry in the following sense:

\medskip

\begin{enumerate}
\item[(K3.1)] {\bf Torelli Theorem:} Two K3 surfaces $S_1$ and $S_2$ are isomorphic if and only if there is a Hodge isometry $H^2(S_1,\Z)\cong H^2(S_2,\Z)$.
\end{enumerate}

\medskip
 
The result is originally due to Pijatecki\u{\i}-\u{S}apiro and \u{S}afarevi\u{c} in the algebraic case \cite{PS} and to Burns and Rapoport in the analytic case \cite{BR} (see also \cite{LP,Fri} for other proofs).
Another way to rephrase this is in terms of fibers of the period map for K3 surfaces: the period map is generically injective. Such a map turns out to be surjective as well. Roughly speaking this means that one can get complete control on which weight-$2$ Hodge structures on the abstract lattice correspond to an actual surface.

One can go further and construct other compact \emph{hyperk\"ahler manifolds} out of K3 surfaces. Following Beauville \cite{Be}, one could first consider Hilbert schemes (or Douady spaces) of points on such surfaces. More generally, for a \emph{projective} K3 surface $S$ and a primitive vector $\vv$ in the algebraic part of the total cohomology $H^*(S,\Z)$, one can construct the moduli space $M_H(S,\vv)$ of stable sheaves $E$ on $S$ with Mukai vector $v(E):=\mathrm{ch}(E)\cdot\sqrt{\mathrm{td}(S)}=\vv$, for the choice of an ample line bundle $H$ which is generic with respect to $\vv$.

\medskip

\begin{enumerate}
	\item[(K3.2)] The moduli space $M_H(S,\vv)$ is a smooth projective hyperk\"ahler manifold of dimension $\vv^2+2$ which is deformation equivalent to a Hilbert scheme of points on a K3 surface.
\end{enumerate}

\medskip

Such moduli spaces deform together with a polarized deformation of the K3 surface $S$ and they yield $19$-dimensional families of hyperk\"ahler manifolds. Here the square of $\vv$ is taken with respect to the so called \emph{Mukai pairing} which is defined on the total cohomology $H^*(S,\Z)$ of $S$. The above statement is the result of many different contributions, starting with the foundational work of Mukai \cite{Muk:Sympl,Muk:K3}, and it is due to Huybrechts \cite{Huy:bir}, O'Grady \cite{O'G:HK}, and Yoshioka. The statement in its final form is \cite[Theorems 0.1 and 8.1]{Yo:ModAbVar}). It also gives a precise non-emptyness statement: if the Mukai vector is \emph{positive}, then the moduli space is non-empty if and only if it has non-negative expected dimension.

More recently, after the works of Mukai \cite{Muk:K3} and Orlov \cite{Orl:FM}, the bounded derived categories of coherent sheaves of K3 surfaces and their autoequivalence groups have been extensively studied. The Torelli theorem (K3.1) has a homological counterpart:

\medskip

\begin{enumerate}
\item[(K3.3)] {\bf Derived Torelli Theorem:} Given two K3 surfaces $S_1$ and $S_2$, then $\Db(S_1)\cong\Db(S_2)$ are isomorphic if and only if there is an isometry between the total cohomology lattices of $S_1$ and $S_2$ preserving the Mukai weight-$2$ Hodge structure.
\end{enumerate}

\medskip

The lattice and Hodge structure mentioned above will be explained in Section \ref{subsec:Mukai}. In the geometric setting considered in (K3.3), it might be useful to keep in mind that the pairing coincides with the usual Euler form on the algebraic part of the total cohomologies of $S_1$ and $S_2$. On the other hand, the $(2,0)$-part in the Hodge decomposition mentioned above is nothing but $H^{2,0}(S_i)$.

Thus the existence of an equivalence can be detected again by looking at the Hodge and lattice structure of the total cohomology (and not just of $H^2$). Pushing this further, one can observe that $\Db(S)$ carries additional structures: \emph{Bridgeland stability conditions}. Indeed, after the works by Bridgeland \cite{Bri:stability,Bri:K3} we have the following fact:

\medskip

\begin{enumerate}
	\item[(K3.4)] If $S$ is a K3 surface, then the manifold parameterizing stability conditions on $\Db(S)$ is non-empty and a connected component is a covering of a generalized period domain.
\end{enumerate}

\medskip

Following the pattern for stability of sheaves, one can take a stability condition $\sigma$ and  a primitive vector $\vv$ as in (K3.2) such that  $\sigma$ is generic with respect to $\vv$. If one considers the moduli space $M_\sigma(\Db(S),\vv)$ of $\sigma$-stable objects in $\Db(S)$ with Mukai vector $\vv$, then by \cite{Toda:K3,BM:projectivity,BM:walls,MYY}, we have the analogue of (K3.4):

\medskip

\begin{enumerate}
	\item[(K3.5)] The moduli space $M_\sigma(\Db(S),\vv)$ is a smooth projective hyperk\"ahler manifold of dimension $\vv^2+2$ which is deformation equivalent to a Hilbert scheme of points on a K3 surface.
\end{enumerate}

\bigskip

Quite surprisingly, a similar picture appears in a completely different setting when the canonical bundle is far from being trivial: Fano varieties.
The first key example is the case of smooth \emph{cubic fourfolds}.

\medskip

\begin{enumerate}
\item[(C.1)] {\bf Torelli Theorem:} Two cubic fourfolds $W_1$ and $W_2$ defined over $\C$ are isomorphic if and only if there is a Hodge isometry $H^4(W_1,\Z)\cong H^4(W_2,\Z)$ that preserves the square of a hyperplane class.
\end{enumerate}

\medskip

The result is due to Voisin \cite{Voi:Torelli} (other proofs were given in \cite{Looijenga:cubics,Charles:Torelli,HR:cubics}).
The striking similarity with K3 surfaces is confirmed by the fact that $H^4(W,\Z)$  carries actually a weight-$2$ Hodge structure of K3 type. Moreover, it was observed by Hassett \cite{Has:special} that the $20$-dimensional moduli space $\cC$ of cubic fourfolds contain divisors of Noether-Lefschetz type and some of them parametrize cubic fourfolds $W$ with a \emph{Hodge-theoretically associated K3 surface}. Roughly, this simply means that the orthogonal in $H^4(W,\Z)$ of a rank $2$ sublattice, generated by the self-intersection $H^2$ of a hyperplane class and by some special surface which is not homologous to $H^2$, looks like the primitive cohomology of a polarized K3 surface. 

The hyperk\"ahler geometry that one can associate to a cubic fourfold is actually quite rich. Indeed, the Fano variety of lines in a cubic fourfold is a $4$-dimensional projective hyperk\"ahler manifold which is deformation equivalent to a Hilbert scheme of points on a K3 surface (see \cite{BD:cubic}). More recently, it was proved by Christian Lehn, Manfred Lehn, Sorger and van Straten \cite{LLSvS:TwistedCubics} that the moduli space of generalized twisted cubics inside a cubic fourfold also gives rise to a $8$-dimensional projective hyperk\"ahler manifold which is again deformation equivalent to a Hilbert scheme of points on a K3 surface. All these constructions work in families and thus provide $20$-dimensional locally complete families of $4$ or $8$ dimensional polarized smooth projective hyperk\"ahler manifolds.

From the homological point of view, it was observed by Kuznetsov \cite{Kuz:CubicFourfolds} that the derived category $\Db(W)$ of a cubic fourfold $W$ contains an admissible subcategory $\Ku(W)$ which is the right orthogonal of the category generated by the three line bundles $\cO_W$, $\cO_W(H)$ and $\cO_W(2H)$. We will refer to $\Ku(W)$ as the \emph{Kuznetsov component} of $W$. The category $\Ku(W)$ has the same homological properties of $\Db(S)$, for $S$ a K3 surface: it is an indecomposable category with Serre functor which is the shift by $2$ and the same Hochschild homology as $\Db(S)$. But, for $W$ very general, there cannot be a K3 surface $S$ with an equivalence $\Ku(W)\cong\Db(S)$. This is the reason why we should think of Kuznetsov components as \emph{non-commutative K3 surfaces}.

The study of non-commutative varieties was started more than thirty years ago. Artin and Zhang \cite{AZ:noncomm} investigated the case of non-commutative projective spaces (see also the book in preparation \cite{Yek1}). In these notes we will follow closely the approach developed by Kuznetsov \cite{Kuz:CubicFourfolds} and Huybrechts \cite{Huy:cubics}. One important feature is that the Kuznetzov component comes with a naturally associated lattice $\widetilde{H}(\Ku(W),\Z)$ with a weight-$2$ Hodge structure. The lattice is actually isometric to the extended K3 lattice mentioned in (K3.3). Hence, as for K3 surfaces, it is natural to expect that (C.1) has a homological counterpart in the same spirit as (K3.3):

\medskip

\begin{enumerate}
	\item[(C.2)] {\bf (Conjectural) Derived Torelli Theorem for cubic fourfolds:} We expect that, if $W_1$ and $W_2$ are cubic fourfolds, then $\Ku(W_1)\cong\Ku(W_2)$ if and only if there is a Hodge isometry $\widetilde{H}(\Ku(W_1),\Z)\cong\widetilde{H}(\Ku(W_2),\Z)$ which preserves the orientation of $4$-positive directions.
\end{enumerate}

\medskip

Such a conjecture will be explained later in this paper but it is worth pointing out that it has been proved generically by Huybrechts \cite{Huy:cubics}.

One of the aims of these notes is to show how, following \cite{BLMS:inducing,BLMNPS:families}, properties (K3.4) and (K3.5) generalize to this setting. Namely, let $W$ be a cubic fourfold and let $\vv$ be a primitive vector in the algebraic part of the total cohomology $\widetilde{H}(\Ku(W),\Z)$ of $\Ku(W)$.

\medskip

\begin{enumerate}
	\item[(C.3)] The manifold parameterizing stability conditions on $\Ku(W)$ is non-empty and a connected component is a covering of a generalized period domain.

\medskip

	\item[(C.4)] The moduli space $M_\sigma(\Ku(W),\vv)$ is a smooth projective hyperk\"ahler manifold of dimension $\vv^2+2$ which is deformation equivalent to a Hilbert scheme of points on a K3 surface, if $\sigma$ is a stability condition as in (C.3) which is generic with respect to $\vv$.

\end{enumerate}

\medskip

Again, both constructions work in families (in an appropriate sense) and thus we get $20$-dimensional locally complete families of smooth projective hyperk\"ahler manifolds. Also, we recover the Fano variety of lines and the manifold related to twisted cubics as instances of (C.4).
The geometric applications of (C.3) and (C.4) are even more and include a reproof of the Torelli Theorem (C.1), a new simple proof of the integral Hodge conjecture for cubic fourfolds and the extension of a result by Addington and Thomas \cite{AT:cubic} saying that for a cubic $W$ having a Hodge-theoretically associated K3 surface is the same as having an equivalence $\Ku(W)\cong\Db(S)$, for a smooth projective K3 surface $S$. All these results will be discussed in this paper.

The interesting point is that we expect (C.3) and (C.4) to hold for other interesting classes of Fano manifolds with naturally associated non-commutative K3 surfaces. This is the case of Gushel-Mukai and Debarre-Voisin manifolds. We will discuss this later on in the paper.

Although we do our best to make this paper as much self-contained as possible, there are various Hodge-theoretical aspects of the theory of cubic fourfolds that are not discussed here and that can be found in the lecture notes of Huybrechts \cite{Huy:LectureNotesCubic} that appear in the same volume as the present ones. Moreover, we only touch  very briefly in our treatment the fundamental theory of Homological Projective Duality: we refer to the original articles \cite{Kuz:HPD,Perry:NCHP,KP:Joins}. In \cite{Kuz:survey}, the relation between rationality questions and geometric properties of Kuznetsov components is widely discussed. Finally, an excellent survey of several aspects of the theory of semiorthogonal decompositions is \cite{Kuz:ICM}.

\subsection*{Plan of the paper}

Let us briefly sketch how the paper is organized. In Section \ref{sec:CY} we introduce the general material concerning non-commutative varieties with an emphasis on those that are of Calabi-Yau type. After presenting semiorthogonal decompositions and exceptional objects (see Section \ref{subsec:SO}), we discuss in Section \ref{subsec:NC} the notion of non-commutative variety in general. In our framework this just means an admissible subcategory $\cD$ of the bounded derived category $\Db(X)$, for $X$ a smooth projective variety. As such, it comes with a Serre functor that we compute and with a well-defined notion of product, which we use  to define the Hochschild homology and cohomology of $\cD$. A non-commutative Calabi-Yau variety is a non-commutative variety whose Serre functor is the shift by an integer $n$. In Section \ref{subsec:NCCY} we discuss a general framework to construct examples of such non-commutative varieties.

Section \ref{sec:examples} has a more geometric flavor. It deals with the constructions of non-commutative K3 categories out of the derived categories of Fano manifolds. The first case we analyze is the one of cubic fourfolds $W$ (see Section \ref{subsec:CubicFourfolds}) where the non-commutative K3 surface is the K3 category $\Ku(W)$ mentioned above. As a result, we state a (generalized version of a) result of Addington and Thomas (see Theorem \ref{thm:AT}) which characterizes completely the loci in the moduli space of cubic fourfolds parameterizing the cubics whose Kuznetsov component is actually equivalent to the bounded derived category of an actual (twisted) K3 surface. Our proof is provided in Section \ref{subsec:applications} and it is based on the use of stability conditions and of moduli spaces of stable objects. In Sections \ref{subsec:GM} and \ref{subsec:DV} we study two other classes of Fano manifolds: Gushel-Mukai and Debarre-Voisin manifolds. Finally, in Section \ref{subsec:Torelli}, by using techniques developed in Section \ref{subsec:Mukai}, we present derived variants of the Torelli theorem for cubic fourfolds and discuss conjectural relations to the birational type of these fourfolds.

Section \ref{sec:Bridgeland} is about Bridgeland stability conditions on Kuznetsov components. After recalling the basic definitions and properties in Section \ref{subsec:BridgelandMain}, we outline a few techniques: the tilting procedure (see Section \ref{subsec:Tilt}) and the way this induces stability conditions on semiorthogonal components (see Section \ref{subsec:inducing}).

We state and prove our first main results in Section \ref{sec:CubicFourfolds}, where we deal with stability conditions and moduli spaces of stable objects in the Kuznetsov component of cubic fourfolds. Theorem \ref{thm:main1} and Theorem \ref{thm:stabconn} are the main results. They prove essentially what we claim in (C.3). The proof requires the techniques in Section \ref{sec:Bridgeland} and a non-commutative Bogomolov inequality proved in Theorem \ref{thm:Bogomolov}. In Section \ref{subsec:ModuliSpaces} we study moduli spaces and prove Theorem \ref{thm:YoshiokaMain} which yields (C.4). In the rest of the paper (see Section \ref{subsec:applications}) we discuss several applications of these theorems.
These include the complete proof of Theorem \ref{thm:AT}, the proof of the integral Hodge Conjecture for cubic fourfolds (see Proposition \ref{prop:intHodge}) originally due to Voisin, and various constructions of moduli spaces associated to low degree curves in cubic fourfolds (see Theorems \ref{thm:Fano} and \ref{thm:twistcub}). We also briefly discuss a homological approach to the Torelli theorem (C.1) (see Theorem \ref{thm:Torelli}).

\subsection*{Notation and conventions}

We work over an algebraically closed field $\K$; often, when the characteristic of the field is not zero, we will assume it to be sufficiently large. When $\K=\C$, the field of complex numbers, we set $\mathfrak{i}=\sqrt{-1}$.
All categories will be $\K$-linear, namely the morphism sets have the structure of $\K$-vector space and the compositions of morphisms are $\K$-bilinear maps, and all varieties will be over $\K$.

We assume a basic knowledge of abelian, derived, and triangulated categories.
Basic references are, for example, \cite{GM:HomologicalAlgebra,Huy:FM}.
Given an algebraic variety $X$, we will denote by $\Db(X):=\Db(\coh(X))$ the bounded derived category of coherent sheaves on $X$. All derived functors will be denoted as if they were underived.
If $X$ and $Y$ are smooth projective varieties, a functor $F\colon\Db(X)\to\Db(Y)$ is called a \emph{Fourier-Mukai functor} if it is isomorphic to $\Phi_{P}(\blank):=p_{Y*}(P\otimes p_X^*(\blank))$, for some object $P\in\Db(X\times Y)$.

We also expect the reader to have some familiarity with K3 surfaces \cite{Huy:BookK3} and projective hyperk\"ahler manifolds \cite{GHJ:book,Debarre:survey}.

\subsection*{Acknowledgements}

Our warmest thank goes to Alexander Kuznetsov: his many suggestions, corrections and observations helped us very much to improve the quality of this article.
It is also our great pleasure to thank Arend Bayer, Andreas Hochenegger, Mart\'i Lahoz, Howard Nuer, Alex Perry, Laura Pertusi, and Xiaolei Zhao for their insightful comments on the subject of these notes and for carefully reading a preliminary version of this paper. We are very grateful to Nick Addington, Enrico Arbarello and Daniel Huybrechts for many useful conversations and for patiently answering our questions, and to Amnon Yekutieli for pointing out the references \cite{AZ:noncomm} and \cite{Yek1}. We would also like to thank Andreas Hochenegger and Manfred Lehn for their collaboration in organizing the school these notes originated from, and the audience for many comments, critiques, and suggestions for improvements. Part of this paper was written while the second author was visiting Northeastern University. The warm hospitality is gratefully acknowledged.

\changelocaltocdepth{2}

\section{Non-commutative Calabi-Yau varieties}
\label{sec:CY}

In this section, we follow closely the presentation and main results in \cite{Kuz:CY}; foundational references are also \cite{BO:semiorthogonal,BvdB:NC,Kuz:HPD,Kuz:ICM,Perry:NCHP}.
We start with a very short review on semiorthogonal decompositions and exceptional collections in Section \ref{subsec:SO}. We then introduce the notion of non-commutative variety in Section \ref{subsec:NC} and study basic facts about Serre functors and Hochschild (co)homology. Finally, in Section \ref{subsec:NCCY} we sketch the proof of a result by Kuznetsov (Theorem \ref{thm:KuznetsovMain}), which provides a general method to construct non-commutative Calabi-Yau varieties.

\subsection{Semiorthogonal decompositions}
\label{subsec:SO}

Let $\cD$ be a triangulated category.

\begin{defi}
\label{def:SO}
A \emph{semiorthogonal decomposition} 
\begin{equation*}
\cD = \langle \cD_1, \dots, \cD_m \rangle
\end{equation*}
is a sequence of full triangulated subcategories $\cD_1, \dots, \cD_m$ of $\cD$ 
--- called the \emph{components} of the decomposition --- such that: 
\begin{enumerate}
\item \label{definition-sod-1} 
$\Hom(F, G) = 0$ for all $F \in \cD_i$, $G \in \cD_j$ and $i>j$.
\item \label{definition-sod-2}
For any $F \in \cD$, there is a sequence of morphisms
\begin{equation*}
0 = F_m \to F_{m-1} \to \cdots \to F_1 \to F_0 = F,
\end{equation*}
such that $\mathrm{cone}(F_i \to F_{i-1}) \in \cD_i$ for $1 \leq i \leq m$. 
\end{enumerate}
\end{defi}

\begin{rema}
\label{remark-projection-functors}
Condition~\eqref{definition-sod-1} of the definition implies that the 
``filtration'' in~\eqref{definition-sod-2} and its ``factors'' are unique and functorial. 
The functor $\delta_i \colon \cD \to \cD_i$ given by the $i$-th ``factor'', i.e., 
\begin{equation*}
\delta_i(F) = \mathrm{cone}(F_i \to F_{i-1}) ,
\end{equation*}
is called the \emph{projection functor} onto $\cD_i$.
In the special case $\cD=\langle \cD_1,\cD_2\rangle$, the functor $\delta_1$ is the left adjoint of the inclusion $\cD_1\hra\cD$, while the functor $\delta_2$ is the right adjoint of $\cD_2\hra\cD$.
\end{rema}

Examples of semiorthogonal decompositions generally arise from exceptional collections, together with the concept of admissible subcategory.

\begin{defi}\label{def:admissible}
A full triangulated subcategory $\cC\subset \cD$ is called \emph{admissible} if the inclusion functor admits left and right adjoints.
\end{defi}

For a subcategory $\cC\subset \cD$, we define its \emph{left} and \emph{right orthogonals} as
\begin{align*}
{}^{\perp}\cC & :=  \{ \, G \in \cD  \mid  \Hom(G, F) = 0 \text{ for all } F \in \cC \, \}, \\ 
\cC^{\perp} & :=  \{ \, G \in \cD  \mid  \Hom(F, G) = 0 \text{ for all } F \in \cC \, \}.
\end{align*}

The following is well-known (see \cite[Lemma 2.3]{Kuz:survey} for a more general statement).

\begin{prop}\label{prop:admissible=>SO}
Let $\cC\subset \cD$ be an admissible subcategory.
Then there are semiorthogonal decompositions
\begin{equation}\label{eq:admissible=>SO}
\cD = \langle \cC, {^\perp}\cC \rangle \qquad \text{ and } \qquad \cD = \langle \cC^{\perp}, \cC \rangle.
\end{equation}
\end{prop}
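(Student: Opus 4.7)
The plan is to build the two decompositions directly from the adjoints guaranteed by admissibility. Let $i\colon\cC\hookrightarrow\cD$ denote the inclusion and write $i^*$ and $i^!$ for its left and right adjoints, respectively. Since $i$ is fully faithful, the unit $\Id_\cC\to i^!i$ and the counit $i^*i\to\Id_\cC$ are isomorphisms, and one can check easily that a map $\phi\colon A\to B$ in $\cD$ with $A\in\cC$ induces an isomorphism upon applying $i^!$ if and only if the cone of $\phi$ lies in $\cC^\perp$, and dually for $i^*$ and ${}^\perp\cC$.

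For the decomposition $\cD=\langle\cC^\perp,\cC\rangle$, I would begin with the semiorthogonality: by definition of $\cC^\perp$, for every $F\in\cC$ and $G\in\cC^\perp$ we have $\Hom(F,G)=0$, which is exactly condition \eqref{definition-sod-1}. For condition \eqref{definition-sod-2}, given $F\in\cD$, I would take the counit $ii^!F\to F$ and complete it to a distinguished triangle
\[
ii^!F \lra F \lra G \lra ii^!F[1].
\]
Setting $F_1:=ii^!F\in\cC$, it remains to show $G\in\cC^\perp$. For any $F'\in\cC$, adjunction and full faithfulness give $\Hom_\cD(iF',ii^!F)\cong\Hom_\cC(F',i^!F)\cong\Hom_\cD(iF',F)$, and the identification is induced by the counit. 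Applying $\Hom_\cD(iF',-)$ to the triangle and using the long exact sequence then yields $\Hom_\cD(iF',G)=0$, so $G\in\cC^\perp$, as required.

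The proof of the decomposition $\cD=\langle\cC,{}^\perp\cC\rangle$ proceeds symmetrically using the left adjoint. The semiorthogonality is again immediate from the definition of ${}^\perp\cC$. For the filtration, given $F\in\cD$, I would consider the unit $F\to ii^*F$ and fit it into a distinguished triangle
\[
H \lra F \lra ii^*F \lra H[1],
\]
so that the right-hand term lies in $\cC$. The same adjunction argument, now with the unit replacing the counit, shows that $\Hom_\cD(H,iF')=0$ for every $F'\in\cC$, hence $H\in{}^\perp\cC$. Setting $F_1:=H$ then gives the required two-step filtration.

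There is no real obstacle here; the whole argument reduces to the standard fact that an adjoint to a fully faithful functor produces a ``projection triangle''. The only point requiring a small amount of care is matching the orderings in Definition \ref{def:SO} with the direction of the unit/counit triangles, so that the correct piece (the image of the adjoint) is placed in the correct component of the semiorthogonal decomposition.
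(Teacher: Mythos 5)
Your proof is correct and follows the same approach as the paper: using the counit $ii^!F\to F$ (resp.\ the unit $F\to ii^*F$) to produce a triangle whose cone lands in $\cC^\perp$ (resp.\ ${}^\perp\cC$), with the long exact $\Hom$-sequence and the adjunction isomorphisms verifying the orthogonality. Your write-up is in fact slightly more careful than the paper's one-line sketch, which, after forming the unit triangle, asserts the cone lies in $\cC^\perp$ when it should be ${}^\perp\cC$ (matching the decomposition $\cD=\langle\cC,{}^\perp\cC\rangle$ it claims to establish).
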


\begin{proof}
Let us denote by $\Psi_L\colon\cD \to \cC$ the left adjoint to the inclusion functor. Then, for all $G\in\cD$, we have a morphism $G\to \Psi_L(G)$ in $\cD$.
The cone of this morphism is in $\cC^\perp$, thus giving the first semiorthogonal decomposition. The second one is analogous, by using the right adjoint.
\end{proof}

Proposition \ref{prop:admissible=>SO} also has a converse statement.
Namely, if a category $\cC$ arises as semiorthogonal component in both decompositions as in \eqref{eq:admissible=>SO}, then it must be admissible.
This follows immediately from Remark \ref{remark-projection-functors}.

Examples of admissible categories are given by exceptional collections.

\begin{defi}\label{def:ExceptionalCollection}
(i) An object $E\in\cD$ is \emph{exceptional} if $\Hom(E,E[p])=0$, for all integers $p\neq0$, and $\Hom(E,E)\cong \K$.

(ii) A set of objects $\{E_1,\ldots,E_m\}$ in $\cD$ is an \emph{exceptional collection} if $E_i$ is an exceptional object, for all $i$, and $\Hom(E_i,E_j[p])=0$, for all $p$ and all $i>j$.

(iii) An exceptional collection $\{E_1,\ldots,E_m\}$ in $\cD$ is \emph{full} if the smallest full triangulated subcategory of $\cD$ containing the exceptional collection is equivalent to $\cD$.

(iv) An exceptional collection $\{E_1,\ldots,E_m\}$ in $\cD$ is \emph{strong} if $\Hom(E_i,E_j[p])=0$, for all $p\neq0$ and all $i<j$.
\end{defi}

\begin{prop}\label{exceptional=>admissible}
Let $\cC:=\langle E\rangle$ be the smallest full triangulated subcategory of a proper\footnote{A triangulated category $\cD$ is \emph{proper} over $\K$ if, for all $F,G\in\cD$, $\dim_\K (\oplus_p \Hom_{\cD}(F,G[p]))<+\infty$.} triangulated category $\cD$  and containing the exceptional object $E$. Then $\cC$ is admissible.
\end{prop}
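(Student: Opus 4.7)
The plan is to construct explicitly the left and right adjoints to the inclusion $i\colon \cC \hookrightarrow \cD$; by the converse of Proposition \ref{prop:admissible=>SO} (noted immediately after its proof), producing both adjoints is equivalent to admissibility. First I would describe $\cC=\langle E\rangle$ concretely: because $E$ is exceptional, any morphism between two shifts of $E$ is either zero or an isomorphism, so its cone is again a finite direct sum of shifts of $E$. By induction every object of $\cC$ is isomorphic to a finite sum $\bigoplus_i E[n_i]^{\oplus a_i}$. In particular $\cC$ is generated under shifts by the single family $\{E[j]\}_{j\in\Z}$, which means that testing membership in $\cC^\perp$ or ${}^\perp\cC$ reduces to testing $\Hom(E[j],-)=0$ or $\Hom(-,E[j])=0$ for all $j$.

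For the right adjoint, I would set
$$
\Psi_R(F) := \bigoplus_{p\in\Z}\Hom_\cD(E[p],F)\otimes_\K E[p],\qquad F\in\cD,
$$
which is a genuine object of $\cC$ because properness of $\cD$ makes $\bigoplus_p\Hom(E[p],F)$ finite-dimensional, so only finitely many summands are nonzero. The natural evaluation $\mathrm{ev}\colon \Psi_R(F)\to F$, built summand-by-summand from $\mathrm{id}\in\End_\K(\Hom(E[p],F))$, is the candidate counit. Completing to a triangle $\Psi_R(F)\to F\to G\to \Psi_R(F)[1]$, the exceptionality of $E$ yields $\Hom(E[j],\Psi_R(F))\cong\Hom(E[j],F)$, and by construction the map induced by $\mathrm{ev}$ is the identity on this group. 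The long exact sequence obtained by applying $\Hom(E[j],-)$ then forces $\Hom(E[j],G)=0$ for every $j$, placing $G$ in $\cC^\perp$. This is exactly the SOD-triangle for $\cD=\langle\cC^\perp,\cC\rangle$, so $\Psi_R$ is the right adjoint to $i$.

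The construction of the left adjoint is entirely dual: take
$$
\Psi_L(F) := \bigoplus_{p\in\Z}\Hom_\cD(F,E[p])^{*}\otimes_\K E[p],
$$
again a finite direct sum by properness, together with the natural coevaluation morphism $F\to\Psi_L(F)$; completing to a triangle and applying $\Hom(-,E[j])$ gives that the fiber lies in ${}^\perp\cC$, yielding the SOD $\cD=\langle\cC,{}^\perp\cC\rangle$. Combined, the two adjoints show that $\cC$ is admissible.

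There is no single deep obstacle here; rather, the point that demands care is verifying that $\mathrm{ev}$ (respectively its dual) induces the \emph{identity} on the relevant $\Hom(E[j],-)$ groups, so that the long exact sequence collapses and places the cone in the correct orthogonal. This verification rests cleanly on the two hypotheses: exceptionality of $E$, which makes $\Hom(E[j],E[p])$ equal to $\K$ when $j=p$ and zero otherwise, and properness of $\cD$, which ensures the direct sums defining $\Psi_R$ and $\Psi_L$ are finite and the canonical double-dual isomorphisms used in the adjunction behave as expected.
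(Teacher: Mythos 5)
Your proof is correct and takes essentially the same route as the paper: you build the counit $\Psi_R(F)=\bigoplus_p \Hom(E[p],F)\otimes_\K E[p]\to F$ (after reindexing this is exactly the paper's $\bigoplus_k\Hom(E,C[k])\otimes E[-k]\to C$), use properness to make the sum finite, complete to a triangle, and use exceptionality to show the cone lies in $\cC^\perp$, with the left adjoint handled dually. The only cosmetic difference is that you first spell out that every object of $\cC$ is a finite direct sum of shifts of $E$, a step the paper leaves implicit when concluding that $\Hom(E,D[p])=0$ for all $p$ already forces $D\in\langle E\rangle^\perp$.
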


\begin{proof}
This can be seen directly by constructing explicitly the left and right adjoints to the functor $\Db(\K)\to\cD$, $V\mapsto E\otimes_\K V$ (inducing an equivalence $\Db(\K)\cong\cC$). 
More explicitly, since $E$ is exceptional, given an object $C$ in $\cD$, consider the (canonical) evaluation morphism
\[
\bigoplus_k\Hom(E,C[k])\otimes E[-k]\to C,
\]
where the first object is in $\cC$.
Complete it to a distinguished triangle
\[
\bigoplus_k\Hom(E,C[k])\otimes E[-k]\to C\to D.
\]
Since $E$ is exceptional, we get $\Hom(E,D[p])=0$, for all integers $p$. Hence $D\in\langle E\rangle^\perp$.

Therefore, we constructed a semiorthogonal decomposition
\[
\cD = \langle \langle E\rangle^\perp, \langle E \rangle \rangle. 
\]
Similarly, we can construct a decomposition $\cD = \langle \langle E\rangle, {^\perp}\langle E \rangle \rangle$, and so the category $\cC$ is admissible.
\end{proof}

By using Proposition \ref{prop:admissible=>SO}, if $\{E_1,\ldots,E_m\}$ is an exceptional collection in a proper triangulated category $\cD$ and $\cC$ denotes the smallest full triangulated subcategory of $\cD$ containing the $E_i$'s, then we get two semiorthogonal decompositions
\[
\cD=\langle\cC^\perp,E_1,\ldots,E_m\rangle=\langle E_1,\ldots,E_m,{}^\perp\cC\rangle.
\]
Here, for sake of simplicity, we write $E_i$ for the category $\langle E_i\rangle$. 

\begin{exam}[Beilinson]\label{ex:Pn}
A beautiful example is provided by the $n$-dimensional projective space $\P^n$. In this case, by a classical result of Beilinson \cite{Bei:Pn}, the line bundles
\[
\{\cO_{\P^n}(-n),\cO_{\P^n}(-n+1),\ldots,\cO_{\P^n}\}
\]
form a full strong exceptional collection and so they yield a semiorthogonal decomposition
\[
\Db(\P^n)=\langle \cO_{\P^n}(-n),\cO_{\P^n}(-n+1),\ldots,\cO_{\P^n}\rangle.
\]

The exceptionality and strongness of the 
collection follow from Bott's theorem.
The basic idea for the proof of fullness is to use the resolution of the diagonal $\Delta\subset\P^n\times\P^n$ given by the Koszul resolution
\[
0\to \cO_{\P^n}(-n)\boxtimes \Omega^n(n) \to \ldots \to \cO_{\P^n}(-1)\boxtimes \Omega^1(1) \to \cO_{\P^n\times\P^n} \to \cO_\Delta \to 0
\]
associated to a natural section $\cO_{\P^n}(-1)\boxtimes \Omega^1(1) \to \cO_{\P^n\times\P^n}$ (see, for example, \cite[Section 8.3]{Huy:FM} for the details).

Actually, any set of line bundles $\{\cO_{\P^n}(k),\cO_{\P^n}(k+1),\ldots,\cO_{\P^n}(k+n)\}$, for $k$ any integer, is a full strong exceptional collection in $\Db(\P^n)$.
\end{exam}

\begin{exam}[Kapranov et al.]\label{ex:Grass}
Consider now the Grassmannian $\mathrm{Gr}(k,n)$ of $k$-dimensional subspaces in an $n$-dimensional $\K$-vector space. Let $\mathcal{U}$ be the tautological subbundle and $\mathcal{Q}$ the tautological quotient. If $\mathrm{char}(\K)=0$ (or sufficiently large), Kapranov \cite{Kap:Grass} has constructed a full strong exceptional collection, and so a semiorthogonal decomposition
\[
\Db(\mathrm{Gr}(k,n))=\langle \Sigma^\alpha\mathcal{U}^\vee\rangle_{\alpha\in R(k,n-k)},
\]
where $R(k,n-k)$ is the $k \times (n-k)$ rectangle, $\alpha$ is a Young diagram, and $\Sigma^\alpha$ is the associated Schur functor. 
The basic idea of the proof is similar to the projective space case, by using the Borel-Bott-Weil Theorem for proving that the above collection is exceptional and a Koszul resolution of the diagonal associated to a canonical section of $\mathcal{U}^\vee\boxtimes \mathcal{Q}$.
When $\mathrm{char}(\K)>0$, the situation is more complicated and described in \cite{BLV:Grass}. In mixed characteristic, it is worth mentioning \cite{Efimov} where  semiorthogonal decompositions for the derived categories of Grassmannians over the integers are studied.

For later use, we need a different exceptional collection, described more recently in \cite{Fon:KP} (see also \cite{KP:ExceptionalHomogeneous} for related results).
We assume that $\mathrm{gcd}(k,n)=1$.
Then we have a semiorthogonal decomposition in a form which is more similar to the case of the projective space
\[
\Db(\mathrm{Gr}(k,n))=\langle \cB,\cB(1),\ldots,\cB(n-1)\rangle,
\]
with the category $\cB$ generated by the exceptional collection formed by the vector bundles $\Sigma^\alpha\mathcal{U}^\vee$ associated to the corresponding Schur functors, where the Young diagram $\alpha$ has at most $k-1$ rows and whose $p$-th row is of length at most $(n-k)(k-p)/k$, for $p=1,\ldots,k-1$.
Explicitly, in the case $\mathrm{Gr}(2,5)$, the category $\cB$ is generated by $2$ exceptional objects:
\[
\cB = \langle \cO_{\mathrm{Gr}(2,5)}, \cU^\vee \rangle.
\]
In the case of $\mathrm{Gr}(3,10)$, the category $\cB$ is generated by $12$ exceptional objects.
\end{exam}

\begin{exam}[Kapranov]\label{ex:quadrics}
Let $Q$ be an $n$-dimensional quadric in $\P^{n+1}$ defined by an equation $\{ q=0\}$; we assume that $\mathrm{char}(\K)\neq 2$. By \cite{Kap:Grass}, the category $\Db(Q)$ has one of the following semiorthogonal decompositions, given by full strong exceptional collections, according to the parity of $n$. If $n=2m+1$ is odd, we have
\[
\Db(Q)=\langle S,\cO_{Q},\cO_{Q}(1),\ldots,\cO_{Q}(n-1)\rangle,
\]
where $S$ is the spinor bundle on $Q$ defined as $\coker(\phi|_{Q})(-1)$ and $\phi\colon \cO_{\P^{n+1}}(-1)^{2^{m+1}}\to \cO_{\P^{n+1}}^{2^{m+1}}$ is such that  $\phi\circ(\phi(-1))=q\cdot \Id\colon \cO_{\P^{n+1}}(-2)^{2^{m+1}}\to \cO_{\P^{n+1}}^{2^{m+1}}$.

If $n=2m$ is even, we have
\[
\Db(Q)=\langle S^-,S^+,\cO_{Q},\cO_{Q}(1),\ldots,\cO_{Q}(n-1)\rangle,
\]
where $S^-:=\coker(\phi|_{Q})(-1)$, $S^+:=\coker(\psi|_{Q})(-1)$, and $\phi,\psi\colon \cO_{\P^{n+1}}(-1)^{2^{m}}\to \cO_{\P^{n+1}}^{2^{m}}$ are such that  $\phi\circ(\psi(-1))=\psi\circ(\phi(-1))=q\cdot \Id$. The reader can have a look at \cite{Ottaviani} for more results about spinor bundles.
\end{exam}

It is a very interesting question to determine which varieties admit a full exceptional collection. A classical result of Bondal \cite{Bon:Exceptional} shows that a smooth projective variety has a full strong exceptional collection if and only if its bounded derived category is equivalent to the bounded derived category of finitely generated right modules over an associative finite-dimensional algebra.
For example, there is a vast literature on homogeneous spaces (which conjecturally should have a full strong exceptional collection; see \cite{KP:ExceptionalHomogeneous}, and the references therein) or in relation to Dubrovin's Conjecture \cite{Dub:ICM}.
Another example is the moduli space of stable rational curves with $n$ punctures: by using results of Kapranov (\cite{Kap:M0n}) and Orlov (\cite{Orl:projbun}; see also Example \ref{ex:BlowUps} below), it is easy to show that a full exceptional collection exists, but conjecturally there exists a full strong exceptional collection which is $S_n$-invariant (see \cite{MS:M0n,CT:M0n}). We also mention the following question by Orlov relating the existence of a full exceptional collection to rationality; we will study further conjectural relations between non-commutative K3 surfaces and rationality in Section \ref{sec:examples}.

\begin{ques}[Orlov]\label{ques:orlov}
Let $X$ be a smooth projective variety over $\K$.
If $\Db(X)$ admits a full exceptional collection, then $X$ is rational.
\end{ques}

For later use, we want to show that the semiorthogonal decompositions in Examples \ref{ex:Pn} and \ref{ex:quadrics} can be made relative to a positive dimensional base.

\begin{exam}[Orlov]\label{ex:ProjBundles}
Let $F$ be a vector bundle of rank $r+1$ over a smooth projective variety $X$.
Consider the projective bundle $\pi\colon\P_X(F)\to X$.
Then, by \cite{Orl:projbun}, the pull-back functor $\pi^*$ is fully faithful and we have a semiorthogonal decomposition
\[
\Db(\P_X(F))=\langle \pi^*(\Db(X))\otimes\cO_{\P_X(F)/X}(-r),\ldots,\pi^*(\Db(X))\rangle.
\]
\end{exam}

\begin{exam}[Kuznetsov]\label{ex:QuadricFibrations}
Let $X,S$ be smooth projective varieties, and let $f\colon X\to S$ be a (flat) quadric fibration. In other words, there is a vector bundle $E$ on $S$ of rank $r+2$ such that $X$ is a divisor in $\P_S(E)$ of relative degree $2$ corresponding to a line bundle $L\subseteq S^2E^\vee$.
We assume that $\mathrm{char}(\K)\neq 2$.
Such a quadric fibration comes with a sheaf $\cB$ of Clifford algebras on $S$. The corresponding sheaf $\cB_0$ of even parts of Clifford algebras can be described as an $\cO_S$-module in the following terms:
\[
\cB_0\cong\cO_S\oplus(\wedge^2E\otimes L)\oplus(\wedge^4E\otimes L^2)\oplus\ldots
\]
We can then take the abelian category $\coh(S,\cB_0)$ of coherent $\cB_0$-modules on $S$ and the corresponding derived category $\Db(S,\cB_0):=\Db(\coh(S,\cB_0))$. One can also consider the sheaf $\cB_1$ of odd parts of the Clifford algebras, which is a coherent $\cB_0$-module. Actually, for all $i\in\Z$, we have the sheaves
\[
\cB_{2i}:=\cB_0\otimes_{\cB_0}L^{-i}\qquad\cB_{2i+1}:=\cB_1\otimes_{\cB_0}L^{-i}.
\]

The pull-back functor $f^*$ is fully-faithful.
Moreover, there is a fully-faithful functor $\Phi\colon \Db(S,\cB_0)\hra \Db(X)$, and a semiorthogonal decomposition
\[
\Db(X)=\langle\Phi(\Db(S,\cB_0)),f^*(\Db(S)),\ldots,f^*(\Db(S))\otimes\cO_{X/S}(r-1)\rangle.
\]
The functor $\Phi$ has a left adjoint that we denote by $\Psi$. The case when $\mathrm{char}(\K)=0$ was treated in \cite[Theorem 4.2]{Kuz:quadrics}. This was generalized in \cite[Theorem 2.2.1]{ABB:quadrics} to fields of arbitrary odd characteristic. The case $\mathrm{char}(\K)=2$ is discussed in the same paper and it can also be described in a similar way, but the definition of $\cB_0$ is different.
\end{exam}

Finally, the last case we review is the blow-up along smooth subvarieties.

\begin{exam}[Orlov]\label{ex:BlowUps}
Let $Y\subseteq X$ be a smooth subvariety of codimension $c$.
Consider the blow-up $f\colon\widetilde{X}\to X$ of $X$ along $Y$. Let $i\colon E\hookrightarrow\widetilde{X}$ be the exceptional divisor. The restriction $\pi:=f|_{E}\colon E\to Y$ is a projective bundle, as $E=\P_Y(\mathcal{N}_{Y/X})$. By \cite{Orl:projbun}, the pullback functor $f^*$ is fully faithful and, for any integer $j$, the functors
\[
\Psi_j\colon\Db(Y)\to\Db(\widetilde{X})\qquad G\mapsto i_*(\pi^*(G)\otimes\cO_{E/Y}(j)),
\]
are fully-faithful as well, giving the semiorthogonal decomposition
\[
\Db(\widetilde{X})=\langle f^*\Db(X),\Psi_0(\Db(Y)),\ldots,\Psi_{c-2}(\Db(Y))\rangle.
\]
\end{exam}

\subsection{Non-commutative smooth projective varieties}
\label{subsec:NC}

We will work with the following definition of non-commutative smooth projective variety; while this is not the most general notion (see \cite{Orl:SmoothNC,Perry:NCHP,KP:Joins}), it will suffice for these notes.\footnote{Note that in \cite[Definition 4.3]{Orl:SmoothNC} it is used the terminology \emph{geometric noncommutative scheme} for what we call non-commutative smooth projective variety.}

\begin{defi}
Let $\cD$ be a triangulated category linear over $\K$.
We say that $\cD$ is a \emph{non-commutative smooth projective variety} if there exists a smooth projective variety $X$ over $\K$ and a fully faithful $\K$-linear exact functor $\cD\hra\Db(X)$ having left and right adjoints.
\end{defi}

By identifying $\cD$ with its essential image in $\Db(X)$, then the definition is only asking that $\cD$ is an admissible subcategory. Note also that, as a consequence of the main result in \cite{BvdB:NC}, being admissible is a notion which is intrinsic to the category $\cD$, namely every fully faithful functor from a non-commutative smooth projective variety into the bounded derived category of a smooth projective variety will have both adjoints.

\subsubsection*{Products}
Non-commutative smooth projective varieties are closed under products (or more generally gluing of categories; see \cite{KL:gluings,Orl:SmoothNC}). More precisely, if $\cD_1\subset\Db(X_1)$ and $\cD_2\subset\Db(X_2)$, we can define $\cD_1 \boxtimes \cD_2$ as the smallest triangulated subcategory of $\Db(X_1\times X_2)$ which is closed under taking direct summands and contains all objects of the form $F_1\boxtimes F_2$, with $F_1\in\cD_1$ and $F_2\in\cD_2$.\footnote{This is different from the definition which appears in \cite[Equation (10)]{Kuz:BaseChange}, but in our context of smooth projective varieties it is equivalent (see also the beginning of the proof of \cite[Theorem 5.8]{Kuz:BaseChange}).}

\begin{prop}\label{prop:product}
The subcategory $\cD_1 \boxtimes \cD_2 \subset \Db(X_1\times X_2)$ is admissible.
\end{prop}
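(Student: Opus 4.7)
The plan is to construct $\cD_1 \boxtimes \cD_2$ as a component of a semiorthogonal decomposition of $\Db(X_1\times X_2)$, and then invoke the converse of Proposition \ref{prop:admissible=>SO} (noted immediately after that proposition) to conclude admissibility. Equivalently, one can build left and right adjoints to the inclusion directly via external Fourier--Mukai kernels; I will weave these two perspectives together.

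Since each $\cD_i \subset \Db(X_i)$ is admissible, I have semiorthogonal decompositions $\Db(X_i) = \langle \cA_i, \cD_i\rangle = \langle \cD_i, \cA_i'\rangle$ with $\cA_i := {}^\perp\cD_i$ and $\cA_i' := \cD_i^\perp$, and the inclusions $\iota_i\colon \cD_i \hookrightarrow \Db(X_i)$ admit left and right adjoints $\pi_i^L, \pi_i^R$. By the Bondal--Van den Bergh theorem (the very result cited to justify that admissibility is intrinsic), each of the endofunctors $\iota_i\pi_i^L, \iota_i\pi_i^R$ of $\Db(X_i)$ is a Fourier--Mukai functor with some kernel $P_i^L, P_i^R \in \Db(X_i \times X_i)$.

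Next, for $\bullet\in\{L,R\}$, form the external product kernel $P^{\bullet} := (P_1^{\bullet})_{13}\otimes (P_2^{\bullet})_{24} \in \Db\bigl((X_1\times X_2)\times(X_1\times X_2)\bigr)$, where the subscripts indicate which two factors of $X_1\times X_2\times X_1\times X_2$ the kernel is pulled back from. By the Künneth formula, the associated Fourier--Mukai functor $\Phi^{\bullet}$ on $\Db(X_1\times X_2)$ sends $F_1\boxtimes F_2$ to $\pi_1^{\bullet}(F_1)\boxtimes \pi_2^{\bullet}(F_2)$, hence has essential image inside $\cD_1\boxtimes\cD_2$. The units/counits of the factorwise adjunctions give, via the box product of kernels, natural transformations $\mathrm{id}\to\Phi^L$ and $\Phi^R\to\mathrm{id}$; the adjunction triangle identities are verified on pure tensors by reducing them to the corresponding identities for the $\pi_i^{\bullet}$. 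To promote this to the whole category, one uses that $\Db(X_1\times X_2)$ is generated under shifts, cones, and summands by external tensor products. Most efficiently, Kuznetsov's base-change theorem for semiorthogonal decompositions, applied in both variables, produces a four-term decomposition
$$\Db(X_1\times X_2) \;=\; \langle\, \cA_1\boxtimes\cA_2,\ \cA_1\boxtimes\cD_2,\ \cD_1\boxtimes\cA_2,\ \cD_1\boxtimes\cD_2 \,\rangle$$
and a dual decomposition placing $\cD_1\boxtimes\cD_2$ on the left; admissibility then follows at once.

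The main obstacle is the exhaustion/generation statement: semiorthogonality of the four pieces is a direct Künneth computation, but showing that they generate $\Db(X_1\times X_2)$ requires a genuine input. Either one appeals to Kuznetsov's base-change for semiorthogonal decompositions along the projections $X_1\times X_2\to X_i$, or equivalently one uses the Fourier--Mukai kernels $P_i^L, P_i^R$ together with the fact that their external products yield well-defined projectors, so that every object of $\Db(X_1\times X_2)$ can be filtered by the four external components. Either route gives the desired adjoints to $\cD_1\boxtimes\cD_2 \hookrightarrow \Db(X_1\times X_2)$ and completes the proof.
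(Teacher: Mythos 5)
Your overall strategy matches the paper's: exhibit $\cD_1\boxtimes\cD_2$ as a component of a four-term semiorthogonal decomposition of $\Db(X_1\times X_2)$ (with a dual one putting it on the other side), and conclude admissibility by the converse to Proposition \ref{prop:admissible=>SO}; the K\"unneth computation for semiorthogonality is also the same. The real divergence is in proving generation, which you rightly flag as the crux. You outsource it to Kuznetsov's base-change theorem for semiorthogonal decompositions, or alternatively to external products of Fourier--Mukai projector kernels. The paper also cites \cite[Theorem~5.8]{Kuz:BaseChange} as the general result, but then gives a short self-contained argument specific to the smooth projective setting that is worth knowing: since $X_1\times X_2$ is projective, any $F$ admits a bounded-above resolution $P^\bullet$ by \emph{external} tensor products of locally free sheaves, and smoothness lets one apply the stupid truncation $\sigma^{\geq-n}$ for $n\gg0$ so that $F$ becomes a direct summand of the bounded complex $\sigma^{\geq-n}P^\bullet$ (via a split triangle); each term of that complex already has a filtration coming from the single-variable semiorthogonal decompositions, which yields condition \eqref{definition-sod-2}. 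Two cautions on your proposal. First, your secondary kernel-based route has a circularity risk: the paper's own proof that the projection onto an admissible subcategory is of Fourier--Mukai type (Lemma \ref{lem:projectionFM}) is deduced \emph{from} Proposition \ref{prop:product}, so if you want to use Fourier--Mukai kernels for $\iota_i\pi_i^L,\iota_i\pi_i^R$ in proving Proposition \ref{prop:product}, you must justify their existence independently (e.g.\ via \cite{Kuz:BaseChange} directly); the appeal to Bondal--Van den Bergh alone does not immediately give this. Second, with the paper's conventions $\Db(X_i)=\langle\cD_i,{}^\perp\cD_i\rangle=\langle\cD_i^\perp,\cD_i\rangle$, your assignments $\cA_i:={}^\perp\cD_i$ and $\cA_i':=\cD_i^\perp$ are swapped relative to the decompositions $\langle\cA_i,\cD_i\rangle$ and $\langle\cD_i,\cA_i'\rangle$ in which you placed them; in your four-term decomposition ending in $\cD_1\boxtimes\cD_2$, the complementary factors must be built from $\cD_i^\perp$, not ${}^\perp\cD_i$.
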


\begin{proof}
This is a special case of \cite[Theorem 5.8]{Kuz:BaseChange}.
In our context of smooth projective varieties, it becomes quite simple.

In fact, we claim that there is a semiorthogonal decomposition
\[
\Db(X_1\times X_2) = \langle \cD_1\boxtimes\cD_2, {^\perp}\cD_1\boxtimes\cD_2, \cD_1\boxtimes{^\perp}\cD_2, {^\perp}\cD_1\boxtimes{^\perp}\cD_2\rangle.
\]
As remarked in the previous section, this immediately implies that $\cD_1\boxtimes\cD_2$ is admissible, and thus a non-commutative smooth projective variety.

To prove the claim, we need to check conditions \eqref{definition-sod-1} and \eqref{definition-sod-2} of Definition \ref{def:SO}.
Condition \eqref{definition-sod-1} follows immediately from the K\"unneth formula (see, for example, \cite[Section 3.3]{Huy:FM}):
\[
\Hom(F_1\boxtimes G_1,F_2\boxtimes G_2) = \bigoplus_{i\in\Z} \Hom(F_1,F_2[i])\otimes \Hom(G_1,G_2[-i]).
\]

Condition \eqref{definition-sod-2} follows directly from the following elementary but very useful fact.
Let $F\in\Db(X_1\times X_2)$.
Then since $X_1\times X_2$ is projective, we can find a bounded above locally-free resolution $P^\bullet$ of $F$, such that, for all $i$, $P^i=P_1^i\boxtimes P_2^i$.
Since $X_1\times X_2$ is smooth, by truncating this resolution at sufficiently large $n$ (with respect to the stupid truncation), we obtain a split exact triangle
\[
G \to \sigma^{\geq -n}P^\bullet \to F,
\]
for some $G\in\Db(X_1\times X_2)$, namely $F$ is a direct factor of $\sigma^{\geq -n}P^\bullet$.
Since $\cD_1$ and $\cD_2$ are part of semiorthogonal decompositions of respectively $\Db(X_1)$ and $\Db(X_2)$, this concludes the proof.
\end{proof}

Having a notion of product, we can define the slightly technical notion of Fourier-Mukai functor between non-commutative varieties (see \cite{Kuz:HPD,HR:cubics}, and \cite{CS:FMsurvey} for a survey on Fourier-Mukai functors).

\begin{defi}\label{def:FM}
Let $X_1,X_2$ be algebraic varieties.
Let $\cD_1\hra\Db(X_1)$ and $\cD_2\hra\Db(X_2)$ be admissible categories.
A functor $F\colon\cD_1\to\cD_2$ is called a \emph{Fourier-Mukai functor} if the composite functor
\[
\Db(X_1)\xrightarrow{\delta_1}\cD_1\to\cD_2\hra\Db(X_2)
\]
is of Fourier-Mukai type.
\end{defi}

In the previous definition, $\delta_1$ denotes the projection functor of Remark \ref{remark-projection-functors} with respect to the semiorthogonal decomposition $\Db(X)=\langle \cD_1, \cD_2\rangle$; it coincides with the left adjoint to the inclusion functor.
By using Proposition \ref{prop:product}, it is not hard to see that if a functor between non-commutative varieties is of Fourier-Mukai type, then the kernel $P_F\in\Db(X_1\times X_2)$ actually lives in $\cD_1^{\perp\perp}\boxtimes \cD_2$; for example, the proof in \cite[Lemma 1.5]{HR:cubics} works in general.

\begin{rema}
By \cite[Theorem 6.4]{Kuz:BaseChange}, the definition of $\cD_1 \boxtimes \cD_2$ does not depend on the embedding.
More precisely, given a Fourier-Mukai equivalence $\cD_1\isomto\cD_1'$, then there is a Fourier-Mukai equivalence of triangulated categories $\cD_1 \boxtimes \cD_2\cong \cD_1' \boxtimes \cD_2$.
\end{rema}

An immediate corollary of Proposition \ref{prop:product} is that the projection functor $\delta$ is of Fourier-Mukai type.
We will use this to define Hochschild (co)homology for a non-commutative variety.

\begin{lemm}\label{lem:projectionFM}
Let $X$ be a smooth projective variety, and let $\cD\subset\Db(X)$ be an admissible category.
Then the projection functor $\delta\colon\Db(X)\to\cD$ is of Fourier-Mukai type.
\end{lemm}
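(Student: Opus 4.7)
The plan is to construct a Fourier--Mukai kernel for $\iota\circ\delta$ (where $\iota\colon\cD\hra\Db(X)$ is the inclusion) by decomposing the diagonal via a semiorthogonal decomposition of $\Db(X\times X)$ lifted from the one on $\Db(X)$. Since the definition of Fourier--Mukai functor between non-commutative varieties requires the composition $\Db(X)\xrightarrow{\mathrm{id}}\Db(X)\xrightarrow{\delta}\cD\hra\Db(X)$ to be of standard Fourier--Mukai type, this is precisely what we need.

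First, apply Proposition \ref{prop:product} to the pair of subcategories $\Db(X)\subset\Db(X)$ and $\cD\subset\Db(X)$. Since ${}^\perp\Db(X)=0$, only two terms survive, yielding the semiorthogonal decomposition
\[
\Db(X\times X) \;=\; \langle\, \Db(X)\boxtimes\cD^\perp,\; \Db(X)\boxtimes\cD \,\rangle.
\]
Apply this decomposition to $\cO_\Delta\in\Db(X\times X)$ to produce a distinguished triangle
\[
P \to \cO_\Delta \to P',
\]
with $P\in\Db(X)\boxtimes\cD$ and $P'\in\Db(X)\boxtimes\cD^\perp$. Since the assignment $Q\mapsto\Phi_Q$ is exact and $\Phi_{\cO_\Delta}\cong\mathrm{id}$, we obtain a triangle of endofunctors of $\Db(X)$:
\[
\Phi_P \to \mathrm{id} \to \Phi_{P'}.
\]

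Second, I claim that $\Phi_P(F)\in\cD$ and $\Phi_{P'}(F)\in\cD^\perp$ for every $F\in\Db(X)$. Because $\Db(X)\boxtimes\cD$ is by definition the smallest triangulated subcategory of $\Db(X\times X)$ closed under direct summands and containing all external products $A\boxtimes B$ with $A\in\Db(X)$ and $B\in\cD$, and because both $\cD$ and the class of functors sending $F$ into $\cD$ are closed under shifts, cones, and summands, it suffices to verify the claim on such pure tensors. For $A\boxtimes B$ the projection formula gives
\[
\Phi_{A\boxtimes B}(F) \;\cong\; R\Gamma\bigl(X,\,A\otimes F\bigr)\otimes B,
\]
which is a finite direct sum of shifts of $B$, hence lies in $\cD$ (respectively $\cD^\perp$) whenever $B$ does.

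Finally, the triangle $\Phi_P(F)\to F\to\Phi_{P'}(F)$ exhibits $F$ as an extension of an object of $\cD^\perp$ by an object of $\cD$. By the uniqueness of the semiorthogonal filtration with respect to $\Db(X)=\langle\cD^\perp,\cD\rangle$ (Remark \ref{remark-projection-functors}), this triangle must coincide with the canonical one, so $\Phi_P\cong\iota\circ\delta$. This proves that $\delta$ is of Fourier--Mukai type with kernel $P\in\Db(X)\boxtimes\cD$. The only delicate point is the generation/closure argument in the middle step, where one must be careful that $\Db(X)\boxtimes\cD$ is closed under direct summands so that objects not of pure tensor form still induce functors landing in $\cD$; this is built into the definition of the box product used in Proposition \ref{prop:product}.
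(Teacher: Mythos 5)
Your proof is correct and follows the same route as the paper: apply Proposition~\ref{prop:product} and take the semiorthogonal projection of $\cO_\Delta$ onto $\Db(X)\boxtimes\cD$. Your middle verification (computing $\Phi_{A\boxtimes B}(F)\cong R\Gamma(X,A\otimes F)\otimes B$ on pure tensors, then using closure of $\cD$ under triangles, shifts and summands, and finally invoking uniqueness of the semiorthogonal filtration) is exactly the detail the paper leaves implicit when it asserts that the projected kernel represents $\delta$.
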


\begin{proof}
This is a special case of \cite[Theorem 7.1]{Kuz:BaseChange}.
Again, in our smooth projective context, the proof is very simple.
Indeed, by Proposition \ref{prop:product}, the subcategory $\Db(X)\boxtimes\cD\subset \Db(X\times X)$ is admissible.
The kernel of the projection functor is simply the projection of the structure sheaf of the diagonal $\cO_{\Delta}\in\Db(X\times X)$ onto the category $\Db(X)\boxtimes\cD$.
\end{proof}

\begin{rema}
Not all functors $\cD_1\to\cD_2$ are of Fourier-Mukai type (see \cite{RvdB:noFM,Vol:noFM}). Nevertheless, fully faithful functors are expected to be of Fourier-Mukai type (as they are in the commutative case, by Orlov's Representability Theorem \cite{Orl:FM}). This goes under the name of Splitting Conjecture (see \cite[Conjecture 3.7]{Kuz:HPD}).
\end{rema}

\subsubsection*{The numerical Grothendieck group}
Given a triangulated category $\cD$, we denote by $K(\cD)$ its Grothendieck group. If $\cD$ is a non-commutative smooth projective variety, then the Euler characteristic
\[
\chi(F,G):=\sum_i (-1)^i \dim_k \Hom_{\cD}(F,G[i])
\]
is well-defined for all $F,G\in\cD$ and it factors through $K(\cD)$.

\begin{defi}\label{def:NumericalGrothendieck}
Let $\cD$ be a non-commutative smooth projective variety.
We define the \emph{numerical Grothendieck group} as $K_\mathrm{num}(\cD):=K(\cD)/\ker\chi$.
\end{defi}

The numerical Grothendieck group is a free abelian group of finite rank. Indeed, since $\cD$ is an admissible subcategory of $\Db(X)$, for $X$ a smooth projective variety, then $K_\mathrm{num}(\cD)$ is a subgroup of the numerical Grothendieck group $N(X)$ of $X$ which is a free abelian group of finite rank.

\subsubsection*{The Serre functor}
Serre duality for non-commutative varieties is studied via the notion of Serre functor, introduced and studied originally in \cite{BK:dg}.

\begin{defi}\label{def:SerreFunctor}
Let $\cD$ be a triangulated category.
A \emph{Serre functor} in $\cD$ is an autoequivalence $S_{\cD}\colon \cD \to\cD$ with a bi-functorial isomorphism
\[
\Hom(F,G)^\vee = \Hom(G,S_{\cD}(F))
\]
for all $F,G\in\cD$.
\end{defi}

If a Serre functor exists then it is unique up to a canonical isomorphism.
If $X$ is a smooth projective variety, the Serre functor is given by $S_{\Db(X)}(\blank)=\blank\otimes\omega_X [\dim X]$.
In general, Serre functors exist for non-commutative smooth projective varieties as well. If $\cD$ has a Serre functor and $\Phi\colon\cC\hookrightarrow\cD$ is an admissible subcategory, then the Serre functor of $\cC$ is given by the following formula
\[
S_{\cC} = \Psi_R \circ S_{\cD}\circ \Phi\qquad \text{ and }\qquad S_{\cC}^{-1}=\Psi_L\circ S_{\cD}^{-1}\circ \Phi,
\]
where $\Psi_R$ and $\Psi_L$ denote respectively the right and left adjoint to the inclusion functor.
Notice also that the Serre functor is of Fourier-Mukai type.

Serre functors behave nicely with respect to products.
Given two non-commutative smooth projective varieties $\cD_1\subset \Db(X_1)$ and $\cD_2\subset\Db(X_2)$, let us denote by $P_{S_{\cD_1}}\in\Db(X_1\times X_1)$, respectively $P_{S_{\cD_2}}\in\Db(X_2\times X_2)$, kernels representing the Serre functors.
Then the Serre functor of the product $\cD_1\boxtimes\cD_2\subset \Db(X_1\times X_2)$ is representable by $P_{S_{\cD_1}}\boxtimes P_{S_{\cD_2}}$.
This can be proved directly (for example, the argument in \cite[Corollary 1.4]{HR:cubics} works in general), since it is true for products of varieties.

% If $X=X_1\times X_2$, then $S_{\Db(X)}(\blank)=\blank\otimes\omega_{X_1}\boxtimes\omega_{X_2}[\dim X_1+\dim X_2]$. In particular, if $X_1$ and $X_2$ are Calabi-Yau varieties in a weak sense (i.e.\ $\omega_i\cong\cO_{X_i}$, for $i=1,2$), then $X$ is a Calabi-Yau variety.

We can now define non-commutative Calabi-Yau smooth projective varieties.

\begin{defi}\label{def:NCCY}
Let $\cD$ be a non-commutative smooth projective variety.
We say that $\cD$ is a non-commutative \emph{Calabi-Yau} variety of dimension $n$ if $S_{\cD}=[n]$.
\end{defi}

By what we observed before, the product of two non-commutative Calabi-Yau varieties of dimension $n$ and $m$ is again Calabi-Yau of dimension $n+m$.

\subsubsection*{Hochschild (co)homology}
The analogue of Hodge cohomology for non-commutative varieties is Hochschild homology. This can be defined naturally in the context of dg-categories.
In our smooth and projective case, this can also be done directly at the level of triangulated categories, in a similar way to \cite{Mar:Hochschild,CW:Hochschild,Cal:Hochschild2} for bounded derived categories of smooth projective varieties.\footnote{Hochschild (co)homology for schemes was originally defined and studied in \cite{Lod:Cyclic,Swa:Hochschild,Wei:Hochschild}.}
The main reference is \cite{Kuz:Hochschild}, and we are content here to briefly sketch the basic properties (see also \cite{Per:Hochschild}).

Let $\cD$ be a non-commutative smooth projective variety.
We also fix an embedding $\cD\hra\Db(X)$, for $X$ a smooth projective variety; the definition will of course be independent of this choice.
We let $P_\delta\in\Db(X\times X)$ be a kernel of the projection functor $\delta$ which is of Fourier-Mukai type, by Lemma \ref{lem:projectionFM}.
We also let $P_{\delta!}\in\Db(X\times X)$ be the kernel of $S_{\cD}^{-1}$, the inverse of the Serre functor of $\cD$.
For $i\in\Z$, we define
\begin{align*}
&\HH^i(\cD):= \Hom_{\Db(X\times X)}(P_\delta,P_\delta[i])& \text{(Hochschild cohomology)}\\
&\HH_i(\cD):= \Hom_{\Db(X\times X)}(P_{\delta!}[i],P_\delta) & \text{(Hochschild homology)}
\end{align*}
Our choice of degree for Hochschild homology is different from \cite{Kuz:Hochschild}.
It is coherent, though, with the definition of Hochschild homology for varieties. Indeed, when $\cD=\Db(X)$, this gives the usual definitions
\begin{align*}
&\HH^i(\Db(X))= \Hom_{\Db(X\times X)}(\cO_{\Delta},\cO_{\Delta}[i]),\\
&\HH_i(\Db(X))= \Hom_{\Db(X\times X)}(S_{\Delta}^{-1}[i],\cO_{\Delta}),
\end{align*}
where $S_{\Delta}^{-1}:=\Delta_*\omega_X^{-1}[-\mathrm{dim}(X)]$.

As in the commutative case, Hochschild cohomology has the structure of a graded algebra, and Hochschild homology is a right module over it.
We can also define a \emph{Mukai pairing}
\[
\left(\blank,\blank\right)\colon \HH_i(\cD)\otimes \HH_{-i}(\cD)\to \K
\]
which is a non-degenerate pairing, induced by Serre duality. Roughly, by reasoning as in \cite[Section 4.9]{Cal:Hochschild1}, one first considers the isomorphism $\tau\colon\Hom(P_{\delta!},P_\delta[-i])\to\Hom(P_\delta,P_\delta\otimes p_2^*\omega_X[n-i])$. The latter vector space is Serre dual to $\HH_{-i}(\cD)$. Thus $\left(v,w\right)$ is defined as the trace of the composition $\tau(v)\circ w$.

The triple consisting of Hochschild (co)homology and the Mukai pairing is called the \emph{Hochschild structure} associated to $\cD$.

\begin{exam}
Let $E$ be an exceptional object and let $\cD=\langle E \rangle$.
Then $\HH_\bullet(\cD)=\HH^\bullet(\cD)=\K$, both concentrated in degree $0$.
\end{exam}

\begin{rema}\label{rmk:HKR}
The \emph{Hochschild--Kostant--Rosenberg isomorphism}
$\Dd_X^*\cO_{\Dd_X}\to\bigoplus_i\Omega^i_X[i]$ yields the graded isomorphisms
\[
I^X_\mathrm{HKR}:\HH_*(X)\rightarrow\HO_*(X):=\bigoplus_i\HO_i(X)\quad\text{and}\quad I_X^\mathrm{HKR}:\HH^*(X)\rightarrow\HT^*(X):=\bigoplus_i\HT^i(X)
\]
where $\HO_i(X):=\bigoplus_{q-p=i}H^p(X,\Omega_X^q)$ and $\HT^i(X):=\bigoplus_{p+q=i}H^p(X,\wedge^q T_X)$.

The existence of such isomorphisms is due to Swan \cite{Swa:Hochschild} for fields of characteristic zero (see also \cite{CW:Hochschild}). In \cite{Yek2} it is proved that if $X$ is a smooth scheme defined over a field of characteristic $p>\dim X$, then the same result holds. In the recent paper \cite{AV:HKR}, this was extended (in a slightly weaker form) to smooth proper schemes over fields of characteristic $p\geq\dim X$.
\end{rema}

A key property of Hochschild homology is that it behaves well with respect to semiorthogonal decompositions.

\begin{prop}\label{prop:HochschildSO}
Let $\cD$ be a non-commutative variety, and let
\[
\cD=\langle \cD_1,\ldots,\cD_r\rangle
\]
be a semiorthogonal decomposition.
Then, for all $i\in\Z$,
\[
\HH_i (\cD) \cong \bigoplus_{j=1}^r \HH_i(\cD_j).
\]
Moreover, this decomposition is orthogonal with respect to the Mukai pairing.
\end{prop}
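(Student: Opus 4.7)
The plan is to lift the semiorthogonal decomposition of $\cD$ to a Postnikov decomposition of the Fourier--Mukai kernels $P_\delta$ and $P_{\delta!}$ in $\Db(X\times X)$, and then exploit semiorthogonality in the product to kill the resulting cross-terms. First I would establish the kernel-level decomposition. Since the filtration of Definition \ref{def:SO}\eqref{definition-sod-2}, applied to $\delta(F)\in\cD$ for each $F\in\Db(X)$, is functorial in $F$, it lifts to a Postnikov tower
\[
0 = P_\delta^{(r)} \to P_\delta^{(r-1)} \to \cdots \to P_\delta^{(0)} = P_\delta
\]
in $\Db(X\times X)$ whose successive cones are the Fourier--Mukai kernels $P_{\delta_j}$ of the individual projections $\delta_j\colon\Db(X)\to\cD_j\hookrightarrow\Db(X)$, each viewed inside the admissible subcategory $\Db(X)\boxtimes\cD_j\subset\Db(X\times X)$ furnished by Proposition \ref{prop:product} (the containment is obtained by the Künneth-type argument of \cite[Lemma 1.5]{HR:cubics}). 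Applying the analogous construction to the functor $S_\cD^{-1}\circ\delta$, and using the compatibility $S_{\cD_j}=\delta_j\circ S_\cD\circ\iota_j$ of Serre functors with admissible embeddings to identify each graded piece, produces a parallel Postnikov filtration of $P_{\delta!}$ whose cones are the kernels $P_{\delta_j!}\in\Db(X)\boxtimes\cD_j$.

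Next, combining the two filtrations yields a convergent spectral sequence computing $\HH_i(\cD)=\Hom_{\Db(X\times X)}(P_{\delta!}[i],P_\delta)$, whose $E_1$-page consists of the terms $\Hom_{\Db(X\times X)}(P_{\delta_j!}[i],P_{\delta_k})$ for $1\le j,k\le r$. The diagonal terms $j=k$ are by definition exactly $\HH_i(\cD_j)$. For the off-diagonal terms I would argue vanishing as follows: the product SOD on $\cD\boxtimes\cD$ induced by the SOD of $\cD$ on the second factor (Proposition \ref{prop:product}), together with the Künneth formula, reduces the computation to $\Hom_\cD(\cD_j,\cD_k)$, which vanishes when $j>k$; the symmetric case $j<k$ follows by Serre duality on $\Db(X\times X)$, which exchanges the roles of $P_\delta$ and $P_{\delta!}$ up to a twist by $\omega_{X\times X}[2\dim X]$ and reverses the direction of semiorthogonality. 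Hence the spectral sequence degenerates at $E_1$ and yields the desired additive decomposition $\HH_i(\cD)\cong\bigoplus_j\HH_i(\cD_j)$.

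For the orthogonality of the Mukai pairing, recall that $(-,-)$ is induced by a trace on $\Db(X\times X)$ arising from the Serre duality isomorphism and sending a pair $(v,w)\in\HH_i(\cD)\otimes\HH_{-i}(\cD)$ to the trace of $\tau(v)\circ w$ (cf.\ \cite[Section 4.9]{Cal:Hochschild1}). Under the direct-sum decomposition constructed above, the cross-pairing between summands $\HH_i(\cD_j)$ and $\HH_{-i}(\cD_k)$ with $j\neq k$ is computed via a trace of a composition that factors through a cross-term pairing $\Hom(P_{\delta_j!}[i],P_{\delta_k})$ (or its Serre dual), which was shown to vanish above. Hence only the diagonal pairings survive and the decomposition is orthogonal for the Mukai pairing.

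The main technical obstacle lies in the very first step: producing the Postnikov filtration of $P_{\delta!}$ with graded pieces exactly $P_{\delta_j!}$ is subtle, because $S_\cD^{-1}$ does not preserve the individual components of the SOD (it permutes them up to mutations). What saves us is the compatibility $S_{\cD_j}^{-1}=\delta_j\circ S_\cD^{-1}\circ\iota_j$, valid only after restriction to the individual components; carefully chasing this identity through the Postnikov filtration to obtain the correct identification of graded pieces is where the non-formal content of the argument is concentrated. Everything else is a formal consequence of the semiorthogonal decomposition and the properties of Fourier--Mukai kernels developed in Section \ref{subsec:NC}.
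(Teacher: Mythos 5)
Your overall strategy --- filter $P_\delta$ and $P_{\delta!}$ compatibly with the semiorthogonal decomposition, kill cross-terms by semiorthogonality on one side and Serre duality on $X\times X$ on the other, and collect the diagonal terms --- is exactly the paper's strategy, which is carried out for $r=2$ and the general case left to Kuznetsov's \cite[Corollary 7.5]{Kuz:Hochschild}. The spectral-sequence packaging for general $r$ is a reasonable way to organize it.

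The place you correctly flag as the delicate point is, however, where the proposal as written does not go through. The compatibility $S_{\cD_j} = \delta_j\circ S_\cD\circ\iota_j$ is not valid in general: the formula in Section \ref{subsec:NC} is $S_{\cD_j} = \Psi_R^{(j)}\circ S_\cD\circ\iota_j$, where $\Psi_R^{(j)}$ is the \emph{right adjoint} of the inclusion $\iota_j\colon\cD_j\hookrightarrow\cD$. In the SOD $\langle\cD_1,\ldots,\cD_r\rangle$, the projection functor $\delta_j$ coincides with this right adjoint only for $j=r$ (and with the left adjoint --- which is the relevant one for $S_{\cD_j}^{-1}$ --- only for $j=1$). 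For the intermediate factors $\delta_j$ is neither adjoint, so your identification of the graded pieces of the filtration of $P_{\delta!}$ with the kernels $P_{\delta_j!}$ of $S_{\cD_j}^{-1}$ fails. This is not merely imprecision: if you insist on the graded pieces being $P_{\delta_j!}$, the parallel filtration of $P_{\delta!}$ simply does not exist in the form you describe, because $S_\cD^{-1}$ does not send the induced filtration of $\cO_\Delta$ to a filtration with those cones.

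What the paper does instead (visible already in the $r=2$ sketch) is more modest and sidesteps this issue: it takes the triangle $P_{2!}\to S_\Delta^{-1}\to P_{1!}$ as it comes, identifies $P_{1!}$ with the kernel of $S_{\cD_1}^{-1}$ (which \emph{is} valid, since the left adjoint of $\iota_1$ equals $\delta_1$), and for the other graded piece does \emph{not} claim $P_{2!}$ is the kernel of $S_{\cD_2}^{-1}$ --- it only proves the weaker statement $\Hom(P_{2!}[i],P_2)\cong\HH_i(\cD_2)$, quoting \cite[Corollary 3.12]{Kuz:Hochschild}. So the way to repair your argument is not to chase a Serre-functor compatibility that doesn't hold, but to abandon the claim that the graded pieces are the $P_{\delta_j!}$, and instead establish directly, factor by factor, that the $E_1$-term $\Hom(P^{(j)}_{\delta!}[i],P_{\delta_j})$ of your spectral sequence computes $\HH_i(\cD_j)$ for whatever the actual graded piece $P^{(j)}_{\delta!}$ turns out to be. The rest of your argument --- vanishing of off-diagonal terms by semiorthogonality plus Serre duality, degeneration at $E_1$, and orthogonality of the Mukai pairing --- is sound once that identification is made correctly.
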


\begin{proof}
The first statement is \cite[Corollary 7.5]{Kuz:Hochschild}.
Here, for simplicity, we sketch the proof for the case in which $\cD=\Db(X)$ and we have a semiorthogonal decomposition $\Db(X)=\langle \cC, {^\perp}\cC \rangle$.

We observed above that the projection functors $\delta_1$ and $\delta_2$ for the admissible subcategories $\cC$ and ${^\perp}\cC$ respectively are of Fourier-Mukai type with kernels $P_1$ and $P_2$. These objects sit in the following distinguished triangle
\[
P_2\to\cO_\Delta\to P_1.
\]
Similarly, we have a triangle
\[
P_{2!}\to S_\Delta^{-1} \to P_{1!}.
\]
Since, by semiorthogonality, $\Hom(P_{2!}, P_1[j])=0$, for all $j\in\Z$, we have an induced map
\[
\Hom(S_\Delta^{-1}[i],\cO_\Delta)\lra \Hom(P_{1!}[i],P_1)\oplus \Hom(P_{2!}[i],P_2).
\]
By using Serre duality, it is not hard to see that $\Hom(P_{1!}, P_2[j])=0$, for all $j\in\Z$ as well; hence, the above map is an isomorphism.
Finally, by definition, $\HH_i(\cD_1)=\Hom(P_{1!}[i],P_1)$; regarding the second factor, it is a small argument (see \cite[Corollary 3.12]{Kuz:Hochschild}) to see that $\HH_i(\cD_2)\cong \Hom(P_{2!}[i],P_2)$.

For the second statement, one needs to use the previous construction of the morphism, together with the isomorphism $\Hom(P_{\delta!},P_\delta[-i])\cong\Hom(P_\delta,P_\delta\otimes p_2^*\omega_X[n-i])$ mentioned before.
\end{proof}

\begin{exam}
Let $\{E_1,\ldots,E_m\}$ be an exceptional collection, and let $\cD=\langle E_1,\ldots,E_m\rangle$. Then $\HH_\bullet(\cD)=\K^{\oplus m}$, concentrated in degree $0$.
\end{exam}

From Proposition \ref{prop:HochschildSO}, we can deduce all other properties of the Hochschild structure.

\begin{theo}\label{thm:HochschildHomologyProperties}
Let $\cC,\cD,\cE$ be non-commutative smooth projective varieties.
\begin{enumerate}
\item Any Fourier-Mukai functor $\Phi\colon\cC\to\cD$ induces a morphism of graded $k$-vector spaces $\Phi_{\HH}\colon\HH_\bullet(\cC)\to\HH_\bullet(\cD)$ such that $\Id_{\HH}=\Id$ and, given another functor $\Psi\colon\cD\to\cE$, we have $(\Psi\circ\Phi)_{\HH}=\Psi_{\HH}\circ\Phi_{\HH}$.
\item If $(\Psi,\Phi)$ is a pair of adjoint Fourier-Mukai functors, then
\[
\left(\blank,\Phi_{\HH}(\blank)\right) = \left(\Psi_{\HH}(\blank),\blank\right).
\]
\item There is a Chern character $\ch\colon K(\cD)\to \HH_0(\cD)$ such that, for all $F,G\in\cD$,
\[
\left(\ch(F),\ch(G) \right) = - \chi(F,G).
\]
\item\label{enum:Hochschild4} The Hochschild structure is invariant under exact equivalences of Fourier-Mukai type.
\end{enumerate}
\end{theo}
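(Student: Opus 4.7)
The plan is to follow the strategy of Kuznetsov \cite{Kuz:Hochschild}, which extends to admissible subcategories the framework of Caldararu \cite{Cal:Hochschild1,Cal:Hochschild2} for bounded derived categories of smooth projective varieties. The crucial input is Lemma \ref{lem:projectionFM}: all projection functors are of Fourier-Mukai type, so the objects $P_\delta$ and $P_{\delta!}$ entering the definition of $\HH_\bullet$ live in $\Db(X\times X)$ and can be manipulated via kernel convolution.

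For (1), given a Fourier-Mukai functor $\Phi\colon\cC\to\cD$ with kernel $P_\Phi \in \Db(X\times Y)$, I would exploit the fact that $\Phi$ commutes with the passage to kernels: convolving $P_\Phi$ with the $\cC$-projection kernel on one side yields, up to canonical isomorphism, the convolution of the $\cD$-projection kernel with $P_\Phi$ on the other. Given a morphism $\alpha\colon P_{\delta!}[i]\to P_\delta$ representing an element of $\HH_i(\cC)$, I would convolve with $P_\Phi$ and its adjoint kernel to produce a morphism in $\Db(Y\times Y)$ whose source and target identify with the analogous $\cD$-kernels, defining $\Phi_{\HH}(\alpha)\in\HH_i(\cD)$. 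Both $\Id_{\HH}=\Id$ and $(\Psi\circ\Phi)_{\HH}=\Psi_{\HH}\circ\Phi_{\HH}$ would then follow from associativity of kernel convolution and the functoriality of the adjunction maps used to identify source and target.

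For (2), the kernels of an adjoint pair $(\Psi,\Phi)$ are related by dualization twisted by the Serre functors of the ambient varieties. Expanding $(\alpha,\Phi_{\HH}\beta)$ and $(\Psi_{\HH}\alpha,\beta)$ as trace pairings of compositions of kernel morphisms and applying Serre duality on $X\times X$ and $Y\times Y$ would reduce both sides to the same expression. For (3), I would imitate the definition of the Chern character from \cite{Cal:Hochschild1}: for $F\in\cD$, the identity $\Id_F$ corresponds via the unit--counit of biduality to a morphism of kernels factoring through $F\boxtimes F^\vee$, giving $\ch(F)\in\HH_0(\cD)$; additivity on distinguished triangles is automatic from the construction. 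The identity $(\ch(F),\ch(G))=-\chi(F,G)$ then reduces to a trace computation: pairing the two canonical morphisms through Serre duality produces the graded trace of $\Id$ on $\bigoplus_i\Hom(F,G[i])[-i]$, which equals $\chi(F,G)$ up to the sign fixed once and for all by the convention for the Serre-duality isomorphism. Finally, (4) is formal: for an equivalence $\Phi$, part (1) gives that $\Phi_{\HH}$ is an isomorphism; part (2) applied to the adjoint pair $(\Phi^{-1},\Phi)$ yields that $\Phi_{\HH}$ preserves the Mukai pairing; and compatibility with $\ch$ follows from naturality of the construction under equivalences, since identity endomorphisms are sent to identity endomorphisms.

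The main obstacle is (3), specifically the compatibility with $\chi$. Already in the commutative case this is the categorical Hirzebruch--Riemann--Roch statement proved in \cite{Cal:Hochschild2}, and its verification requires careful bookkeeping of dualities, shifts and signs. The non-commutative case adds a further layer of delicacy because the Serre functor of $\cD$ need not arise from a line bundle twist on the ambient variety, so the comparison between $P_{\delta!}$ and the Serre-twisted $P_\delta$ must be made entirely at the level of kernels on $X\times X$, using only the abstract properties of the projection functor and its left adjoint.
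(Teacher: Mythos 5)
Your proposal takes a genuinely different route from the paper's own proof of part (1), and it is worth being precise about the difference. The paper first lifts $\Phi$ to an ambient Fourier--Mukai functor $\Psi\colon\Db(X_1)\to\Db(X_2)$ and constructs $\Psi_{\HH}\colon\HH_\bullet(X_1)\to\HH_\bullet(X_2)$ entirely at the level of the \emph{ambient} kernels $\cO_{\Delta_{X_i}}$ and $S_{\Delta_{X_i}}^{\pm 1}$, where the Serre functor is an explicit twist by $\omega_{X_i}$; the map $\Phi_{\HH}$ is then obtained by restricting to $\HH_\bullet(\cC)$ and orthogonally projecting onto $\HH_\bullet(\cD)$ via the decomposition of Proposition~\ref{prop:HochschildSO}. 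Your approach instead works intrinsically with the projection kernels $P_\delta^{\cC}$, $P_{\delta!}^{\cC}$, $P_\delta^{\cD}$, $P_{\delta!}^{\cD}$, sandwiching $\alpha$ between the kernel of $\Phi$ and the kernel of its adjoint. This is a legitimate route (and avoids the explicit projection step), but note one imprecision: the source and target of $P_\Phi\circ\alpha\circ P_{\Phi^{\mathrm{adj}}}$ do \emph{not} identify with $P_{\delta!}^{\cD}[i]$ and $P_\delta^{\cD}$; they are the kernels of $\Phi\circ S_{\cC}^{-1}\circ\Phi^{\mathrm{adj}}$ and of $\Phi\circ\Phi^{\mathrm{adj}}$, which only receive/emit canonical morphisms from/to the $\cD$-kernels via the unit and counit (together with the relation $\Phi^L=S_\cC^{-1}\circ\Phi^R\circ S_\cD$ to reconcile the two adjoints). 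You gesture at this with ``the functoriality of the adjunction maps used to identify source and target,'' but one must pre- and post-compose with these (co)unit morphisms rather than ``identify''; this distinction matters for the well-definedness of $\Phi_{\HH}$ and for the pairing compatibilities in (2) and (3). With that correction, your intrinsic construction and the paper's ambient-then-project construction yield the same map. The trade-off is exactly the one you flag at the end: the ambient route buys concreteness (the diagonal and Serre kernels are explicit) at the cost of an extra projection step using Proposition~\ref{prop:HochschildSO}, while the intrinsic route stays inside the non-commutative variety but must carry abstract projection kernels throughout. For parts (2)--(4) your sketches are consistent with the paper (which largely defers to Kuznetsov and C\u ald\u araru for the detailed verifications), with your definition of $\ch$ via biduality being an unwinding of the paper's $\ch(E):=(\Phi_E)_{\HH}(1)$.
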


\begin{proof}
As for (1), since $\Phi$ is a Fourier-Mukai functor, it is induced by a Fourier-Mukai functor $\Psi\colon\Db(X_1)\to\Db(X_2)$, where $\cC$ and $\cD$ are admissible subcategories of $\Db(X_1)$ and $\Db(X_2)$ respectively. Let $E$ be the Fourier-Mukai kernel of $\Psi$. It is not difficult to see that $\Psi$ induces a morphism $\Psi_{\HH}\colon\HH_\bullet(X_1)\to\HH_\bullet(X_2)$. For a given $i$ consider $\mu\in\HH_i(X_1)$ and define $\Psi_{\HH}(\mu)\in\HH_i(X_2)$ in the following way. We consider the composition
\[
S^{-1}_{\Delta_{X_2}}[i]\mor[\gamma]E\circ E^\vee[i]\mor[\mathrm{id}\circ\eta\circ\mathrm{id}] E\circ S^{-1}_{\Delta_{X_1}}[i]\circ S_{\Delta_{X_1}}\circ E^\vee\mor[\mathrm{id}\circ\mu\circ\mathrm{id}\circ\mathrm{id}]E\circ S_{\Delta_{X_1}}\circ E^\vee\mor[h]\cO_{\Delta_{X_2}},
\]
where $\eta:\cO_{\Delta_{X_1}}\to S^{-1}_{\Delta_{X_1}}\circ S_{\Delta_{X_1}}$ is the isomorphism coming from the easy fact that $\Phi_{S^{-1}_{\Delta_{X_1}}}\circ\Phi_{S_{\Delta_{X_1}}}=\mathrm{id}$ and
the morphisms $\gamma$ and $h$ are the natural ones.

By Proposition \ref{prop:HochschildSO}, $\HH_\bullet(\cD)$ is an orthogonal factor of $\HH_\bullet(X_2)$. So we set $\Phi_{\HH}$ in (1) to be the composition of $\Psi_{\HH}|_{\HH_\bullet(\cC)}$ with the orthogonal projection onto $\HH_\bullet(\cD)$.

With this definition, (2) follows from a straightforward computation. As for (3), following \cite{Cal:Hochschild1}, let us recall that, given $E\in\cD$, we can think of $E$ as an object on $\mathrm{pt}\times X_2$ and just set $\ch(E):=(\Phi_E)_{\HH}(1)$, where $(\Phi_E)_{\HH}\colon\HH_0(\mathrm{pt})(\cong\K)\to\HH_0(X_2)$. Since $E\in\cD$, we have $\ch(E)\in\HH_0(\cD)$. The compatibility between the Mukai pairing and $\chi$ is proven as in \cite[Theorem 7.6]{Cal:Hochschild1} (the change of sign is harmless here).

For the last statement and more details on the first three, the reader can consult \cite[Section 7]{Kuz:Hochschild}.
\end{proof}

Notice, in particular, that by Theorem \ref{thm:HochschildHomologyProperties},\eqref{enum:Hochschild4}, the Hochschild structure does not depend on the choice of the embedding $\cD\hra\Db(X)$.

For Calabi-Yau varieties, Hochschild homology and cohomology coincide, up to shift.
This is the content of the following result (see \cite[Section 5.2]{Kuz:CY}).
With our definition, the proof is immediate.

\begin{prop}\label{prop:HochschildCY}
Let $\cD$ be a non-commutative Calabi-Yau variety of dimension $n$.
Then, for all $i\in\Z$, we have
\[
\HH^i(\cD) \cong \HH_{n-i}(\cD).
\]
In particular, $\HH_n(\cD)\neq0$.
\end{prop}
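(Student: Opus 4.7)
The plan is to exploit the Calabi--Yau hypothesis $S_{\cD}=[n]$ to relate the two Fourier--Mukai kernels appearing in the definitions of Hochschild homology and cohomology, and then just shift.

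\textbf{Step 1: Compare the kernels.} By definition, $P_\delta\in\Db(X\times X)$ is a Fourier--Mukai kernel for the projection functor $\delta\colon\Db(X)\to\cD$, while $P_{\delta!}$ is a kernel representing the composition $\iota\circ S_{\cD}^{-1}\circ\delta$, where $\iota\colon\cD\hra\Db(X)$ is the inclusion. Under the Calabi--Yau hypothesis $S_{\cD}=[n]$, the inverse Serre functor is simply the shift $[-n]$, so $\iota\circ S_{\cD}^{-1}\circ\delta=(\iota\circ\delta)[-n]$ as endofunctors of $\Db(X)$. Since a shift of a Fourier--Mukai functor is represented by the same shift of its kernel, we obtain the isomorphism
\[
P_{\delta!}\cong P_\delta[-n]
\]
in $\Db(X\times X)$.

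\textbf{Step 2: Rewrite homology as cohomology.} Plugging this identification into the definition of Hochschild homology and using that shifts in the first argument of $\Hom$ pass to the opposite shift in the second,
\[
\HH_i(\cD)=\Hom_{\Db(X\times X)}(P_{\delta!}[i],P_\delta)\cong\Hom_{\Db(X\times X)}(P_\delta[i-n],P_\delta)\cong\Hom_{\Db(X\times X)}(P_\delta,P_\delta[n-i])=\HH^{n-i}(\cD),
\]
which is the desired isomorphism.

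\textbf{Step 3: Non-vanishing in top degree.} Taking $i=n$ gives $\HH_n(\cD)\cong\HH^0(\cD)=\Hom(P_\delta,P_\delta)$. This group is nonzero because it contains the identity endomorphism of $P_\delta$ (equivalently, the identity natural transformation of the projection functor $\delta$, which is nonzero as soon as $\cD$ itself is nonzero, since $\delta$ restricts to the identity on $\cD$).

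There is no real obstacle: once the definitions of $P_\delta$ and $P_{\delta!}$ from the preceding paragraph are unwound, the statement is essentially a tautology. The only point that requires a sentence of justification is that the Fourier--Mukai kernel of a shifted functor is the corresponding shift of the kernel, which is standard and independent of the admissible subcategory structure.
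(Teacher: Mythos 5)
Your proposal is correct and takes essentially the same route as the paper's proof, which treats the isomorphism $\HH^i(\cD)\cong\HH_{n-i}(\cD)$ as immediate from the kernel-level definitions (your Steps 1 and 2 simply make that "immediate" explicit: $S_\cD=[n]$ forces $P_{\delta!}\cong P_\delta[-n]$) and then argues only the nonvanishing via $\Id_{P_\delta}\in\Hom(P_\delta,P_\delta)=\HH^0(\cD)$, exactly as in your Step 3.
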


\begin{proof}
We only need to show the last statement. But, by definition, $\HH^0(\cD)\neq0$, since $\Id_{P_\delta}\in\Hom(P_\delta,P_\delta)$.
\end{proof}

We can use Hochschild cohomology to define the notion of connectedness for non-commutative varieties.

\begin{defi}\label{def:connected}
Let $\cD$ be a non-commutative smooth projective variety.
We say that $\cD$ is \emph{connected} if $\HH^0(\cD)=\K$.
\end{defi}

\begin{lemm}[Bridgeland's trick]\label{lem:BridgelandTrick}
Let $\cD$ be a non-commutative Calabi-Yau variety of dimension $n$.
If $\cD$ is connected, then $\cD$ is indecomposable, namely it does not admit any non-trivial semiorthogonal decomposition.
\end{lemm}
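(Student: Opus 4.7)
The plan is to proceed by contradiction. Assume $\cD$ admits a nontrivial semiorthogonal decomposition $\cD = \langle \cA, \cB \rangle$ with both $\cA$ and $\cB$ nonzero; the goal is to produce a second linearly independent element in $\HH^0(\cD)$, contradicting connectedness.

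The first step is to upgrade semiorthogonality to full orthogonality, using the Calabi-Yau hypothesis. For any $A \in \cA$, $B \in \cB$ and any integer $j$, Serre duality together with $S_\cD = [n]$ gives
$$\Hom(A, B[j]) \;\cong\; \Hom(B[j], A[n])^\vee \;=\; \Hom(B, A[n-j])^\vee \;=\; 0,$$
where the last vanishing is semiorthogonality. Combined with the vanishing $\Hom(B, A[j]) = 0$ already built into the decomposition, this shows that $\cA$ and $\cB$ are completely orthogonal inside $\cD$; in particular every object of $\cD$ splits canonically as $A \oplus B$ with $A \in \cA$ and $B \in \cB$.

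The second step transfers this splitting to Fourier-Mukai kernels. Fix the embedding $\cD \hookrightarrow \Db(X)$, and let $P_\delta, P_\alpha, P_\beta \in \Db(X \times X)$ be kernels of the projections onto $\cD$, $\cA$, $\cB$ respectively, which exist by Lemma \ref{lem:projectionFM}. The semiorthogonal decomposition yields a functorial triangle
$$P_\beta \longrightarrow P_\delta \longrightarrow P_\alpha.$$
Complete orthogonality forces $\Hom(P_\alpha, P_\beta[1]) = 0$: any such morphism corresponds to a natural transformation from an $\cA$-valued functor to a $\cB$-valued functor, and every morphism between objects in orthogonal components vanishes. Hence the triangle splits, $P_\delta \cong P_\alpha \oplus P_\beta$, and the two corresponding idempotents $e_\alpha, e_\beta \in \End(P_\delta) = \HH^0(\cD)$ are nonzero and linearly independent. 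Consequently $\dim_\K \HH^0(\cD) \geq 2$, which contradicts the assumption $\HH^0(\cD) = \K$.

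The one delicate point worth double-checking is the passage from categorical complete orthogonality to the splitting of the kernel triangle, namely the vanishing $\Hom(P_\alpha, P_\beta[1]) = 0$. Morally it is just the ``kernel version'' of complete orthogonality, but formally it uses the interpretation of morphisms between Fourier-Mukai kernels as natural transformations between the functors they represent; alternatively, one can argue directly from the splitting of every object in $\cD$ to the splitting of the projection functor, and then invoke uniqueness of the Fourier-Mukai kernel for the projection (Lemma \ref{lem:projectionFM}).
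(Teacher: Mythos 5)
Your proof is correct, but it takes a genuinely different route from the paper's. The paper never mentions complete orthogonality or kernel splittings; instead it notes that each component $\cD_i$ inherits the Calabi--Yau property $S_{\cD_i}=[n]$ (via the Serre functor formula for admissible subcategories, since $\Psi_R\circ\Phi\cong\Id$), so Proposition~\ref{prop:HochschildCY} gives $\HH_n(\cD_i)\neq 0$ for $i=1,2$, and then the additivity of Hochschild homology across semiorthogonal decompositions (Proposition~\ref{prop:HochschildSO}) gives $\HH_n(\cD)=\HH_n(\cD_1)\oplus\HH_n(\cD_2)$, which has dimension at least $2$ and contradicts $\HH_n(\cD)\cong\HH^0(\cD)=\K$. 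You instead upgrade semiorthogonality to full orthogonality via Serre duality and exhibit two orthogonal nonzero idempotents directly in $\End(P_\delta)=\HH^0(\cD)$; both arguments land on the same dimension count. The paper's version is shorter given the two Hochschild propositions it has just established and never needs to touch kernels; yours is closer to first principles, works entirely in $\HH^0$, and makes explicit the instructive complete-orthogonality observation that the paper leaves implicit. The step you flag as delicate, $\Hom(P_\alpha,P_\beta[1])=0$, is indeed the one place that needs a real argument: the clean way to close it within the paper's framework is to note (using Lemma~\ref{lem:projectionFM} and the remarks following Definition~\ref{def:FM}) that $P_\alpha\in\Db(X)\boxtimes\cA$ and $P_\beta\in\Db(X)\boxtimes\cB$, and then combine the K\"unneth formula from the proof of Proposition~\ref{prop:product} with the complete orthogonality of $\cA$ and $\cB$ to kill $\Hom(P_\alpha,P_\beta[j])$ for all $j$ --- exactly as the paper argues the vanishing $\Hom(P_{2!},P_1[j])=0$ in the proof of Proposition~\ref{prop:HochschildSO}. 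With that filled in, your proof is complete.
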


\begin{proof}
Let $\cD=\langle\cD_1,\cD_2\rangle$ be a non-trivial semiorthogonal decomposition.
Since $\cD$ is a non-commutative Calabi-Yau variety of dimension $n$, then both $\cD_1$ and $\cD_2$ are non-commutative Calabi-Yau varieties of dimension $n$ as well.
In particular, by Proposition \ref{prop:HochschildCY}, $\HH_n(\cD_i)\neq0$, for $i=1,2$. But then, by Proposition \ref{prop:HochschildSO}, we have
\[
\K = \HH^0(\cD) = \HH_n(\cD) = \HH_n(\cD_1)\oplus \HH_n(\cD),
\]
which is a contradiction.
\end{proof}

We use Hochschild (co)homology to finally define non-commutative K3 surfaces. Recall that for a K3 surface $S$, Hochschild (co)homology can be easily computed, by using the Hochschild-Kostant-Rosenberg Theorem:
\[
\HH_\bullet(S) = \K[-2] \oplus \K^{\oplus 22} \oplus \K[2].
\]

\begin{defi}\label{def:NCK3}
Let $\cD$ be a non-commutative smooth projective variety.
We say that $\cD$ is a \emph{non-commutative K3 surface} if $\cD$ is a non-commutative connected Calabi-Yau variety of dimension $2$ and its Hochschild (co)homology coincides with the Hochschild (co)homology of a K3 surface.
\end{defi}

\subsection{Constructing non-commutative Calabi-Yau varieties}
\label{subsec:NCCY}

The goal of this section is to present a result by Kuznetsov which covers essentially all currently known examples of non-commutative Calabi-Yau varieties.
We will then study examples of non-commutative K3 surfaces in details in Section \ref{sec:examples}, and then concentrate on the case of those non-commutative K3 surfaces associated to cubic fourfolds in Section \ref{sec:CubicFourfolds}.

We will need first to introduce a bit more terminology.
First of all, we recall the notion of spherical functor (\cite{Rou:Spherical,Ann:Spherical,AL:Spherical,Add:Spherical,Mea:Spherical}).
We will follow the definition in \cite[Section 2.5]{Kuz:CY}.

Given a functor $\Phi\colon\cC\to\cD$, we keep following the convention of denoting by $\Psi_R$ and $\Psi_L$ respectively its right and left adjoint functors (if they exist).

\begin{defi}\label{def:SphericalFunctor}
Let $X,Y$ be smooth projective varieties and let $\Phi\colon\Db(X)\to\Db(Y)$ be a Fourier-Mukai functor.
We say that $\Phi$ is \emph{spherical} if
\begin{enumerate}
\item the natural transformation $\Psi_L\oplus\Psi_R\to \Psi_R\circ\Phi\circ\Psi_L$, induced by the  sum of the two units of the adjunction, is an isomorphism.
\item the natural transformation $\Psi_L\circ\Phi\circ\Psi_R\to\Psi_L\oplus\Psi_R$, induced by the  sum of the two counits of the adjunction, is an isomorphism.
\end{enumerate}
\end{defi}

Given a spherical functor, we can define the associated \emph{spherical twist} functors:
\begin{align*}
&T_X \colon\Db(X)\to\Db(X),& T_X(F):=\mathrm{cone}\left(\Psi_L\circ\Phi(F)\to F \right)\\
&T_Y\colon\Db(Y)\to\Db(Y), & T_Y(G):=\mathrm{cone}\left(G\to \Phi\circ\Psi_L(G) \right)[-1]
\end{align*}

The key result about spherical functors is the following.

\begin{prop}\label{prop:SphericalTwist}
Let $\Phi\colon\Db(X)\to\Db(Y)$ be a spherical functor.
Then the twist functors $T_X$ and $T_Y$ are autoequivalences.
Moreover, we have
\[
\Phi\circ T_X = T_Y \circ \Phi \circ [2].
\]
\end{prop}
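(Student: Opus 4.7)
The proof splits naturally into two tasks: (a) showing that $T_X$ and $T_Y$ are autoequivalences and (b) verifying the compatibility $\Phi\circ T_X\cong T_Y\circ\Phi\circ[2]$. For (a), the natural candidates for inverses, built from the unit and counit of the adjunction $(\Phi,\Psi_R)$, are
\[
T_X^{-1}(F):=\mathrm{cone}\bigl(F\to\Psi_R\circ\Phi(F)\bigr)[-1]\qquad\text{and}\qquad T_Y^{-1}(G):=\mathrm{cone}\bigl(\Phi\circ\Psi_R(G)\to G\bigr).
\]

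To establish $T_X\circ T_X^{-1}\cong\Id$, I would apply the octahedron axiom to the composition of natural transformations $\Psi_L\circ\Phi\to\Id\to\Psi_R\circ\Phi$, the first map being the counit of $(\Psi_L,\Phi)$ and the second the unit of $(\Phi,\Psi_R)$. Evaluated at $F$, this produces a triangle whose outer terms are $T_X(F)$, $T_X^{-1}(F)[1]$, and the cone of the composite $\Psi_L\Phi(F)\to\Psi_R\Phi(F)$. The crucial input is now spherical condition~(1): by naturality of the two unit transformations involved, this composite can be identified, after appropriate whiskering, with one of the two components of the isomorphism $\Psi_L\oplus\Psi_R\isomto\Psi_R\Phi\Psi_L$, which lets one recognize the remaining cone in the octahedron and collapse the triangle to the desired isomorphism. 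The verification of $T_X^{-1}\circ T_X\cong\Id$, and the analogous assertions for $T_Y$ with spherical condition~(2) in place of~(1), then follow by symmetric arguments.

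For the compatibility formula, apply $\Phi$ to the defining triangle $\Psi_L\Phi(F)\to F\to T_X(F)$ to obtain
\[
\Phi\Psi_L\Phi(F)\to\Phi(F)\to\Phi T_X(F)\to\Phi\Psi_L\Phi(F)[1],
\]
whose first map is $\Phi$ applied to the counit $\Psi_L\Phi\to\Id$. On the other hand, the defining triangle of $T_Y$ at $\Phi(F)$ reads
\[
T_Y\Phi(F)\to\Phi(F)\to\Phi\Psi_L\Phi(F)\to T_Y\Phi(F)[1],
\]
whose second map is the unit $\Id\to\Phi\Psi_L$ evaluated at $\Phi(F)$. The triangle identity for the adjunction $(\Psi_L,\Phi)$ tells us that the composition $\Phi(F)\to\Phi\Psi_L\Phi(F)\to\Phi(F)$ of these two maps equals the identity; hence the second triangle is split, giving a direct sum decomposition $\Phi\Psi_L\Phi(F)\cong\Phi(F)\oplus T_Y\Phi(F)[1]$ under which the initial map of the first triangle becomes the projection onto $\Phi(F)$. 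Since the cone of a split epimorphism $\Phi(F)\oplus T_Y\Phi(F)[1]\to\Phi(F)$ is $T_Y\Phi(F)[2]$, we conclude $\Phi T_X(F)\cong T_Y\Phi(F)[2]$, naturally in $F$.

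The main obstacle is the bookkeeping required in part~(a): one must identify the various unit/counit natural transformations as explicit components of the spherical isomorphisms and then combine the octahedra so that all cones collapse properly. The compatibility formula of part~(b), in contrast, is essentially a formal consequence of the triangle identity together with the defining triangles of $T_X$ and $T_Y$, and so requires little more than unwinding the definitions once part~(a) has furnished the necessary framework.
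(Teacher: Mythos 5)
Your candidate inverses $T_X^{-1}$ and $T_Y^{-1}$ coincide with the functors $T_X'$, $T_Y'$ the paper introduces, and your treatment of part~(b) — splitting the $T_Y$-triangle via the triangle identity for $(\Psi_L,\Phi)$ and reading off the cone of the projection — is correct and is exactly the ``follows easily from the definitions'' step the paper alludes to.

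Part~(a) as sketched, however, has a real gap. The octahedron applied to $\Psi_L\Phi \to \Id \to \Psi_R\Phi$ produces the triangle
\[
T_X(F)\longrightarrow \mathrm{cone}\bigl(\eta_R\circ\epsilon_L\bigr)\longrightarrow T_X^{-1}(F)[1]\longrightarrow T_X(F)[1],
\]
which relates $T_X(F)$, $T_X^{-1}(F)[1]$, and the cone of the composite — but it does \emph{not} compute the composition $T_X\circ T_X^{-1}$. Even if the middle term vanished, the triangle would only tell you $T_X(F)\cong T_X^{-1}(F)$, which is neither true nor what you need. To compute $T_X T_X^{-1}(F)$ one has to apply $\Psi_L\Phi$ to the defining triangle $T_X^{-1}(F)\to F\xrightarrow{\eta_R}\Psi_R\Phi(F)$; the resulting term $\Psi_L\Phi\Psi_R\Phi(F)$ is then handled via spherical condition~(2) (not~(1), which involves $\Psi_R\Phi\Psi_L$ and is instead what you need for $T_X^{-1}T_X\cong\Id$). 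Combined with the triangle identity $\epsilon_R\Phi\circ\Phi\eta_R=\Id_\Phi$, this identifies $\Psi_L\Phi T_X^{-1}(F)\cong\Psi_R\Phi(F)[-1]$, after which the defining triangle of $T_X$ applied to $T_X^{-1}(F)$ collapses against the original triangle to give $T_X T_X^{-1}(F)\cong F$. Moreover, your proposed identification of the composite $\eta_R\circ\epsilon_L\colon\Psi_L\Phi\to\Psi_R\Phi$ with a whiskered component of $\Psi_L\oplus\Psi_R\isomto\Psi_R\Phi\Psi_L$ does not typecheck: whiskering that isomorphism by $\Phi$ on the right yields maps with codomain $\Psi_R\Phi\Psi_L\Phi$, not $\Psi_R\Phi$. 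The factorization $\eta_R\circ\epsilon_L=(\Psi_R\Phi\epsilon_L)\circ(\eta_R\Psi_L\Phi)$ does pass through a component of the spherical isomorphism, but the cone of a composite is not determined by the cone of one factor, so one extra octahedron is unavoidable.
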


\begin{proof}
With the above definition, this is proved in detail in \cite[Proposition 2.13 \& Corollary 2.17]{Kuz:CY}.
The idea of the proof is not hard: if we define
\begin{align*}
&T_X'\colon\Db(X)\to\Db(X),& T_X'(F):=\mathrm{cone}\left(F\to \Psi_R\circ\Phi(F) \right)[-1]\\
&T_Y'\colon\Db(Y)\to\Db(Y), & T_Y'(G):=\mathrm{cone}\left(\Phi\circ\Psi_R(G)\to G \right),
\end{align*}
we can show with a direct argument that $T_X$ and $T_X'$ (respectively, $T_Y$ and $T_Y'$) are mutually inverse autoequivalences.
The last formula follows then easily from the definitions.
\end{proof}

Secondly, we recall the important notion of rectangular Lefschetz decomposition (which is fundamental for Homological Projective Duality; see \cite{Kuz:HPD,Perry:NCHP}).

\begin{defi}\label{def:RectangularLeftschez}
Let $X$ be a smooth projective variety, and let $L$ be a line bundle on $X$.
A \emph{Lefschetz decomposition} of $\Db(X)$ with respect to $L$ is a semiorthogonal decomposition of the form
\[
\Db(X) = \langle \cB_0, \cB_1\otimes L,\ldots, \cB_{m-1}\otimes L^{\otimes (m-1)}\rangle, \qquad \cB_0\supseteq \cB_1\supseteq \ldots\supseteq\cB_{m-1}.
\]
A Lefschetz decomposition is called \emph{rectangular} if $\cB_0=\cB_1= \ldots=\cB_{m-1}$.
\end{defi}

To simplify notation, given a line bundle $L$ on a smooth projective variety $X$,  we denote by $L\colon\Db(X)\isomto\Db(X)$ the autoequivalence given by tensoring by $L$.

We can now state the main result for this section.
This is our setup.

\begin{setup}\label{setup:KuznetsovCY}
Let $M$ and $X$ be smooth projective varieties and let $L_M$ and $L_X$ be line bundles on $M$ and $X$ respectively.
Let $m,d\in\Z$ be such that $1\leq d <m$. 
We assume:
\begin{enumerate}
\item $\Db(M)$ has a rectangular Lefschetz decomposition with respect to $L_M$
\[
\Db(M) = \langle \cB_M, \cB_M\otimes L_M,\ldots, \cB_M\otimes L_M^{\otimes (m-1)}\rangle,
\]
\item there is a spherical functor $\Phi\colon\Db(X)\to\Db(M)$
\end{enumerate}
 which satisfy the following compatibilities:
\begin{enumerate}
\item[(i)] $L_M\circ \Phi = \Phi \circ L_X$;
\item[(ii)] $L_X\circ T_X = T_X \circ L_X$;
\item[(iii)] $T_M(\cB_M\otimes L_M^{\otimes i}) = \cB_M\otimes L_M^{\otimes i-d}$, for all $i\in\Z$.
\end{enumerate}
\end{setup}

\begin{prop}\label{prop:KuzCYff}
The left adjoint functor $\Psi_L$ induces a fully faithful functor $\cB_M\hra\Db(X)$.
\end{prop}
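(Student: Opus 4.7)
The natural candidate for the isomorphism we need to establish is the unit of the adjunction $(\Psi_L,\Phi)$, so the plan is to show that for every $B,B'\in\cB_M$ the unit map
\[
\eta_{B,B'}\colon \Hom_{\Db(M)}(B,B'[i])\lra \Hom_{\Db(M)}(B,\Phi\Psi_L(B')[i])\;=\;\Hom_{\Db(X)}(\Psi_L(B),\Psi_L(B')[i])
\]
is an isomorphism in every degree $i\in\Z$. The main tool is the spherical twist $T_M$ attached to the spherical functor $\Phi$, which by definition fits into a functorial triangle
\[
T_M(G)\lra G\lra \Phi\Psi_L(G)\lra T_M(G)[1]
\]
for every $G\in\Db(M)$, where the middle arrow is exactly the unit of $(\Psi_L,\Phi)$.

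First I would take $G=B'$ and apply $\Hom_{\Db(M)}(B,-)$ to obtain the long exact sequence
\[
\cdots\to \Hom(B,T_M(B')[i])\to \Hom(B,B'[i])\xrightarrow{\eta_{B,B'}} \Hom(B,\Phi\Psi_L(B')[i])\to \Hom(B,T_M(B')[i+1])\to\cdots
\]
Hence it is enough to verify $\Hom_{\Db(M)}^\bullet(B,T_M(B'))=0$ for all $B,B'\in\cB_M$.

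Next I would invoke the compatibility~(iii) of Setup~\ref{setup:KuznetsovCY} with $i=0$, which gives $T_M(\cB_M)=\cB_M\otimes L_M^{-d}$. Writing $T_M(B')=B''\otimes L_M^{-d}$ for some $B''\in\cB_M$, adjunction under the autoequivalence $L_M^d$ turns the desired vanishing into
\[
\Hom^\bullet_{\Db(M)}(B\otimes L_M^d,B'').
\]
Since $1\le d\le m-1$, both $\cB_M\otimes L_M^d$ and $\cB_M=\cB_M\otimes L_M^0$ appear in the rectangular Lefschetz decomposition of $\Db(M)$, with $\cB_M\otimes L_M^d$ strictly to the right of $\cB_M$. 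Semiorthogonality therefore forces the above group to vanish in every degree, yielding $\Hom^\bullet(B,T_M(B'))=0$ and hence the bijectivity of $\eta_{B,B'}$ in all degrees.

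The only subtle point — and what I would take as the main step — is the identification of the middle arrow in the spherical triangle with the unit of adjunction, together with the observation that $T_M$ is an autoequivalence (Proposition~\ref{prop:SphericalTwist}) so that it commutes with shifts; both are formal consequences of Definition~\ref{def:SphericalFunctor} but need to be invoked to reduce everything to a single vanishing statement controlled by the rectangular Lefschetz decomposition via condition~(iii). Note that the compatibilities (i) and (ii) in the Setup play no role at this stage; they will become essential only in later steps of Kuznetsov's construction of the non-commutative Calabi--Yau category inside $\Db(X)$.
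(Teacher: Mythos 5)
Your proof is correct and follows the standard argument: use the spherical triangle $T_M(B')\to B'\to \Phi\Psi_L(B')$ to reduce full faithfulness of $\Psi_L|_{\cB_M}$ to the vanishing $\Hom^\bullet(B,T_M(B'))=0$, then deduce this from compatibility (iii) of Setup~\ref{setup:KuznetsovCY} together with semiorthogonality of the rectangular Lefschetz decomposition. This is precisely the direct check the paper defers to in \cite[Lemma~3.10]{Kuz:CY}.
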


\begin{proof}
This is a direct check; see \cite[Lemma 3.10]{Kuz:CY} for the details.
\end{proof}

We set $\cB_X:=\Psi_L(\cB_M)$.
By Proposition \ref{prop:KuzCYff}, and by using properties (i) and (iii), we have a semiorthogonal decomposition
\begin{equation}\label{eq:DefKuznetsovComponent}
\Db(X) = \langle \Ku(X), \cB_X, \cB_X\otimes L_X,\ldots, \cB_X\otimes L_X^{\otimes (m-d-1)} \rangle,
\end{equation}
where $\Ku(X)$ is defined as
\[
\Ku(X):=\langle \cB_X,\cB_X\otimes L_X,\ldots, \cB_X\otimes L_X^{\otimes (m-d-1)} \rangle^\perp.
\]

\begin{defi}\label{def:KuznetsovComponent}
We say that $\Ku(X)$ is the \emph{Kuznetsov component} of $X$ associated to our data: $M$, $\Phi$, and the rectangular Lefschetz decomposition of $\Db(M)$.
\end{defi}

Let us define the two autoequivalences:
\begin{align*}
&\rho\colon\Db(X)\isomto\Db(X) &\rho:= T_X\circ L_X^{\otimes d} \\
&\sigma\colon\Db(X)\isomto\Db(X) &\sigma:= S_{\Db(X)}\circ T_X\circ L_X^{\otimes m}
\end{align*}

\begin{lemm}\label{lem:PropertiesRhoSigma}
We keep our assumptions as in Setup \ref{setup:KuznetsovCY}.
We have:
\begin{enumerate}
\item\label{enum:PropertiesRhoSigma1} $\sigma\circ\rho=\rho\circ\sigma$;
\item\label{enum:PropertiesRhoSigma2} $S_{\Db(X)}^{-1}=L_X^{\otimes m}\circ T_X\circ \sigma^{-1}$;
\item\label{enum:PropertiesRhoSigma3} $\sigma$ and $\rho$ respect the semiorthogonal decomposition \eqref{eq:DefKuznetsovComponent}.
\end{enumerate}
\end{lemm}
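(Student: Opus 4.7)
My plan is to dispatch parts \eqref{enum:PropertiesRhoSigma1} and \eqref{enum:PropertiesRhoSigma2} by a formal calculation, and prove \eqref{enum:PropertiesRhoSigma3} by transferring the preservation property through the spherical functor $\Phi$.

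For part \eqref{enum:PropertiesRhoSigma1}, I will use that the Serre functor commutes canonically with every autoequivalence (by uniqueness of Serre functors, since conjugating $S_{\Db(X)}$ by an autoequivalence produces another Serre functor on $\Db(X)$). Combined with hypothesis (ii) that $L_X$ and $T_X$ commute, all four factors $S_{\Db(X)}, T_X, L_X^d, L_X^m$ commute pairwise, giving $\sigma \rho = \rho \sigma$ immediately. For part \eqref{enum:PropertiesRhoSigma2}, inverting $\sigma$ gives $\sigma^{-1} = L_X^{-m} \circ T_X^{-1} \circ S_{\Db(X)}^{-1}$, and using (ii) to commute $T_X$ past $L_X^{-m}$ we obtain
\[
L_X^m \circ T_X \circ \sigma^{-1} = L_X^m \circ T_X \circ L_X^{-m} \circ T_X^{-1} \circ S_{\Db(X)}^{-1} = S_{\Db(X)}^{-1}.
\]

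The substantive content is part \eqref{enum:PropertiesRhoSigma3}. My strategy is to show that each of $\rho$ and $\sigma$ preserves every individual component of the semiorthogonal decomposition \eqref{eq:DefKuznetsovComponent}. The main tool is the intertwining from Proposition \ref{prop:SphericalTwist}, namely $\Phi \circ T_X \cong T_M \circ \Phi \circ [2]$, combined with hypothesis (i) which gives $\Phi \circ L_X = L_M \circ \Phi$. Composing these yields $\Phi \circ \rho \cong T_M \circ L_M^d \circ \Phi \circ [2]$, and hypothesis (iii) ensures that the endofunctor $T_M \circ L_M^d$ of $\Db(M)$ preserves each piece $\cB_M \otimes L_M^i$ of the rectangular Lefschetz decomposition. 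A parallel identity holds for $\sigma$, using additionally that the Serre functor $S_{\Db(X)}$ interacts with $\Phi$ in a controlled manner (compatible with $S_{\Db(M)}$, since $\Phi$ is of Fourier--Mukai type).

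To transfer preservation from $\Db(M)$ to $\Db(X)$, I would take $F \in \cB_X = \Psi_L(\cB_M)$ and argue, via the formula above for $\Phi \circ \rho$ together with the spherical identities $\Psi_L \oplus \Psi_R \cong \Psi_R \circ \Phi \circ \Psi_L$, that $\rho(F)$ again lies in the essential image of $\Psi_L|_{\cB_M}$, i.e.\ in $\cB_X$. Once $\rho(\cB_X) \subseteq \cB_X$ is established, hypothesis (ii) promotes this to $\rho(\cB_X \otimes L_X^k) = \cB_X \otimes L_X^k$ for every $k$ by tensoring; hence $\rho$ preserves the subcategory $\langle \cB_X, \cB_X \otimes L_X, \ldots, \cB_X \otimes L_X^{m-d-1}\rangle$, and since $\rho$ is an autoequivalence of $\Db(X)$ it must also preserve its left orthogonal $\Ku(X)$. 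The treatment of $\sigma$ is entirely parallel.

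The main obstacle is precisely this transfer step. The spherical identities in Definition \ref{def:SphericalFunctor} only control the composites $\Psi_L \circ \Phi \circ \Psi_R$ and $\Psi_R \circ \Phi \circ \Psi_L$, not $\Phi \circ \Psi_L$ by itself, so the argument must proceed indirectly; the rectangularity in Setup \ref{setup:KuznetsovCY}(1) and the precise shift by $-d$ in (iii) are what allow one to eliminate unwanted cross-terms. The bookkeeping of shifts and of the powers of $L_X$ and $L_M$ is the core of Kuznetsov's construction, and is where care is required.
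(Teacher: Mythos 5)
Parts (1) and (2) are correct and match the paper's brief argument: (1) is hypothesis (ii) together with the fact that a Serre functor commutes with every autoequivalence, and (2) then drops out by the formal computation you give.

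Your overall plan for (3) --- show that $\rho$ and $\sigma$ preserve each piece $\cB_X\otimes L_X^{\otimes k}$ and then pass to the orthogonal $\Ku(X)$ --- is the right one, and the ingredients you list are correct, but the transfer step is genuinely incomplete and you do not close it. Knowing $\Phi\circ\rho\cong T_M\circ L_M^{\otimes d}\circ\Phi\circ[2]$ does not determine what $\rho$ does on $\cB_X$, because $\Phi$ is not an equivalence; and the spherical identity $\Psi_R\Phi\Psi_L\cong\Psi_L\oplus\Psi_R$ does not feed in directly, since the triangle defining $T_X$ on an object of $\cB_X\otimes L_X^{\otimes d}=\Psi_L(\cB_M\otimes L_M^{\otimes d})$ produces the composite $\Psi_L\Phi\Psi_L(-)$, not $\Psi_R\Phi\Psi_L(-)$. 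The fix is to pass to \emph{left adjoints} in the intertwining $\Phi\circ T_X\cong T_M\circ\Phi\circ[2]$, which gives $T_X\circ\Psi_L\cong\Psi_L\circ T_M\circ[2]$; combined with the adjoint form of hypothesis (i), $L_X\circ\Psi_L\cong\Psi_L\circ L_M$, this yields $\rho\circ\Psi_L\cong\Psi_L\circ(T_M\circ L_M^{\otimes d})\circ[2]$, and (iii) now applies directly to show $\rho(\cB_X\otimes L_X^{\otimes k})=\cB_X\otimes L_X^{\otimes k}$, after which the rest of your argument goes through unchanged. For $\sigma$, the same manipulation --- using $S_{\Db(X)}\circ\Psi_L\cong\Psi_R\circ S_{\Db(M)}$ (a general adjoint--Serre fact, not particular to Fourier--Mukai functors) together with $\Psi_R\cong\Psi_L\circ T_M^{-1}\circ[-1]$, which follows from the spherical axioms --- reduces the claim to the statement that $S_{\Db(M)}\circ L_M^{\otimes m}$ preserves each $\cB_M\otimes L_M^{\otimes i}$. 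This is precisely where rectangularity enters, via $S_{\Db(M)}(\cB_M\otimes L_M^{\otimes(m-1)})=\cB_M\otimes L_M^{-1}$ (compare the decomposition tensored by $L_M^{-1}$ with the one obtained by rotating the last factor through the Serre functor, and use uniqueness of the complementary component); it is an extra input absent from the case of $\rho$, so ``entirely parallel'' overstates the symmetry.
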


\begin{proof}
The first statement follows immediately from the property (ii) and the fact that the Serre functor commutes with any autoequivalence.
The second statement follows then immediately.
For the last one, one can check it directly; see \cite[Lemma 3.11]{Kuz:CY} for the details.
\end{proof}

\begin{theo}[Kuznetsov]\label{thm:KuznetsovMain}
Let $c:=\mathrm{gcd}(d,m)$.
The Serre functor of the Kuznetsov component can be expressed as
\[
S_{\Ku(X)}^{d/c}=\rho^{-m/c}\circ \sigma^{d/c}.
\]
\end{theo}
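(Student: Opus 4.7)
Set $a := d/c$ and $b := m/c$, coprime positive integers with $1\leq a < b$. Both $\rho|_{\Ku(X)}$ and $\sigma|_{\Ku(X)}$ are autoequivalences of $\Ku(X)$ by Lemma \ref{lem:PropertiesRhoSigma}\eqref{enum:PropertiesRhoSigma3}, so both sides of the claimed identity live in $\Aut(\Ku(X))$. The first step is to simplify the right-hand side in $\Aut(\Db(X))$. Since $S_{\Db(X)}$, $T_X$, $L_X$ are pairwise commuting autoequivalences of $\Db(X)$---the Serre functor commutes with every autoequivalence, while $T_X\circ L_X=L_X\circ T_X$ by assumption (ii)---a direct computation using the definitions of $\rho$ and $\sigma$ gives
\[
\rho^{-b}\circ\sigma^{a}=L_X^{\otimes -db}\circ T_X^{-b}\circ S_{\Db(X)}^{a}\circ T_X^{a}\circ L_X^{\otimes ma}=S_{\Db(X)}^{a}\circ T_X^{a-b}\circ L_X^{\otimes(ma-db)}=S_{\Db(X)}^{a}\circ T_X^{a-b},
\]
since $ma-db=0$ by definition of $c=\gcd(d,m)$. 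The theorem thus reduces to the identity $S_{\Ku(X)}^{a}=S_{\Db(X)}^{a}\circ T_X^{a-b}$ as functors $\Ku(X)\to\Ku(X)$.

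To prove the reduced identity, I would first describe the action of $T_X$ on the orthogonal block $\langle \cB_X,\ldots,\cB_X\otimes L_X^{\otimes(m-d-1)}\rangle$ of \eqref{eq:DefKuznetsovComponent}. Taking left adjoints of the spherical relation $\Phi\circ T_X=T_M\circ\Phi\circ[2]$ from Proposition \ref{prop:SphericalTwist} yields $T_X\circ\Psi_L=\Psi_L\circ T_M\circ[2]$; combined with condition (iii) on $T_M$ and with the adjointed form $\Psi_L\circ L_M=L_X\circ\Psi_L$ of condition (i), this gives
\[
T_X(\cB_X\otimes L_X^{\otimes i})=\cB_X\otimes L_X^{\otimes(i-d)}[2].
\]
In particular $\rho$ acts as the shift $[2]$ on each $\cB_X\otimes L_X^{\otimes i}$, so all the nontrivial content of $\rho$ and $\sigma$ is concentrated on $\Ku(X)$, and one has precise control over how $T_X^{a-b}$ rearranges the orthogonal block.

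The identity $S_{\Ku(X)}^{a}=S_{\Db(X)}^{a}\circ T_X^{a-b}$ is then established by tracking how $a$ successive applications of $S_{\Db(X)}$ cycle $\Ku(X)$ through the semiorthogonal decomposition \eqref{eq:DefKuznetsovComponent} by mutation. One application of $S_{\Db(X)}$ moves $\Ku(X)$ from the leftmost to the rightmost position, at the cost of permuting the orthogonal components and shifting their $L_X$-indices; iterating this $a$ times and using the formula above for $T_X$ on each $\cB_X\otimes L_X^{\otimes i}$, one verifies that the composite $S_{\Db(X)}^{a}\circ T_X^{a-b}$ restricts to an autoequivalence of $\Ku(X)$ whose Serre-duality pairing coincides with that of $S_{\Ku(X)}^{a}$. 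The coprimality $\gcd(a,b)=1$, built into the exponents $d/c$ and $m/c$, ensures that after exactly $a$ iterations the indexing of components closes up on $\Ku(X)$.

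The main obstacle lies in this last mutation bookkeeping. The individual ingredients---the cyclic behavior of the Serre functor on semiorthogonal decompositions, the formula for $T_X$ computed above, and the commutativity relations of Lemma \ref{lem:PropertiesRhoSigma}---are all available, but combining them into a clean identity requires carefully tracking shifts, twists, and projections across the $m-d$ orthogonal components. This is the delicate heart of the argument, and it is precisely where the rectangular Lefschetz hypothesis on $M$ and the spherical twist relation for $\Phi$ conspire to produce the fractional Calabi-Yau exponent $d/c$.
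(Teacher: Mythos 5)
Your opening reduction is correct and in fact identifies exactly what must be shown: using the pairwise commutativity of $S_{\Db(X)}$, $T_X$, $L_X$ and the relation $m(d/c)-d(m/c)=0$, one indeed has $\rho^{-m/c}\circ\sigma^{d/c}=S_{\Db(X)}^{d/c}\circ T_X^{(d-m)/c}$ in $\Aut(\Db(X))$, and the theorem becomes the claim that this autoequivalence restricts on $\Ku(X)$ to $S_{\Ku(X)}^{d/c}$. Your formula $T_X(\cB_X\otimes L_X^{\otimes i})=\cB_X\otimes L_X^{\otimes(i-d)}[2]$, obtained by adjoining the spherical relation $\Phi\circ T_X=T_M\circ\Phi\circ[2]$, is also correct and useful.

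However, the proposal then stops at precisely the step where all the content lives. You write that the reduced identity ``is then established by tracking how $a$ successive applications of $S_{\Db(X)}$ cycle $\Ku(X)$ through the semiorthogonal decomposition by mutation'' and that after $a$ iterations ``one verifies'' the composite has the right Serre-duality pairing, but you do not carry out this verification and candidly flag the ``mutation bookkeeping'' as the obstacle. That bookkeeping \emph{is} the proof: the difficulty is that $S_{\Db(X)}$ does not preserve $\Ku(X)$, so $S_{\Ku(X)}^a$ is an $a$-fold alternation of $S_{\Db(X)}$ with projections $\delta_{\Ku(X)}$, and one must show these projections can be telescoped. Nothing in your sketch does this.

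The paper handles exactly this telescoping by introducing the degree shift functor $O_{\Ku(X)}:=\delta_{\Ku(X)}\circ L_X$ and proving, in Lemma \ref{lem:DegreeShiftProperties}, the three nontrivial identities $O_{\Ku(X)}^i=\delta_{\Ku(X)}\circ L_X^{\otimes i}$ for $0\leq i\leq m-d$ (a genuine induction via the octahedral axiom), $O_{\Ku(X)}^d=\rho$, and the \emph{single-application} formula $S_{\Ku(X)}^{-1}=O_{\Ku(X)}^{m-d}\circ\rho\circ\sigma^{-1}$. These three facts are where the ``bookkeeping'' of shifts and projections is actually discharged; once they are in hand, the theorem follows by raising the last formula to the power $d/c$ and substituting $O_{\Ku(X)}^d=\rho$, using only commutativity. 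Your approach would need to reprove the substance of Lemma \ref{lem:DegreeShiftProperties} (or an equivalent) to close the gap, and in particular the naive plan of ``verifying the Serre pairing'' of $S_{\Db(X)}^a\circ T_X^{a-b}$ on $\Ku(X)$ is not a shortcut: it requires exactly the same control over how $\delta_{\Ku(X)}$ interacts with $L_X$ and $T_X$ that the degree shift functor packages.
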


To prove the theorem, we introduce a fundamental functor for the Kuznetsov component, the \emph{degree shift} functor:\footnote{In \cite{Kuz:CY} the functor $O$ is defined on the whole derived category $\Db(X)$ and it is called \emph{rotation functor}.}
\[
O_{\Ku(X)}\colon\Ku(X)\to\Ku(X) \qquad O_{\Ku(X)}:=\delta_{\Ku(X)}\circ L_X,
\]
where $\delta_{\Ku(X)}$ denotes as usual the projection onto the Kuznetsov component (or equivalently, the left adjoint functor of the inclusion).

\begin{lemm}\label{lem:DegreeShiftProperties}
We keep our assumptions as in Setup \ref{setup:KuznetsovCY}.
We have:
\begin{enumerate}
\item\label{enum:DegreeShiftProperties1} $O_{\Ku(X)}$ is an autoequivalence;
\item\label{enum:DegreeShiftProperties2} $O_{\Ku(X)}\circ \rho=\rho \circ O_{\Ku(X)}$ and $O_{\Ku(X)}\circ \sigma= \sigma \circ O_{\Ku(X)}$;
\item\label{enum:DegreeShiftProperties3} $O_{\Ku(X)}^i=\delta_{\Ku(X)}\circ L_X^{\otimes i}$, for all $0\leq i\leq m-d$;
\item\label{enum:DegreeShiftProperties4} $S_{\Ku(X)}^{-1}=O_{\Ku(X)}^{m-d}\circ\rho\circ \sigma^{-1}$;
\item\label{enum:DegreeShiftProperties5} $O_{\Ku(X)}^d=\rho$.
\end{enumerate}
\end{lemm}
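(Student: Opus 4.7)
The plan is to tackle the five statements in the order (3), (4), (2), (5), (1): (4) and (2) fall out of (3) and Lemma~\ref{lem:PropertiesRhoSigma}, the main technical work lies in (5), and (1) then follows from (5). The key enabling computation for everything is (3).

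I begin with (3) by induction on $i$. For $F_i := O_{\Ku(X)}^i(F) \in \Ku(X)$, the defining orthogonality of $\Ku(X)$ gives
\[
\Hom(\cB_X\otimes L_X^{\otimes k}, L_X(F_i)) \;\cong\; \Hom(\cB_X\otimes L_X^{\otimes(k-1)}, F_i) \;=\; 0, \qquad 1\leq k \leq m-d-1.
\]
Hence, in the semiorthogonal decomposition \eqref{eq:DefKuznetsovComponent}, only the component $\cB_X$ contributes to the $\Ku(X)^\perp$-part of $L_X(F_i)$, yielding a triangle $Q_{i+1}\to L_X(F_i)\to F_{i+1}$ with $Q_{i+1}\in\cB_X$. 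Splicing these triangles together by repeated application of the octahedral axiom produces, for each $i$, a triangle
\[
P_i \to L_X^{\otimes i}(F) \to O_{\Ku(X)}^i(F)
\]
with $P_i\in\langle \cB_X,\cB_X\otimes L_X,\dots,\cB_X\otimes L_X^{\otimes(i-1)}\rangle$. When $i\leq m-d$, this ambient subcategory is contained in $\Ku(X)^\perp$, so the displayed triangle is the decomposition of $L_X^{\otimes i}(F)$ dictated by \eqref{eq:DefKuznetsovComponent}, and hence $O_{\Ku(X)}^i(F) \cong \delta_{\Ku(X)}(L_X^{\otimes i}(F))$.

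Parts (4) and (2) are now quick. For (4), Lemma~\ref{lem:PropertiesRhoSigma}\eqref{enum:PropertiesRhoSigma2} gives $S_{\Db(X)}^{-1}=L_X^{\otimes m}\circ T_X\circ\sigma^{-1}$, and since $T_X$ commutes with $L_X$ by Setup~\ref{setup:KuznetsovCY}(ii) this rewrites as $L_X^{\otimes(m-d)}\circ\rho\circ\sigma^{-1}$. For $F\in\Ku(X)$, $\rho\circ\sigma^{-1}(F)\in\Ku(X)$ by Lemma~\ref{lem:PropertiesRhoSigma}\eqref{enum:PropertiesRhoSigma3}, so applying $\delta_{\Ku(X)}$ and invoking (3) with $i=m-d$ yields $S_{\Ku(X)}^{-1}(F)=O_{\Ku(X)}^{m-d}\circ\rho\circ\sigma^{-1}(F)$. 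For (2), note that $\rho$ and $\sigma$ preserve the decomposition \eqref{eq:DefKuznetsovComponent} component-wise, which forces them to commute with the projection $\delta_{\Ku(X)}$; they also commute with $L_X$, tautologically for $\rho$ and because the Serre functor commutes with line-bundle twists for $\sigma$. Combining, $O_{\Ku(X)}\circ\rho = \rho\circ O_{\Ku(X)}$ and $O_{\Ku(X)}\circ\sigma = \sigma\circ O_{\Ku(X)}$.

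The main obstacle is (5). The strategy is to compare two triangles with the same third term $L_X^{\otimes d}(F)$: the iterative one from the proof of (3),
\[
P_d \to L_X^{\otimes d}(F) \to O_{\Ku(X)}^d(F), \qquad P_d \in \langle \cB_X, \cB_X\otimes L_X, \dots, \cB_X\otimes L_X^{\otimes(d-1)}\rangle,
\]
and the defining triangle of the spherical twist
\[
L_X^{\otimes d}\circ\Psi_L\circ\Phi(F) \to L_X^{\otimes d}(F) \to T_X(L_X^{\otimes d}(F)) = \rho(F),
\]
where we have used $\Phi\circ L_X = L_M\circ\Phi$. Both cones lie in $\Ku(X)$: the left one by construction and the right one by Lemma~\ref{lem:PropertiesRhoSigma}\eqref{enum:PropertiesRhoSigma3} applied to $\rho = T_X\circ L_X^{\otimes d}$. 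The content of (5) is thus the identification of the two fibers, which is the delicate step; it requires the compatibility $\Phi\circ T_X \cong T_M\circ\Phi\circ[2]$ from Proposition~\ref{prop:SphericalTwist} together with Setup~\ref{setup:KuznetsovCY}(iii), which controls how $T_M$ permutes the Lefschetz pieces $\cB_M\otimes L_M^{\otimes i}$. Via the spherical adjunctions these two ingredients allow one to rewrite $L_X^{\otimes d}\circ\Psi_L\circ\Phi(F)$ as an iterated extension of objects in $\cB_X\otimes L_X^{\otimes k}$ for $0\leq k\leq d-1$, matching $P_d$. Once (5) is established, (1) is immediate: $\rho=T_X\circ L_X^{\otimes d}$ is an autoequivalence of $\Db(X)$ that restricts to an autoequivalence of $\Ku(X)$, so $O_{\Ku(X)}^d$ is an autoequivalence of $\Ku(X)$, and hence so is $O_{\Ku(X)}$.
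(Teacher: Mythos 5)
Parts (3), (4), (2), and (1) are handled correctly and in essentially the same way as the paper: the inductive argument for (3) is the same octahedral-splicing idea, and the reductions of (4), (2), (1) to (3), Lemma~\ref{lem:PropertiesRhoSigma}, and the (to-be-proved) (5) are the same.

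The issue is (5). Your framing is right --- compare the triangle $P_d \to L_X^{\otimes d}(F) \to O_{\Ku(X)}^d(F)$ from (3) with the spherical-twist triangle $L_X^{\otimes d}\Psi_L\Phi(F) \to L_X^{\otimes d}(F) \to \rho(F)$, and observe that both will compute $\delta_{\Ku(X)}(L_X^{\otimes d}(F))$ once one knows that both fibers lie in ${}^\perp\Ku(X)$. You correctly identify the non-trivial claim: $L_X^{\otimes d}\Psi_L\Phi(F) \in {}^\perp\Ku(X)$. But you then only gesture at it, asserting it can be deduced from $\Phi\circ T_X \cong T_M\circ\Phi\circ[2]$ together with Setup~\ref{setup:KuznetsovCY}(iii), without actually assembling an argument. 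That leaves the central step of the whole lemma unproved. It is also pointing at the wrong ingredients: the spherical compatibility $\Phi\circ T_X \cong T_M\circ\Phi\circ[2]$ is not needed here. The clean argument is a one-line adjunction. For $F\in\Ku(X)$ and $d\leq k\leq m-1$,
\[
\Hom\bigl(\cB_M\otimes L_M^{\otimes k},\, \Phi(L_X^{\otimes d}(F))\bigr)
\;\cong\;
\Hom\bigl(\Psi_L(\cB_M\otimes L_M^{\otimes k}),\, L_X^{\otimes d}(F)\bigr)
\;\cong\;
\Hom\bigl(\cB_X\otimes L_X^{\otimes(k-d)},\, F\bigr)\;=\;0,
\]
since $0\leq k-d\leq m-d-1$ and $F\in\Ku(X)$. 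Hence $\Phi(L_X^{\otimes d}(F))\in\langle \cB_M,\dots,\cB_M\otimes L_M^{\otimes(d-1)}\rangle$, and applying $\Psi_L$ (which sends $\cB_M\otimes L_M^{\otimes i}$ to $\cB_X\otimes L_X^{\otimes i}$) gives $\Psi_L\Phi(L_X^{\otimes d}(F))\in\langle\cB_X,\dots,\cB_X\otimes L_X^{\otimes(d-1)}\rangle\subset{}^\perp\Ku(X)$, the inclusion using $d\leq m-d$. This is exactly the step you leave open, and you should also flag the restriction $d\leq m-d$ (the paper notes it explicitly), which is needed both here and to invoke (3) at $i=d$.
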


\begin{proof}
This is the summary of various results in \cite[Section 3]{Kuz:CY}.
Property \eqref{enum:DegreeShiftProperties1} follows by either \eqref{enum:DegreeShiftProperties4} or \eqref{enum:DegreeShiftProperties5}.
Property \eqref{enum:DegreeShiftProperties2} follows by a direct check.
Property \eqref{enum:DegreeShiftProperties4} follows from \eqref{enum:DegreeShiftProperties3} and by Lemma \ref{lem:PropertiesRhoSigma},\eqref{enum:PropertiesRhoSigma2}.
The key results are \eqref{enum:DegreeShiftProperties3} and \eqref{enum:DegreeShiftProperties5}.

To prove \eqref{enum:DegreeShiftProperties3}, observe that the formula is true for $i=0$. Let us assume the formula is true for $0\leq i< m-d$; we want to show it is true for $i+1$ as well.
Let $F\in\Ku(X)$.
We can consider the composition
\begin{align*}
F\otimes L_X^{\otimes (i+1)} &\to \delta_{\Ku(X)}(F\otimes L_X^{\otimes i})\otimes L_X = O^i_{\Ku(X)}(F)\otimes L_X\to\\
&\to \delta_{\Ku(X)}(O^i_{\Ku(X)}(F)\otimes L_X)=O_{\Ku(X)}^{i+1}(F).
\end{align*}
We need to show that $\delta_{\Ku(X)}(F\otimes L_X^{\otimes (i+1)})=O_{\Ku(X)}^{i+1}(F)$, or equivalently that the cone $G$ of the above composition is in $\langle \cB_X,\cB_X\otimes L_X,\ldots, \cB_X\otimes L_X^{\otimes (m-d-1)} \rangle$.
But, by the octahedral axiom, the cone is an extension of two objects
\[
G_1\otimes L_X \to G \to G_2,
\]
where $G_1\in\langle \cB_X,\cB_X\otimes L_X,\ldots, \cB_X\otimes L_X^{\otimes (i-1)} \rangle$ and $G_2\in\cB_X$, which is what we wanted.

We will only show \eqref{enum:DegreeShiftProperties5} under the assumption $d\leq m-d$. Since we are interested in the Calabi-Yau case (where, in particular, we will have that $c=d$ divides $m$), this is enough for us.
By \eqref{enum:DegreeShiftProperties3}, we have that $O_{\Ku(X)}^d=\delta_{\Ku(X)}\circ L_X^{\otimes d}$.
By definition, we also have that $\rho=T_X\circ L_X^{\otimes d}$.

Let $F\in\Ku(X)$.
Then $\rho(F)$ is defined by the following triangle
\[
\Psi_L(\Phi(F\otimes L_X^{\otimes d})) \to F\otimes L_X^{\otimes d} \to \rho(F).
\]
By Lemma \ref{lem:PropertiesRhoSigma},\eqref{enum:PropertiesRhoSigma3}, we know that $\rho(F)\in\Ku(X)$.
Hence, we only need to show that $\Psi_L(\Phi(F\otimes L_X^{\otimes d}))\in {^\perp}\Ku(X)$.

%By property (i), we have $\Psi_L(\Phi(F\otimes L_X^{\otimes d}))=\Psi_L(\Phi(F)\otimes L_M^{\otimes d})$.
By using adjointness, it is easy to see that $\Phi(F\otimes L_X^{\otimes d})\in\langle \cB_M,\ldots, \cB_M\otimes L_M^{\otimes d-1}\rangle$.
The adjoint of property (i) shows that, for all $i\in\Z$,
\[
\Psi_L(\cB_M\otimes L_M^{\otimes i})=\Psi_L(\cB_M)\otimes L_X^{\otimes i}=\cB_X\otimes L_X^{\otimes i}.
\]
Hence
\[
\Psi_L(\Phi(F\otimes L_X^{\otimes d}))\in\langle \cB_X,\ldots, \cB_X\otimes L_X^{\otimes d-1}\rangle\subset {^\perp}\Ku(X),
\]
as we wanted.
\end{proof}

We can now prove Kuznetsov's theorem.

\begin{proof}[Proof of Theorem \ref{thm:KuznetsovMain}.]
By Lemma \ref{lem:DegreeShiftProperties},\eqref{enum:DegreeShiftProperties4}, we can express the (inverse of the) Serre functor in terms of the functors $O_{\Ku(X)}$, $\rho$, and $\sigma$.
By Lemma \ref{lem:PropertiesRhoSigma},\eqref{enum:PropertiesRhoSigma1}, all these functors commute.
By raising everything to the power $d/c$, and by using Lemma \ref{lem:DegreeShiftProperties},\eqref{enum:DegreeShiftProperties5}, the statement follows.
\end{proof}

\section{Fano varieties and their Kuznetsov components: examples}
\label{sec:examples}

In this section we present a few examples of non-commutative K3 surfaces.
The basic references are \cite{Kuz:CubicFourfolds,Kuz:survey,KP:GM}.
There are very interesting examples of non-commutative Calabi-Yau varieties in higher dimension as well; we will not cover them in these notes and we refer to \cite{IM:cubics,IM:CY,Kuz:CY} and references therein.

The general goal could be stated as follows.

\begin{ques}\label{ques:existenceK3}
How to construct examples of non-commutative K3 surfaces?
Is there a generalized period map and a (derived) Torelli Theorem?
What is the image of the period map?
\end{ques}

Already the first part of Question \ref{ques:existenceK3} is not easy to answer.
The main issue is that the only way we have to construct non-commutative K3 surfaces is by embedding them in some (commutative) Fano variety.
Currently, there are a few families which have been studied.
We will present three of them in this section: cubic fourfolds, Gushel-Mukai manifolds, and Debarre-Voisin manifolds.
All of them arise indeed as Kuznetsov components in the derived category of a certain smooth Fano variety, as in Theorem \ref{thm:KuznetsovMain}.
Other examples, not covered in these notes, are K\"uchle manifolds \cite{Kuz:Kuchle}; in such examples, though, a rectangular Lefschetz decomposition is not yet known and Theorem \ref{thm:KuznetsovMain} not yet applicable directly.

There is a common theme and expectation that such non-commutative K3 surfaces should contain deep birational properties on the Fano variety itself.
We can then formulate the following question.

\begin{ques}[Kuznetsov]\label{ques:rationality}
Let $W$ be a Fano fourfold.
Assume that $W$ has a Kuznetsov component $\Ku(W)$ which is a non-commutative K3 surface.
If $W$ is rational, then is $\Ku(W)$ equivalent to the derived category of a K3 surface?
\end{ques}

The above question is not well-defined in general, since there is no invariant definition yet of what a Kuznetsov component is for a general fourfold $W$ (see \cite[Section 3]{Kuz:survey}).
On the other hand, in the examples we will see (cubic fourfolds and Gushel-Mukai fourfolds), there is an evident choice for it, and the above question therefore makes sense.

Question \ref{ques:rationality} also motivates the understanding of when such a Kuznetsov component is actually equivalent to the derived category of a K3 surface. We can then formulate the following question, which we are going to answer in the case of cubic fourfolds (see Theorem \ref{thm:AT}).

\begin{ques}[Addington-Thomas, Huybrechts]\label{ques:K3}
Let $W$ be a Fano variety.
Assume that $W$ has a Kuznetsov component $\Ku(W)$ which is a non-commutative K3 surface.
Is it true that $\Ku(W)$ is equivalent to the derived category of a K3 surface if and only if there is a primitive embedding of the hyperbolic lattice $U\hra K_\mathrm{num}(\Ku(W))$ in the numerical Grothendieck group of $W$?
\end{ques}

The hyperbolic lattice $U$ in Question \ref{ques:K3} has rank $2$, is even unimodular and is defined by the bilinear form
\[
\left(\begin{array}{cc}0&1\\1&0\end{array}\right).
\]
It has a very neat interpretation in terms of moduli spaces of objects in $\Ku(W)$.
Indeed, the two square-zero classes correspond to skyscraper sheaves and ideal sheaves of points on the K3 surface. Also, the fact that the two classes have intersection $1$ corresponds to the fact that both are fine moduli spaces.
This is the way we will approach this question in Section \ref{sec:CubicFourfolds}: we will recover the K3 surface as moduli space of stable objects in $\Ku(W)$, and use a universal family to induce the derived equivalence.

In this section we will discuss the above questions in three examples.
Cubic fourfolds will be in Section \ref{subsec:CubicFourfolds}; Gushel-Mukai manifolds in Section \ref{subsec:GM}; Debarre-Voisin manifolds in Section \ref{subsec:DV}. In Section \ref{subsec:Mukai} we review the integral Mukai structure in these examples, by using topological K-theory, as in \cite{AT:cubic}. Finally, in Section \ref{subsec:Torelli} we discuss Torelli-type statements.

Finally, for sake of completeness, we mention that conjecturally all non-commutative smooth projective varieties are expected to be admissible subcategories in a Fano manifold (we refer to \cite{BBF:Determinantal,KKLL:CompleteIntersections,KL:Fano,FK:Curves,Nar:Curves} for recent advances on this conjecture).

\begin{ques}[Bondal]\label{ques:FanoVisitor}
Let $\cD$ be a non-commutative smooth projective variety.
Does there exists a Fano manifold $W$ and a fully faithful functor $\cD\hra\Db(W)$?
\end{ques}

\subsection{Cubic fourfolds}
\label{subsec:CubicFourfolds}

Let $\iota\colon W\hra\P^5$ be a cubic fourfold over $\K$; we assume 
$\mathrm{char}(\K)\neq 2,3$.
We denote by $\cO_W(1)$ the hyperplane section $\cO_{\P^5}(1)|_W$.

\subsubsection*{The Kuznetsov component} We start by using Theorem \ref{thm:KuznetsovMain} to show that the Kuznetsov component of a cubic fourfold is a non-commutative K3 surface.

\begin{lemm}\label{lem:CubicFourfoldSpherical}
The functor $\iota_*\colon\Db(W)\to\Db(\P^5)$ is spherical.
The associated spherical twists are $T_W=\cO_W(-3)[2]$ and $T_{\P^5}=\cO_{\P^5}(-3)$.
\end{lemm}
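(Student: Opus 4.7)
\emph{Proof plan.} I would verify Definition~\ref{def:SphericalFunctor} for $\Phi=\iota_*$ directly, using the Koszul resolution of the cubic divisor as the sole computational input, and then read off both twists from their definitions. The left adjoint is $\Psi_L=\iota^*$, and by Grothendieck duality for closed immersions the right adjoint is $\Psi_R=\iota^!(\blank)=\iota^*(\blank)\otimes\omega_{W/\P^5}[\dim W-\dim\P^5]$. Adjunction for the smooth cubic hypersurface $W\subset\P^5$ gives $\omega_W\cong\cO_W(-3)$ and hence $\omega_{W/\P^5}\cong\cO_W(3)$, so that $\iota^!\cong\iota^*(\blank)\otimes\cO_W(3)[-1]$. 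The computational engine is the structure sequence
\[
0\to\cO_{\P^5}(-3)\to\cO_{\P^5}\to\iota_*\cO_W\to 0.
\]
Tensoring with $F\in\Db(\P^5)$ and applying the projection formula yields a functorial triangle $F\otimes\cO_{\P^5}(-3)\to F\to\iota_*\iota^*F$ on $\P^5$; pulling the Koszul complex back along $\iota$ kills its differential (the defining equation restricts to zero on $W$), producing a canonical split identification $\iota^*\iota_*G\cong G\oplus G\otimes\cO_W(-3)[1]$ functorial in $G\in\Db(W)$.

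With these identifications both sphericality conditions reduce to explicit calculations. For condition~(1) one computes
\[
\iota^!\iota_*\iota^*F\;\cong\;\iota^*F\otimes\bigl(\cO_W\oplus\cO_W(-3)[1]\bigr)\otimes\cO_W(3)[-1]\;\cong\;\iota^*F\oplus\iota^!F,
\]
and the natural transformation $\iota^*\oplus\iota^!\to\iota^!\iota_*\iota^*$ built from the two units should match the canonical inclusion of summands. Condition~(2) is handled by a symmetric computation using the counits and the analogous decomposition of $\iota^*\iota_*\iota^!F$.

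Finally, the twists are read off directly. Under the splitting the counit $\iota^*\iota_*G\to G$ of the $(\iota^*,\iota_*)$-adjunction is the projection $G\oplus G(-3)[1]\to G$, so its cone is $T_W(G)\cong G\otimes\cO_W(-3)[2]$. The unit $F\to\iota_*\iota^*F$ fits in the Koszul triangle above with fibre $F\otimes\cO_{\P^5}(-3)$, so its cone is $F\otimes\cO_{\P^5}(-3)[1]$ and therefore $T_{\P^5}(F)\cong F\otimes\cO_{\P^5}(-3)$. The main obstacle is not identifying the underlying objects, which is essentially immediate from the Koszul resolution, but rather checking that the abstract natural transformations appearing in Definition~\ref{def:SphericalFunctor} genuinely agree with the canonical splittings coming from Koszul; this is a diagram chase through the units and counits of the two adjunctions, routine once organized but easy to mishandle.
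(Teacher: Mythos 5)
Your proposal is correct and matches the approach the paper intends: the paper's own proof consists entirely of the remark that everything follows from the structure sequence $0\to\cO_{\P^5}(-3)\to\cO_{\P^5}\to\cO_W\to 0$ and a direct computation, and your plan is precisely that computation carried out — identifying $\Psi_L=\iota^*$, $\Psi_R=\iota^!=\iota^*(\blank)\otimes\cO_W(3)[-1]$, using the Koszul triangle and the vanishing of the cubic equation on $W$ to split $\iota^*\iota_*G\cong G\oplus G(-3)[1]$, and reading off the twists from the (co)unit maps. Your closing caveat about verifying that the abstract unit/counit natural transformations agree with the canonical Koszul splittings is the one genuinely delicate point, and it is indeed what the paper sweeps under "direct computation."
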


\begin{proof}
All the statements can be checked by using the exact sequence
\[
0\to \cO_{\P^5}(-3) \to \cO_{\P^5} \to \cO_{W} \to 0
\]
and a direct computation.
\end{proof}

By using Lemma \ref{lem:CubicFourfoldSpherical} and the semiorthogonal decomposition
\[
\Db(\P^5) = \langle \cO_{\P^5},\cO_{\P^5}(1),\cO_{\P^5}(2),\cO_{\P^5}(3),\cO_{\P^5}(4),\cO_{\P^5}(5)\rangle,
\]
it is immediate to check that the compatibilities in Setup \ref{setup:KuznetsovCY} are met ($d=3$ and $m=6$).
Hence, we have a semiorthogonal decomposition
\[
\Db(W) = \langle \Ku(W), \cO_W,\cO_W(1),\cO_W(2) \rangle
\]
and, by Theorem \ref{thm:KuznetsovMain}, the Serre functor $S_{\Ku(W)}=[2]$.
Hence, $\Ku(W)$ is a non-commutative $2$-Calabi-Yau category.

We can also compute Hochschild homology of $W$ directly, by using the Hochschild-Kostant-Rosenberg Theorem, and the Hodge diamond for $W$:
\[
\HH_\bullet(W)= \K[-2] \oplus \K^{\oplus 25} \oplus \K[2].
\]
By Proposition \ref{prop:HochschildSO}, the Hochschild homology of $\Ku(W)$ is therefore isomorphic to the one of a K3 surface.
Therefore, $\Ku(W)$ is an example of a non-commutative K3 surface.

\subsubsection*{Pfaffian cubic fourfolds}

Toward understanding both Question \ref{ques:rationality} and Question \ref{ques:K3} for cubic fourfolds, the first example to analyze in detail is the case of Pfaffian cubic fourfolds.
They are all contained in a special divisor in the moduli space of cubic fourfolds.

Let $V$ be a $\K$-vector space of dimension $6$.
For $i=2,4$, let $\mathrm{Pf}(i,V)$ be the closed subset of $\P(\Lambda^2V)$ consisting of those forms having rank $\leq i$.
Let $L\subset \P(\Lambda^2V)$ be a linear subspace of dimension $8$.
We set
\[
S := \mathrm{Pf}(2,V) \cap L\qquad \text{ and }\qquad W:= \mathrm{Pf}(4,V^\vee) \cap L^\perp.
\]
For a general $L$, both $S$ and $W$ are smooth: $S$ is a K3 surface of degree 14 and $W$ a cubic fourfold.
We call all cubic fourfolds obtained in this way \emph{Pfaffian cubic fourfolds}, and the K3 surface the \emph{associated K3 surface}.

We define the correspondence
\[
\Gamma := \left\{ (s,w)\in S\times W\,:\, s \cap \ker(w)\neq 0 \right\},
\]
with the natural projections $p_S\colon\Gamma\to S$ and $p_W\colon\Gamma\to W$. The above definition makes sense in view of the observation that $\mathrm{Pf}(2,V) =\mathrm{Gr}(2,V)$ and thus the points of $S$ are actually $2$-dimensional subspaces of $V$. Moreover, we think of $w$ as a point of $\P(\Lambda^2V^\vee)$ so that $\ker(w)$ is also a subspace of $V$. We remark here that even though the expected codimension of $\Gamma$ is $3$ a direct computation shows that it is actually $2$.

\begin{prop}[Kuznetsov]\label{prop:KuznetsovPfaffian}
Let $W$ be a Pfaffian cubic fourfold, and let $S$ be the associated K3 surface.
Then the ideal sheaf $\cI_\Gamma$ induces a Fourier-Mukai equivalence
\[
\Phi_{\cI_{\Gamma}\otimes p_W^*\cO_W(1)}\colon\Db(S)\isomto \Ku(W)\subset\Db(W).
\]
\end{prop}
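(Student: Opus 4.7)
The plan is to show three things about the Fourier--Mukai functor $\Phi := \Phi_{\cI_\Gamma\otimes p_W^*\cO_W(1)}$: (a) its essential image lies in $\Ku(W)$; (b) $\Phi$ is fully faithful; (c) the image is all of $\Ku(W)$. Together these give the desired equivalence.

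For (a) and (b) the key preliminary is a tractable description of $\cI_\Gamma$. The ambient picture is $\mathrm{Gr}(2,V) \times \P(\Lambda^2 V^\vee)$, where the tautological rank-two bundle $\mathcal{U}$ together with the tautological skew form combine into a canonical bundle map $\mathcal{U} \to \mathcal{U}^\vee\otimes \cO(0,1)$ whose degeneracy locus cuts out the universal incidence. Restricting to $L \times L^\perp$ exhibits $\Gamma$ as a degeneracy locus; although $\Gamma$ has codimension $2$ rather than the naively expected $3$, the associated Eagon--Northcott complex is nevertheless a resolution of $\cO_\Gamma$ (hence, after a twist, of $\cI_\Gamma$), with terms built from tensor products of $\mathcal{U}$, $\mathcal{U}^\vee$, and line bundles coming from $\P(\Lambda^2 V)$ and $\P(\Lambda^2 V^\vee)$.

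With this resolution in hand, (a) amounts to showing $R\Hom(\Phi(F), \cO_W(i)) = 0$ for every $F\in\Db(S)$ and $i \in \{0,1,2\}$. By adjunction and the projection formula, this reduces to pushing the twisted resolution forward along $p_S$; after unwinding, the required vanishings follow from Bott--Borel--Weil on $\mathrm{Gr}(2,V)$, as the relevant Schur functors of $\mathcal{U}^\vee$ fall outside of the cohomological range allowed by $\cO_W(i)$ for $i\le 2$. Step (b) is the Bondal--Orlov / Bridgeland spanning-class argument adapted to the Calabi--Yau setting: using skyscraper sheaves of points of $S$ as a spanning class, one identifies $\Phi(\cO_s) \cong \cI_{\Gamma_s/W}(1)$, where $\Gamma_s := p_W(p_S^{-1}(s)) \subset W$ is a surface of controlled degree, and a direct geometric analysis of $\Gamma_s$ yields both the orthogonality $\Hom^\bullet(\Phi(\cO_{s_1}),\Phi(\cO_{s_2})) = 0$ for $s_1\ne s_2$ and the natural isomorphism $\Ext^\bullet(\Phi(\cO_s),\Phi(\cO_s)) \cong \Ext^\bullet(\cO_s,\cO_s)$.

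Finally, (c) is formal given (a) and (b). The image $\Phi(\Db(S))$ is an admissible subcategory of $\Ku(W)$ with admissible orthogonal complement $\cA \subseteq \Ku(W)$. Since $\Db(S)$ is connected and Calabi--Yau of dimension $2$, so is $\Phi(\Db(S))$, and then $\cA$ is itself Calabi--Yau of dimension $2$; because $\Ku(W)$ is indecomposable by Lemma \ref{lem:BridgelandTrick}, this forces $\cA = 0$. (Equivalently, Proposition \ref{prop:HochschildSO} gives $\HH_\bullet(\Ku(W)) = \HH_\bullet(\Db(S)) \oplus \HH_\bullet(\cA)$, and comparing total dimensions, both equal to $24$, forces $\HH_\bullet(\cA) = 0$.) The main obstacle is the bookkeeping in step (a): one must arrange the Eagon--Northcott resolution so that the required Bott--Borel--Weil vanishings on $\mathrm{Gr}(2,V)$ fall into place cleanly for each of the three twists, which is the true technical heart of the argument.
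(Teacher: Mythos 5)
Your proposal follows the same overall strategy as the paper's (and Kuznetsov's original) argument: show the essential image lies in $\Ku(W)$, prove full faithfulness via the Bondal--Orlov criterion on skyscraper sheaves, and conclude essential surjectivity from the indecomposability of $\Ku(W)$ as a connected $2$-Calabi--Yau category. Step (c) and its justification via Lemma \ref{lem:BridgelandTrick} are exactly the paper's; your alternative via Hochschild homology dimensions is a valid and equivalent phrasing via Proposition \ref{prop:HochschildSO}.

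The two places where your plan diverges in emphasis from the paper's sketch are instructive. First, the paper's proof of this proposition quietly defers step (a) to the original reference \cite{Kuz:Pfaffians}; your plan to resolve $\cI_\Gamma$ by an Eagon--Northcott complex coming from the incidence degeneracy locus and then invoke Borel--Bott--Weil is precisely what that reference does, and you are right that it is logically indispensable \emph{before} (b): the Serre-duality step $\Hom^{\geq 3}(\cI_{\Gamma_{s_1}}(1),\cI_{\Gamma_{s_2}}(1))=0$ requires already knowing these objects live in $\Ku(W)$, where the Serre functor is $[2]$. Second, for step (b) the paper is more concrete where you are more schematic: it works under the additional genericity hypothesis that $L$ is general (equivalently, $S$ contains no line and $W$ contains no plane), identifies each $\Gamma_s := p_S^{-1}(s)$ as a quartic scroll, proves $\Gamma_{s_1}\neq\Gamma_{s_2}$ for $s_1\neq s_2$ using the two-rulings argument, gets the $\Hom^0$ vanishing from codimension $2$ and distinctness, the $\Hom^2$ vanishing from the Serre functor, and the $\Hom^1$ vanishing from $\chi(\cI_{\Gamma_1},\cI_{\Gamma_2})=0$. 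Your phrase ``a direct geometric analysis of $\Gamma_s$'' should be replaced by exactly this chain of observations; as it stands it hides the genericity hypothesis the paper needs. Otherwise your outline is sound, and you correctly flag the Eagon--Northcott bookkeeping as the real work.
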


\begin{proof}
This is the content of \cite[Theorem 2]{Kuz:Pfaffians}.
We follow the presentation given in \cite[Proposition 3]{AL:8fold}, and we refer there for all details.

The argument goes as follows, under the additional assumption that $L$ is general (which is enough for our future purposes). In this case, indeed, $S$ does not contain a line and $W$ does not contain a plane.

Consider two distinct points $p_1,p_2\in S$ and set $\Gamma_i:=p_S^{-1}(p_i)$. Note that $\Gamma_i$ is a quartic scroll, for $i=1,2$. Since $p_1\neq p_2$, we have that $\Gamma_1$ and $\Gamma_2$ are distinct. Indeed, if we identify $p_i$ with the subspace it parametrizes, we have $p_1\cap p_2=\{0\}$ because, otherwise, $S$ would contain a line. This implies that if $\Gamma_1=\Gamma_2$, then the maps $\pi_i\colon\Gamma_i\to\P(p_i)$ mapping $w$ to $p_i\cap\ker(w)$ would define two different rulings on $\Gamma_1=\Gamma_2$. This is not possible.

To show that $\Phi:=\Phi_{\cI_{\Gamma}\otimes p_W^*\cO_W(1)}$ is fully faithful, one just applies the standard criterion due to Bondal and Orlov (see, for example, \cite[Proposition 7.1]{Huy:FM}). In particular, for $p_1,p_2\in S$, we have to prove that
\[
\dim\Hom(\Phi(\cO_{p_1}),\Phi(\cO_{p_2})[i])=\dim\Hom(\cO_{p_1},\cO_{p_2}[i]).
\]
A simple computation shows that $\Phi(\cO_{p_i})=\cI_{\Gamma_i}(1)$. Thus, the equality above can be rewritten as 
\[
\dim\Hom(\cI_{\Gamma_1}(1),\cI_{\Gamma_2}(1)[i])=\dim\Hom(\cO_{p_1},\cO_{p_2}[i]).
\]
The equality is clearly trivial when $i<0$. On the other hand, $\Gamma_i$ has codimension $2$ and $\Gamma_1$ and $\Gamma_2$ are distinct if $p_1\neq p_2$. Hence the equality holds for $i=0$ as well. Since the Serre functor of $\Ku(W)$ is the shift by $2$, the same results holds true for $i=2$. Since $\chi(\cI_{\Gamma_1},\cI_{\Gamma_2})=0$, the case $i=1$ follows as well.

This implies that $\Phi$ is an equivalence, since we observed that $\Ku(W)$ is a connected Calabi-Yau category of dimension $2$ and thus cannot have a proper admissible subcategory.
\end{proof}

These cubic fourfolds are rational, as proven in \cite[Proposition 5 ii)]{BD:cubic}. The argument goes as follows. Take $V'$ a general codimension $1$ linear subspace in $V$. The assignement that sends $w\in W$, to $\ker(w)\cap V'$ defines a birational map
\[
W\dashrightarrow\P(V'),
\]
which gives the rationality of $W$.

\subsubsection*{Cubic fourfolds and K3 surfaces}

In Section \ref{sec:CubicFourfolds}, we will develop the theory of moduli spaces for the Kuznetsov component of a cubic fourfold.
This will allow us to give a complete answer to Question \ref{ques:K3} for cubic fourfolds:

\begin{theo}[Addington--Thomas, Bayer--Lahoz--Macr\`i--Nuer--Perry--Stellari]\label{thm:AT}
Let $W$ be a cubic fourfold.
Then $\Ku(W)$ is equivalent to the derived category of a K3 surface if and only if there is a primitive embedding of the hyperbolic lattice $U\hra K_\mathrm{num}(W)$ in the numerical Grothendieck group of $W$.
\end{theo}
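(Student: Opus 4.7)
The plan is to construct the K3 surface $S$ as a two-dimensional moduli space of Bridgeland-stable objects in $\Ku(W)$ and then use a universal family to produce the equivalence $\Db(S)\cong\Ku(W)$.

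The ``only if'' direction is formal. Any Fourier--Mukai equivalence $\Phi\colon\Db(S)\isomto\Ku(W)$ induces an isometry of numerical Grothendieck groups with respect to the Euler pairing (Theorem \ref{thm:HochschildHomologyProperties}). The classes $[\cO_S]$ and $[\cO_p]$ of the structure sheaf and of a skyscraper sheaf are isotropic and pair to $-\chi(\cO_S,\cO_p)=-1$, so up to an obvious sign change they span a primitively embedded hyperbolic sublattice $U \hookrightarrow K_\mathrm{num}(S)\cong K_\mathrm{num}(\Ku(W))$.

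For the converse, fix a primitive embedding $U \hookrightarrow K_\mathrm{num}(\Ku(W))$ with isotropic basis $\vv_1,\vv_2$ such that $\langle\vv_1,\vv_2\rangle=1$. Theorem \ref{thm:main1} and Theorem \ref{thm:stabconn} provide a Bridgeland stability condition $\sigma$ on $\Ku(W)$ which can be taken generic with respect to $\vv_1$. By Theorem \ref{thm:YoshiokaMain}, the moduli space $S:=M_\sigma(\Ku(W),\vv_1)$ is a smooth projective hyperk\"ahler manifold of dimension $\vv_1^2+2=2$, and is therefore a K3 surface. Because $\langle\vv_1,\vv_2\rangle=1$, the Brauer-type obstruction to the existence of a universal family vanishes, so there is a universal object $\cE \in \Db(S)\boxtimes\Ku(W)\subset\Db(S\times W)$ whose restriction to each fibre $\{s\}\times W$ is the $\sigma$-stable object parametrized by $s$. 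The resulting Fourier--Mukai functor $\Phi_\cE\colon\Db(S)\to\Ku(W)$ sends skyscraper sheaves to these stable objects; stability together with the Calabi--Yau identity $S_{\Ku(W)}=[2]$ supply precisely the vanishings required by the Bondal--Orlov criterion, so $\Phi_\cE$ is fully faithful. Its essential image is an admissible subcategory of $\Ku(W)$, which is connected Calabi--Yau of dimension two and therefore indecomposable by Bridgeland's trick (Lemma \ref{lem:BridgelandTrick}). Hence $\Phi_\cE$ is an equivalence.

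The main obstacle is not the Fourier--Mukai bookkeeping just sketched but the input from Theorems \ref{thm:main1}, \ref{thm:stabconn}, and \ref{thm:YoshiokaMain}. Producing Bridgeland stability conditions on $\Ku(W)$ via the inducing procedure of Section \ref{subsec:inducing} ultimately rests on the non-commutative Bogomolov inequality of Theorem \ref{thm:Bogomolov}, and showing that the moduli spaces $M_\sigma(\Ku(W),\vv_1)$ are smooth projective hyperk\"ahler manifolds of the expected dimension is a genuine non-commutative extension of the Mukai--O'Grady--Yoshioka theory. These results form the technical heart of the paper, and once they are available the argument for Theorem \ref{thm:AT} reduces to the standard moduli-theoretic construction outlined above.
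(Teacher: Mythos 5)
Your proposal follows essentially the same route as the paper: in the converse direction, construct a K3 surface as the moduli space $M_\sigma(\Ku(W),\vv_1)$ of $\sigma$-stable objects for a two-dimensional Mukai vector, then use a universal family to induce a fully faithful Fourier--Mukai functor and conclude by indecomposability of $\Ku(W)$. One small difference in presentation: the paper first obtains a quasi-universal family, producing a twisted equivalence $\Db(S,\alpha)\cong\Ku(W)$, and only afterwards uses the existence of $\vv_2$ with $(\vv_1,\vv_2)=1$ to kill the Brauer class $\alpha$; you collapse these two steps by invoking the vanishing of the Brauer obstruction upfront. That is fine.

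There is one genuine slip in the ``only if'' direction. You assert that the classes $[\cO_S]$ and $[\cO_p]$ are both isotropic for the Euler form. That is false for $[\cO_S]$: on a K3 surface $\chi(\cO_S,\cO_S)=2$, so $[\cO_S]$ has square $2$ (equivalently, its Mukai vector $(1,0,1)$ has Mukai square $-2$, not $0$). The Gram matrix of $\{[\cO_S],[\cO_p]\}$ under the Euler form is $\bigl(\begin{smallmatrix}2&1\\1&0\end{smallmatrix}\bigr)$, which is indeed isometric to $U$ after the base change $[\cO_S]\mapsto [\cO_S]-[\cO_p]=[I_p]$; the isotropic pair you want is $\{[I_p],[\cO_p]\}$ (Mukai vectors $(1,0,0)$ and $(0,0,1)$). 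Your phrase ``up to an obvious sign change'' does not fix this, since the issue is a nonzero self-pairing, not a sign. The conclusion still holds, but the basis should be corrected.
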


At the lattice level, the condition $U\hra K_\mathrm{num}(\Ku(W))$ implies that $W$ is \emph{special}, in the sense of Hassett \cite{Has:special}. Roughly speaking, a cubic fourfold $W$ is \emph{special} if $H^4(W,\Z)\cap H^{2,2}(W)$ contains the class of a surface which is not homologous to the self-intersection $H^2$ of a hyperplane class in $W$. These special cubic fourfolds organize themselves in divisors of Noether-Lefschetz type.

Theorem \ref{thm:AT} was first proved by Addington and Thomas in \cite{AT:cubic} generically on these divisors. The completion of their result is in \cite{BLMNPS:families}.
Both results though rely on Proposition \ref{prop:KuznetsovPfaffian} (or a variant of it, for cubic fourfolds containing a plane; see Remark \ref{rmk:twistedK3} below).

\begin{rema}\label{rmk:twistedK3}
An analogous result can be proved, to characterize cubic fourfolds $W$ for which $\Ku(W)$ is equivalent to $\Db(S,\alpha)$, where $S$ is a K3 surface and $\alpha$ is an element in the Brauer group $\mathrm{Br}(S):=H^2(S,\cO_S^*)_\mathrm{tor}$ of $S$. Given a twisted K3 surface, i.e., a pair $(S,\alpha)$ as above, we can define the abelian category $\coh(S,\alpha)$ of $\alpha$-twisted coherent sheaves on $S$ (see \cite[Chapter 1]{Cal:thesis} for an extensive introduction). We set $\Db(S,\alpha):=\Db(\coh(S,\alpha))$.

Following \cite{Kuz:CubicFourfolds}, it is not difficult to construct examples where $\Ku(W)\cong\Db(S,\alpha)$. Indeed, consider a generic cubic fourfold $W$ containing a plane $P$. Let $P'$ another plane in $\P^5$ which is skew with respect to $P$. Let $\pi_P\colon W\dashrightarrow P'$ be the natural projection from $P$. Given $p\in P'$, the preimage $\pi_P^{-1}(p)$ is the union of $P$ and a quadric $Q_p$. By blowing-up $\widetilde{\pi}_P\colon\widetilde{W}\to P'$, we get a quadric fibration. Given the double nature of $\widetilde{W}$ as a blow-up and as a quadric fibration, one can combine Examples \ref{ex:QuadricFibrations} and \ref{ex:BlowUps} and show that $\Ku(W)\cong\Db(P',\cB_0)$.

Back to the geometric setting and due to the genericity assumption on $W$, the quadric $Q_p$ is singular if and only if $p$ belongs to a smooth sextic $C\subseteq P'$. The double cover $S$ of $P'$ ramified along $C$ is a smooth K3 surface and the quadric fibration provided by $\pi_P$ yields a natural class $\alpha\in\mathrm{Br}(S)$. Moreover $\Db(P',\cB_0)\cong\Db(S,\alpha)$.

The rephrasing of Theorem \ref{thm:AT} in the twisted setting is the following. Let $W$ be a cubic fourfold.
Then $\Ku(W)$ is equivalent to the derived category of a twisted K3 surface $(S,\alpha)$ if and only if there is a primitive vector $\mathbf{v}\in K_\mathrm{num}(\Ku(W))$ such that $\mathbf{v}^2=0$.
This was proved generically on Hassett divisors by Huybrechts in \cite{Huy:cubics}, and the completion is in \cite{BLMNPS:families}.

It should be noted that the condition of having an isotropic vector in $K_\mathrm{num}(\Ku(W))$ mentioned above is equivalent to the condition of having a primitive embedding $U(n)\hra K_\mathrm{num}(\Ku(W))$. This shows the analogy with the untwisted case considered in the theorem above. We conclude by observing that a partial result in the case of cubics containing a plane is in \cite{Mosc:Cubics}.
\end{rema}

\subsection{Gushel-Mukai manifolds}
\label{subsec:GM}

In this section, we assume $\mathrm{char}(\K)=0$.
Gushel-Mukai manifolds were introduced and studied in a series of papers \cite{IM:EPW,DIM:GM,DK1,DK2,DK3}, based on earlier classification results in \cite{Gus:GM,Muk:GM}.

\begin{defi}\label{def:GM}
A \emph{Gushel-Mukai (GM) manifold} is a smooth $n$-dimensional intersection
\[
X := \mathrm{Cone}(\mathrm{Gr}(2,5))\cap \P^{n+4}\cap Q,\qquad 2\leq n\leq 6,
\]
where $\mathrm{Cone}(\mathrm{Gr}(2,5))\subset\P^{10}$ is the cone over the Grassmannian $\mathrm{Gr}(2,5)\subset\P^9$ in its Pl\"ucker embedding, $\P^{n+4}\subset\P^{10}$ is a linear subspace, and $Q\subset\P^{n+4}$ is a quadric hypersurface.
\end{defi}

The geometry of GM manifolds is very rich and it is the subject of much interest recently, also due to their similarity (and connections) with cubic fourfolds.
We will only shortly recall the definition of Kuznetsov component for $n=4,6$, and mention a few results towards Question \ref{ques:rationality} and Question \ref{ques:K3} in these cases, mostly without proofs.
Our main reference is \cite{KP:GM}, and we refer there for all details.

\subsubsection*{The Kuznetsov component}

We assume $n=4,6$.\footnote{If $n=2$, then $X$ is a K3 surface. If $n$ is odd, everything goes through in the same way, but the Kuznetsov component is an Enriques-type category, with $S^2=[4]$.}
In this case $X$ is a Fano manifold.
Since $X$ is smooth, the intersection $\mathrm{cone}(\mathrm{Gr}(2,5))\cap Q$ does not contain the vertex of the cone.
Hence, we can consider the projection from the vertex of the cone in the Grassmannian
\[
f\colon X \to \mathrm{Gr}(2,5),
\]
which is called the \emph{Gushel map}.

There are two possibilities for the Gushel map.
Either $f$ is an embedding and its image is a quadric section of a smooth linear section of $\mathrm{Gr}(2,5)$ (in such a case, we say that the GM manifold is \emph{ordinary}), or $f$ is a double covering onto a smooth linear section of $\mathrm{Gr}(2,5)$, ramified along a quadric section (in such a case, we say that the GM manifold is \emph{special}, and we denote by $\tau$ the involution). In either case, we denote the smooth linear section by $M_X$.

\begin{lemm}\label{lem:LefschetzDecoLinearSectionGrass(2,5)}
Let $\iota\colon M\hra \mathrm{Gr}(2,5)$ be a smooth linear section of dimension $N\geq 3$. Then $M$ has a rectangular Lefschetz decomposition with respect to $\cO_M(1):=\cO_{\mathrm{Gr}(2,5)}(1)|_M$:
\[
\Db(M) = \langle \cB_M,\cB_M(1),\ldots,\cB_M(N-2) \rangle,
\]
where $\cB_M=\{\cO_M,\cU_M^\vee \}$, $\cU_M:=\cU_{\mathrm{Gr}(2,5)}|_M$.
\end{lemm}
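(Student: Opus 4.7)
The plan is to deduce this from the rectangular Lefschetz decomposition of $\mathrm{Gr}(2,5)$ recalled in Example \ref{ex:Grass},
\[
\Db(\mathrm{Gr}(2,5)) = \langle \cB, \cB(1), \cB(2), \cB(3), \cB(4) \rangle, \qquad \cB = \langle \cO, \cU^\vee \rangle,
\]
by propagating it down to $M$ along $\iota \colon M \hra \mathrm{Gr}(2,5)$. Since $M$ is cut out by a regular section of $\cO(1)^{\oplus (6-N)}$, the main tool is the Koszul resolution
\[
0 \to \cO(-(6-N)) \to \cdots \to \cO(-1)^{\oplus (6-N)} \to \cO \to \iota_*\cO_M \to 0
\]
on the ambient Grassmannian.

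The first step is to verify that the collection
\[
\cO_M,\ \cU_M^\vee,\ \cO_M(1),\ \cU_M^\vee(1),\ \dots,\ \cO_M(N-2),\ \cU_M^\vee(N-2)
\]
is exceptional in $\Db(M)$. By the projection formula combined with the Koszul resolution above, every $\Ext^\bullet_M$ between two of these generators is rewritten as the hypercohomology on $\mathrm{Gr}(2,5)$ of a complex whose terms are of the form $\cO(j)$, $\cU \otimes \cO(j)$, or $\cU \otimes \cU \otimes \cO(j)$ with appropriate twists. Each sheaf cohomology can then be evaluated by Borel-Bott-Weil; the index range $0 \leq i, j \leq N-2 \leq 4$, coupled with the length $5$ of Fonarev's decomposition, yields the desired vanishings and the isomorphisms $\Hom(\cO_M, \cO_M) = \Hom(\cU_M^\vee, \cU_M^\vee) = \K$.

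The second and harder step is fullness. I would argue by induction on the codimension $c = 6 - N$. The base case $c = 0$ is Fonarev's theorem. For the inductive step, I realize $M$ as a smooth hyperplane section of a linear section $M' \supset M$ of dimension $N+1$ admitting, by the inductive hypothesis, a rectangular decomposition $\langle \cB_{M'}, \cB_{M'}(1), \ldots, \cB_{M'}(N-1) \rangle$. Applying the standard hyperplane-section argument (distinguished triangle $\cO_{M'}(-1) \to \cO_{M'} \to \iota_*\cO_M$ followed by restriction and mutation, as in Kuznetsov's general theory of Lefschetz varieties) produces a semiorthogonal decomposition
\[
\Db(M) = \langle \cC_M,\ \cB_M(1),\ \ldots,\ \cB_M(N-2) \rangle
\]
with $\cB_M \subseteq \cC_M$; the content of the lemma is the equality $\cC_M = \cB_M$.

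The principal obstacle is precisely this last equality: a priori $\cC_M$ could have a strictly larger ``primitive'' piece, as happens for linear sections of non-balanced Lefschetz varieties. I would eliminate this by a numerical check, comparing $\mathrm{rk}\,K_0(M)$ (computable from the topological Euler characteristic of the smooth Fano manifold $M$) with the expected value $2(N-1)$ coming from a full exceptional collection of the proposed shape, and confirming the match via Proposition \ref{prop:HochschildSO} together with the Hodge numbers of $M$. Equivalently, and more explicitly, one can restrict Fonarev's resolution of the diagonal of $\mathrm{Gr}(2,5)$ to $M \times M$ and verify that it remains a resolution of $\cO_{\Delta_M}$ built out of the claimed generators, which immediately implies fullness. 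Either verification is greatly simplified by the fact that $\cB$ is generated by only two exceptional objects on $\mathrm{Gr}(2,5)$.
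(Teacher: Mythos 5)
Your route is genuinely different from the paper's. The paper quotes Kuznetsov's hyperplane-section theorem directly, and for a self-contained argument gives an indirect Serre-functor proof: for $M$ a hyperplane section of $\mathrm{Gr}(2,5)$, the pushforward $\iota_*$ is spherical and compatible with the rectangular decomposition of Example~\ref{ex:Grass}, so Theorem~\ref{thm:KuznetsovMain} forces any residual Kuznetsov component of $M$ to be Calabi-Yau of dimension $-3$; by Proposition~\ref{prop:HochschildCY} such a category would have $\HH_{-3}\neq 0$, and by Proposition~\ref{prop:HochschildSO} this would contradict $\HH_\bullet(M)$ being concentrated in degree $0$, so the residual piece is zero. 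You instead try direct Borel--Bott--Weil computations for exceptionality and an induction on codimension for fullness.

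The exceptionality step of your plan is fine in principle, but the fullness step as you phrase it has two real gaps. First, the numerical check: knowing $\mathrm{rk}\,K_0(M)=2(N-1)$, or that the Hodge numbers of $M$ match $\bigoplus \HH_\bullet(\cB_M(i))$, does not imply $\cC_M=\cB_M$. An admissible subcategory can be a \emph{phantom}, with trivial $K_0$ and trivial Hochschild homology, so agreement of numerical invariants cannot by itself kill the residual piece. What actually rules this out in the paper's argument is the \emph{Serre-functor} observation: the residual piece is forced to be Calabi-Yau of a specific (negative) dimension, and a Calabi-Yau category is never a phantom because $\HH^0\neq 0$ (Proposition~\ref{prop:HochschildCY}). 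Your sketch never invokes the Serre functor of the residual category, so the numerical match is a necessary but not sufficient check. Second, the "restrict the diagonal resolution to $M\times M$" alternative does not produce a resolution of $\cO_{\Delta_M}$: since $\Delta_{\mathrm{Gr}}$ and $M\times M$ fail to meet transversally in $\mathrm{Gr}\times\mathrm{Gr}$ once $\mathrm{codim}\,M>0$, the derived restriction $Li^*\cO_{\Delta_{\mathrm{Gr}}}$ carries nonzero higher Tor (a Koszul complex on the normal bundle), so the restricted complex resolves a derived self-intersection, not the structure sheaf of the diagonal of $M$. To salvage your plan you would need to either run Kuznetsov's Lefschetz-center / hyperplane-section machinery honestly (which is what \cite{Kuz:hyperplane} does), or replace the numerical check by the Calabi-Yau/Hochschild-homology argument as in the paper.
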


\begin{proof}
This is \cite[Theorem 1.2 \& Section 6.1]{Kuz:hyperplane} (see also \cite[Lemma 2.2]{KP:GM}). It can also be obtained, in an indirect way, from Theorem \ref{thm:KuznetsovMain}; we briefly sketch the argument.
Indeed, assume, for simplicity, that $M$ has dimension $5$.
Then, the functor $\iota_*$ is spherical and compatible with the rectangular Lefschetz decomposition of Example \ref{ex:Grass}.
The corresponding Kuznetsov component is Calabi-Yau of dimension $-3$.
By Proposition \ref{prop:HochschildSO} and Proposition \ref{prop:HochschildCY}, this is impossible since the Hochschild homology of $M$ can be computed and it is concentrated in degree $0$.
\end{proof}

\begin{lemm}\label{lem:sphericalGM}
The functor $f_*\colon\Db(X)\to \Db(M_X)$ is spherical.
The associated spherical twists are $T_X=\cO_X(-2)[2]$, $T_{M_X}=\cO_{M_X}(-2)$, if $X$ is ordinary, and $T_X=\tau\circ \cO_X(-1)[1]$, $T_{M_X}=\cO_{M_X}(-1)[-1]$, if $X$ is special.
\end{lemm}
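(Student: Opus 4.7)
The strategy is to treat the ordinary and special cases separately; in each, I identify the two adjoints of $f_*$, compute the compositions $f^*f_*$ and $f_*f^*$ using a suitable resolution, verify the sphericality conditions of Definition \ref{def:SphericalFunctor} by direct inspection, and then read off the twist functors from the resulting distinguished triangles.

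In the ordinary case, $f$ embeds $X$ as a divisor in $M_X$ cut out by a section of $\cO_{M_X}(2)$, so $N_{X/M_X}=\cO_X(2)$ and Grothendieck duality gives $\Psi_L=f^*$ and $\Psi_R(\blank)=f^*(\blank)\otimes \cO_X(2)[-1]$. The Koszul resolution $0\to\cO_{M_X}(-2)\to\cO_{M_X}\to f_*\cO_X\to 0$ together with the projection formula yields, for $F\in\Db(X)$ and $G\in\Db(M_X)$, distinguished triangles
\[
F\otimes\cO_X(-2)[1]\to f^*f_*F\to F \qquad\text{and}\qquad G\otimes\cO_{M_X}(-2)\to G\to f_*f^*G.
\]
Substituting the first triangle (applied to $f^*G$) into $\Psi_L\Phi\Psi_R$, one gets a triangle with outer terms $\Psi_L(G)$ and $\Psi_R(G)$, and a direct check shows that the natural transformations appearing in Definition \ref{def:SphericalFunctor} induce a splitting; sphericality is equivalent to this splitting and follows by the same computation on both sides. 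The first triangle then identifies $T_X(F) = F\otimes\cO_X(-2)[2]$, and the second identifies $T_{M_X}(G) = G\otimes\cO_{M_X}(-2)$ after the shift in the definition.

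In the special case, $f$ is a flat double cover ramified along $B\in|\cO_{M_X}(2)|$ with involution $\tau$, so $f_*\cO_X=\cO_{M_X}\oplus\cO_{M_X}(-1)$ and the relative dualizing sheaf is $\omega_{X/M_X}=f^*\cO_{M_X}(1)$; hence $\Psi_R(\blank)=f^*(\blank)\otimes\cO_X(1)$. The projection formula immediately gives $f_*f^*G=G\oplus G\otimes\cO_{M_X}(-1)$, so $T_{M_X}(G)=G\otimes\cO_{M_X}(-1)[-1]$. For $f^*f_*F$ I apply flat base change along the fiber square: $X\times_{M_X}X$ is the scheme-theoretic union $\Delta_X\cup\Gamma_\tau$ of the diagonal and the graph of $\tau$, glued along the ramification divisor $R\subset X$, and Mayer-Vietoris for $\cO_{X\times_{M_X}X}$ produces, after pushforward, a triangle
\[
\tau^*F\otimes\cO_X(-1)\to f^*f_*F\to F.
\]
This yields $T_X(F)=\tau^*F\otimes\cO_X(-1)[1]$, i.e.\ $T_X=\tau\circ\cO_X(-1)[1]$. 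Sphericality is then verified in the same manner as in the ordinary case, using this triangle in place of the Koszul one.

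The main obstacle is the special case, precisely the computation of $f^*f_*$: because $X\times_{M_X}X$ is non-reduced along $R$, the Mayer--Vietoris decomposition of its structure sheaf needs a careful local verification to extract the twist by $\cO_X(-1)$. A cleaner alternative is to use the $\Z/2$-equivariant description $M_X=X/\tau$ and decompose $f_*F$ according to $\tau$-eigenspaces; the non-trivial character contributes the relative dualizing sheaf $\omega_{X/M_X}=f^*\cO_{M_X}(1)$, from which the formula for $T_X$ follows directly and functorially in $F$.
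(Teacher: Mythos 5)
The paper proves this lemma by citing \cite[Proposition~3.4]{Kuz:CY} with the words ``this is a direct check,'' so your proposal supplies the computation the paper is merely gesturing at; the adjoints, the Koszul triangles, the identity $f_*f^*G\cong G\oplus G\otimes\cO_{M_X}(-1)$, and the final formulas for the twists are all correct.

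The gaps are concentrated in the special case and in the sphericality verification. First, a factual slip: $X\times_{M_X}X$ is not non-reduced along $R$. Locally it is $\Spec\,\cO[t_1,t_2]/(t_1^2-s,\,t_2^2-s)$, which injects into $\cO_\Delta\oplus\cO_{\Gamma_\tau}$ and is therefore reduced --- it is the \emph{nodal} union of two smooth copies of $X$ along $R$. Second, and more substantively, the Mayer--Vietoris (normalization) sequence $\cO_{X\times_{M_X}X}\to\cO_\Delta\oplus\cO_{\Gamma_\tau}\to\cO_R$, tensored with $p_2^*F$ and pushed forward by $p_1$, yields a triangle of the shape $f^*f_*F\to F\oplus\tau^*F\to(\text{$R$-supported term})$, which is not the triangle $\tau^*F\otimes\cO_X(-1)\to f^*f_*F\to F$ you wrote. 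To reach yours one must compose with the projection onto $F$ (recovering the counit), apply the octahedral axiom, and identify the resulting cone with $\tau^*F\otimes\cO_X(-1)[1]$ via $\cO_X(R)\cong\cO_X(1)$ and the restriction triangle $\tau^*F(-1)\to\tau^*F\to F|_R$; this is exactly where the $\cO_X(-1)$-twist comes from and has to be argued, not asserted. The eigenspace alternative you mention has an analogous issue: $f_*F$ carries no canonical $\tau$-action for a non-equivariant $F$ (one only has the natural isomorphism $f_*F\cong f_*\tau^*F$), so the decomposition really must be run through the equivariant category rather than on $f_*F$ directly. Finally, in both cases the two conditions of Definition~\ref{def:SphericalFunctor} are asserted rather than verified: your triangles only exhibit $\Psi_R\Phi\Psi_L$ and $\Psi_L\Phi\Psi_R$ as extensions of $\Psi_R$ by $\Psi_L$, and the actual content of the ``direct check'' is that the natural transformations built from the (co)units split them --- in the ordinary case the relevant obstruction lands in a cohomology group that vanishes by Kodaira vanishing and Serre duality on the Fano $X$, and this deserves to be spelled out.
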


\begin{proof}
This is a direct check; see \cite[Proposition 3.4]{Kuz:CY} for the details.
\end{proof}

By using Lemma \ref{lem:sphericalGM} and the rectangular Lefschetz semiorthogonal decomposition of $M_X$ in Lemma \ref{lem:LefschetzDecoLinearSectionGrass(2,5)}, we can check that the compatibilities of Setup \ref{setup:KuznetsovCY} are met.\footnote{If $n=4$, we have $d=2$ and $m=4$, for $X$ ordinary, and $d=1$ and $m=3$, for $X$ special; if $n=6$, we have $d=1$ and $m=5$.}
Hence, we have a semiorthogonal decomposition
\[
\Db(X) = \langle \Ku(X), \cB_X, \cB_X(1), \ldots, \cB_X(n-3)\rangle,
\]
where $\cB_X:=f^*\cB=\langle \cO_X, \cU_X^\vee\rangle$.
By Theorem \ref{thm:KuznetsovMain}, the Serre functor $S_{\Ku(X)}=[2]$.

The Hochscild homology of $\Ku(X)$ can been computed, again by using Proposition \ref{prop:HochschildSO} and the Hochschild-Kostant-Rosenberg Theorem, since the Hodge diamond of $X$ is known (see \cite{IM:EPW,DK2}; in particular, \cite[Proposition 2.9]{KP:GM}). It coincides with the Hochschild homology of a K3 surface.
Therefore, $\Ku(X)$ is a non-commutative K3 surface.

\subsubsection*{Ordinary GM fourfolds containing a quintic del Pezzo surface}

The analogous result of Proposition \ref{prop:KuznetsovPfaffian} for GM fourfolds is the following (see \cite[Theorem 1.2]{KP:GM}).

\begin{theo}\label{thm:KuzPerryGMvsK3}
Let $X$ be an ordinary GM fourfold containing a quintic del Pezzo surface.
Then there is a K3 surface $S$ such that $\Ku(X)\cong\Db(S)$. 
\end{theo}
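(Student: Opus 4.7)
The plan is to follow the template of Proposition \ref{prop:KuznetsovPfaffian}: construct an explicit K3 surface $S$ from the incidence geometry of $(X,\Sigma)$, write down a Fourier--Mukai kernel on $S\times X$, check via the Bondal--Orlov criterion that the induced functor $\Phi\colon\Db(S)\to\Db(X)$ is fully faithful with image inside $\Ku(X)$, and finally upgrade this to an equivalence using that $\Ku(X)$ is a connected $2$-Calabi--Yau category and therefore indecomposable.

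First I would construct the K3 surface. An ordinary GM fourfold is $X=\mathrm{Gr}(2,5)\cap\P^8\cap Q$ and the quintic del Pezzo $\Sigma=\mathrm{Gr}(2,5)\cap\P^5$ sits inside $X$ via a linear inclusion $\P^5\subset\P^8$. The geometric datum $(\Sigma,X)$ singles out a complementary Pl\"ucker slice which, combined with the quadric $Q$ (or a quadric naturally extracted from it), produces a degree-$10$ K3 surface of the form $S\cong \mathrm{Gr}(2,5)\cap\P^6\cap Q'$. One must verify that $S$ is smooth and $2$-dimensional for $X$ in the class described in the theorem; this is a transversality check on the Pl\"ucker embedding. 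With $S$ in hand, define the incidence correspondence
\[
\Gamma := \{(s,x)\in S\times X\mid \cU_s\cap\cU_x\neq 0\}\subset S\times X,
\]
where $\cU$ is the tautological rank-$2$ subbundle on $\mathrm{Gr}(2,5)$ and $s,x$ are viewed as $2$-dimensional subspaces of a fixed $5$-dimensional vector space. After an appropriate twist by a line bundle pulled back from $X$, the ideal sheaf $\cI_\Gamma$ gives a kernel $P$ and a candidate functor $\Phi=\Phi_P\colon\Db(S)\to\Db(X)$.

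Second I would check semiorthogonality, namely that $\Phi(\cO_s)\in\Ku(X)$ for every $s\in S$; this reduces to cohomological computations on the fibers $\Gamma_s$ against $\cO_X,\cU_X^\vee,\cO_X(1),\cU_X^\vee(1)$. Fully faithfulness then follows from the Bondal--Orlov criterion: for distinct $s_1,s_2\in S$, the fibers $\Gamma_{s_1}$ and $\Gamma_{s_2}$ are distinct codimension-$2$ subschemes of $X$ (pinning down $\Hom$ and, by Serre duality in the $2$-Calabi--Yau category $\Ku(X)$, also $\Ext^2$), while the Euler pairing $\chi(\Phi(\cO_{s_1}),\Phi(\cO_{s_2}))=0$ forces $\Ext^1$ to vanish as well. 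Essential surjectivity onto $\Ku(X)$ is then automatic: by Lemma \ref{lem:BridgelandTrick}, the connected Calabi--Yau $2$-category $\Ku(X)$ admits no proper admissible subcategory, so any fully faithful functor from $\Db(S)$ into it is an equivalence.

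The main obstacle is the first step: identifying the K3 surface $S$ canonically from $(X,\Sigma)$ and showing that it is smooth and that $\Gamma$ has the expected codimension. In the Pfaffian cubic case, the expected codimension of the analogous incidence was $3$ but the actual codimension turned out to be $2$; an analogous excess-intersection phenomenon is to be expected here and must be controlled by hand via a careful Pl\"ucker-geometric analysis. Once this geometric input is secured, the cohomological verifications and the Bondal--Orlov argument run closely parallel to those of Proposition \ref{prop:KuznetsovPfaffian}.
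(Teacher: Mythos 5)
Your high-level template (construct a K3 surface $S$, write an incidence ideal sheaf as Fourier--Mukai kernel, verify fully faithfulness on skyscrapers via Bondal--Orlov, and conclude by indecomposability of the connected $2$-Calabi--Yau category $\Ku(X)$) mirrors the Pfaffian cubic case, but the paper gives no proof of this theorem at all: it simply cites \cite[Theorem 1.2]{KP:GM} and records only that $S$ is a \emph{generalized dual} GM surface. The proof in \cite{KP:GM} is of a genuinely different kind. The surface $S$ is identified via the Lagrangian-subspace data encoding GM varieties, and the equivalence $\Ku(X)\cong\Db(S)$ is established through a chain of birational transformations and mutations of semiorthogonal decompositions rather than by exhibiting an explicit kernel on $S\times X$ and running the Bondal--Orlov criterion; this is an instance of the GM duality conjecture, now settled in full generality by the categorical joins machinery of \cite{KP:Joins}. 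The explicit kernel is only known a posteriori, and the indirect route is taken precisely because a direct check is not accessible.

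The concrete gap in your proposal is in the identification of $S$ and of $\Gamma$. The generalized dual GM surface does not live in the same Grassmannian as $X$: if $X$ is cut out of $\mathrm{Gr}(2,V_5)$ for a $5$-dimensional subspace $V_5$ of a $6$-dimensional $V_6$, then the dual $S$ sits inside the Grassmannian of $2$-planes in a hyperplane of the \emph{dual} space $V_6^\vee$. Your proposed incidence $\cU_s\cap\cU_x\neq 0$ is therefore not even well-posed, since $\cU_s$ and $\cU_x$ are $2$-planes in mutually dual vector spaces and have no natural intersection. Note that exactly this asymmetry is already present in the Pfaffian template you are following: there $S\subset\mathrm{Gr}(2,V)\subset\P(\Lambda^2V)$ while $W\subset\P(\Lambda^2V^\vee)$, and the incidence $s\cap\ker(w)\neq 0$ is asymmetric accordingly. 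Your ``complementary Pl\"ucker slice'' heuristic does not recover the dual surface and does not even place $S$ in the correct ambient space, so the cohomological verifications and the parallel with Proposition \ref{prop:KuznetsovPfaffian} cannot be started.
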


The geometric construction of the K3 surface $S$ in Theorem \ref{thm:KuzPerryGMvsK3} is rather concrete. In the language of \cite{KP:GM}, $S$ is a \emph{generalized dual} of the Gushel-Mukai fourfold $X$. We do not need to be explicit here about this. But it is worth mentioning that $S$ is a Gushel-Mukai surface (i.e., $n=2$ in Definition \ref{def:GM}).

With a view toward Question \ref{ques:rationality}, GM fourfolds as in Theorem \ref{thm:KuzPerryGMvsK3} are actually rational (see \cite[Lemma 4.7]{KP:GM}): very roughly, by blowing up a quintic del Pezzo surface, we get a fibration over $\P^2$ whose general fiber is a smooth quintic del Pezzo surface. Since, by a theorem of Enriques, Manin, and Swinnerton-Dyer, a quintic Del Pezzo surface defined over an infinite field $k$ is $k$-rational \cite{SB:RationalityQuintic}, this shows rationality over $\P^2$, and so rationality of $X$.

\subsubsection*{Gushel-Mukai manifolds and K3 surfaces}

In \cite{KP:GM} there are many interesting conjectures on the Kuznetsov components of GM manifolds, in particular related to duality. In the very recent preprint \cite{KP:Joins}, the generalized duality conjecture \cite[Conjecture 3.7]{KP:GM} has been completely solved (see \cite[Corollary 9.21]{KP:Joins}).
This gives an analogue of Theorem \ref{thm:KuzPerryGMvsK3} for GM sixfolds.
On the other hand, we still do not know even a generic answer to Question \ref{ques:K3} for GM fourfolds.

\subsection{Debarre-Voisin manifolds}
\label{subsec:DV}

Also in this section, we assume $\mathrm{char}(\K)=0$.
Debarre-Voisin manifolds were studied in \cite{DV:Examples} with the aim of constructing new examples of locally complete families of polarized hyperk\"ahler fourfolds.
Their derived categories are less studied: indeed much less is known with respect to the two previous examples, and all basic questions are still open.

Let $\iota \colon X\hra \mathrm{Gr}(3,10)$ be a smooth linear section.
It is a Fano manifold of dimension $20$.
We denote by $\cO_X(1)$ the restriction to $X$ of the Pl\"ucker line bundle $\cO_{\mathrm{Gr}(3,10)}(1)$.
A computation similar to what we saw before, gives:

\begin{lemm}\label{lem:DV_examples}
The functor $\iota_*\colon\Db(X)\to\Db(\mathrm{Gr}(3,10))$ is spherical.
The associated spherical twists are $T_X=\cO_X(-1)[2]$ and $T_{\mathrm{Gr}(3,10)}=\cO_{\mathrm{Gr}(3,10)}(-1)$.
\end{lemm}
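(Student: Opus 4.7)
The plan is to mirror the strategy of the analogous Lemmas \ref{lem:CubicFourfoldSpherical} and \ref{lem:sphericalGM}: verify sphericality of $\iota_*$ by a direct computation from the Koszul resolution of $\iota_*\cO_X$, and then read off both spherical twists from the same resolution.

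The first step is to set up the adjoints. Since $X$ is a smooth Pl\"ucker hyperplane section of $G:=\mathrm{Gr}(3,10)$, we have the Koszul resolution
\[
0\to \cO_G(-1)\to \cO_G\to \iota_*\cO_X\to 0,
\]
and the adjunction formula identifies the normal bundle as $\cO_X(1)$, so the adjoints of $\iota_*$ are $\iota^*\dashv \iota_*\dashv \iota^!$ with $\iota^!(-)\cong \iota^*(-)\otimes \cO_X(1)[-1]$. Pulling the Koszul sequence back to $X$, where its differential becomes zero, yields $\iota^*\iota_*\cO_X\cong \cO_X\oplus \cO_X(-1)[1]$, and hence by the projection formula
\[
\iota^!\iota_*\iota^*(F)\cong \iota^*(F)\oplus \iota^*(F)(1)[-1]\cong \iota^*(F)\oplus \iota^!(F).
\]
To finish checking Definition \ref{def:SphericalFunctor}, I would verify (exactly as in the cubic fourfold case) that the two summands above really correspond to the maps $\iota^*\to\iota^!\iota_*\iota^*$ and $\iota^!\to\iota^!\iota_*\iota^*$ induced by the units of adjunction; the second sphericality condition is then handled dually, using counits.

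For the twists, the triangle $F\otimes \cO_G(-1)\to F\to \iota_*\iota^*(F)$ obtained by tensoring the Koszul triangle with $F$ and using the projection formula immediately gives
\[
T_G(F)=\mathrm{cone}\bigl(F\to \iota_*\iota^*(F)\bigr)[-1]\cong F\otimes \cO_G(-1),
\]
so $T_{\mathrm{Gr}(3,10)}=\cO_{\mathrm{Gr}(3,10)}(-1)$. For $T_X$, one can either compute directly from $\iota^*\iota_*(F)\cong F\oplus F(-1)[1]$, with the counit projecting onto the first summand so that $T_X(F)=\mathrm{cone}(\iota^*\iota_*(F)\to F)\cong F(-1)[2]$, or invoke Proposition \ref{prop:SphericalTwist} and deduce $T_X=\cO_X(-1)[2]$ from the commutation $\iota_*\circ T_X=T_G\circ \iota_*\circ [2]$ and the projection formula.

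The only step requiring genuine care is the identification of the two abstract summands of $\iota^!\iota_*\iota^*$ with the images of the unit-induced morphisms: this is the standard bookkeeping silently invoked in the ``direct computation'' of Lemma \ref{lem:CubicFourfoldSpherical} for a smooth hypersurface embedding, and no new ingredient beyond the Koszul resolution and the adjunction formula is required.
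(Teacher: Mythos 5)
Your computation is correct and is precisely the "direct check via the Koszul resolution" that the paper invokes without spelling out (recall the text introduces this lemma with "A computation similar to what we saw before gives", referring back to the cubic fourfold and Gushel–Mukai cases, themselves left as direct checks or cited to Kuznetsov). The identifications $\iota^*\iota_*\cO_X\cong\cO_X\oplus\cO_X(-1)[1]$ (because the Koszul differential vanishes on $X$), $\iota^!\cong\iota^*(-)\otimes\cO_X(1)[-1]$ (since $\omega_{X/G}\cong\cO_X(1)$ for the hyperplane section of $\mathrm{Gr}(3,10)$), and the resulting $\iota^!\iota_*\iota^*\cong\iota^*\oplus\iota^!$ are all right, and so are the two twists read off from the Koszul triangle and the counit. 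Two small remarks: the bookkeeping you flag — that the abstract splitting is realized by the unit/counit morphisms — is exactly the step the paper silently elides, and your acknowledgment of it is appropriate; and your alternative derivation of $T_X$ from Proposition \ref{prop:SphericalTwist} requires a further word (since $\iota_*$ is not fully faithful one cannot conclude $T_X(F)\cong F(-1)[2]$ from $\iota_*T_X(F)\cong\iota_*(F(-1)[2])$ alone), so the direct computation via the counit is the cleaner route and should be taken as the primary argument.
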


By using Fonarev's rectangular Lefschetz decomposition of Example \ref{ex:Grass}, the compatibilities of Setup \ref{setup:KuznetsovCY} are met ($d=1$ and $m=10$), and we obtain a semiorthogonal decomposition
\[
\Db(X) = \langle \Ku(X), \cB_X, \ldots, \cB_X(8) \rangle,
\]
where $\cB_X:=\iota^*\cB_{\mathrm{Gr}(3,10)}$ has a strong full exceptional collection of length 12.
By Theorem \ref{thm:KuznetsovMain}, the category $\Ku(X)$ is $2$-Calabi-Yau.
The Hodge numbers of $X$ have been computed in \cite[Theorem 1.1]{DV:Examples}; by using the Hochschild-Kostant-Rosenberg Theorem again, we have
\[
\HH_\bullet (X) = \K[-2] \oplus \K^{130} \oplus \K[2],
\]
and so $\Ku(X)$ is an example of non-commutative K3 surface.

There is no Debarre-Voisin manifold where Question \ref{ques:K3} have been answered yet.

\subsection{The Mukai lattice of the Kuznetsov component}
\label{subsec:Mukai}

We assume throughout this section that the base field is the complex numbers, $\K=\C$.
We introduce a lattice structure in Hochschild homology in the non-commutative K3 surface examples discussed in the previous sections.
This, together with the Hodge structure, corresponds to the usual Mukai structure for (derived categories of) K3 surfaces; we refer to \cite[Chapter 10]{Huy:FM} for a summary of results on K3 surfaces.

\subsubsection*{Topological K-theory}

Let $\cD$ be a non-commutative smooth projective variety.
A general construction of the \emph{topological K-theory} associated to $\cD$ is in \cite{Bla:TopKth}. In our setting, this can be introduced in a way more closely related to the usual K-theory of a complex manifold as follows.

\begin{setup}\label{setup:TopKth}
Let $X$ be a smooth projective variety over $\C$.
We assume that:
\begin{itemize}
\item $H^*(X,\Z)$ is torsion-free and $H^{\mathrm{odd}}(X,\Z)=0$;
\item there is a semiorthogonal decomposition
\[
\Db(X) = \langle \cD_X, E_1, \ldots, E_m\rangle
\]
with $\{E_1,\ldots,E_m\}$ an exceptional collection.
\end{itemize}
\end{setup}

We consider the topological K-theory $K_\mathrm{top}(X)$ of $X$.
In our setup, since cohomology is torsion-free and odd cohomology vanishes, $K_\mathrm{top}(X)=K_\mathrm{top}^0(X)$.
This is defined as the Grothendieck group of topological $\C$-vector bundles on $X$.

The basic properties of topological K-theory in our setup are the following (see \cite{AH:Ktop,AH:Ktop2}; see also \cite[Section 2]{AT:cubic}):
\begin{enumerate}
\item $K_\mathrm{top}(\mathrm{pt})=\Z$.
\item  Any morphism $f\colon X\to Y$ induces a pull-back morphism $f^*\colon K_\mathrm{top}(Y)\to K_\mathrm{top}(X)$ and push-forward morphism $f_*\colon K_\mathrm{top}(X)\to K_\mathrm{top}(Y)$. There is a projection
formula and a Grothendieck-Riemann-Roch formula. One can also take tensor products and duals of classes in topological K-theory.
\item The \emph{Mukai vector}
\[
\vv\colon K_\mathrm{top}(X) \to H^*(X,\Q) \qquad \vv:=\ch.\sqrt{\mathrm{td}_X}
\]
is injective and induces an isomorphism over $\Q$. In particular, $K_\mathrm{top}(X)$ is torsion-free.
\item The \emph{Mukai pairing} $(\blank,\blank)$ on $K_\mathrm{top}(X)$ is defined as follows. Pick a map $p\colon X\to\mathrm{pt}$ to a point and define the topological Euler pairing as
\[
\chi(v_1,v_2):=p_*(v_1^\vee\otimes v_2)\in K_\mathrm{top}(\mathrm{pt}),
\]
for all $v_1,v_2\in K_\mathrm{top}(X)$. Notice that, by (1), $\chi(v_1,v_2)$ is an integer. We can now set $(\blank,\blank):=-\chi(\blank,\blank)$.
\item Consider the following modification of the Hochschild-Konstant-Rosemberg isomorphism introduced in Remark \ref{rmk:HKR}: 
\[
I^X_K:=(\mathrm{td}(X)^{-1/2}\lrcorner(-))\circ I^X_\mathrm{HKR},
\]
where $\mathrm{td}(X)^{-1/2}\lrcorner(-)$ denotes the contraction by $\mathrm{td}(X)^{-1/2}$. We can then take the following sequence of morphisms
\[
K_\mathrm{top}(X)\hookrightarrow K_\mathrm{top}(X)\otimes\C\stackrel{\vv}{\longrightarrow}H^*(X,\C)\stackrel{(I^X_K)^{-1}}{\longrightarrow}\HH_*(X),
\]
where $\vv$ denotes here the $\C$-linear extension of the Mukai vector. The composition above is compatible with the various Mukai pairings defined on topological K-theory, singular cohomology and Hochschild homology. Indeed $\vv$ preserves the Mukai pairing by Grothendieck-Riemann-Roch for complex vector bundles while $I^X_K$ does the same by \cite{Ra:RR}.
\end{enumerate}

Now let $X_1$ and $X_2$ be smooth projective varieties over $\C$ and let $P\in\Db(X_1\times X_2)$. Consider the Fourier-Mukai functor $\Phi_P(\blank):=(p_2)_*(P\otimes p_1^*(\blank))\colon\Db(X_1)\to\Db(X_2)$. Since, by (2), pull-back, push-forward and tensorization induce compatible morphisms $(\Phi_P)_K$ and $(\Phi_P)_H$ at the level of (topological) K-theory and singular cohomology (with $\Q$ coefficients), we can consider the following diagram:
\begin{equation}\label{eqn:bigdiagram}
\xymatrix{
\Db(X_1)\ar[d]^-{\Phi_P}\ar[r]^-{[\blank]}&K(X_1)\ar@{^{(}->}[r]\ar[d]^-{(\Phi_P)_K}&K_\mathrm{top}(X_1)\ar[r]^{\vv}\ar[d]^-{(\Phi_P)_K}&H^*(X_1,\Q)\ar[rr]^{(I^{X_1}_K)^{-1}}\ar[d]^-{(\Phi_P)_H}&&\HH_*(X_1)\ar[d]^-{(\Phi_P)_{\HH}}\\
\Db(X_2)\ar[r]^-{[\blank]}&K(X_2)\ar@{^{(}->}[r]&K_\mathrm{top}(X_2)\ar[r]^{\vv}&H^*(X_2,\Q)\ar[rr]^{(I^{X_2}_K)^{-1}}&&\HH_*(X_2).
}
\end{equation}

\begin{lemm}\label{lem:bigsquare}
All squares in \eqref{eqn:bigdiagram} are commutative.
\end{lemm}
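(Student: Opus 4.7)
The plan is to verify commutativity of the diagram one square at a time; each square asks whether a Fourier--Mukai type kernel acts compatibly on a pair of standard invariants of smooth projective varieties, so all four squares are known compatibility statements, of increasing depth as one moves from left to right.

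The first three squares are formal or standard. The leftmost square $\Db\to K$ commutes by the very definition of the action of a Fourier--Mukai kernel on the Grothendieck group, since $(\Phi_P)_K[E]=[(p_2)_*(P\otimes p_1^*E)]=[\Phi_P(E)]$. The square $K\hra K_\mathrm{top}$ commutes because, by property (2) of topological K-theory recalled above, the inclusion is compatible with pullback, projective pushforward, and tensor product, and the algebraic and topological Fourier--Mukai actions are defined by identical formulae on the kernel class $[P]$. The square $K_\mathrm{top}\to H^*$ commutes by the topological Grothendieck--Riemann--Roch theorem: since $(\Phi_P)_H$ is by definition the integral transform with kernel $\vv(P)$, an application of GRR to the projection $p_2$, together with the projection formula and the multiplicativity of $\sqrt{\td}$ under external products, yields $\vv\circ(\Phi_P)_K=(\Phi_P)_H\circ\vv$ (see \cite[Ch.~5]{Huy:FM} for the algebraic form of this argument and \cite{AH:Ktop2} for the topological one).

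The final square, comparing $(\Phi_P)_H$ on singular cohomology with $(\Phi_P)_{\HH}$ on Hochschild homology via $I^X_K=(\td(X)^{-1/2}\lrcorner(-))\circ I^X_\mathrm{HKR}$, is the non-trivial one and is the main obstacle. Writing $(\Phi_P)_{\HH}$ in terms of its Fourier--Mukai kernel as in the proof of Theorem \ref{thm:HochschildHomologyProperties}, the problem reduces to showing that the HKR isomorphism, corrected by $\sqrt{\td}$ on each side, is equivariant under the natural pushforward and tensor product operations appearing in the definition of the integral transform. This is exactly the content of C\u{a}ld\u{a}raru's Mukai formula, established in the generality needed here by Ramadoss \cite{Ra:RR} (building on \cite{Mar:Hochschild,CW:Hochschild,Cal:Hochschild1,Cal:Hochschild2}): the same $\td^{-1/2}$-correction that makes the Mukai pairings match under HKR is what makes integral transforms intertwine correctly. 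Once this input is in hand, commutativity of the last square follows directly, completing the proof.
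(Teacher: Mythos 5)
Your decomposition of the problem into four squares, and your treatment of the first three, matches the paper: they follow formally from the definitions of the induced maps together with Grothendieck--Riemann--Roch (the paper dispenses with them in one sentence, but the content is exactly what you write).

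For the rightmost square, however, you are conflating two distinct statements. Ramadoss's theorem \cite{Ra:RR} proves that the $\td^{-1/2}$-twisted HKR isomorphism $I^X_K$ is an \emph{isometry} for the Mukai pairings; this is exactly what the paper invokes in item (5) just above the diagram, and it is a statement about a bilinear form, not about functoriality. What the rightmost square requires is the stronger (and logically independent) assertion that $I^X_K$ \emph{intertwines} the action of a Fourier--Mukai kernel on singular cohomology with its action on Hochschild homology. That pairings match under $I^X_K$ does not by itself yield this intertwining; the implication you sketch (``the same $\td^{-1/2}$-correction \ldots is what makes integral transforms intertwine correctly'') is a heuristic, not a proof, and is precisely the content of C\u{a}ld\u{a}raru's conjecture. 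The paper cites \cite[Theorem 1.2]{MS:IDTT} for this, which is where the functoriality statement is actually established. So the structure of your argument is right, but the key input for the last square needs to be \cite{MS:IDTT}, not \cite{Ra:RR}.
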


\begin{proof}
The commutativity of the first three squares on the left follows directly  by the definition of the induced morphisms. The commutativity of the rightmost square is proved in \cite[Theorem 1.2]{MS:IDTT}.
\end{proof}

The induced morphism $(\Phi_P)_H$ is compatible with the Hodge structure on $H^*(X_i,\Q)$ induced by the isomorphisms $I^{X_i}_K$ between the total cohomology groups and the Hochschild homologies $\HH_*(X_i)$.

\begin{defi}\label{def:TopKth}
Assume we are in Setup \ref{setup:TopKth}.
We define the \emph{topological K-theory} of $\cD_X$ as
\[
K_\mathrm{top}(\cD_X) := \left\{ u\in K_\mathrm{top}(X)\,:\, ([E_i],u)=0, \text{ for all } i=1,\ldots,m\right\}.
\]
\end{defi}

Now let $X_1$ and $X_2$ be as in Setup \ref{setup:TopKth} and let $\Phi\colon\cD_{X_1}\to\cD_{X_2}$ be a Fourier-Mukai functor. In this setting, it is not hard to show that we can rewrite \eqref{eqn:bigdiagram} in the following way:
\begin{equation*}\label{eqn:bigdiagram1}
\xymatrix{
\cD_{X_1}\ar[r]\ar[d]^-{\Phi}&K(\cD_{X_1})\ar@{^{(}->}[r]\ar[d]^-{\Phi_K}&K_\mathrm{top}(\cD_{X_1})\ar[r]\ar[d]^-{\Phi_K}&\HH_*(\cD_{X_1})\ar[d]^-{\Phi_{\HH}}\\
\cD_{X_2}\ar[r]&K(\cD_{X_2})\ar@{^{(}->}[r]&K_\mathrm{top}(\cD_{X_2})\ar[r]&\HH_*(\cD_{X_2}).
}
\end{equation*}
The Mukai and Hodge structures in the above diagram are compatible as well.

Finally, the Mukai structure is invariant under deformations.
More precisely, let $C$ be a smooth quasi-projective curve over $\C$, and let $g\colon \cX\to C$ be a smooth projective morphism.
We let $\cE_1,\ldots,\cE_m\in\Db(\cX)$ be families of exceptional objects and we assume we have a $C$-linear semiorthogonal decomposition\footnote{In our smooth setting, $C$-linearity simply means that each semiorthogonal factor is closed under tensorization by pull-backs of objects from $\Db(C)$.}
\[
\Db(\cX) = \langle\cD_{\cX}, \cE_1\otimes \Db(C), \ldots, \cE_m\otimes \Db(C)\rangle.
\]
By \cite{Kuz:BaseChange}, for each closed point $c\in C$, we have a semiorthogonal decomposition
\[
\Db(\cX_c) = \langle\cD_{\cX_c}, \cE_1|_{c}, \ldots, \cE_m|_{c}\rangle.
\]
We assume that each closed fiber $\cX_c$ and the above semiorthogonal decomposition are as in Setup \ref{setup:TopKth}.
Then, since topological K-theory is invariant by smooth deformations, we have the following result.

\begin{lemm}\label{lem:TopKthandK3s}
In the above notation and assumptions, we have that the topological K-theory $K_{\mathrm{top}}(\cD_{\cX_c})$ and its Mukai structure are invariant as $c\in C$ varies. In particular, if there exists $c_0\in C$ and a smooth projective K3 surface $S$ such that $\cD_{\cX_{c_0}}\cong \Db(S)$, we have $K_{\mathrm{top}}(\cD_{\cX_c})\cong \widetilde{\Lambda}:=E_8(-1)^{\oplus 2}\oplus U^{\oplus 4}$ as lattice, for all $c\in C$.
\end{lemm}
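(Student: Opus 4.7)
The plan is to split the statement into two independent claims: first, that the lattices $K_{\mathrm{top}}(\cD_{\cX_c})$ (with Mukai pairing) fit into a local system on $C$ and hence are all abstractly isomorphic; second, that when one fiber is equivalent to the derived category of a K3 surface, this common lattice is identified with $\widetilde{\Lambda}$.

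For the first claim, I would exploit the fact that $g\colon\cX\to C$ is smooth and projective, so by Ehresmann's theorem it is a locally trivial $C^{\infty}$-fiber bundle over $C$. Consequently, the assignment $c\mapsto K_{\mathrm{top}}(\cX_c)$ is the fiber of a local system of $\Z$-modules on $C$, and the Mukai pairing, being induced by the topological Euler pairing which is defined fiberwise from the $C^{\infty}$-structure, is a flat (parallel) bilinear form on this local system. The global objects $\cE_1,\dots,\cE_m\in\Db(\cX)$ have classes $[\cE_i]\in K_{\mathrm{top}}(\cX)$ whose restrictions $[\cE_i|_c]$ give flat global sections of the local system; this is because $K_{\mathrm{top}}(\cX)\to K_{\mathrm{top}}(\cX_c)$ factors through the monodromy-invariant part. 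Taking the orthogonal complement with respect to the parallel Mukai pairing of a collection of flat sections produces a sub-local system, whose fiber at $c$ is by definition $K_{\mathrm{top}}(\cD_{\cX_c})$. Since $C$ is connected, any two fibers of a local system are isomorphic as lattices with their induced pairings, which proves the first claim.

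For the second claim, suppose $c_0\in C$ and $\Phi\colon\Db(S)\isomto\cD_{\cX_{c_0}}\hra\Db(\cX_{c_0})$ is a Fourier-Mukai equivalence with $S$ a smooth projective K3. By the discussion around diagram~\eqref{eqn:bigdiagram} and Lemma~\ref{lem:bigsquare}, the Fourier-Mukai kernel of $\Phi$ induces a map $\Phi_K\colon K_{\mathrm{top}}(S)\to K_{\mathrm{top}}(\cX_{c_0})$ which is compatible with the Mukai pairing; the image lies in $K_{\mathrm{top}}(\cD_{\cX_{c_0}})$ by the semiorthogonality with the $[\cE_i|_{c_0}]$, and the analogous argument for the quasi-inverse of $\Phi$ produces an inverse map, so $\Phi_K$ restricts to a lattice isometry $K_{\mathrm{top}}(S)\isomto K_{\mathrm{top}}(\cD_{\cX_{c_0}})$. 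Combined with the classical identification $K_{\mathrm{top}}(S)\cong\widetilde{\Lambda}$ for a K3 surface (since $H^*(S,\Z)$ is torsion-free and concentrated in even degrees, the Mukai vector $\vv\colon K_{\mathrm{top}}(S)\to H^*(S,\Z)$ is an isomorphism of lattices with Mukai pairing, and this lattice is the extended K3 lattice), we conclude $K_{\mathrm{top}}(\cD_{\cX_{c_0}})\cong\widetilde{\Lambda}$, and by the first claim the same holds for every $c\in C$.

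The main technical obstacle I anticipate is justifying rigorously that the construction $c\mapsto K_{\mathrm{top}}(\cX_c)$ really forms a local system of lattices with parallel Mukai pairing and with $[\cE_i]$ giving flat sections; this requires either appealing to the smooth triviality of the family (Ehresmann) together with continuity of pullback in topological K-theory under deformation of the base point, or to the more refined formalism of Blanc's topological K-theory of dg-categories, which directly gives locally constant behavior in families. Once this framework is in place, the orthogonality argument and the identification at a single fiber are formal.
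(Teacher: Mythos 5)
Your proof is correct and is essentially a fleshing-out of what the paper does: the paper states this lemma without a formal proof, simply invoking that topological K-theory is invariant under smooth deformations, and your local-system argument via Ehresmann's theorem, the flatness of the sections $[\cE_i|_c]$, the parallel Mukai pairing, and the identification $K_{\mathrm{top}}(\cD_{\cX_{c_0}})\cong K_{\mathrm{top}}(S)\cong\widetilde{\Lambda}$ at a single fiber is precisely the intended expansion of that remark.
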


\subsubsection*{Examples}

Let $W$ be a cubic fourfold defined over $\C$. Since $W$ can be deformed to a Pfaffian cubic fourfold $W'$ and we proved in Proposition \ref{prop:KuznetsovPfaffian} that $\Ku(W')\cong\Db(S)$, for $S$ a smooth projective K3 surface, Lemma \ref{lem:TopKthandK3s} implies that $K_{\mathrm{top}}(\Ku(W))$, endowed with the Mukai pairing, is isometric to the K3 lattice $\widetilde{\Lambda}$.

The lattice $K_{\mathrm{top}}(\Ku(W))$ has a weight-$2$ Hodge structure coming from Hochschild homology; explicitly, it can be defined in terms of the weight-$4$ Hodge structure on $H^4(W,\Z)$:
\begin{align*}
&\widetilde{H}^{2,0}(\Ku(W)):=\vv^{-1}(H^{3,1}(W))\\
&\widetilde{H}^{1,1}(\Ku(W)):=\vv^{-1}\left(\bigoplus_{p=0}^4H^{p,p}(W)\right)\\
&\widetilde{H}^{0,2}(\Ku(W)):=\vv^{-1}(H^{1,3}(W)).
\end{align*}

The lattice together with the Hodge structure is called the \emph{Mukai lattice} of $\Ku(W)$ and denoted by $\widetilde{H}(\Ku(W),\Z)$. We set
\begin{align*}
&\widetilde{H}_\mathrm{Hodge}(\Ku(W),\Z):=\widetilde{H}(\Ku(W),\Z)\cap\widetilde{H}^{1,1}(\Ku(W))\\
&\widetilde{H}_\mathrm{alg}(\Ku(W),\Z):=K_\mathrm{num}(\Ku(W)).
\end{align*}
We will see later in Theorem \ref{thm:YoshiokaMain} that we have $\widetilde{H}_\mathrm{Hodge}(\Ku(W),\Z)=\widetilde{H}_\mathrm{alg}(\Ku(W),\Z)$.
This will imply the integral Hodge conjecture holds for cubic fourfolds (see Proposition \ref{prop:intHodge}).

\begin{exam}\label{ex:twistedK3}
Let $(S,\alpha)$ be a twisted K3 surface. Then the total cohomology $H^*(S,\Z)$ is endowed with a Mukai pairing and a weight-$2$ Hodge structure which depends on a lift to $H^2(S,\Q)$ of $\alpha$ (see, for example, \cite{HS}). This lattice with this Hodge structure is called the Mukai lattice and it is denoted by $\widetilde{H}(S,\alpha,\Z)$. When $\alpha$ is trivial we simply write $\widetilde{H}(S,\Z)$ and we have that $\widetilde{H}^{2,0}(S)=H^{2,0}(S)$, $\widetilde{H}^{0,2}(S)=H^{0,2}(S)$ while $\widetilde{H}^{1,1}(S)=H^{0}(S,\C)\oplus H^{1,1}(S)\oplus H^4(S,\C)$. If $\Ku(W)\cong\Db(S,\alpha)$, then the two integral Hodge structures coincide.
\end{exam}

\begin{rema}\label{rmk:A2}
Consider the projection functor $\delta\colon\Db(W)\to\Ku(W)$ and fix any line $L\subset W$ (as we will see later, there is always a $4$-dimensional family of lines in a cubic fourfold). Define
\[
\llambda_1:=\vv(\delta(\cO_L(1))\qquad\llambda_2:=\vv(\delta(\cO_L(2)).
\]
This vectors generate a primitive positive definite sublattice
\[
A_2 =\begin{pmatrix} 2 & -1 \\ -1 & 2 \end{pmatrix} \subset \widetilde{H}_\mathrm{Hodge}(\Ku(W),\Z)
\]
This embedding of $A_2$ moves in families. We should think of this primitive sublattice as the choice of a lattice polarization on $\Ku(W)$: we will return to this when defining Bridgeland stability conditions on $\Ku(W)$.
By \cite[Proposition 2.3]{AT:cubic} we have a Hodge isometry
\begin{equation}\label{eqn:perpend}
\langle \llambda_1,\llambda_2 \rangle^{\perp} \cong H^4_\mathrm{prim}(W,\Z)(-1),
\end{equation}
where $H^4_\mathrm{prim}(W,\Z)$ is the orthogonal complement of the self-intersection of a hyperplane class. By the Local Torelli theorem (see \cite[Section 6.3.2]{Voi:book2}), it follows that the very general cubic fourfold $W$ has the property that $A_2 =\widetilde{H}_\mathrm{Hodge}(\Ku(W),\Z)$.
\end{rema}

\begin{rema}\label{rmk:Nikulin}
A direct computation shows that the discriminant group of $A_2$ is the cyclic group $\Z/3\Z$. Thus \cite[Theorem 1.6.1 and Corollary 1.5.2]{Ni} implies that any autoisometry of $A_2$ extends to an autoisometry of $\widetilde{H}(\Ku(W),\Z)$. Viceversa, the same results from \cite{Ni} yield that any autoisometry of the orthogonal $A_2^\perp$ in $\widetilde{H}(\Ku(W),\Z)$ extends to an autoisometry of the Mukai lattice.
\end{rema}

\begin{exam}\label{ex:O}
For a cubic fourfold $W$, we can consider the autoequivalence $O_{\Ku(W)}$. Its action on $\widetilde{H}(\Ku(W),\Z)$ was investigated in \cite[Proposition 3.12]{Huy:cubics}. In particular, $(O_{\Ku(W)})_H$ fixes the sublattice $A_2$ and cyclicly permutes the elements $\llambda_1$, $\llambda_2$ and $-\llambda_1-\llambda_2$. On the other hand, $(O_{\Ku(W)})_H$ acts as the identity on $H^4_\mathrm{prim}(W,\Z)$.
\end{exam}

As we observed in Example \ref{ex:twistedK3}, an equivalence $\Db(S)\cong\Ku(W)$ induces a Hodge isometry $\widetilde{H}(S,\Z)\cong\widetilde{H}(\Ku(W),\Z)$.
If such a Hodge isometry exists, we say that $W$ \emph{has a Hodge theoretically associated K3 surface}. Notice that if $W$ has an Hodge theoretically associated K3 surface $S$, then the copy of the hyperbolic lattice generated by $H^0(S,\Z)$ and $H^4(S,\Z)$ embeds primitively in $K_\mathrm{num}(\Ku(W))\subseteq\widetilde{H}_\mathrm{Hodge}(\Ku(W),\Z)$. A simple application of \cite[Theorem 1.6.1 and Corollary 1.5.2]{Ni} shows that the converse is also true. Namely, if there is a primitive embedding $U\hookrightarrow K_\mathrm{num}(\Ku(W))$, then $W$ has a Hodge theoretically associated K3 surfaces. All in all, these two conditions are equivalent.

If $X$ is a Gushel-Mukai manifold, Theorem \ref{thm:KuzPerryGMvsK3} (for fourfolds; in the sixfolds case, this is \cite[Corollary 9.21]{KP:Joins}) implies that $X$ can be deformed to a Gushel-Mukai manifold $X'$ such that $\Ku(X')\cong\Db(S)$, for a $S$ a K3 surface. Hence the discussion above can be repeated for Gushel-Mukai manifolds. For Debarre-Voisin manifolds, this is not yet known even though it is expected to hold true as well.
Moreover, in the case of Gushel-Mukai fourfolds, the concept of Hodge theoretically associated K3 surfaces is slightly different though from the case of cubic fourfolds. Indeed, these are special in the sense of \cite{DIM:GM}, and clearly $\widetilde{H}(S,\Z)\cong\widetilde{H}(\Ku(X),\Z)$ is still equivalent to the condition $U\hra K_{\mathrm{num}}(\Ku(X))$. But, contrary to the cubic fourfolds case, having a Hodge theoretically associated K3 surface may not be divisorial for GM fourfolds, as observed in \cite[Section 3.3]{pertusi:GMK3}.

\subsection{Derived Torelli theorem}
\label{subsec:Torelli}

Let us go back to the case of a twisted K3 surface $(S,\alpha)$. The total cohomology $H^*(S,\R)$ comes with an orientation provided by the four positive (with respect to the Muaki pairing) vectors
\[
v_1:=(0,\omega, 0)\qquad v_2:=\left(1,0,-\frac{\omega^2}{2}\right)\qquad v_3:=\mathrm{Re}\,\psi\qquad v_4:=\mathrm{Im}\,\psi,
\]
where $\omega$ is a positive real multiple of an ample line bundle and $\psi$ is a generator of $H^{2,0}(S)$.

\begin{rema}\label{rmk:staborient}
In the language of stability contitions that will be introduced in Section \ref{sec:Bridgeland}, the choce of $v_1$ and $v_2$ correspond to the choice of a stability condition in a connected component of the space of stability conditions of $\Db(S)$. Note that this space is expected to be connected.
\end{rema}

The following result was first proved by Orlov in his seminal paper \cite{Orl:FM} (with the addition of \cite{HMS:Orient}) and in \cite{HS,HS1,Rei:OrientTwist} for twisted K3 surfaces:

\begin{theo}[Derived Torelli theorem for K3 surfaces]\label{thm:derTorelli}
Let $(S_1,\alpha_1)$ and $(S_2,\alpha_2)$ be twisted K3 surfaces. Then the following are equivalent:
\begin{enumerate}
\item There exists an equivalence $\Db(S_1,\alpha_1)\cong\Db(S_2,\alpha_2)$;
\item There exists an orientation preserving Hodge isometry $\widetilde{H}(S_1,\alpha_1,\Z)\cong\widetilde{H}(S_2,\alpha_2,\Z)$.
\end{enumerate}
\end{theo}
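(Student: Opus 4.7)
The plan is to handle the two implications separately and then address the orientation subtlety as a third, more delicate point. For the direction $(1) \Rightarrow (2)$, I would start by invoking Orlov's Representability Theorem (and its twisted variant) to conclude that any exact equivalence $\Phi\colon\Db(S_1,\alpha_1)\isomto\Db(S_2,\alpha_2)$ is of Fourier--Mukai type, with kernel $P \in \Db(S_1\times S_2, \alpha_1^{-1}\boxtimes \alpha_2)$. The compatibility diagram \eqref{eqn:bigdiagram}, extended to the twisted setting, then supplies a commutative square relating $\Phi$, its action $\Phi_K$ on (topological) $K$-theory and its action $\Phi_{\HH}$ on Hochschild homology. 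Since $\Phi_{\HH}$ is a morphism of Hodge structures (as is any Fourier--Mukai action through $I_K^{X}$), and since the Mukai vector is compatible with Grothendieck--Riemann--Roch, the map induced by $P$ on $\widetilde{H}(S_i,\alpha_i,\Z)$ is a Hodge isometry. Lemma \ref{lem:TopKthandK3s} guarantees that the target indeed lies in the integral Mukai lattice.

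For the direction $(2) \Rightarrow (1)$ the strategy is to realize the prescribed Hodge isometry $\varphi$ using moduli of stable twisted sheaves. First I would apply a sequence of ``obvious'' autoequivalences of $\Db(S_1,\alpha_1)$ --- shifts, tensoring with line bundles (or $B$-field twists in the twisted case), and spherical twists along $\cO_{S_1}$ and along appropriate $(-2)$-classes --- in order to normalize $\varphi$ so that $\varphi(0,0,1) =: \vv \in \widetilde{H}_{\mathrm{alg}}(S_2,\alpha_2,\Z)$ is primitive, isotropic, and of positive rank. For a $\vv$-generic polarization $H$ on $S_2$, the moduli space $M:=M_H(S_2,\alpha_2,\vv)$ is then a fine twisted K3 surface $(S_1',\alpha_1')$, and the universal family on $M\times S_2$ provides a Fourier--Mukai equivalence $\Db(S_1',\alpha_1')\isomto \Db(S_2,\alpha_2)$ whose cohomological action sends $(0,0,1)$ to $\vv$. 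A Hodge-theoretic matching of Mukai lattices together with the Torelli theorem (K3.1) for (twisted) K3 surfaces identifies $(S_1',\alpha_1')$ with $(S_1,\alpha_1)$; post-composing with one further autoequivalence of $\Db(S_1,\alpha_1)$ then corrects the cohomological action so that it coincides with $\varphi$.

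The hard part, and the one I expect to be the main obstacle, is the orientation-preservation claim in $(1) \Rightarrow (2)$, which is not automatic from Fourier--Mukai functoriality. The issue is that the group of Hodge isometries of $\widetilde{H}(S,\alpha,\Z)$ contains an index-two subgroup preserving the natural orientation of the positive $4$-plane, and one must show that the isometries coming from derived autoequivalences land inside this subgroup. The approach I would follow is to deform a given equivalence in a family of (twisted) K3 surfaces, using the deformation-invariance of topological $K$-theory from Lemma \ref{lem:TopKthandK3s}, to reduce to specific generators of the autoequivalence group whose action on the positive $4$-plane can be computed explicitly: the shift $[1]$, tensor products with line bundles, and spherical twists along spherical objects --- all of which one checks preserve the orientation by direct calculation. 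In the twisted setting, a further reduction to the untwisted case via lifts of Brauer classes to $B$-fields is needed, after which the argument proceeds as in the untwisted case. Conversely, for $(2) \Rightarrow (1)$ the key technical input is the non-emptiness and smoothness of $M_H(S_2,\alpha_2,\vv)$, which is the twisted analogue of (K3.2).
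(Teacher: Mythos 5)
The paper offers no proof of this statement; it is stated as a citation to Orlov (untwisted equivalence), Huybrechts--Macr\`i--Stellari (orientation), and Huybrechts--Stellari and Reinecke (twisted case), so there is no internal argument to compare against. Your outline is a reasonable reconstruction of that body of work, and the decomposition into Orlov representability, the Mukai--Orlov moduli construction, and the orientation question is the right one. Two caveats about the sketch itself. In $(2)\Rightarrow(1)$, a Hodge isometry of Mukai lattices between $(S_1,\alpha_1)$ and the moduli space $(S_1',\alpha_1')$ would by itself only yield a derived equivalence (indeed this is the very statement under proof); to conclude $(S_1',\alpha_1')\cong(S_1,\alpha_1)$ you must first normalize the composite isometry so that it fixes $(0,0,1)$ and $(1,0,0)$, whence it restricts to a Hodge isometry on (twisted) $H^2$, and only then invoke Torelli --- as written, your appeal to a ``Hodge-theoretic matching of Mukai lattices'' skips this normalization. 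And the twisted orientation argument is not simply a ``reduction to the untwisted case via $B$-field lifts''; the twist forces modifications both in the deformation step and in the analysis of generators of the autoequivalence group, which is precisely the content of Reinecke's paper, so that step deserves more than the one sentence you allot it.
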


To formulate a similar result in the context of non-commutative K3 surfaces arising in one of the three classes of examples discussed above, we need to specify an orientation on $\widetilde{H}(\Ku(X),\Z)$, where $X$ is either a cubic fourfold or a Gushel-Mukai manifold or a Debarre-Voisin manifold.

Inspired by Remark \ref{rmk:staborient} we could proceed as follows. Assume that $\Ku(X)$ has a stability condition $\sigma$. Then $\widetilde{H}(\Ku(X),\Z)$ contains four positive directions spanned by the real and imaginary part of the central charge $Z$ of $\sigma$ and the real and imaginary part of a generator of $\widetilde{H}^{2,0}(\Ku(X))$. We can then formulate the following natural question:

\begin{ques}[Huybrechts]\label{ques:derTorelli}
Let $X_1$ and $X_2$ be either cubic fourfold or Gushel-Mukai manifold or Debarre-Voisin manifold. Is it true that there exists a Fourier-Mukai equivalence $\Ku(X_1)\cong\Ku(X_2)$ if and only if there is an orientation preserving Hodge isometry $\widetilde{H}(\Ku(X_1),\Z)\cong\widetilde{H}(\Ku(X_2),\Z)$?
\end{ques}

A positive answer would give a non-commutative version of Theorem \ref{thm:derTorelli}. This would lead us to explore the relations between the existence of equivalences between the Kuznetsov components and the birational type of the fourfolds:

\begin{ques}[Huybrechts]\label{ques:birtype}
	Let $X_1$ and $X_2$ be fourfolds as above. Is it true that the existence of a Fourier-Mukai equivalence $\Ku(X_1)\cong\Ku(X_2)$ implies that $X_1$ and $X_2$ are birational?
\end{ques}

In the case of a cubic fourfold $W$, the above discussion about orientation can be made very precise, since $\widetilde{H}_\mathrm{alg}(\Ku(W),\Z)$ always contains a copy of the positivive definite lattice $A_2$. Together with the real and imaginary part of a generator of $\widetilde{H}^{2,0}(\Ku(W))$ this lattice provides a natural orientation on the Mukai lattice of $\Ku(W)$.

\begin{rema}\label{rmk:orient}
It was observed in \cite[Lemma 2.3]{Huy:cubics} that the Mukai lattice $\widetilde{H}(\Ku(W),\Z)$ is always endowed with an orientation reversing Hodge isometry. A way to construct this is by taking the isometry of $A_2$ such that $\llambda_1\mapsto-\llambda_1$ while $\llambda_2\mapsto\llambda_1+\llambda_2$. By Remark \ref{rmk:Nikulin}, this extends to a Hodge isometry of $\widetilde{H}(\Ku(W),\Z)$ which changes the orientation. Another more geometric way of describing such an orientation reversing Hodge isometry is by taking the action induced on the Mukai lattice by the autoequivalence of $\Ku(W)$ obtained by taking the dualizing functor $\mathbb{D}(\blank)=\mathbf{R}\cH om(\blank,\cO_W)$ (post)composed with the tensorization by $\cO_W(1)$.
\end{rema}

For cubic fourfolds, Question \ref{ques:derTorelli} has the following (partial) answer which is a slightly more precise version of items (i) and (ii) of \cite[Theorem 1.5]{Huy:cubics}.

\begin{theo}[Non-commutative Derived Torelli]\label{thm:derNVCorelli}
	Let $W_1$ and $W_2$ be cubic fourfolds such that either $W_1$ is very general or $\widetilde{H}_\mathrm{Hodge}(\Ku(W_1),\Z)$ contains a primitive vector $\vv$ with $\vv^2=0$. Then the following are equivalent:
	\begin{enumerate}
		\item There exists a Fourier-Mukai equivalence $\Ku(W_1)\cong\Ku(W_2)$;
		\item There exists a Hodge isometry $\widetilde{H}(\Ku(W_1),\Z)\cong\widetilde{H}(\Ku(W_2),\Z)$ which is orientation preserving.
	\end{enumerate}
\end{theo}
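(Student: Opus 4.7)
The plan is to handle each implication separately and, for the converse, to split into the two cases suggested by the hypothesis on $W_1$.

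For $(1)\Rightarrow(2)$, I would start with a Fourier-Mukai equivalence $\Phi\colon\Ku(W_1)\isomto\Ku(W_2)$ and use diagram \eqref{eqn:bigdiagram} together with Lemma \ref{lem:bigsquare} to extract an induced isomorphism $\Phi_K\colon\widetilde{H}(\Ku(W_1),\Z)\isomto\widetilde{H}(\Ku(W_2),\Z)$. This map is automatically a Hodge isometry: it preserves the Mukai pairing by Grothendieck-Riemann-Roch, and the $(2,0)$-part is determined as the image of $\HH_2$ under the modified HKR isomorphism. The non-trivial point is that $\Phi_K$ is orientation-preserving: this requires a $\Ku(W)$-analogue of the HMS orientation theorem. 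I would try to establish it by exhibiting a generating set of Fourier-Mukai autoequivalences of $\Ku(W)$ (shifts, the degree-shift $O_{\Ku(W)}$, and equivalences obtained by deformation to the Pfaffian locus, where Proposition \ref{prop:KuznetsovPfaffian} and Theorem \ref{thm:derTorelli} apply) and checking that each preserves the natural orientation on $\widetilde{H}(\Ku(W),\R)$ defined by $\llambda_1,\llambda_2$ together with the real and imaginary parts of a generator of $\widetilde{H}^{2,0}(\Ku(W))$.

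For $(2)\Rightarrow(1)$ in Case~(a), when $W_1$ is very general, Remark \ref{rmk:A2} gives $\widetilde{H}_\mathrm{Hodge}(\Ku(W_1),\Z)=A_2$, and the Hodge isometry $g$ forces $\widetilde{H}_\mathrm{Hodge}(\Ku(W_2),\Z)$ to have rank two as well, so $W_2$ is also very general. Restricting $g$ to the orthogonal complements $A_2^\perp$ and applying the identification \eqref{eqn:perpend} yields a Hodge isometry $H^4_\mathrm{prim}(W_1,\Z)\isomto H^4_\mathrm{prim}(W_2,\Z)$. Combining the Nikulin-type extension arguments invoked in Remark \ref{rmk:Nikulin} with the orientation-preserving hypothesis --- whose geometric content is unpacked in Remark \ref{rmk:orient} via the dualizing functor composed with the twist by $\cO_W(1)$ --- I can extend this to a Hodge isometry $H^4(W_1,\Z)\cong H^4(W_2,\Z)$ fixing the class $H^2$ rather than sending it to $-H^2$. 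The Torelli theorem for cubic fourfolds (C.1) then produces $W_1\cong W_2$ and hence an equivalence $\Ku(W_1)\cong\Ku(W_2)$.

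For Case~(b), when $\widetilde{H}_\mathrm{Hodge}(\Ku(W_1),\Z)$ carries a primitive isotropic vector, Remark \ref{rmk:twistedK3} supplies an equivalence $\Ku(W_1)\cong\Db(S_1,\alpha_1)$ for a twisted K3 surface, and transporting the isotropic vector across $g$ yields similarly $\Ku(W_2)\cong\Db(S_2,\alpha_2)$. Composing these equivalences with $g$, and using the implication $(1)\Rightarrow(2)$ already known for twisted K3 surfaces (Theorem \ref{thm:derTorelli}), I obtain an orientation-preserving Hodge isometry $\widetilde{H}(S_1,\alpha_1,\Z)\cong\widetilde{H}(S_2,\alpha_2,\Z)$; the derived Torelli theorem for twisted K3 surfaces then delivers a Fourier-Mukai equivalence $\Db(S_1,\alpha_1)\cong\Db(S_2,\alpha_2)$, which combined with the chosen equivalences gives $\Ku(W_1)\cong\Ku(W_2)$. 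The hardest step is expected to be the orientation issue in $(1)\Rightarrow(2)$: unlike for a K3 surface, one does not have a large explicit supply of Fourier-Mukai kernels inside $\Ku(W)\boxtimes\Ku(W)$, so the HMS-type statement must be obtained indirectly via deformation to the Pfaffian locus combined with a classification of the action of $\Aut(\Ku(W))$ on $\widetilde{H}(\Ku(W),\Z)$.
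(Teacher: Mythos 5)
Your treatment of $(2)\Rightarrow(1)$ matches the paper's in both cases (very general $W_1$, isotropic vector), with only minor unnecessary elaboration in case (a), where the orientation hypothesis is not actually used: Theorem~\ref{thm:Torelli} takes an isometry of $H^4_{\mathrm{prim}}$ directly, so extending to $H^4$ and controlling the sign of $H^2$ is not needed.

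The problem is the direction $(1)\Rightarrow(2)$. You propose to show that the Hodge isometry $\Phi_K$ induced by the given Fourier--Mukai equivalence is itself orientation-preserving, i.e., an analogue for $\Ku(W)$ of the HMS orientation theorem \cite{HMS:Orient}. That is a genuinely hard statement, not established in the paper, and you yourself flag the classification of $\Aut(\Ku(W))$ on the Mukai lattice as the missing ingredient. But the statement only asks for the \emph{existence} of some orientation-preserving Hodge isometry, not that the particular isometry induced by the equivalence preserves orientation. Remark~\ref{rmk:orient} exhibits, for any cubic fourfold, an explicit orientation-reversing Hodge autoisometry of $\widetilde H(\Ku(W),\Z)$ (coming from $\mathbb{D}\otimes\cO_W(1)$, or abstractly from $\llambda_1\mapsto-\llambda_1$, $\llambda_2\mapsto\llambda_1+\llambda_2$ extended via Remark~\ref{rmk:Nikulin}). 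Therefore, if $\Phi_K$ reverses orientation, simply postcompose with that autoisometry; the result is an orientation-preserving Hodge isometry, and $(1)\Rightarrow(2)$ holds in full generality, with no assumptions on $W_1$ and no orientation theorem. Your plan replaces a one-line observation with an open (and, for your purposes, irrelevant) problem; as written it does not close.
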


\begin{proof}
Condition (1) implies (2) in full generality, without the assumptions mentioned in the statement. Indeed, we observed in the previous section that a Fourier-Mukai equivalence $\Ku(W_1)\cong\Ku(W_2)$ induces an Hodge isometry $\widetilde{H}(\Ku(W_1),\Z)\cong\widetilde{H}(\Ku(W_2),\Z)$. In case it is not orientation preserving, we can apply Remark \ref{rmk:orient}.

Assume (2). If $W_1$ is very general, then $\widetilde{H}_\mathrm{Hodge}(\Ku(W_1),\Z)=A_2$, and so the same holds for $W_2$. By equation \eqref{eqn:perpend} the lattices $H^4_\mathrm{prim}(W_i,\Z)$ are the orthogonal complements of $A_2$. Thus, we get a Hodge isometry $H^4_\mathrm{prim}(W_1,\Z)\cong H^4_\mathrm{prim}(W_2,\Z)$. The Torelli theorem for cubic fourfolds mentioned in the introduction and reproved later in the last section (see Theorem \ref{thm:Torelli}), implies then that $W_1\cong W_2$. Hence, in particular, there is a Fourier-Mukai equivalence $\Ku(W_1)\cong\Ku(W_2)$.

If $\widetilde{H}_\mathrm{Hodge}(\Ku(W),\Z)$ contains a primitive vector $\vv$ with $\vv^2=0$, then, by Theorem \ref*{thm:AT} and Remark \ref{rmk:twistedK3}, there is a Fourier-Mukai equivalence $\Ku(W)\cong\Db(S,\alpha)$, for some twisted K3 surface $(S,\alpha)$. The fact that (2) implies (1) is then an easy application of Theorem \ref{thm:derTorelli}, as the orienatation preserving Hodge isometry $\widetilde{H}(\Ku(W_1),\Z)\cong\widetilde{H}(\Ku(W_2),\Z)$ yields an orientation preserving Hodge isometry $\widetilde{H}(S_1,\alpha_1,\Z)\cong\widetilde{H}(S_2,\alpha_2,\Z)$.
\end{proof}

Huybrechts' result \cite[Theorem 1.5]{Huy:cubics} has an additional part proving that the equivalence between (1) and (2) in Theorem \ref{thm:derNVCorelli} holds also for general points in the divisors of the moduli space $\cC$ of cubic fourfolds parameterizing special cubic fourfolds.

\begin{rema}
The tight analogy between Kuznetsov components of cubic fourfolds and K3 surfaces, suggests that the number of isomorphisms classes of cubic fourfolds with equivalent Kuznetsov components (with the equivalence given by a Fourier-Mukai functor) should be finite. Indeed, for K3 surfaces such a finitness result is due to Bridgeland and Maciocia \cite[Corollary 1.2]{BM} (and extended in \cite[Corollary 4.6]{HS} to twisted K3 surfaces). For Kuznetsov components, the same statement is proved in \cite[Theorem 1.1]{Huy:cubics}. Notice that the number of isomorphism classes of cubic fourfolds with equivalent Kuznetsov components can be arbitrarily large \cite{Pert:FM}. The same holds for (twisted) K3 surfaces \cite{Og:FM,HS,St:FM}.
\end{rema}

In the presence of well defined period maps, we could wonder if for two manifolds $X_1$ and $X_2$ which are either cubic or Gushel-Mukai or Debarre-Voisin, the following two conditions are equivalent:
\begin{enumerate}
	\item[(1)] There exists a Fourier-Mukai equivalence $\Ku(X_1)\cong\Ku(X_2)$ commuting with $O_{\Ku(X_1)}$ and $O_{\Ku(X_2)}$;
	\item[(2)] $X_1$ and $X_2$ are points of the same fibre of the period map.
\end{enumerate}
If $X_1$ and $X_2$ are cubic fourfolds, this is true and reduces once more to the Torelli Theorem. This will be explained in Theorem \ref{thm:CategoricalTorelli}.

\section{Bridgeland stability conditions}
\label{sec:Bridgeland}

In this section we give a short review on the theory of Bridgeland stability conditions, with a particular emphasis on the case of non-commutative K3 surfaces. The main references are still the original works \cite{Bri:stability,Bri:K3,KS:stability}.
There are also lecture notes on the subject; see, for example, \cite{Bay:stability,Huy:stability,MS:stability}.

\subsection{Definition and Bridgeland's Deformation Theorem}
\label{subsec:BridgelandMain}

Let $\cD$ be a non-commutative smooth projective variety.
The first ingredient in Bridgeland stability condition is the notion of bounded t-structures. We will actually only define what a heart of a bounded t-structure is; in view of \cite[Lemma 3.2]{Bri:stability}, this definition uniquely determines the bounded t-structure.

\begin{defi}\label{def:heart}
	A \emph{heart of a bounded t-structure} in $\cD$ is a full subcategory $\cA \subset \cD$ such that 
	\begin{enumerate}
		\item[(a)] for $E, F \in \cA$ and $n < 0$ we have $\Hom(E, F[n]) = 0$, and
		\item[(b)] for every $E \in \cD$ there exists a sequence of morphisms
		\[ 0 = E_0 \xrightarrow{\phi_1} E_1 \to \dots \xrightarrow{\phi_m} E_m = E \]
		such that the cone of $\phi_i$ is of the form $A_i[k_i]$ for some sequence
		$k_1 > k_2 > \dots > k_m$ of integers and objects $A_i \in \cA$.
	\end{enumerate}
\end{defi}

If $\cD=\Db(X)$, where $X$ is a smooth projective variety, then $\coh(X)\subset\Db(X)$ satisfies the axioms above and thus it is the heart of a bounded t-structure. Other more elaborate ways of constructing these subcategories are discussed in Section \ref{subsec:Tilt}. The heart of a bounded t-structure is always an abelian category.

\begin{defi}\label{def:Bridgeland}
Let $\Lambda$ be a finite rank free abelian group and let $v\colon K(\cD) \thra \Lambda$ be a surjective group homomorphism.
A \emph{Bridgeland stability condition} on $\cD$ (with respect to the pair $(\Lambda,v)$) is a pair $\sigma= (\cA, Z)$ consisting of the heart of a bounded t-structure $\cA\subset\cD$ and a group homomorphism $Z\colon \Lambda\to\C$ (called \emph{central charge}) such that:

(a) For every $0 \not= A \in \cA$, $Z(A)$\footnote{We abuse notation and denote $Z(v(A))$ by $Z(A)$. We use the identifications $K(\cA)=K(\cD)$.} lies in the extended upper-half plane, i.e., $\Im (Z(A)) \geq 0$ and if $\Im(Z(A))=0$, then $\mathrm{Re} (Z(A)) < 0$ (we say that $Z$ is a \emph{stability function}).

(b) The function $Z$ allows one to define a \emph{slope} by setting $\mu_{\sigma} := - \frac{\mathrm{Re} Z}{\Im Z}$ and a notion of stability:
An object $0 \not= E\in\cA$ is $\sigma$-\emph{semistable} if for every proper subobject $F$, we have $\mu_{\sigma}(F) \leq \mu_{\sigma}(E)$.
We then require any object $A$ of $\cA$ to have a Harder-Narasimhan filtration (HN filtration, for short) in semistable ones. This means that there is a finite sequence of monomorphisms in $\cA$
\[
0=E_0\hookrightarrow E_1\hookrightarrow\cdots\hookrightarrow E_{n-1}\hookrightarrow E_n=A
\]
such that the factors $F_j=E_j/E_{j-1}$ are $\mu_{\sigma}$-semistable and
\[
\mu_{\sigma}(F_1)>\mu_{\sigma}(F_2)\cdots>\mu_{\sigma}(F_n).
\]

(c) Finally, $\sigma$ satisfies the \emph{support property}:
There exists a quadratic form $Q$ on $\Lambda\otimes\R$ such that $Q|_{\Ker Z}$ is negative definite, and $Q(E)\geq0$, for all $\sigma$-semistable objects $E\in\cA$.
\end{defi}

For an object $E\in\cA$, the semistable objects in the filtration in Definition \ref{def:Bridgeland},(b) are called Harder-Narasimhan factors (HN factors, for short).
The definition of semistable object can be extended to objects in $\cD$: we say that an object $F\in\cD$ is $\sigma$-semistable if $F=E[n]$, for $E\in\cA$ $\sigma$-semistable and $n\in\Z$.
We can also define the \emph{phase} of a semistable object as follows: for $E\in\cA$ $\sigma$-semistable, we set $\phi(E):=\frac{1}{\pi}\mathrm{arg}(Z(E))\in(0,1]$ and $\phi(E[n])=\phi(E)+n$. The subcategory given by the union of $\sigma$-semistable objects in $\cD$ gives a \emph{slicing} of $\cD$ (this is important, but we will not need this explicitly in these notes; see \cite[Section 3]{Bri:stability}).
Finally, Harder-Narasimhan filtrations can be defined for any non-zero object $E\in\cD$, by combining the two filtrations in Definition \ref{def:Bridgeland},(b) and in Definition \ref{def:heart},(b).
We set $\phi_\sigma^+(E)$, respectively $\phi_\sigma^-(E)$, as the largest, respectively smallest, phase of the Harder-Narasimhan factors of $E$.

We denote by $\mathrm{Stab}_{(\Lambda,v)}(\cD)$ the set of stability conditions on $\cD$ as in the above definition. To simplify the notation, we often write $\mathrm{Stab}_{\Lambda}(\cD)$ or $\mathrm{Stab}(\cD)$ when $\Lambda$ and/or $v$ are clear. This set is actually a topological space: the topology is given by the coarsest one such that, for any $E \in\cD$, the maps
\[
\mathcal{Z}\colon(\cA,Z) \mapsto Z,
\quad (\cA, Z) \mapsto \phi^+(E),
\quad (\cA, Z) \mapsto \phi^-(E)
\]
are continuous. More explicitly this topology is induced by the generalized (i.e., with values in $[0,+\infty]$) metric
\[
d(\sigma_1,\sigma_2) = \underset{0\neq E \in \cD}{\sup}\left\{|\phi^+_{\sigma_1}(E)-\phi^+_{\sigma_2}(E)|,\, | \phi^-_{\sigma_1}(E) - \phi^-_{\sigma_2}(E)|,\, \|Z_1-Z_2\| \right\},
\]
for $\sigma_1,\sigma_2\in\mathrm{Stab}_{\Lambda}(\cD)$.
Here $\|\blank \|$ denotes the induced operator norm on $\Hom(\Lambda,\C)$, with respect to the choice of any norm in $\Lambda$.

The key result in the theory of stability conditions is Bridgeland Deformation Theorem. This is the main result of \cite{Bri:stability}.

\begin{theo}[Bridgeland]\label{thm:Bridgeland}
The continuous map $\mathcal{Z}$ is a local homemorphism and thus the topological space $\mathrm{Stab}_\Lambda(\cD)$ has a natural structure of a complex manifold of dimension $\rk(\Lambda)$.
\end{theo}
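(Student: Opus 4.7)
The strategy I would follow is Bridgeland's original one, adapted to the setting with the support property. Fix $\sigma_0 = (\cA_0, Z_0) \in \mathrm{Stab}_\Lambda(\cD)$; the goal is to show that in a neighborhood $U$ of $\sigma_0$, the map $\mathcal{Z}$ is a bijection onto an open subset of $\Hom(\Lambda, \C) \cong \C^{\mathrm{rk}\,\Lambda}$, with continuous inverse. Since $\mathcal{Z}$ is continuous by definition of the topology on $\mathrm{Stab}_\Lambda(\cD)$, once this is established the complex structure of dimension $\mathrm{rk}(\Lambda)$ is transported to $\mathrm{Stab}_\Lambda(\cD)$ via $\mathcal{Z}$.

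The heart of the argument is to lift every central charge $Z$ in a small ball around $Z_0$ to a stability condition $\sigma_Z = (\cA_Z, Z)$. Letting $Q$ be the support-property quadratic form, the natural norm for measuring deformations is
\[
\|W\|_{\sigma_0} := \sup\left\{\frac{|W(v)|}{|Z_0(v)|} \,:\, v \in \Lambda_\R,\ Q(v) \geq 0,\ Z_0(v) \neq 0\right\},
\]
which is finite (since $Q|_{\ker Z_0}$ is negative definite, combined with a compactness argument on the relevant projectivization) and controls phase shifts: for $\|Z-Z_0\|_{\sigma_0} < \sin(\pi\epsilon)$ with $\epsilon$ small, the new phase $\phi_Z(E) := \arg Z(E)/\pi$ of any $\sigma_0$-semistable object $E$ differs from $\phi_{Z_0}(E)$ by at most $\epsilon$. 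Using this, define $\cP_Z(\phi)$ as the extension-closure of those $\sigma_0$-semistable $E$ with $\phi_{Z_0}(E) \in (\phi - \epsilon, \phi + \epsilon)$ and $\phi_Z(E) \equiv \phi \pmod{\Z}$, and set $\cA_Z := \cP_Z((0, 1])$. One then verifies: (a) the collection $\{\cP_Z(\phi)\}$ forms a slicing, i.e.\ every object admits a finite HN filtration with well-defined $Z$-phases, so that $\cA_Z$ is the heart of a bounded t-structure; (b) the support property persists for $(\cA_Z, Z)$ with (perhaps a rescaling of) the same $Q$; (c) the assignment $Z \mapsto \sigma_Z$ is continuous and is the local inverse of $\mathcal{Z}$.

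The main obstacle is (a), the existence of HN filtrations with respect to $Z$. The construction is straightforward in outline: given $F \in \cD$, take its $\sigma_0$-HN filtration and group consecutive factors whose $Z$-phases coincide modulo $\epsilon$, then iterate on any subquotients that are not yet $\cP_Z$-semistable. Finiteness of this procedure is precisely where the support property enters: it gives a uniform lower bound on $|Z(v)|$ for classes $v$ of $\sigma_0$-semistable objects of bounded norm, ruling out infinite refinement. Once (a) is established, (b) follows because $\cA_Z$ is built from pieces with $Q \geq 0$ and the relevant convex combinations preserve this inequality for $Z$ close enough to $Z_0$; and (c) is a direct check using the $\phi^\pm$-based topology, together with a uniqueness statement: any $\sigma \in \mathrm{Stab}_\Lambda(\cD)$ sufficiently close to $\sigma_0$ with $\mathcal{Z}(\sigma) = Z$ must coincide with $\sigma_Z$, since its HN-phases match those of $\sigma_Z$ by continuity of $\phi^\pm$. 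The subtlety in (a) is that the iterative refinement must not only terminate but also preserve the extension-closure structure of each $\cP_Z(\phi)$; the support property is exactly strong enough to make both happen simultaneously.
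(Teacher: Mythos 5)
Your proposal follows Bridgeland's original route in \cite{Bri:stability}: fix $\sigma_0$, perturb $Z$, and directly construct the new slicing $\cP_Z$ from the old one, using the support property to control phase shifts and to terminate HN refinement. The paper, however, takes a genuinely different and shorter route, following Bayer \cite{Bay:short}: it first exploits the action of $\widetilde{\GL}_2^+(\R)$ (or just $\C$) on $\mathrm{Stab}(\cD)$ by post-composition with the central charge, which reduces the problem to deforming $Z$ while keeping $\Im Z$ \emph{fixed}. In that special case the heart $\cA$ does not move at all, so there is no new slicing to construct; one only needs to check that HN filtrations with respect to the new slope still exist and that the support quadratic form $Q$ continues to work, which comes down to an elementary convex-geometry estimate on the cone $\{Q \geq 0\}$. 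What the reduction buys is the elimination of precisely the step you flag as ``the main obstacle,'' namely the iterative re-slicing argument. What your route buys is that it makes no use of the group action, so it more transparently produces the local inverse $Z \mapsto \sigma_Z$ directly.

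One genuine caveat about your sketch: the definition of $\cP_Z(\phi)$ as ``the extension closure of $\sigma_0$-semistable $E$ with $\phi_{Z_0}(E) \in (\phi-\epsilon,\phi+\epsilon)$ and $\phi_Z(E) \equiv \phi \pmod \Z$'' is not quite Bridgeland's construction and, taken literally, does not obviously yield a slicing. The semiorthogonality $\Hom(\cP_Z(\phi_1),\cP_Z(\phi_2))=0$ for $\phi_1 > \phi_2$ is unclear whenever $0 < \phi_1 - \phi_2 \leq 2\epsilon$, since the defining $\sigma_0$-phase windows overlap. Bridgeland instead works inside the quasi-abelian ``thin'' categories $\cP_{\sigma_0}((a,b))$ with $b - a < 1 - 2\epsilon$, defines $Z$-semistability internally there, and shows that this yields a bona fide slicing compatible with $Z$; this is where the real work is. Your outline has the right ingredients (the finiteness $\|Z - Z_0\|_{\sigma_0} < \infty$ from the support property, the bound $|\phi_Z - \phi_{Z_0}| < \epsilon$, and the role of $Q$ in terminating refinement), but the specific description of $\cP_Z(\phi)$ would need to be replaced by the thin-category version for the argument to go through.
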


By acting on the central charge by a linear transformation, we have an action of $\C$ (or more generally of the universal cover of $\mathrm{GL}_2^+(\R)$) on $\mathrm{Stab}(\cD)$.
By using this action, the proof of Theorem \ref{thm:Bridgeland} reduces to study deformations of the central charge $Z$ with $\Im(Z)$ constant; in this case, the heart is also constant. Then the result follows from an elementary convex geometry argument. This is explained in full detail in \cite{Bay:short}.
The role of the support property and an effective deformation statement is discussed also in \cite[Appendix A]{BMS}.

If we fix $\mathbf{v}\in\Lambda$, then by using the support property there is a locally-finite set of \emph{walls} (real codimension one submanifolds with boundary) in $\mathrm{Stab}_\Lambda(\cD)$ where the set of semistable objects with class $\mathbf{v}$ changes.

\begin{defi}
Let $\mathbf{v}\in\Lambda$.
A stability condition $\sigma$ is called \emph{generic} with respect to $\mathbf{v}$ (or \emph{$\vv$-generic}) if it does not lie on a wall for $\vv$.
\end{defi}

The main open problem in the theory is the lack of examples.
While the surface case is now well-understood \cite{Bri:K3,BM11:local_p2,AB:Bridgeland,Tod13:extremal}, starting from threefolds the theory becomes quite scarce.
In fact, for a long time no example of a stability condition was known for a projective Calabi-Yau threefold.
The first Calabi-Yau examples were finally produced in \cite{MP1,MP2,BMS}; the case of quintic threefolds has been recently addressed in \cite{li:quintic}.
%though, there is still no known example on the quintic threefold.

A conjectural approach to construct stability condition is via the notion of tilt-stability \cite{BMT:3folds-BG,BMS}. This is a weak stability condition, and we recall here the definition and the main example: we will use this to construct Bridgeland stability conditions on non-commutative K3 surfaces.

\begin{defi}
A \emph{weak stability condition} on $\cD$ is a pair $\sigma= (\cA, Z)$ consisting of the heart of a bounded t-structure $\cA\subset\cD$ and a group homomorphism $Z\colon \Lambda\to\C$ satisfying (b), (c) in Definition \ref{def:Bridgeland} and such that
\begin{enumerate}
\item[(a')] for $E\in\cA$, we have $\mathrm{Im} Z(E)\geq 0$, with $\mathrm{Im} Z(E)= 0 \Rightarrow \mathrm{Re}Z(E)\leq 0$.
\end{enumerate}
\end{defi}

\begin{exam}\label{ex:coh}
Let $X$ be a smooth projective variety of dimension $n$ and with an ample class $H$. Consider the lattice $\Lambda_H^1$ generated by the vectors
\[
(H^n\rk(E),H^{n-1}\cdot\mathrm{ch}_{1}(E))\in\Q^{\oplus 2},
\]
for all $E\in\coh(X)$. Set $Z_\mathrm{slope}(E):=-H^{n-1}\cdot\mathrm{ch}_{1}(E)+\mathfrak{i}H^n\rk(E)$. It is easy to verify that the pair $\sigma_\mathrm{slope}:=(\coh(X),Z_\mathrm{slope})$ is a weak stability condition. Note that since $\Lambda_H^1$ has rank $2$, the support property trivially holds.

The slope associated to $\sigma_\mathrm{slope}$ is the classical slope stability for sheaves. Hence, by the Bogomolov-Gieseker inequality, we have
\begin{equation} \label{eq:defDeltaH}
\Delta_H(E) = \left(H^{n-1}\ch_1(E)\right)^2 - 2\, \left(H^n \ch_0(E)\right) \, \left(H^{n-2}\ch_2(E)\right)\geq 0, 
\end{equation}
for all $\sigma_\mathrm{slope}$-semistable sheaves $E$.
\end{exam}

\subsection{Bridgeland's Covering Theorem}
\label{subsec:BridgelandCovering}

Let $\cD$ be a non-commutative K3 surface such that $K_\mathrm{num}(\cD)$ is finitely generated. Set $\Lambda=K_\mathrm{num}(\cD)$. Consider the natural surjection $\vv\colon K(\cD)\twoheadrightarrow\Lambda$ and the Mukai pairing $(\blank,\blank)$ given by
\[
(\vv(E),\vv(F))=-\chi(E,F).
\]

\begin{exam}\label{ex:K3Cubic}
If $S$ is a K3 surface, then we have an identification $\Lambda=\widetilde{H}_\mathrm{alg}(S,\Z)$, and $\vv$ is the Mukai vector.
Similarly, if $\cD=\Ku(W)$, for $W$ a cubic fourfold, then $\Lambda=\widetilde{H}_\mathrm{alg}(\Ku(W),\Z)$, and $\vv$ is as well the Mukai vector.
\end{exam}

Under our assumptions, the pairing $(\blank,\blank)$ yields a natural identification between the vector spaces $\Hom(\Lambda,\C)$ and $\Lambda\otimes\C$. In other words, the continuous maps $\mathcal{Z}\colon\mathrm{Stab}_\Lambda(\cD)\to\Hom(\Lambda,\C)$ can be rewritten as a continuous map
\[
\eta\colon\mathrm{Stab}_\Lambda(\cD)\to\Lambda\otimes\C,
\]
such that, for all $\sigma=(\cA,Z)\in\mathrm{Stab}_\Lambda(\cD)$, we have $Z(\blank)=(\eta(\sigma),\blank)$.

Following \cite{Bri:K3}, we define $\cP\subset \Lambda\otimes\C$ 
as the open subset consisting of those vectors whose real and imaginary parts span positive-definite two-planes in $\Lambda\otimes\R$. Set
\[
\cP_0 := \cP \setminus \bigcup_{\delta\in\Delta} \delta^\perp,
\]
where $\Delta:=\{\delta\in\Lambda:(\delta,\delta)=-2\}$.

\begin{theo}[Bridgeland's Covering]\label{thm:cov}
If $\eta^{-1}(\cP_0)$ is non-empty, then the restriction 
\[
\eta\colon\eta^{-1}(\cP_0)\to \cP_0
\]
is a covering map.
\end{theo}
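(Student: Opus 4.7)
My approach follows Bridgeland's original strategy in the K3 case, now adapted to the non-commutative setting. Since Theorem \ref{thm:Bridgeland} already guarantees that $\eta$ (equivalently $\cZ$) is a local homeomorphism from $\mathrm{Stab}_\Lambda(\cD)$ to $\Lambda\otimes\C$, the restriction to $\eta^{-1}(\cP_0)$ is automatically a local homeomorphism onto an open subset of $\cP_0$. To upgrade this to a covering map, I would invoke the standard criterion: a local homeomorphism between locally path-connected Hausdorff spaces is a covering as soon as it satisfies the path-lifting property. Both $\mathrm{Stab}_\Lambda(\cD)$ (a complex manifold) and $\cP_0$ (an open subset of $\Lambda\otimes\C$) qualify, so the entire proof reduces to establishing path-lifting.

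\smallskip

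Given $\sigma_0\in\eta^{-1}(\cP_0)$ and a continuous path $\gamma\colon[0,1]\to\cP_0$ with $\gamma(0)=\eta(\sigma_0)$, I would set
\[
T=\{t\in[0,1] : \gamma|_{[0,t]} \text{ admits a continuous lift } \tilde\gamma \text{ starting at } \sigma_0\}.
\]
Uniqueness of such a lift is immediate from the local homeomorphism property, and openness of $T$ follows by reapplying Theorem \ref{thm:Bridgeland} at each $\tilde\gamma(t)$. The whole content of the argument is to show that $T$ is closed, equivalently that given $t_n\uparrow t_\infty$ with $t_n\in T$ and $\sigma_n:=\tilde\gamma(t_n)=(\cA_n,Z_n)$, one can produce a limit stability condition $\sigma_\infty=(\cA_\infty,Z_\infty)$ with $\eta(\sigma_\infty)=\gamma(t_\infty)$.

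\smallskip

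The construction of $\sigma_\infty$ is the main technical step. The central charges converge, $Z_n\to Z_\infty$ in $\Hom(\Lambda,\C)$, so the real and imaginary parts stay in a compact subset. The support property provides a quadratic form $Q$ with $Q|_{\Ker Z_n}<0$ and $Q\geq 0$ on classes of $\sigma_n$-semistable objects; combined with the convergence of $Z_n$, this forces the set of classes of $\sigma_n$-semistable objects with bounded $|Z_n|$ to be finite (up to passing to a subsequence). Using this, I would pass to a subsequence along which the phases $\phi_{\sigma_n}(E)$ converge for each such class, and define a slicing at $t_\infty$ whose semistable objects of each phase are limits of $\sigma_n$-semistable objects; the heart $\cA_\infty$ is then the extension-closure of the phases lying in $(0,1]$. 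The verification that $(\cA_\infty,Z_\infty)$ is the heart of a bounded t-structure, that $Z_\infty$ is a stability function on it with the HN property, and that the support property persists (with the same $Q$), is a direct but delicate adaptation of \cite[Sections 6--7]{Bri:K3}, using the fact that $\cD$ is non-commutative K3 (in particular $K_\mathrm{num}(\cD)$ is finitely generated with Mukai pairing, and the Serre functor is $[2]$, so that the phase function has good symmetry).

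\smallskip

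The hard part, and the precise place where the removal of the hyperplanes $\delta^\perp$ for $\delta\in\Delta$ enters, is ruling out the two degeneration phenomena that would otherwise prevent the limit from being a stability condition. First, an object $E$ with $\vv(E)=\delta$ a $(-2)$-class and $Z_\infty(E)=0$ would violate axiom (a) of Definition \ref{def:Bridgeland} in the limit; the condition $\eta(\sigma_\infty)\in\cP_0$ forbids exactly this. Second, and more subtly, near a point of $\delta^\perp$ one expects infinite accumulation of walls associated to spherical-type objects of class $\delta$, which would obstruct taking a subsequential limit of hearts. A Bogomolov-type inequality coming from the support property, together with the bound $Q(\delta)=-2$, shows that only finitely many such walls can be crossed inside any compact region of $\cP_0$, so that the limit hearts $\cA_n$ stabilize (up to the action of tilting by finitely many semistable classes) as $t_n\to t_\infty$. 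Once this finiteness is in place, the limit stability condition $\sigma_\infty$ is constructed as above, one has $t_\infty\in T$, and local openness lets us extend past $t_\infty$, completing the proof that $T=[0,1]$ and hence that $\eta\colon\eta^{-1}(\cP_0)\to\cP_0$ is a covering map.
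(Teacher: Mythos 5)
Your proposal takes a genuinely different route from the one the paper cites. The paper refers to \cite[Corollary 1.3]{Bay:short}, whose essential input is a \emph{quantitative} deformation theorem: for a stability condition $\sigma$ satisfying the support property with respect to a quadratic form $Q$, any central charge within an explicit $Q$-distance of $Z_\sigma$ lifts to a nearby stability condition that still satisfies the support property with the \emph{same} $Q$. Once one exhibits a continuous family of quadratic forms $Q_Z$ for $Z\in\cP_0$ with $Q_Z|_{\Ker Z}$ negative definite, this yields trivializing neighborhoods of a uniform size and the covering property is immediate. Your argument is instead the open/closed path-lifting scheme, closer in spirit to parts of Bridgeland's original \cite[Section 8]{Bri:K3}. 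That scheme is serviceable, but the content of the theorem lives entirely in the closedness step, and as written that step has real gaps.

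Concretely: the assertion that the classes of $\sigma_n$-semistable objects with bounded central charge form a finite set ``up to passing to a subsequence'' is not established, and even granting finiteness at the level of numerical classes, you have not explained how to control the \emph{hearts} $\cA_n$ (passing to a subsequence so that phases of classes converge does not imply the slicings stabilize, and the extension-closure of ``limit phases'' is not a priori a heart). The claim that ``only finitely many such walls can be crossed inside any compact region'' via a Bogomolov-type inequality is also heuristic: local finiteness of walls is stated relative to a \emph{fixed} Mukai vector, and does not immediately yield stabilization of the sequence of t-structures. Finally, even with a candidate $(\cA_\infty,Z_\infty)$, verifying the HN property and the support property in the limit requires precisely the quantitative deformation estimate; one cannot avoid it by the subsequential construction you outline. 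A secondary issue: the criterion ``local homeomorphism + path-lifting on Hausdorff, locally path-connected spaces $\Rightarrow$ covering'' is true under appropriate hypotheses but is not the textbook-standard criterion and should be sourced or proved, and since $\cP_0$ has two connected components your argument at best gives a covering of the component met by $\eta$.

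The cleanest repair is to abandon the limit construction in favor of the deformation estimate: since $\gamma([0,t_\infty])\subset\cP_0$ is compact, one may choose the quadratic forms $Q_{\gamma(t)}$ continuously, obtaining a \emph{uniform} deformation radius along the path; as soon as $\gamma(t_n)$ lies within this radius of $\gamma(t_\infty)$, Bayer's Theorem 1.2 extends $\tilde\gamma$ directly past $t_\infty$. This closes $T$ without ever taking a limit of hearts, and at that point the open/closed scaffolding is arguably redundant compared to the direct uniform-trivialization argument that Bayer's corollary makes.
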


The proof is an application of Theorem \ref{thm:Bridgeland}. The actual statement is \cite[Corollary 1.3]{Bay:short} (based on \cite[Proposition 8.3]{Bri:K3}).

Notice that $\cP_0$ has two connected components. It is expected that the image $\mathrm{im}(\eta)$ is contained in only one connected component of $\cP_0$ and that $\eta^{-1}(\cP_0)$ is connected and simply-connected as well. This is known only for generic analytic K3 surfaces (and some generic twisted K3 surfaces) \cite{HMS:generic}; in the algebraic case, the strongest evidence is \cite{BB:autoequivalences}. This is related to the choice of an orientation, as discussed in Remark \ref{rmk:staborient}.

\begin{exam}\label{ex:K3}
Let $S$ be a K3 surface.
We let $\cP_0^+$ be the connected component of $\cP_0$ containing vectors of the form $(1,0,-\frac{\omega^2}{2})+ \mathfrak{i} (0,\omega, 0)$, where $\omega$ is a positive real multiple of an ample line bundle.
The main result of \cite{Bri:K3} shows that there exist stability conditions on $\Db(S)$ for which skyscraper sheaves are all stable of the same phase. They are all contained in the same connected component, denoted by $\mathrm{Stab}^\dagger(\Db(S))$. Moreover, $\mathrm{Stab}^\dagger(\Db(S))\subset \eta^{-1}(\cP_0^+)$.
The proof is by using tilting of coherent sheaves: we will recall this construction in the next section.
\end{exam}

\subsection{Tilting and examples}
\label{subsec:Tilt}

The aim of this section is to describe a way to produce (weak) stability condition by an iteration process based on tilting.

Let $\cD$ be a non-commutative smooth projective variety and assume that we are given a weak stability condition $\sigma = (\cA, Z)$ on $\cD$. Let $\mu \in \R$. Consider the subcategories of $\cA$ defined as follows:
\begin{align*}
\cT_\sigma^\mu & = \{E:\text{All HN factors $F$ of $E$ have slope
		$\mu_{\sigma}(F) > \mu $}\} \\
& = \langle E \colon \text{$E$ is $\sigma$-semistable with $\mu_{\sigma}(E) > \mu$} \rangle, \\
\cF_\sigma^\mu & = \{E:\text{All HN factors $F$ of $E$ have slope
		$\mu_{\sigma}(F) \le \mu $}\} \\
& = \langle E \colon \text{$E$ is $\sigma$-semistable with $\mu_{\sigma}(E) \leq \mu$} \rangle,
\end{align*}
where $\langle\blank\rangle$ denotes the extension closure. This notation will be used only in this section where there is no risk to confuse this with semiorthogonal decompositions.

The general theory of torsion pairs and tilting allows us to produce a new abelian category in $\cD$ which is the heart of a bounded t-structure:

\begin{prop}\label{prop:slopetilt}
	Given a weak stability condition $\sigma = (Z, \cA)$ and a choice of slope $\mu \in \R$, the category
	\[
	\cA_\sigma^\mu = \langle \cT_\sigma^\mu, \cF_\sigma^\mu[1] \rangle
    \]
is the heart of a bounded t-structure on $\cD$.
\end{prop}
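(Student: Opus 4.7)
The plan is to verify that $(\cT_\sigma^\mu,\cF_\sigma^\mu)$ is a torsion pair in the abelian category $\cA$, and then invoke the classical tilting construction of Happel--Reiten--Smal\o: for any torsion pair $(\cT,\cF)$ in the heart of a bounded t-structure, the extension closure $\langle \cT,\cF[1]\rangle$ is again the heart of a bounded t-structure. Thus the work splits into a torsion-pair check followed by an essentially formal tilting step.

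First I would check the two torsion-pair axioms. The Hom-vanishing $\Hom_{\cA}(T,F)=0$ for $T\in\cT_\sigma^\mu$, $F\in\cF_\sigma^\mu$ reduces, by the extension-closure description of $\cT_\sigma^\mu$ and $\cF_\sigma^\mu$, to the case where $T$ and $F$ are $\sigma$-semistable of slopes $>\mu$ and $\leq\mu$ respectively. The image of any nonzero map $T\to F$ in $\cA$ would then be a common quotient of $T$ and subobject of $F$, and the usual slope seesaw (applied to the weak stability function $Z$) would force its slope to be simultaneously $>\mu$ and $\leq\mu$, a contradiction. For the torsion decomposition of an arbitrary $E\in\cA$, I would take its HN filtration $0=E_0\subsetneq E_1\subsetneq\cdots\subsetneq E_n=E$ with factors of strictly decreasing slope and cut at the largest index $k$ with $\mu_\sigma(E_k/E_{k-1})>\mu$; the short exact sequence $0\to E_k\to E\to E/E_k\to 0$ then has $E_k\in\cT_\sigma^\mu$ and $E/E_k\in\cF_\sigma^\mu$.

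With the torsion pair in hand, the tilting step is standard: one verifies the two conditions of Definition \ref{def:heart} for $\cA_\sigma^\mu$ directly. Condition (a) splits, after decomposing both arguments into their $\cT_\sigma^\mu$ and $\cF_\sigma^\mu[1]$ parts, into four cases, each of which follows from the corresponding vanishing in $\cA$ together with the Hom-vanishing half of the torsion pair. Condition (b) is produced by taking any $E\in\cD$, using its $\cA$-cohomology filtration, and then splitting each cohomology object $H^i_\cA(E)\in\cA$ via its torsion decomposition; a diagram chase packages the pieces into factors lying in $\cT_\sigma^\mu[-i]\cup\cF_\sigma^\mu[-i+1]\subset\cA_\sigma^\mu[-i]\cup\cA_\sigma^\mu[-i+1]$, yielding the required filtration in $\cD$. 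The only genuinely delicate point is the slope-based Hom-vanishing at the torsion-pair stage, since $\sigma$ is merely a weak stability condition; but the existence of HN filtrations is part of the axioms, and the seesaw argument uses only the sign conventions in the definition of a stability function, so it goes through unchanged.
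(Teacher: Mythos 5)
Your proposal is correct and follows exactly the route the paper takes: the paper's entire proof is the remark ``The proof is a direct check; see \cite{Happel-al:tilting}'', i.e.\ verify that $(\cT_\sigma^\mu,\cF_\sigma^\mu)$ is a torsion pair and invoke Happel--Reiten--Smal\o\ tilting, which is precisely the structure of your argument, including the correct observation that the Harder--Narasimhan filtration of a weak stability condition supplies the torsion decomposition and that the image-seesaw argument survives the weakening. (A cosmetic slip only: in the filtration step one has $\cF_\sigma^\mu[-i+1]\subset\cA_\sigma^\mu[-i]$, not $\cA_\sigma^\mu[-i+1]$, since $\cF_\sigma^\mu\subset\cA_\sigma^\mu[-1]$.)
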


The proof is a direct check; see \cite{Happel-al:tilting}.
We will refer to $\cA^\mu_\sigma$ as the \emph{tilting} of $\cA$ with respect to the weak stability condition $\sigma$ at the slope $\mu$. When $\sigma$ is clear, we will sometimes just write $\cA^\mu$.

Let us now consider the case $\cD=\Db(X)$, where $X$ is a smooth projective variety, and fix a hyperplane section $H$ on $X$. By Example \ref{ex:coh}, we have the weak stability condition $\sigma_\mathrm{slope}$ and, given $\beta \in \R$, we can consider the tilt
\[
\coh^\beta(X):=(\coh_{\sigma_\mathrm{slope}}(X))^\beta\subseteq \Db(X).
\]

We want now to go further and define a new weak stability condition whose heart is $\coh^\beta(X)$. To this extent, for $E\in\coh(X)$, set
\[
\mathrm{ch}^{\beta}(E) = e^{-\beta H} \mathrm{ch}(E)\in H^*(X,\R).
\]

We take $\Lambda_H^2$ to be the lattice generated by the vectors
\[
(H^n\rk(E),H^{n-1}\cdot\mathrm{ch}_{1}(E),H^{n-2}\cdot\mathrm{ch}_{2}(E))\in\Q^{\oplus 3},
\]
for all $E\in\coh(X)$. The classical Bogomolov inequality \eqref{eq:defDeltaH} defines a quadratic inequality on $\Lambda_H^2$, and it is the key ingredient in the following result:

\begin{prop}\label{prop:tiltstab}
Given $\alpha > 0, \beta \in \R$, the pair $\sigma_{\alpha, \beta} = (\coh^\beta(X), Z_{\alpha, \beta})$
	with $\coh^\beta(X)$ as constructed above, and
	\[
	Z_{\alpha, \beta}(E) := \frac 12 \alpha^2 H^n \mathrm{ch}_0^\beta(E) - H^{n-2}\mathrm{ch}_2^\beta(E) + \mathfrak{i}\, H^{n-1}\mathrm{ch}_1^\beta(E)
	\]
	defines a weak stability condition on $\Db(X)$ with respect to $\Lambda_H^2$. The quadratic form $Q$ can be given by $\Delta_H$. Moreover, these stability conditions vary continuously as $(\alpha, \beta) \in \R_{>0} \times \R$ varies.
    Finally, if $\mathrm{dim}(X)=2$, then $\sigma_{\alpha,\beta}$ is a Bridgeland stability condition on $\Db(X)$.
\end{prop}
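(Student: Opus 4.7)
The plan is to verify the three axioms of a weak stability condition for $\sigma_{\alpha,\beta}$, then upgrade to a genuine Bridgeland stability condition in the surface case. The main obstacle is the support property, which requires a tilt-stable Bogomolov--Gieseker inequality; the positivity of the stability function and the existence of Harder--Narasimhan filtrations are comparatively routine once one has set up the torsion pair carefully.

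First, I would check that $Z_{\alpha,\beta}$ is a weak stability function on $\coh^\beta(X)$. Any $E \in \coh^\beta(X)$ sits in a triangle with $H^{-1}(E) \in \cF^\beta_{\sigma_{\mathrm{slope}}}$ and $H^0(E) \in \cT^\beta_{\sigma_{\mathrm{slope}}}$, so it suffices to treat objects of the form $F$ or $F[1]$ for $F$ a slope-semistable sheaf. A direct computation gives
\[
\Im Z_{\alpha,\beta}(F) = H^{n-1}\cdot \mathrm{ch}_1^\beta(F) = H^{n-1}\cdot \mathrm{ch}_1(F) - \beta H^n\,\mathrm{rk}(F),
\]
which is $\geq 0$ whenever $\mu_{\sigma_{\mathrm{slope}}}(F) > \beta$ and $\leq 0$ whenever $\mu_{\sigma_{\mathrm{slope}}}(F) \leq \beta$; it is zero precisely for torsion-free $F$ with slope exactly $\beta$ or for torsion sheaves supported in codimension $\geq 2$. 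In the first case, the fact that $\Delta_H$ is invariant under twist by $\cO_X(\beta H)$ together with the classical Bogomolov--Gieseker inequality $\Delta_H(F)\geq 0$ forces $H^{n-2}\cdot\mathrm{ch}_2^\beta(F) \leq 0$, from which $\Re Z_{\alpha,\beta}(F[1]) \leq -\tfrac{\alpha^2}{2} H^n\,\mathrm{rk}(F) \leq 0$; in the second case, effectivity of the support gives $H^{n-2}\cdot\mathrm{ch}_2(F) \geq 0$ and the inequality $\Re Z_{\alpha,\beta}(F) \leq 0$ is immediate.

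Next, I would establish Harder--Narasimhan filtrations using Bridgeland's criterion (see \cite[Proposition 2.4]{Bri:stability}): it is enough to verify that $\coh^\beta(X)$ is noetherian with respect to $\sigma_{\alpha,\beta}$ and that the image of $\Im Z_{\alpha,\beta}$ is discrete on the classes arising in any fixed filtration. Noetherianity of the tilted heart follows from the argument of \cite[Lemma 2.17]{BMS}: the values $H^{n-1}\cdot\mathrm{ch}_1^\beta$ achieved by subobjects lie in a discrete subset once one fixes the class of the ambient object. The support property with $Q = \Delta_H$ splits into two checks: first, that $\Delta_H$ restricts to a negative-definite form on $\ker Z_{\alpha,\beta}\subset \Lambda_H^2\otimes \R$, which is immediate from the explicit expression of $Z_{\alpha,\beta}$ (the kernel is a line on which $H^{n-1}\mathrm{ch}_1^\beta = 0$ and $\alpha^2 H^n\mathrm{ch}_0^\beta = 2 H^{n-2}\mathrm{ch}_2^\beta$, forcing $\Delta_H < 0$); and second, the genuine geometric statement that $\Delta_H(E) \geq 0$ for every $\sigma_{\alpha,\beta}$-semistable $E$. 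This latter inequality is the heart of the matter, and I would argue by reducing to the classical Bogomolov inequality: after taking sheaf cohomology, a $\sigma_{\alpha,\beta}$-semistable object $E \in \coh^\beta(X)$ gives rise to a slope-semistable quotient of $H^0(E)$ and a slope-semistable subobject of $H^{-1}(E)[1]$, to which $\Delta_H \geq 0$ applies, and one combines these using the semistability inequality at slope $\mu_{\sigma_{\alpha,\beta}}(E)$ (the argument in \cite[Corollary 7.3.2]{BMT:3folds-BG} carries over verbatim).

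Continuity in $(\alpha,\beta) \in \R_{>0}\times\R$ is immediate from the formula of $Z_{\alpha,\beta}$ and the fact that the heart $\coh^\beta(X)$ varies continuously (in the sense of \cite[Section 2]{BMS}). Finally, in the surface case $n=2$ one upgrades $\sigma_{\alpha,\beta}$ to a Bridgeland stability condition by verifying the strict inequality $Z_{\alpha,\beta}(E)\neq 0$ for every nonzero $E \in \coh^\beta(X)$. If $\Im Z_{\alpha,\beta}(E) = 0$, the analysis above shows that, up to extensions, $E$ is either a slope-semistable torsion-free sheaf of slope exactly $\beta$ shifted by $1$ (handled as above, yielding $\Re Z_{\alpha,\beta} < 0$ strictly since $\mathrm{rk}>0$) or a torsion sheaf supported on points, in which case $\mathrm{ch}_2(E) > 0$ and $\Re Z_{\alpha,\beta}(E) = -\mathrm{ch}_2(E) < 0$; either way $Z_{\alpha,\beta}(E)\neq 0$, which completes the proof.
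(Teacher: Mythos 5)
Your proposal is essentially correct and follows exactly the argument that the paper itself defers to: the paper offers no independent proof of this proposition, simply citing \cite{Bri:K3,AB:Bridgeland} for surfaces and \cite{BMT:3folds-BG,BMS} for the higher-dimensional statement, and your sketch is a faithful summary of what is carried out in those references (torsion pair, reduction of the weak-stability-function axiom to slope-semistable sheaves, noetherianity for HN filtrations, $\Delta_H$ negative on $\ker Z_{\alpha,\beta}$, and the tilt-stable Bogomolov inequality).

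One caution on the support-property step: the inequality $\Delta_H(E)\geq 0$ for $\sigma_{\alpha,\beta}$-semistable $E$ does not follow by simply applying the classical Bogomolov--Gieseker inequality to the HN factors of $H^0(E)$ and $H^{-1}(E)$ and ``combining,'' because $\Delta_H$ is quadratic rather than additive. The actual argument in \cite{BMT:3folds-BG} proceeds by a large-volume-limit/wall-crossing reduction (letting $\alpha\to\infty$, where tilt-semistability degenerates to slope-semistability, together with a Hodge-index-type inequality for the polarization of $\Delta_H$ along walls, where the restriction of that bilinear form is positive semi-definite). Since you cite the correct source and correctly identify that the ultimate input is the classical Bogomolov inequality, this is an expository imprecision rather than a genuine gap, but the word ``verbatim'' oversells how direct that reduction is.
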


Proposition \ref{prop:tiltstab} was observed in the case of surfaces in \cite{Bri:K3,AB:Bridgeland}. The higher dimensional version is in \cite{BMT:3folds-BG,BMS}. When the choice of $\alpha$ and $\beta$ are irrelevant, we will refer to the weak stability condition $\sigma_{\alpha, \beta}$ as \emph{tilt stability}.

In the applications to cubic fourfolds, we will need to tilt $\coh^\beta(X)$ once more, where $X$ will be some non-commutative Fano threefold.
To this end, the only thing we will need is to define the analogue of the action of $\C$ on the central charge for tilt-stability, as for Bridgeland stability.

Consider the weak stability condition $\sigma_{\alpha, \beta}$ on $\Db(X)$ as in Proposition \ref{prop:tiltstab}. Let $\mu \in \R$. 
By Proposition \ref{prop:slopetilt}, we get that
\[ 
\coh_{\alpha, \beta}^\mu(X):= (\coh^\beta(X))_{\sigma_{\alpha, \beta}}^\mu.
\]
is the heart of a bounded t-structure. Let $u \in \C$ be the unit vector in the upper half plane with $\mu = -\frac{\Re(u)}{\Im(u)}$ and consider the function 
\[
Z_{\alpha, \beta}^\mu := \frac 1u Z_{\alpha, \beta}.
\]
Then we have:

\begin{prop}\label{prop:tiltedtiltstability}
	The pair $(\coh_{\alpha, \beta}^\mu(X), Z_{\alpha, \beta}^\mu)$ is a weak stability condition on $\Db(X)$.
\end{prop}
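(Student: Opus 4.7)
The plan is to verify the three conditions (a$'$), (b), (c) for the pair $(\coh_{\alpha,\beta}^\mu(X), Z_{\alpha,\beta}^\mu)$ by transferring, via the tilt-and-rotate construction, the corresponding properties of $\sigma_{\alpha,\beta}$, which is already a weak stability condition by Proposition \ref{prop:tiltstab}.

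For (a$'$), write $u = e^{\mathfrak{i}\theta}$ with $\theta \in (0,\pi)$, so $\mu = -\cot\theta$. For a nonzero $\sigma_{\alpha,\beta}$-semistable $F$, define the Bridgeland phase $\phi(F) \in (0,1]$ by $Z_{\alpha,\beta}(F) \in \R_{>0}\cdot e^{\mathfrak{i}\pi\phi(F)}$; the defining conditions of the torsion pair read $F \in \cT^\mu_{\sigma_{\alpha,\beta}}$ iff $\phi(F) > \theta/\pi$ and $F \in \cF^\mu_{\sigma_{\alpha,\beta}}$ iff $\phi(F) \le \theta/\pi$. Division by $u$ shifts phases by $-\theta/\pi$, so after passing to the tilted heart the phases of the generating semistable pieces lie in $(0, 1-\theta/\pi]$ (from $\cT^\mu$) and in $(1-\theta/\pi, 1]$ (from $\cF^\mu[1]$), both inside $(0,1]$; this gives $\Im Z_{\alpha,\beta}^\mu \ge 0$. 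The boundary case $\Im Z_{\alpha,\beta}^\mu(F)=0$ occurs only for shifts of semistable pieces of slope exactly $\mu$ (yielding $\Re Z_{\alpha,\beta}^\mu<0$) or for classes in $\ker Z_{\alpha,\beta}^\mu=\ker Z_{\alpha,\beta}$ (yielding $Z_{\alpha,\beta}^\mu=0$); in either case the sign constraint holds. Extension-closedness of the heart extends (a$'$) to arbitrary $E\in\coh_{\alpha,\beta}^\mu(X)$.

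For (b), given $E\in\coh_{\alpha,\beta}^\mu(X)$, I would lift the $\sigma_{\alpha,\beta}$-HN filtrations of the two cohomology objects $H^0(E)\in\cT^\mu_{\sigma_{\alpha,\beta}}$ and $H^{-1}(E)\in\cF^\mu_{\sigma_{\alpha,\beta}}$ to a filtration of $E$ in $\coh_{\alpha,\beta}^\mu(X)$ whose factors are the $\sigma_{\alpha,\beta}$-HN factors of $H^0(E)$ and the shifts of those of $H^{-1}(E)$. The phase computation in (a$'$) ensures each such factor is $\sigma_{\alpha,\beta}^\mu$-semistable, and reordering by decreasing $\sigma_{\alpha,\beta}^\mu$-slope produces the required HN filtration; finiteness is inherited from the finiteness of the original HN filtrations.

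For (c), I would take the same quadratic form $\Delta_H$ used in Proposition \ref{prop:tiltstab}. Negative (semi-)definiteness on the kernel is immediate since $\ker Z_{\alpha,\beta}^\mu=\ker Z_{\alpha,\beta}$. The real content is to check $\Delta_H(F)\ge 0$ for any $\sigma_{\alpha,\beta}^\mu$-semistable $F$: since $\Delta_H$ is shift-invariant and factors through $\Lambda_H^2$, the inequality already holds on each $\sigma_{\alpha,\beta}$-HN factor of $F$, and then extends to the class of $F$ by a convexity argument on the cone spanned by those factor classes — this is the standard deformation lemma for tilt stability, cf.\ \cite[Appendix A]{BMS} and \cite[Section 6]{Bri:K3}. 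This support-property step is the main obstacle: (a$'$) and (b) are essentially formal consequences of the tilt-and-rotate construction, whereas preserving the Bogomolov-Gieseker-type inequality under tilting requires the global convexity input above.
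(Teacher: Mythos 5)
The phase analysis for (a$'$) is sound, but the key claim in your step (b) --- that the $\sigma_{\alpha,\beta}$-HN factors of $H^0(E)$ and the shifts of those of $H^{-1}(E)$ are automatically $\sigma^\mu_{\alpha,\beta}$-semistable --- is false in general, and this is exactly the delicate point of the proposition (not the support property, which you single out as the main obstacle). The issue is specific to \emph{weak} stability conditions: any object $F\in\coh^\beta(X)$ with $\Im Z_{\alpha,\beta}(F)=0$ has $\sigma_{\alpha,\beta}$-slope $+\infty$ and is therefore vacuously $\sigma_{\alpha,\beta}$-semistable, regardless of its subobject structure. After dividing by $u$, such an $F$ with $Z_{\alpha,\beta}(F)\neq 0$ acquires a finite $\sigma^\mu_{\alpha,\beta}$-slope, yet a nonzero subobject $K\subset F$ in $\coh^\mu_{\alpha,\beta}(X)$ with $Z_{\alpha,\beta}(K)=0$ still has $Z^\mu_{\alpha,\beta}(K)=0$, i.e.\ $\sigma^\mu_{\alpha,\beta}$-slope $+\infty$, and destabilizes $F$. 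So the top $\sigma_{\alpha,\beta}$-HN factor of $H^0(E)$, when it lives at slope $+\infty$, need not be $\sigma^\mu_{\alpha,\beta}$-semistable, and your lifted filtration is not an HN filtration. (The final ``reordering by slope'' step also tacitly presupposes the factors are semistable; without that, one cannot exchange adjacent pieces.)

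Repairing this requires a genuine further input: one must refine the slope-$+\infty$ pieces by first splitting off the maximal subobject with $Z_{\alpha,\beta}=0$, and the existence of this maximal subobject is itself a finiteness statement (it follows, e.g., from the Noetherianity of $\coh^\beta(X)$ and of the tilted heart, neither of which your sketch invokes). Alternatively, one can reverse your order: first establish the support property for $\sigma^\mu_{\alpha,\beta}$ and then deduce existence of HN filtrations from it, which is closer to what \cite[Proposition 2.14]{BLMS:inducing} actually does. Your sketch of (c) is also too quick: $\Delta_H$ is indefinite, so ``convexity on the cone spanned by the factor classes'' is not free --- it uses the negative semi-definiteness of $\Delta_H$ on $\ker Z_{\alpha,\beta}$ together with the fact that the $\sigma_{\alpha,\beta}$-phases of the HN factors of a $\sigma^\mu_{\alpha,\beta}$-semistable object span an interval of length at most one; this is precisely the technical lemma in \cite[Appendix A]{BMS} and should be stated rather than gestured at.
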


Proposition \ref{prop:tiltedtiltstability} was observed implicitly in \cite{BMT:3folds-BG}; the above statement is \cite[Proposition 2.14]{BLMS:inducing}.

\subsection{Inducing stability conditions}
\label{subsec:inducing}

In this last section, we discuss how stability conditions combine with semiorthogonal decompositions. We follow \cite[Sections 4 \& 5]{BLMS:inducing}.

Let $\cD$ be a non-commutative smooth projective variety.
Let $E_1, \dots, E_m$ be an exceptional collection; set $\cD_2:= \langle E_1, \dots, E_m \rangle$ and $\cD_1=\cD_2^\perp$, so that we have a semiorthogonal decomposition
\[
\cD = \langle \cD_1, \cD_2 \rangle.
\]

\begin{prop} \label{prop:inducestability}
	Let $\sigma = (\cA, Z)$ be a weak stability condition on $\cD$ with the following properties:
	\begin{enumerate}
		\item $E_i \in \cA$,
		\item $S_\cD(E_i) \in \cA[1]$, and
		\item $Z(E_i) \neq 0$,
	\end{enumerate}
for all $i = 1, \dots, m$. Assume moreover that there are no objects $0 \neq F \in \cA_1:=\cA \cap \cD_1$ with
	$Z(F) = 0$ (i.e., $Z_1:=Z|_{K(\cA_1)}$ is a stability function on $\cA_1$).
	Then the pair $\sigma_1 = (\cA_1, Z_1)$ is a stability condition on $\cD_1$.
\end{prop}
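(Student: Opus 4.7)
The proof verifies each axiom of a Bridgeland stability condition for $\sigma_1=(\cA_1,Z_1)$ in turn, where $\cA_1=\cA\cap\cD_1$.

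First, I would show that $\cA_1$ is the heart of a bounded t-structure on $\cD_1$. Hom-vanishing $\Hom(A,B[k])=0$ for $A,B\in\cA_1$ and $k<0$ is inherited from $\cA$. For the filtration axiom, the key technical input is the following: since $E_i\in\cA$ and $S_\cD(E_i)\in\cA[1]$, Serre duality gives
\[
\Hom(E_i,A[k])^\vee\cong \Hom(A,S_\cD(E_i)[-k])=\Hom(A,(S_\cD(E_i)[-1])[-k+1]),
\]
and since both $A$ and $S_\cD(E_i)[-1]$ lie in $\cA$, this vanishes for $k>1$. Combined with $\Hom(E_i,A[k])=0$ for $k<0$ (both $E_i,A\in\cA$), we conclude that $\Hom(E_i,A[k])=0$ for every $A\in\cA$ and every $k\notin\{0,1\}$. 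Given any $F\in\cD_1$, the standard spectral sequence
\[
E_2^{p,q}=\Hom(E_i,H^q_{\cA}(F)[p])\Rightarrow\Hom(E_i,F[p+q])
\]
associated to the $\cA$-Postnikov filtration of $F$ then degenerates at $E_2$, because $E_2^{p,q}=0$ for $p\notin\{0,1\}$. Since $F\in\cD_1=\cD_2^\perp$, the abutment $\Hom(E_i,F[n])$ vanishes for all $n$, which forces both $\Hom(E_i,H^q_\cA(F))$ and $\Hom(E_i,H^q_\cA(F)[1])$ to vanish for every $i$ and $q$; together with the vanishing above, this gives $H^q_\cA(F)\in\cD_1$, hence $H^q_\cA(F)\in\cA_1$. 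The Postnikov tower of $F$ associated to $\cA$ therefore provides the required filtration of $F$ in $\cD_1$ with cones shifts of objects in $\cA_1$ (all intermediate terms lie in $\cD_1$ by triangulated closure).

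Second, $Z_1:=Z|_{K(\cA_1)}$ is a stability function on $\cA_1$: positivity $\Im Z_1\geq 0$ on $\cA_1$ is inherited from the weak stability function $Z$ on $\cA$, and the hypothesis that no nonzero $F\in\cA_1$ satisfies $Z(F)=0$ promotes the weak stability inequality into the strict one. The support property descends similarly, using the same quadratic form $Q$ restricted to the image of $K(\cA_1)$ in $\Lambda\otimes\R$: negative-definiteness on $\ker Z_1\subseteq\ker Z$ is automatic, and the non-negativity $Q(E)\geq 0$ on $\sigma_1$-semistable objects follows once one knows such objects are $\sigma$-semistable in $\cA$.

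The last and most delicate step is existence of HN filtrations in $\cA_1$. The crucial observation is that any $\cA_1$-subobject $B\hookrightarrow A$ of $A\in\cA_1$ is automatically an $\cA$-subobject with the same class in $K$, so $\mu_{\sigma_1}(B)=\mu_\sigma(B)$ and any $\sigma_1$-destabilizing $\cA_1$-subobject is simultaneously $\sigma$-destabilizing in $\cA$. The Noetherian and slope-chain conditions that guarantee HN existence for $\sigma$ in $\cA$ therefore restrict to the analogous conditions in $\cA_1$, and one can construct an HN filtration of any $A\in\cA_1$ intrinsically by iteratively taking maximal $\sigma_1$-destabilizing $\cA_1$-subobjects. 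I expect this to be the main obstacle of the proof: one must argue carefully that the HN machinery can be run inside $\cA_1$ without reference to $\cA$, since a priori an $\cA$-subobject of $A\in\cA_1$ need not lie in $\cA_1$, so the $\sigma$-HN filtration of $A$ in $\cA$ cannot simply be transported term by term to $\cA_1$.
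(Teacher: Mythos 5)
Your verification that $\cA_1=\cA\cap\cD_1$ is a heart of a bounded t-structure on $\cD_1$ matches the paper's sketch exactly: both use the Serre duality computation $\Hom(E_i,A[k])^\vee\cong\Hom(A,S_\cD(E_i)[-k])$ together with $S_\cD(E_i)\in\cA[1]$ to see that $\Hom(E_i,A[k])=0$ for $k\notin\{0,1\}$, and then run the spectral sequence $E_2^{p,q}=\Hom(E_i,H^q_\cA(F)[p])\Rightarrow\Hom(E_i,F[p+q])$ and use $F\in\cD_2^\perp$ to conclude $H^q_\cA(F)\in\cA_1$. This is precisely Lemma \ref{lem:induce} and the short Serre-duality check that follows it in the paper; the paper's own proof stops there and cites \cite[Proposition 5.1]{BLMS:inducing} for the remaining axioms.

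Where you go beyond the paper's sketch there is a genuine gap. In the support property paragraph you write that $Q(E)\geq 0$ on $\sigma_1$-semistable objects ``follows once one knows such objects are $\sigma$-semistable in $\cA$,'' treating that implication as a routine step. But this implication is exactly as hard as (indeed, is essentially equivalent to) the HN existence problem you correctly flag in your last paragraph: for $A\in\cA_1$, a destabilizing subobject $B\hookrightarrow A$ in $\cA$ need not lie in $\cD_1$, so $\sigma_1$-semistability in $\cA_1$ a priori says nothing about $\cA$-subobjects that live outside $\cD_1$. You cannot simultaneously acknowledge that $\sigma$-HN filtrations of $A\in\cA_1$ need not transport to $\cA_1$ and then assume ``$\sigma_1$-semistable $\Rightarrow$ $\sigma$-semistable'' as a known ingredient in the support-property step --- these are two faces of the same difficulty, and the cited proof in \cite{BLMS:inducing} invests its real work in comparing the two notions of semistability (and the two HN filtrations). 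As it stands, your sketch establishes the heart and the stability-function axiom but leaves the HN property and the support property unsupported, and the support-property paragraph conceals rather than flags the missing argument.
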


This is \cite[Proposition 5.1]{BLMS:inducing}.
In what follows we sketch why $\cA$, under the above assumptions, induces the heart $\cA_1$ of a bounded t-structure on $\cD_1$.

\begin{lemm}\label{lem:induce}
Let $\cA \subset \cD$ be the heart of a bounded t-structure.
Assume that $E_1,\dots,E_m\in\cA$ and $\Hom(E_i,F[p])=0$, for all $F\in\cA$, $i=1,\dots,m$, and $p>1$.
Then $\cA_1 := \cD_1 \cap \cA$ is the heart of a bounded t-structure on $\cD_1$. 
\end{lemm}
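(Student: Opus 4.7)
The plan is to reduce the statement to the case of a single exceptional object by induction on $m$, and then verify the two axioms of Definition~\ref{def:heart} for $\cA_1 = \cA \cap \cD_1$ directly. For the inductive step, rewrite the given decomposition as $\cD = \langle \cD', E_m \rangle$, with $\cD' := \langle E_m \rangle^\perp = \langle \cD_1, E_1, \ldots, E_{m-1} \rangle$. Applying the $m=1$ case to the exceptional object $E_m$ yields that $\cA' := \cA \cap \cD'$ is the heart of a bounded t-structure on $\cD'$. Since $E_1, \ldots, E_{m-1}$ all lie in $\cA' = \cA \cap \cD'$ and the $\Hom$-vanishing hypothesis is inherited from $\cA$, induction applied to $\cD' = \langle \cD_1, E_1, \ldots, E_{m-1}\rangle$ produces $\cA' \cap \cD_1 = \cA \cap \cD_1 = \cA_1$ as a heart on $\cD_1$.

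For the base case $m=1$, set $E := E_1$; then the projection $\delta_1$ coincides with the left mutation $L_E$. Axiom~(a) of Definition~\ref{def:heart} for $\cA_1$ is immediate because $\cA_1 \subseteq \cA$. For axiom~(b), start with $X \in \cD_1$ and let $0 = X_0 \to X_1 \to \cdots \to X_n = X$ be its $\cA$-filtration in $\cD$, with cones $A_i[k_i]$ where $A_i \in \cA$ and $k_1 > \cdots > k_n$. Applying $L_E$ and using $L_E X = X$, we obtain a sequence of morphisms in $\cD_1$ with cones $(L_E A_i)[k_i]$, so the problem reduces to analysing $L_E A$ for $A \in \cA$.

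The key technical claim is that for $A \in \cA$, the object $L_E A$ has $\cA$-cohomology supported in degrees $-1$ and $0$, and both cohomology objects lie in $\cA_1$. The first statement follows from the long exact sequence of $\cA$-cohomology associated to the defining triangle
\[
\bigoplus_p \Hom(E, A[p]) \otimes E[-p] \to A \to L_E A,
\]
since the hypothesis forces the left-hand term to have cohomology only in degrees $0$ and $1$. The second, and delicate, statement is proved by applying $\Hom(E, \blank)$ to the $\cA$-truncation triangle $H^{-1}(L_E A)[1] \to L_E A \to H^0(L_E A)$; using that $L_E A \in \cD_1 = E^\perp$, the long exact sequence collapses to natural isomorphisms
\[
\Hom(E, H^0(L_E A)[p]) \;\cong\; \Hom(E, H^{-1}(L_E A)[p+2])
\]
for every $p \in \Z$. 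The right-hand side vanishes for $p+2 > 1$ by hypothesis, while the left-hand side vanishes for $p<0$ because $E, H^0(L_E A) \in \cA$; a short case check at $p = 0, 1$ (comparing both sides at each value) then forces both $\Hom(E, H^{-1}(L_E A)[q])$ and $\Hom(E, H^0(L_E A)[q])$ to vanish in every degree, placing both cohomology objects in $E^\perp \cap \cA = \cA_1$.

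To finish, refine the filtration of $X$ in $\cD_1$ by applying the octahedral axiom to the truncation triangle of each $(L_E A_i)[k_i]$: this replaces the cone at step $i$ by the successive cones $H^{-1}(L_E A_i)[k_i+1]$ and $H^0(L_E A_i)[k_i]$, both in $\cA_1$ up to shift. The resulting list of shifts is weakly decreasing; whenever two consecutive cones occur at the same shift (which happens exactly when $k_i = k_{i+1} + 1$) we replace them by their extension, which again lies in $\cA_1$ because $\cA_1$ is extension-closed in $\cA$ (apply $\Hom(E, \blank)$ to any short exact sequence in $\cA$ with outer terms in $\cD_1$). This yields a filtration of $X$ with cones in $\cA_1$ at strictly decreasing shifts, confirming axiom~(b). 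The main obstacle will be the vanishing argument in the third paragraph, which crucially exploits that $L_E A$ itself already lies in $\cD_1$ to bootstrap from the narrow two-degree window in which $\Hom(E, \blank)$ can be nonzero on $\cA$.
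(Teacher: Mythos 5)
Your proof is correct, but it takes a genuinely different and substantially longer route than the paper's. The paper directly shows that for any $F \in \cD_1$, every $\cA$-cohomology object $H^q_\cA(F)$ already lies in $\cA_1$, via the spectral sequence $E_2^{p,q}=\Hom(E_i,H^q_\cA(F)[p]) \Rightarrow \Hom(E_i,F[p+q])$: the hypothesis kills all $E_2$-terms outside $p\in\{0,1\}$, so the spectral sequence degenerates, and since the abutment vanishes (because $F\in\cD_1$), all $E_2$-terms vanish; thus the $\cA$-filtration of $F$ \emph{is already} an $\cA_1$-filtration and axiom (b) is immediate, with no induction on $m$, no mutation functors, and no refinement step. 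Your argument instead reduces to $m=1$, applies $L_E$ to an $\cA$-filtration of $X$, proves that $L_E A$ for general $A\in\cA$ has $\cA$-cohomology in degrees $-1,0$ lying in $\cA_1$ (a correct but more general fact than needed -- for $A=H^q_\cA(X)$ it turns out a posteriori that $L_E A = A$ and $H^{-1}$ vanishes), and then carefully interleaves and merges the resulting filtration via the octahedral axiom. Everything checks out, including the vanishing argument: the isomorphisms $\Hom(E,H^0[p])\cong\Hom(E,H^{-1}[p+2])$ have nonvanishing windows $p\in\{0,1\}$ on the left and $p\in\{-2,-1\}$ on the right, which are disjoint, forcing all groups to vanish (your phrasing ``a short case check at $p=0,1$'' slightly undersells how cleanly this works). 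What the paper's approach buys is directness and brevity -- it bypasses the induction, the mutation functor, and the octahedral refinement entirely -- while yours is more elementary in the sense that it uses only long exact sequences rather than a spectral sequence, at the cost of considerably more bookkeeping.
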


\begin{proof}
The category $\cA_1$ satisfies condition (a) in Definition \ref{def:heart}, since it holds for $\cA$; hence, we only need to verify (b).

Consider $F \in \cD_1$. For every $i=1,\dots,m$, there is a spectral sequence (\cite[(3.1.3.4)]{BBD}; see also \cite[Proposition 2.4]{Okada:CY2})
\[
E_2^{p, q} = \Hom(E_i, H^q_\cA(F)[p]) \Rightarrow \Hom(E_i, F[p+q]).
\]
    
By assumption, these terms vanish except for $p = 0, 1$, and thus the spectral sequence degenerates at $E_2$.
On the other hand, since $F \in \cD_1=\cD_2^\perp$ we have $\Hom(E_i,F[p+q]) = 0$. Therefore, $\Hom(E_i, H^q_\cA(F)[p]) = 0$, for all $p \in \Z$.
This gives that $H^q_\cA(F) \in \cA \cap \cD_1 = \cA_1$, and so it proves the claim.
\end{proof}

\begin{proof}[Sketch of the proof of Proposition \ref{prop:inducestability}.]
To induce the heart $\cA_1$ we only need to verify the assumptions of Lemma \ref{lem:induce}.
For all $i=1,\dots,m$, $p>1$, and $F\in\cA$, we have
\[
\Hom(E_i,F[p]) = \Hom(F[p],S_{\cD}(E_i))^\vee = \Hom(F,S_{\cD}(E_i)[-p])^\vee=0,
\]
since $S_{\cD}(E_i)\in\cA[1]$.
\end{proof}

\section{Cubic fourfolds}
\label{sec:CubicFourfolds}

The goal of this section is to present the main results in \cite{BLMS:inducing,BLMNPS:families}.
First of all, for a cubic fourfold $W$, we show the existence of Bridgeland stability conditions on $\Ku(W)$ (Theorem \ref{thm:main1}) and describe a connected component of the space of stability conditions (Theorem \ref{thm:stabconn}).
Then we study moduli spaces of stable objects and we generalize the Mukai theory to the non-commutative setting; the main result is Theorem \ref{thm:YoshiokaMain}.
Finally, in Section \ref{subsec:applications}, we explain how to give a uniform setting to study the various interesting hyperk\"ahler manifolds associated to a cubic fourfolds (e.g., the Fano variety of lines or the hyperk\"ahler manifolds constructed by using twisted cubics). As consequences, we also get new proofs of the Torelli theorem (C.1) and the integral Hodge conjecture for cubic fourfolds.

The basic idea to construct stability conditions is to induce stability conditions as in Section \ref{subsec:inducing}.
The problem is that we are currently not able to do this directly from the derived category of the cubic fourfold.
Instead, we use its structure of conic fibration (see Section \ref{subsec:ConicFibrations}), to reduce to a (non-commutative) projective space, where this inducing procedure works.
The key technical result is a generalization of the Bogomolov-Gieseker inequality (Theorem \ref{thm:Bogomolov}) to this non-commutative setting.

\subsection{Conic fibrations}
\label{subsec:ConicFibrations}

In this section we assume that $\mathrm{char}(\K)\neq2$.
Let $W$ be a cubic fourfold and let $L\subseteq W$ be a line not contained in a plane in $W$\footnote{Note that such a line always exists as the family of lines in a smooth cubic fourfold are $4$-dimensional by \cite{BD:cubic}. On the other hand, such an hypersurface can contain only a finite number of planes.}.
%By \cite{BD:cubic}, the family of lines in a cubic fourfold is $4$-dimensional.
We consider the projection
\[
\pi_L\colon W\dashrightarrow\P^3
\]
from $L$ to a skew $3$-dimensional projective space in $\P^5$. Let $\sigma\colon\widetilde{W}:=\mathrm{Bl}_L(W)\to W$ be the blow-up of $W$ along $L$. The rational map $\pi_L$ yields a conic fibration
\[
\widetilde{\pi}_L\colon\widetilde{W}\to\P^3.
\]
We use the following notation for divisor classes in $\widetilde{W}$: $h$ is the pull-back of a hyperplane section on $\P^3$, $H$ is the pull-back of a hyperplane section on $W$, and the exceptional divisor $D$ of the blow-up has the form $D=H-h$.

As explained in Example \ref{ex:QuadricFibrations}, the conic fibration structure produces a sheaf $\cB_0$ of even parts of Clifford algebras on $\P^3$ and a semiorthogonal decomposition
\begin{equation}\label{eqn:quadrica}
\Db(\widetilde{W}) =\langle \Phi(\Db(\P^3,\cB_0)),\underbrace{\cO_{\widetilde{W}}(-h),\cO_{\widetilde{W}},\cO_{\widetilde{W}}(h),\cO_{\widetilde{W}}(2h)}_{\widetilde{\pi}^*_L\Db(\P^3)}\rangle.
\end{equation}
The Serre functor $S_{\cB_0}$ of the non-commutative smooth projective variety $\Db(\P^3,\cB_0)$ has the form
\[
S_{\cB_0}(\blank)=\blank\otimes_{\cB_0}\cB_{-3}[3]
\]
(see \cite[Section 7]{BLMS:inducing}).

On the other hand, by Example \ref{ex:BlowUps} (and after some simple mutations), the blow-up structure gives a semiorthogonal decomposition
\begin{equation}\label{eqn:blupquadrics}
\Db(\widetilde{W})=\langle\sigma^*\Ku(W),\cO_{\widetilde{W}}(h-H),\cO_{\widetilde{W}},\cO_{\widetilde{W}}(h),\cO_{\widetilde{W}}(H),\cO_D(h), \cO_{\widetilde{W}}(2h),\cO_{\widetilde{W}}(H + h)\rangle.
\end{equation}

If one compares \eqref{eqn:blupquadrics} and \eqref{eqn:quadrica} and perform some elementary mutations and adjunctions, one sees that $\Ku(W)$ embeds into $\Db(\P^3,\cB_0)$ together with three more exceptional objects in \eqref{eqn:blupquadrics}. The precise statement is the following result (see \cite[Proposition 7.7]{BLMS:inducing}):

\begin{prop}\label{prop:twistedP3}
Under the above assumptions,
\[
\Db(\P^3,\cB_0)=\left\langle \Psi ( \sigma^* \Ku(W)), \cB_{1},\cB_{2}, \cB_{3}\right\rangle,
\]
where $\Psi$ is the left adjoint of $\Phi$.
\end{prop}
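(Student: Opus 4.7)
My plan is to exploit the two semiorthogonal decompositions \eqref{eqn:quadrica} and \eqref{eqn:blupquadrics} of $\Db(\widetilde{W})$, together with the adjunction $(\Psi,\Phi)$ and the fully faithfulness of $\Phi$, which guarantees $\Psi\circ\Phi\simeq\mathrm{id}_{\Db(\P^3,\cB_0)}$. By \eqref{eqn:quadrica} and Proposition \ref{prop:admissible=>SO}, the essential image of $\Phi$ is characterised as the right orthogonal
\[
\Phi(\Db(\P^3,\cB_0))=\langle\cO_{\widetilde{W}}(-h),\cO_{\widetilde{W}},\cO_{\widetilde{W}}(h),\cO_{\widetilde{W}}(2h)\rangle^{\perp}.
\]
The strategy is to mutate the blow-up decomposition \eqref{eqn:blupquadrics} into an equivalent semiorthogonal decomposition of the form
\[
\Db(\widetilde{W})=\langle\sigma^{*}\Ku(W),F_{1},F_{2},F_{3},\cO_{\widetilde{W}}(-h),\cO_{\widetilde{W}},\cO_{\widetilde{W}}(h),\cO_{\widetilde{W}}(2h)\rangle
\]
for a suitable triple of exceptional objects $F_{1},F_{2},F_{3}\in\Db(\widetilde{W})$. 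Uniqueness of the right orthogonal of the last four components then forces
\[
\langle\sigma^{*}\Ku(W),F_{1},F_{2},F_{3}\rangle=\Phi(\Db(\P^{3},\cB_{0})),
\]
and since $\Psi$ restricts to an inverse of $\Phi$ on $\Phi(\Db(\P^3,\cB_0))$, it transports this to the desired semiorthogonal decomposition of $\Db(\P^3,\cB_0)$.

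The second phase is to verify that the mutated objects can be chosen so that $\Psi(F_{i})\cong\cB_{i}$ for $i=1,2,3$. This is an explicit computation using the description of $\Phi$ and $\Psi$ recalled in Example \ref{ex:QuadricFibrations}: the images under $\Phi$ of the free rank-one $\cB_{0}$-modules are built from pull-backs of line bundles along $\widetilde{\pi}_{L}$ together with the relative spinor data of the conic fibration. Once the $F_{i}$ have been put in the correct form by the mutation process, their identification with the Clifford-algebra sheaves follows from the presentation $\cB_{2i}=\cB_{0}\otimes_{\cB_{0}}L^{-i}$, $\cB_{2i+1}=\cB_{1}\otimes_{\cB_{0}}L^{-i}$, after a final twist by a line bundle coming from the factor $\cO_{\widetilde{W}}(1)$ that appears in Kuznetsov's normalisation of $\Phi$.

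The main obstacle is carrying out Step~1 explicitly. The blow-up decomposition \eqref{eqn:blupquadrics} contains $\cO_{\widetilde{W}}(h-H)$, $\cO_{\widetilde{W}}(H+h)$ and the divisorial sheaf $\cO_{D}(h)$ with $D=H-h$, none of which is a component of \eqref{eqn:quadrica}. The correct mutation sequence must juggle the two roles of $D$, as exceptional divisor for $\sigma$ and as a horizontal locus for $\widetilde{\pi}_{L}$, and repeatedly use the short exact sequence
\[
0\to\cO_{\widetilde{W}}(-D)\to\cO_{\widetilde{W}}\to\cO_{D}\to 0,
\]
suitably twisted by multiples of $h$ and $H$, in order to convert line bundles on $\widetilde{W}$ into sheaves on $D$ and back. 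The $\mathrm{Ext}$-vanishings required at each step reduce, after pushing forward along $\widetilde{\pi}_{L}$ or $\sigma$, to standard cohomology computations on $\P^{3}$ and on $W$; the delicate part is the bookkeeping of the full sequence of mutations and the verification that the three resulting exceptional objects project under $\Psi$ exactly onto $\cB_{1},\cB_{2},\cB_{3}$.
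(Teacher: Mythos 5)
Your strategy matches exactly what the paper sketches just before the statement: compare the two semiorthogonal decompositions \eqref{eqn:blupquadrics} (blow-up) and \eqref{eqn:quadrica} (conic fibration) of $\Db(\widetilde{W})$, mutate the former so that the block $\langle\cO_{\widetilde{W}}(-h),\cO_{\widetilde{W}},\cO_{\widetilde{W}}(h),\cO_{\widetilde{W}}(2h)\rangle$ sits on the right, use uniqueness of the right orthogonal to identify the remainder with $\Phi(\Db(\P^3,\cB_0))$, and transport back along $\Psi$. The observations you use along the way are all correct: $\Phi$ fully faithful implies $\Psi\circ\Phi\simeq\mathrm{id}$; the image of $\Phi$ is $\langle\cO_{\widetilde{W}}(-h),\ldots,\cO_{\widetilde{W}}(2h)\rangle^{\perp}$; and a semiorthogonal decomposition of $\Phi(\Db(\P^3,\cB_0))$ is taken to one of $\Db(\P^3,\cB_0)$ by the inverse. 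This is exactly the same route as \cite[Proposition 7.7]{BLMS:inducing}, which the paper cites for the actual argument.

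That said, be aware that what you label as ``the main obstacle'' is not an incidental detail but essentially the whole content of the proposition. The hard work is precisely (i) finding a mutation sequence that turns the four objects $\cO_{\widetilde{W}}(h-H),\cO_{\widetilde{W}}(H),\cO_D(h),\cO_{\widetilde{W}}(H+h)$ into $\cO_{\widetilde{W}}(-h)$ plus three exceptional objects $F_1,F_2,F_3$ placed correctly relative to $\sigma^*\Ku(W)$, checking the required $\Ext$-vanishings at each step, and (ii) proving $\Psi(F_i)\cong\cB_i$ rather than merely that $\Psi$ sends the $F_i$ to \emph{some} exceptional objects. In the cited reference the $F_i$ are written down explicitly (as twists of $\cO_D$ and pushforwards from $D$), and the identification with $\cB_1,\cB_2,\cB_3$ uses the specific normalization of the Kuznetsov-Clifford functor $\Phi$. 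Your ingredients (the triangle $\cO_{\widetilde{W}}(-D)\to\cO_{\widetilde{W}}\to\cO_D$, the twist by $\cO_{\widetilde{W}}(1)$ absorbed in the definition of $\Phi$, the relations $\cB_{2i}=\cB_0\otimes_{\cB_0}L^{-i}$) are the right ones, but as written the proposal is a correct plan rather than a proof.
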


%Recall that, as in Example \ref{ex:QuadricFibrations}, $\Psi$ stands for the left adjoint of $\Phi$. 
As a conclusion, the composition $\Psi\circ\sigma^*$ yields a fully faithful embedding of the Kuznetsov component of $W$ into the derived category of a non-commutative Fano manifold of dimension $3$. This is crucial to reduce the complexity of the computations in the next section.

\begin{rema}\label{rmk:quadricinfamilies}
It was observed in \cite{BLMNPS:families} that the above construction works for families of cubic fourfolds over a suitable base. One needs this family to have a section for the relative Fano variety of lines (and the lines should not be contained in planes in the corresponding cubic fourfold).
%In other words, if $\cW\to S$ is a family of cubic fourfolds over a smooth integral quasi-projective base $S$, then one can define $\Ku(\cW)$ as the right orthogonal of the category generated by three line bundles. Moreover $\Ku(\cW)$ is embedded in $\Db(\P^3_S,\widetilde{\cB}_0)$, where $\P^3_S$ is a projective bundle over $S$ and $\widetilde{\cB}_0$ is a relative version of $\cB_0$. 
\end{rema}

To apply the techniques discussed in Section \ref{subsec:inducing} and produce stability conditions on $\Ku(W)$, we need to be able to talk about slope and tilt stability for the abelian category $\coh(\P^3,\cB_0)$ and its tilts, respectively.

Consider the forgetful functor $\mathrm{Forg}\colon\Db(\P^3,\cB_0)\to\Db(\P^3)$ and the \emph{twisted Chern character} defined as
\begin{equation}\label{enq:twistChern}
\mathrm{ch}_{\cB_0} (E) := \mathrm{ch}(\mathrm{Forg}(E)) \left( 1 - \frac{11}{32}\ell\right),
\end{equation}
for all $E\in\Db(\P^3,\cB_0)$, where $\ell$ denotes the class of a line in $\P^3$. We denote by $\mathrm{ch}_{\cB_0,i}$ the degree $i$ component of $\mathrm{ch}_{\cB_0}$, and identify them with rational numbers.
%The reason for the `magic' number $\frac{11}{32}$ will be clarified in a second.

Define on $\coh(\P^3,\cB_0)$ the slope function
\[
\mu^{\cB_0}_h(E):=\left\{\begin{array}{ll}\frac{\mathrm{ch}_{\cB_0, 1}(E)}{\mathrm{ch}_{\cB_0, 0}(E)}&\text{if } \mathrm{ch}_{\cB_0, 0} (E)\neq 0\\ +\infty&\text{otherwise,}\end{array}\right.
\]
This function induces a weak stability condition on $\coh(\P^3,\cB_0)$. Thus it makes sense to talk about $\mu^{\cB_0}_h$-(semi)stable (or, simply, slope-(semi)stable) objects in $\coh(\P^3,\cB_0)$.

The key result is the following generalization of the Bogomolov-Gieseker inequality for slope-semistable sheaves \cite[Theorem 8.3]{BLMS:inducing}:

\begin{theo}\label{thm:Bogomolov}
	For any $\mu^{\cB_0}_h$-semistable sheaf $E\in\coh(\P^3,\cB_0)$, we have
	\[
	\Delta_{\cB_0}(E):= \ch_{\cB_0,1}(E)^2 - 2 \ch_{\cB_0,0}(E) \ch_{\cB_0,2}(E) \geq0.
	\]
\end{theo}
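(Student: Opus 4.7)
The strategy is to deduce the inequality from the classical Bogomolov-Gieseker inequality on a surface via a restriction argument. There are three steps: a Mehta-Ramanathan type restriction theorem, a compatibility check for the twisted discriminant, and the inequality for $\cB_0|_H$-modules on $\P^2$.

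First, I would prove the following restriction principle: if $E \in \coh(\P^3, \cB_0)$ is $\mu_h^{\cB_0}$-semistable, then for a very general hyperplane $H \cong \P^2 \subset \P^3$ the restriction $E|_H$ is $\mu_h^{\cB_0|_H}$-semistable as a $\cB_0|_H$-module. The argument mirrors the classical Mehta-Ramanathan theorem: since $\cB_0$ is locally free of rank $4$ over $\cO_{\P^3}$, every $\cB_0$-submodule of $E$ is in particular an $\cO_{\P^3}$-subsheaf of the underlying sheaf $\mathrm{Forg}(E)$, so the family of potentially destabilizing $\cB_0|_H$-submodules of $E|_H$ is bounded, and for a very general $H$ no such destabilization can fail to lift to a $\cB_0$-submodule of $E$, contradicting the $\mu_h^{\cB_0}$-semistability of $E$.

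Second, a direct application of the projection formula, together with the definition \eqref{enq:twistChern} of the twisted Chern character, shows that the numerical quantity $\Delta_{\cB_0}(E)$ equals $\Delta_{\cB_0|_H}(E|_H)$ under the natural identification of intersection numbers on $\P^3$ (via $h^i$) with those on $\P^2$; indeed the definition only involves $\ch_{\cB_0, i}$ for $i \leq 2$, and these behave multiplicatively under cap-product with $h$. Thus the inequality on $\P^3$ reduces to the same inequality on a surface.

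Third, on $\P^2$ the sheaf $\cB_0|_H$ is an $\cO_H$-algebra which is Azumaya of degree $2$ away from the discriminant sextic of the conic fibration, so on that open set its modules are equivalent to twisted sheaves for a Brauer class. For slope-semistable twisted sheaves on a smooth projective surface, the Bogomolov-Gieseker inequality is standard and can be proved by the usual Hodge-index argument applied to the modified Chern character (see, e.g., the theory of twisted sheaves \textit{\`a la} Lieblich/C\u{a}ld\u{a}raru). The correction term $-\tfrac{11}{32}\ell$ in the definition of $\ch_{\cB_0}$ is chosen precisely so that $\ch_{\cB_0}$ matches (up to rescaling) the Chern character of the associated twisted sheaf, making the transfer of the classical inequality immediate.

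The main technical obstacle I anticipate is the first step: one needs to verify that the Mehta-Ramanathan argument, including the relevant boundedness of $\cB_0$-submodules with bounded slope, goes through in the non-commutative setting. A possible alternative route would be to use the fully faithful embedding of Proposition \ref{prop:twistedP3} to transport the question to $\Db(\widetilde{W})$ and apply the classical Bogomolov-Gieseker inequality there, but this requires a precise dictionary relating $\mu_h^{\cB_0}$-stability on $(\P^3,\cB_0)$ to a controllable stability notion on $\widetilde{W}$ and careful bookkeeping of how $\ch_{\cB_0}$ transforms under $\Phi$, which does not seem to simplify matters.
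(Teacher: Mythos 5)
Your restriction principle is false as stated: a $\mu_h^{\cB_0}$-semistable sheaf on $\P^3$ need \emph{not} restrict to a $\mu$-semistable module on a very general hyperplane $H\cong\P^2$. This already fails in the untwisted case $\cB_0=\cO$; for example $T_{\P^2}$ is stable but restricts to $\cO(1)\oplus\cO(2)$ on a general line, and the same phenomenon persists in all dimensions. The Mehta--Ramanathan and Flenner theorems only guarantee semistability of restrictions to hypersurfaces of degree $d$ above a threshold depending on $\Delta(E)$ --- the very invariant you are trying to bound --- so a ``restrict to a hyperplane, then apply BG on the surface'' argument is circular. This is precisely why the paper's proof (see the sentence after the theorem) follows Langer \cite{Lan:Bogomolov}: Langer's method does \emph{not} presuppose a Mehta--Ramanathan statement, but instead proves the Bogomolov inequality and quantitative restriction estimates \emph{simultaneously}, by induction on dimension, using Grauert--M\"ulich-type bounds on the failure of semistability of general hyperplane restrictions and bootstrapping those bounds. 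If you try to repair your Step 1 by passing to hypersurfaces of degree $d\gg 0$, Step 2 no longer hands you the same discriminant (the intersection numbers rescale by powers of $d$), and in positive characteristic you also lose boundedness, which is itself downstream of BG.

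\textbf{Gap in Step 3.} Even granting the reduction to a surface, the Azumaya/twisted-sheaf argument is incomplete. The restriction $\cB_0|_H$ degenerates along the sextic curve cut out by the discriminant of the conic fibration on $H$, so it is only Azumaya on an open complement, and your sketch does not explain how to control $\ch_{\cB_0,2}$, hence $\Delta_{\cB_0}$, across the degeneracy locus. The normalization $-\tfrac{11}{32}\ell$ is not obviously the Chern character of an associated twisted sheaf; in the paper it is pinned down by the requirement $\Delta_{\cB_0}(\cB_i)=0$, and the compatibility you assert would itself require proof. In short, your plan has the right overall flavor (dimension reduction plus a surface inequality), but the two places you flag as ``expected to go through'' are exactly where the argument must be rebuilt along Langer's lines rather than Mehta--Ramanathan's.
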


It is not difficult to see that $\cB_i$ is slope-stable and, with the correction given by $\frac{11}{32}$ in \eqref{enq:twistChern}, we have $\Delta_{\cB_0}(\cB_i)=0$, for all $i\in\Z$.
The proof of Theorem \ref{thm:Bogomolov} follows a similar approach as in Langer's proof of the usual Bogomolov-Gieseker inequality \cite{Lan:Bogomolov}.
%if the inequality is not satisfied we can restrict a semistable sheaf to a semistable sheaf on some blow-up of a (non-commutative) projective plane, and there use Euler characteristic methods to get a contradiction.

\subsection{Existence of Bridgeland stability conditions}
\label{subsec:ExistenceCubicFourfolds}

We keep assuming that $\mathrm{char}(\K)\neq2$.
In this section we apply the construction of the previous section to construct Bridgeland stability conditions on $\Ku(W)$ and describe a connected component of $\mathrm{Stab}(\Ku(W))$.

The discussion in Section \ref{subsec:Tilt} works verbatim also for the non-commutative variety $(\P^3,\cB_0)$. In particular, for a given $\beta\in\R$, we consider the modified twisted Chern character
\[
\mathrm{ch}^\beta_{\cB_0}:=e^{-\beta}\cdot\mathrm{ch}_{\cB_0}
\]
and take the lattice $\Lambda_{\cB_0}$ generated by the vectors
\[
\left(\mathrm{ch}_{\cB_0,0}(E),\mathrm{ch}_{\cB_0,1}(E),\mathrm{ch}_{\cB_0,2}(E)\right)\in\Q^{\oplus 3},
\]
for all $E\in\Db(\P^3,\cB_0)$. By Theorem \ref{thm:Bogomolov} we have a quadratic form $\Delta_{\cB_0}$ on $\Lambda_{\cB_0}$.

Let $\Lambda_{\cB_0,\Ku(W)}\subseteq \Lambda_{\cB_0}$ be the lattice which is the image of $K(\Ku(W))$ under the natural composition $K(\Ku(W))\to K(\Db(\P^3,\cB^0))\to\Lambda_{\cB_0}$. Hence we have a surjection
\[
v\colon K(\Ku(W))\twoheadrightarrow\Lambda_{\cB_0,\Ku(W)}.
\]

\begin{rema}\label{rmk:diffcomp}
It is not difficult to see that $v$ has the following alternative description. The composition of $\mathrm{Forg}$ and of the fully faithful functor $\Psi\circ\sigma^*\colon\Ku(W)\to\Db(\P^3,\cB_0)$ induces a morphism at the level of numerical Grtothendieck groups which, composed with the Chern character $\mathrm{ch}_{\cB_0}$ truncated at degree $2$, yields a surjective morphism
\[
u\colon\widetilde{H}_{\mathrm{alg}}(\Ku(W),\Z)\twoheadrightarrow \Lambda_{\cB_0,\Ku(W)}
\]
Given the Mukai vector $\vv\colon K(\Ku(W))\to\widetilde{H}_{\mathrm{alg}}(\Ku(W),\Z)$, we have $v=u\circ\vv$.
\end{rema}

For $\beta\in\R$, we consider the abelian category $\coh^\beta(\P^3,\cB_0)$,
which is the heart of a bounded t-structure obtained by tilting $\coh(\P^3,\cB_0)$ with respect to slope-stability at the slope $\mu^{\cB_0}_h=\beta$. Moreover, for $\alpha\in\R_{>0}$ and all $\beta\in\R$, consider the function
\[
Z_{\alpha, \beta}(E) := \frac 12 \alpha^2 \mathrm{ch}_{\cB_0,0}^\beta(E) - \mathrm{ch}_{\cB_0,2}^\beta(E) + \mathfrak{i}\,\mathrm{ch}_{\cB_0,1}^\beta(E)
\]
defined on $\Lambda_{\cB_0,\Ku(W)}$ and taking values in $\C$. By Proposition \ref{prop:tiltstab}, the pair
\[
\sigma_{\alpha, \beta} := (\coh^\beta(\P^3,\cB_0), Z_{\alpha, \beta})
\]
is a weak stability condition on $\Db(\P^3,\cB_0)$ with respect to $\Lambda_{\cB_0,\Ku(W)}$ (see \cite[Proposition 9.3]{BLMS:inducing}). The support property is provided by the quadratic form given by $\Delta_{\cB_0}$.

We want to use this together with Proposition \ref{prop:inducestability} to prove the following result.

\begin{theo}\label{thm:main1}
	If $W$ is a cubic fourfold, then $\mathrm{Stab}_{\Lambda_{\cB_0,\Ku(W)}}(\Ku(W))$ is non-empty.
\end{theo}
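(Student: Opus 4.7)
The plan is to construct a Bridgeland stability condition on $\Ku(W)$ by inducing one from $\Db(\P^3,\cB_0)$ along the fully faithful embedding $\Psi\circ\sigma^*\colon\Ku(W)\hookrightarrow\Db(\P^3,\cB_0)$ supplied by Proposition \ref{prop:twistedP3}. The semiorthogonal decomposition
\[
\Db(\P^3,\cB_0)=\langle \Psi(\sigma^*\Ku(W)),\cB_1,\cB_2,\cB_3\rangle
\]
puts us exactly in the setup of the inducing criterion of Proposition \ref{prop:inducestability}, so it suffices to produce a weak stability condition $\sigma'=(\cA,Z')$ on $\Db(\P^3,\cB_0)$ whose heart $\cA$ contains the exceptional objects $\cB_1,\cB_2,\cB_3$ together with the shifts $\cB_{-2}[2],\cB_{-1}[2],\cB_0[2]$ of their Serre duals (recall $S_{\cB_0}(\cB_i)=\cB_{i-3}[3]$), and for which $Z'$ is non-zero on the $\cB_i$'s and is a stability function on $\cA\cap\Psi(\sigma^*\Ku(W))$.

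\textbf{Construction of $\sigma'$ by double tilting.} To produce such a heart $\cA$ I will iterate tilting. First, Theorem \ref{thm:Bogomolov} (the non-commutative Bogomolov--Gieseker inequality for $(\P^3,\cB_0)$) provides the family of tilt-stability weak stability conditions $\sigma_{\alpha,\beta}=(\coh^\beta(\P^3,\cB_0),Z_{\alpha,\beta})$ via the non-commutative analog of Proposition \ref{prop:tiltstab}. Then, for a slope $\mu\in\R$, I tilt once more along the lines of Proposition \ref{prop:tiltedtiltstability} to obtain a weak stability condition $\sigma'=(\coh^\mu_{\alpha,\beta}(\P^3,\cB_0),Z^\mu_{\alpha,\beta})$ whose heart $\cA$ sits inside $\langle\coh^\beta(\P^3,\cB_0),\coh^\beta(\P^3,\cB_0)[1]\rangle$, and hence has the flexibility to contain the required window of objects shifted by $[0]$ and $[2]$.

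\textbf{Choice of parameters.} Next I choose $\alpha>0$, $\beta\in\R$, and $\mu\in\R$ so that, using the explicit twisted Chern characters $\ch_{\cB_0}(\cB_j)$ (the normalization $1-\tfrac{11}{32}\ell$ in \eqref{enq:twistChern} is precisely what makes $\Delta_{\cB_0}(\cB_j)=0$, hence every $\cB_j$ is slope-semistable with well-understood invariants):
\begin{itemize}
\item each $\cB_i$ with $i=1,2,3$ lies in $\coh^\beta(\P^3,\cB_0)$ and is $\sigma_{\alpha,\beta}$-semistable of slope strictly greater than $\mu$, so that $\cB_i\in\cA$;
\item each $\cB_{i-3}[1]$ with $i=1,2,3$ lies in $\coh^\beta(\P^3,\cB_0)$ and is $\sigma_{\alpha,\beta}$-semistable of slope at most $\mu$, so that $\cB_{i-3}[2]\in\cA$, i.e.\ $S_{\cB_0}(\cB_i)\in\cA[1]$.
\end{itemize}
A standard analysis of the resulting inequalities in $\alpha,\beta,\mu$ (analogous to the one carried out in \cite{BLMS:inducing} for Fano threefolds) shows these conditions are simultaneously satisfiable and that $Z^\mu_{\alpha,\beta}(\cB_i)\neq 0$ for generic $\mu$. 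With these choices, hypotheses (1)--(3) of Proposition \ref{prop:inducestability} are verified.

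\textbf{The main obstacle: the stability-function condition and support property.} The delicate remaining point, and the main technical obstacle, is to rule out non-zero objects $F\in\cA\cap\Psi(\sigma^*\Ku(W))$ with $Z^\mu_{\alpha,\beta}(F)=0$. Here one argues by contradiction: such an $F$ would be $\sigma^\mu_{\alpha,\beta}$-semistable of central charge zero, so in particular its invariants would have to satisfy $Q_{\alpha,\beta}(F)\leq 0$ for the quadratic form coming from $\Delta_{\cB_0}$, together with the equalities forced by $Z^\mu_{\alpha,\beta}(F)=0$. Semiorthogonality with each $\cB_i$, i.e.\ $\Hom^\bullet(\cB_i,F)=0=\Hom^\bullet(F,\cB_i)$, combined with Theorem \ref{thm:Bogomolov} and Euler-characteristic constraints in $\Lambda_{\cB_0,\Ku(W)}$, then rules out the existence of any such $F$ (by a case analysis on its slope-semistable cohomology sheaves). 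Finally, the support property for the induced pair $(\cA_1,Z'|_{\cA_1})$ on $\Ku(W)$, with respect to the lattice $\Lambda_{\cB_0,\Ku(W)}$, is a direct consequence of the support property of $\sigma'$, since the quadratic form $\Delta_{\cB_0}$ restricts to a quadratic form on $\Lambda_{\cB_0,\Ku(W)}$ that is negative definite on $\ker Z'|_{\cA_1}$. Proposition \ref{prop:inducestability} then produces the desired Bridgeland stability condition in $\mathrm{Stab}_{\Lambda_{\cB_0,\Ku(W)}}(\Ku(W))$.
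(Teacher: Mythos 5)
Your proposal follows essentially the same route as the paper: embed $\Ku(W)$ into $\Db(\P^3,\cB_0)$ via the conic-fibration decomposition of Proposition \ref{prop:twistedP3}, build weak stability conditions on $\Db(\P^3,\cB_0)$ by double tilting using the Bogomolov-type inequality of Theorem \ref{thm:Bogomolov} (via Propositions \ref{prop:tiltstab} and \ref{prop:tiltedtiltstability}), choose $(\alpha,\beta,\mu)$ so that the $\cB_i$ and the shifts of their Serre duals land in the required positions, and induce via Proposition \ref{prop:inducestability}. The paper makes the concrete choice $\beta=-1$, $\mu=0$, and $\alpha$ sufficiently small, and verifies the slope inequalities directly, which your ``standard analysis'' gestures at; this part is fine.

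The only place where your sketch drifts from the actual argument is the final verification that no nonzero $F\in\cA\cap\Ku(W)$ has $Z(F)=0$. Your observation that such an $F$ would be semistable of central charge zero, and hence (by the support property and negative definiteness of $\Delta_{\cB_0}$ on $\ker Z$) would have vanishing numerical class in $\Lambda_{\cB_0}$, is correct and captures half the story. But you then attribute the contradiction to ``Theorem \ref{thm:Bogomolov} and Euler-characteristic constraints''; neither is what finishes the job. The paper instead invokes a structural result (\cite[Lemma 2.15]{BLMS:inducing}) which says that any object of the double-tilted heart with $\ch_{\cB_0,0}=\ch_{\cB_0,1}=\ch_{\cB_0,2}=0$ has $\mathrm{Forg}(F)$ a torsion sheaf supported in dimension zero, and then a one-line Hom computation: for such $F\neq 0$ one has $\Hom_{\cB_0}(\cB_j,F)\cong\Hom_{\cB_0}(\cB_0,F)\cong\Hom_{\P^3}(\cO_{\P^3},\mathrm{Forg}(F))\neq 0$, contradicting $F\in\langle\cB_1,\cB_2,\cB_3\rangle^\perp$. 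Your ``case analysis on slope-semistable cohomology sheaves'' points in the right direction, but it is the structural identification of $\ker Z$ objects plus the adjunction with the forgetful functor, not the Bogomolov inequality, that closes the argument.
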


\begin{proof}
This is \cite[Theorem 1.2]{BLMS:inducing}; it is now easy to sketch a proof.
Let us fix a line $L\subset W$ not contained in a plane in $W$. By Proposition \ref{prop:twistedP3}, we have
\[
\Db(\P^3,\cB_0)=\langle\Ku(W),\cB_1,\cB_2,\cB_3\rangle.
\]

Consider the weak stability condition $\sigma_{\alpha, \beta}$ mentioned above and set $\beta = -1$. Define the slope function $\mu_{\alpha,-1}:=\mu_{\sigma_{\alpha,-1}}$ associated to $\sigma_{\alpha,-1}$.
Let us tilt $\coh^{-1}(\P^3,\cB_0)$ again with respect to $\mu_{\alpha,-1}$ at $\mu_{\alpha,-1}=0$, getting the heart $\coh^{0}_{\alpha,-1}(\P^3,\cB_0)$ of a bounded t-structure on $\Db(\P^3,\cB_0)$.
Set $Z_{\alpha,-1}^0:=-\mathfrak{i}Z_{\alpha,-1}$.
By Proposition \ref{prop:tiltedtiltstability}, the pair
\[
\sigma_{\alpha,-1}^0:=(\coh^{0}_{\alpha,-1}(\P^3,\cB_0), Z_{\alpha,-1}^0)
\]
is a weak stability condition on $\Db(\P^3,\cB_0)$. We want to check that the assumptions in Proposition \ref{prop:inducestability} are satisfied for $\cD_1=\Ku(W)$, $\cD_2=\langle\cB_1,\cB_2,\cB_3\rangle$ and $\alpha$ sufficiently small. This would then conclude the proof.

Let us start form (1). A direct computation shows that $\cB_1,\cB_2,\cB_3$, $\cB_{-2}[1],\cB_{-1}[1],\cB_{0}[1]$ belong to $\coh^{-1}(\P^3,\cB_0)$, and they are $\sigma_{\alpha,-1}$-stable for all $\alpha>0$. For $\alpha$ sufficiently small, one directly proves that
\[
\mu_{\alpha,-1}(\cB_{-2}[1])<\mu_{\alpha,-1}(\cB_{-1}[1])<\mu_{\alpha,-1}(\cB_0[1])
<0<\mu_{\alpha,-1}(\cB_1)<\mu_{\alpha,-1}(\cB_2)<\mu_{\alpha,-1}(\cB_3).
\]
Hence $\cB_1$, $\cB_2$ and $\cB_3$ are contained in $\coh^{0}_{\alpha,-1}(\P^3,\cB_0)$, for $\alpha$ sufficiently small.

As for (2), observe that, by \cite[Corollary 3.9]{Kuz:quadrics}, $\cB_i\otimes_{\cB_0}\cB_j\cong\cB_{i+j}$. Thus $S_{\cB_0}(\cB_j)\cong\cB_{j-3}[3]$ and $S_{\cB_0}(\cB_j)\in\coh^{0}_{\alpha,-1}(\P^3,\cB_0)[1]$, for $\alpha$ sufficiently small and $j=1,2,3$. A very simple check yields (3).

Finally, by \cite[Lemma 2.15]{BLMS:inducing}, if $E\in\coh^{0}_{\alpha,-1}(\P^3,\cB_0)$ is such that $Z_{\alpha,-1}^0(E)=0$, then $\mathrm{Forg}(E)$ is a torsion sheaf supported in dimension $0$.
But then $F\notin\Ku(X)$ because 
\[
\Hom_{\cB_0}(\cB_j,F)\cong\Hom_{\cB_0}(\cB_0,F)\cong\Hom_{\P^3}(\cO_{\P^3},\mathrm{Forg}(F)),
\]
where, in the last isomorphism, we used the adjunction between the functors $\blank\otimes_{\cO_{\P^3}}\cB_0$ and $\mathrm{Forg}$ (see again \cite[Section 7]{BLMS:inducing}).
\end{proof}

To finish the section, we enlarge the lattice with respect to which the support property holds to get the analogue of Bridgeland's result for K3 surfaces \cite{Bri:K3} in Example \ref{ex:K3}.

\begin{defi}
A \emph{full numerical stability condition} on $\Ku(W)$ is a Bridgeland stability condition on $\Ku(W)$ whose lattice $\Lambda$ is given by the Mukai lattice $\widetilde{H}_\mathrm{alg}(\Ku(W),\Z)$ and the map $v$ is given by the Mukai vector $\vv$.
\end{defi}

Let $\mathrm{Stab}(\Ku(W))$ be the set of full stability conditions. As explained in Section \ref{subsec:BridgelandCovering}, we have a map
\[
\eta\colon\mathrm{Stab}(\Ku(W))\to\widetilde{H}_\mathrm{alg}(\Ku(W),\C)
\]
together with a period domain $\cP_0\subseteq\widetilde{H}_\mathrm{alg}(\Ku(W),\C)$ such that $\eta|_{\eta^{-1}(\cP_0)}$ is a covering map (see Theorem \ref{thm:cov}).

Let $\sigma=(\cA,Z)$ be the stability condition constructed in the proof of Theorem \ref{thm:main1}. Consider the pair $\sigma':=(\cA,Z')$, where $Z':=Z\circ u$ (see Remark \ref{rmk:diffcomp}). Then $\sigma'$ is a full stability condition and $\eta(\sigma')\in\cP_0$. This is proven in \cite[Proposition 9.10]{BLMS:inducing} and it implies that the open subset $\eta^{-1}(\cP_0)$ is non-empty.
Let $\cP_0^+$ denote the connected component of $\cP_0$ containing $\eta(\sigma')$, and let $\mathrm{Stab}^\dagger(\Ku(W))$ denote the connected component of $\mathrm{Stab}(\Ku(W))$ containing $\sigma'$.

\begin{theo}\label{thm:stabconn}
The connected component $\mathrm{Stab}^\dagger(\Ku(W))$ is contained in $\eta^{-1}(\cP_0^+)$.
In particular, the restriction $\eta\colon\mathrm{Stab}^\dagger(\Ku(W))\to\cP_0^+$ is a covering map.
\end{theo}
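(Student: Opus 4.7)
The plan is to mimic the strategy of \cite[Section~8]{Bri:K3} in our non-commutative setting, showing that $\eta^{-1}(\cP_0^+)$ is simultaneously open and closed inside $\mathrm{Stab}(\Ku(W))$. Openness is immediate from continuity of $\eta$ and openness of $\cP_0^+\subseteq\widetilde{H}_\mathrm{alg}(\Ku(W),\C)$. Once closedness is established, the connectedness of $\mathrm{Stab}^\dagger$ together with the fact that $\sigma'\in\mathrm{Stab}^\dagger\cap\eta^{-1}(\cP_0^+)$ is non-empty and $\cP_0^-\cap\cP_0^+=\varnothing$ will force $\mathrm{Stab}^\dagger\subseteq\eta^{-1}(\cP_0^+)$, after which Bridgeland's covering theorem (Theorem~\ref{thm:cov}) immediately gives the covering property.

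The core of the proof is therefore closedness. Let $\sigma_n=(\cA_n,Z_n)\to\sigma_\infty=(\cA_\infty,Z_\infty)$ with $\sigma_n\in\eta^{-1}(\cP_0^+)$, so that $\eta(\sigma_\infty)\in\overline{\cP_0^+}$, and we must rule out the two ways to hit the boundary: (i) $\eta(\sigma_\infty)\notin\cP$, i.e.\ the real and imaginary parts of $\eta(\sigma_\infty)$ no longer span a positive-definite $2$-plane; (ii) $\eta(\sigma_\infty)\in\cP\cap\delta^\perp$ for some $(-2)$-class $\delta\in\widetilde{H}_\mathrm{alg}(\Ku(W),\Z)$. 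For (i), I would use the full support property: under the identification $Z_\infty(\blank)=(\eta(\sigma_\infty),\blank)$, pick two $\sigma_\infty$-semistable objects $E_1,E_2$ whose central charges are $\R$-linearly independent in $\C$ (such objects exist since $Z_\infty$ has image a dense subgroup of $\C$). Their Mukai vectors $\vv(E_i)$ span a $2$-plane on which the Mukai form is non-negative by the support property, while $\ker Z_\infty$ is negative definite; combined with the signature $(4,20)$ of the Mukai pairing (Lemma~\ref{lem:TopKthandK3s}), this forces the $2$-plane $\R\cdot\Re\eta(\sigma_\infty)+\R\cdot\Im\eta(\sigma_\infty)$ to be positive-definite.

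For (ii), I would argue by contradiction using the classical mechanism of spherical objects becoming strictly semistable on a wall. If $(\eta(\sigma_\infty),\delta)=0$ with $\delta^2=-2$, the Calabi--Yau-$2$ property of $\Ku(W)$ (Section~\ref{subsec:CubicFourfolds}) together with a Mukai-style Riemann--Roch computation forces the existence of a $\sigma_\infty$-semistable spherical object $S$ with $\vv(S)=\pm\delta$, whose phase can be arbitrarily adjusted by the $\C$-action in a neighborhood of $\sigma_\infty$. Specialising the nearby stability conditions $\sigma_n$ (which lie in $\eta^{-1}(\cP_0)$ where $\delta^\perp$ is avoided), one obtains a sequence of HN-filtrations of $S$ with unbounded length, contradicting local finiteness of walls and the support property. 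This is the same line of reasoning as in the K3 surface case and it transfers without essential change to $\Ku(W)$ because the Mukai lattice and the Serre functor behave in the same way.

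The main obstacle I expect is this second step, the $(-2)$-wall avoidance. It relies on having enough control over spherical (or spherical-like) objects in $\Ku(W)$ and their semistability behaviour in families of stability conditions; unlike the K3 surface case, we do not have a geometric model to produce spherical objects, so one must rely on the lattice and Hochschild data together with the Calabi--Yau-$2$ structure established in Section~\ref{subsec:CubicFourfolds}. Once both cases are excluded, closedness of $\eta^{-1}(\cP_0^+)$ follows, and the theorem reduces to Theorem~\ref{thm:cov}.
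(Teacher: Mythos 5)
The paper does not give a self-contained proof of this theorem; it points to \cite{BLMNPS:families} and explicitly names the key ingredients: Theorem~\ref{thm:YoshiokaMain} (non-emptiness and structure of Bridgeland moduli spaces on $\Ku(W)$) and the theory of stability conditions in families, used to reduce to the K3 surface case (Example~\ref{ex:K3}) via deformation of the cubic fourfold. Your proposal takes a genuinely different route: a direct imitation of Bridgeland's open-and-closed argument for K3 surfaces, carried out intrinsically in $\Ku(W)$ without passing through a geometric K3 model. In principle this is a nice idea, but as written the argument has a genuine gap precisely at the point where the real difficulty lies.

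The gap is in step~(ii). You assert that the Calabi--Yau-$2$ property together with ``a Mukai-style Riemann--Roch computation forces the existence of a $\sigma_\infty$-semistable spherical object $S$ with $\vv(S)=\pm\delta$.'' Riemann--Roch computes the expected dimension of the moduli space of objects of class $\delta$; it does not and cannot by itself produce a single object. Even on an honest K3 surface, the existence of a stable object with any prescribed primitive Mukai vector of square $\geq -2$ is a deep theorem (Mukai, O'Grady, Yoshioka, Bayer--Macr\`i), not a formal consequence of Serre duality and the lattice. For $\Ku(W)$ the corresponding existence statement is exactly the non-emptiness part of Theorem~\ref{thm:YoshiokaMain}, which is itself proved by deformation to the K3 case over the Pfaffian locus; so your ``direct'' argument quietly presupposes the very result the paper identifies as the key input, and without invoking it you have no mechanism to produce the spherical object. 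You are right that the lack of a geometric model for $\Ku(W)$ is the obstacle --- but then the proposal stops at the obstacle rather than overcoming it.

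Two smaller issues. In step~(i), the claim that because $Q(\vv(E_1)),Q(\vv(E_2))\geq 0$ for two semistable objects with $\R$-independent central charges ``the Mukai form is non-negative on the $2$-plane they span'' does not follow: non-negativity at two vectors does not give non-negativity on their span, and the support-property form $Q$ need not literally equal the Mukai pairing. The standard argument for positivity of the plane $\R\,\Re\,\eta(\sigma_\infty)+\R\,\Im\,\eta(\sigma_\infty)$ runs through the negative-definiteness of $Q$ on $\ker Z_\infty$ combined with signature considerations, and needs to be formulated more carefully than this. In step~(ii), the final sentence about ``HN-filtrations of $S$ with unbounded length, contradicting local finiteness of walls and the support property'' is vague; once you grant a spherical object of class $\delta$ with $Z_\infty(\delta)=0$, the contradiction is more directly that a non-zero semistable object would have vanishing central charge, or, if $S$ is only semistable for nearby $\sigma_n$, a quantitative violation of the support property as $|Z_n(\delta)|\to 0$. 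Either way this step should be spelled out, but it is secondary to the existence gap above.
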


Theorem \ref{thm:stabconn} is proved in \cite{BLMNPS:families}.
The key ingredient is Theorem \ref{thm:YoshiokaMain} below and the notion of family of stability conditions to reduce to the K3 surface case (Example \ref{ex:K3}).

\begin{rema}\label{rmK:GM}
(i) If $\Ku(W)\cong\Db(S)$, for $S$ a K3 surface, then by construction $\mathrm{Stab}^\dagger(\Ku(W))$ coincides with the connected component $\mathrm{Stab}^\dagger(\Db(S))$ discussed in Example \ref{ex:K3}.

(ii) We expect Theorems \ref{thm:main1} and \ref{thm:stabconn} to hold also in the case of Gushel-Mukai fourfolds. Indeed, those varieties have a conic fibration that makes it very plausible that the approach in Section \ref{subsec:ConicFibrations} would work also in that case. One of the difficulties consists in proving the analogue of Theorem \ref{thm:Bogomolov} in this new geometric setting.	
\end{rema}

\subsection{Moduli spaces}
\label{subsec:ModuliSpaces}

The most important consequence of Theorem \ref{thm:main1} is to be able to construct and study moduli spaces of stable objects on Kuznetsov components in an analogous way as the Mukai theory for K3 surfaces.

\subsubsection*{General properties of moduli spaces of complexes}

Let $\cD\subset\Db(X)$ be a non-commutative smooth projective variety, where $X$ is a smooth projective variety over $\K$.
In a similar way as in Section \ref{subsec:NC}, given a scheme $B$, locally of finite type over $\K$, we can define a quasi-coherent product category
\[
\cD_{\mathrm{Qcoh}}\boxtimes \mathrm{D}_{\mathrm{Qcoh}}(B)\subset\mathrm{D}_{\mathrm{Qcoh}}(X\times B)
\]
in the unbounded derived category of quasi-coherent sheaves on $X\times B$ (this is the smallest triangulated subcategory closed under arbitrary direct sums and containing $\cD\boxtimes\Db(B)$; see \cite{Kuz:BaseChange}).

\begin{defi}
An object $E\in\mathrm{D}_{\mathrm{Qcoh}}(X\times B)$ is \emph{$B$-perfect} if it is, locally over $B$, isomorphic to a bounded complex of quasi-coherent sheaves on $B$ which are flat and of finite presentation.
\end{defi}

Roughly, complexes which are $B$-perfect are those which can be restricted to fibers over $B$.
We denote by $\mathrm{D}_{B\text{-}\mathrm{perf}}(X\times B)$ the full subcategory of $\mathrm{D}(\mathrm{Qcoh}(X\times B))$ consisting of $B$-perfect complexes, and
\[
\cD_{B\text{-}\mathrm{perf}}:=(\cD_{\mathrm{Qcoh}}\boxtimes \mathrm{D}_{\mathrm{Qcoh}}(B))\cap \mathrm{D}_{B\text{-}\mathrm{perf}}(X\times B).
\]

Consider the $2$-functor
\[
\mathfrak{M}\colon\mathbf{Sch}\to\mathbf{Grp}
\]
which maps a scheme $B$ which is locally of finite type over $\K$ to the groupoid
\[
\mathfrak{M}(B):=\left\{E\in\cD_{B\text{-}\mathrm{perf}}\,:\, \begin{array}{l}\Ext^i(E|_{X\times\{b\}},E|_{X\times\{b\}}) = 0,\text{ for all }i < 0\\
\text{and all geometric points }b \in B\end{array}\right\}.
\]

The following is the main result in \cite{Lie:Mod}:

\begin{theo}[Lieblich]\label{thm:Lieblich}
The functor $\mathfrak{M}$ is an Artin stack, locally of finite type, locally quasi-separated and with separated diagonal.
\end{theo}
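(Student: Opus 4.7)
The plan is to deduce the theorem from Lieblich's original result on the moduli of universally gluable complexes on $X$, and then carve out $\mathfrak{M}$ as a substack using the admissibility of $\cD \subset \Db(X)$. None of the properties in the statement (Artin, locally of finite type, locally quasi-separated, separated diagonal) will be re-proved from scratch; they will all be inherited.

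First I would introduce the auxiliary $2$-functor $\mathfrak{M}_X$ sending a $\K$-scheme $B$ to the groupoid of $B$-perfect objects $E \in \mathrm{D}_{B\text{-}\mathrm{perf}}(X\times B)$ satisfying $\Ext^{<0}(E_b,E_b)=0$ on every geometric fiber $E_b := E|_{X\times\{b\}}$. Since $X$ is smooth and projective, Lieblich's main theorem in \cite{Lie:Mod} applies verbatim and shows that $\mathfrak{M}_X$ is an Artin stack over $\K$ with exactly the properties listed in the statement. Because all four of these properties descend to locally closed substacks, it is enough to exhibit $\mathfrak{M}$ as a (locally) closed substack of $\mathfrak{M}_X$.

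Next I would use the admissibility of $\cD$ to do so. By Kuznetsov's base change theorems \cite{Kuz:BaseChange} (specifically, the analogues of Theorem 5.8 and Lemma \ref{lem:projectionFM} in families over $B$), the subcategory $\cD_{\mathrm{Qcoh}}\boxtimes \mathrm{D}_{\mathrm{Qcoh}}(B)$ is admissible in $\mathrm{D}_{\mathrm{Qcoh}}(X\times B)$ and there is a projection functor $\delta^\perp_B$ onto its orthogonal whose kernel is the base change of a fixed perfect Fourier--Mukai kernel $P_{\delta^\perp}\in\Db(X\times X)$. An object $E$ of $\mathfrak{M}_X(B)$ then lies in $\mathfrak{M}(B)$ if and only if $\delta^\perp_B(E)=0$, and because compatibility with base change identifies $\delta^\perp_B(E)|_{X\times\{b\}}$ with $\delta^\perp(E_b)$, this vanishing can be tested fiberwise. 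Since $P_{\delta^\perp}$ is perfect on $X\times X$ and $E$ is $B$-perfect, $\delta^\perp_B(E)$ is again $B$-perfect; the locus in $B$ where its cohomology sheaves fail to vanish is therefore constructible and stable under generization, hence open. Equivalently, the condition $\delta^\perp_B(E)=0$ is closed on $B$, so $\mathfrak{M}\hookrightarrow\mathfrak{M}_X$ is a closed immersion and $\mathfrak{M}$ inherits every property in the statement.

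The hard part is the second step: one has to upgrade the fiberwise characterization ``$E_b\in\cD$'' into a genuinely closed condition on $B$, which requires that the projection $\delta^\perp$ be represented by a \emph{perfect} kernel so that the cohomology-and-base-change machinery applies. Without this, one would only obtain a constructible locus and the resulting substack could fail to be locally closed. The residual verifications, namely that closed immersions preserve being Artin, locally of finite type, locally quasi-separated, and having separated diagonal, are standard consequences of the definitions.
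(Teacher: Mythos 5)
Your overall reduction to Lieblich's theorem for the ambient moduli stack $\mathfrak{M}_X$, followed by carving out $\mathfrak{M}$ using the admissibility of $\cD$, is exactly the route the paper takes. However, there is a concrete error in the middle: the condition $\delta^\perp_B(E)=0$ is an \emph{open} condition on $B$, not a closed one. For a $B$-perfect complex $K$ on $X\times B$, locally represented by a bounded complex of $B$-flat locally free sheaves, the locus $\{(x,b): K\otimes\kappa(x,b)\neq 0\}$ is closed (failure of exactness of a complex of matrices is a closed condition, since ranks of linear maps are lower semicontinuous), and because $X$ is proper its image in $B$, which is precisely $\{b : K_b\neq 0\}$, is again closed. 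Hence the vanishing locus $\{b : K_b = 0\}$ is open. Your assertion that the non-vanishing locus is ``stable under generization, hence open'' runs the semicontinuity the wrong way: by upper semicontinuity of cohomology, non-vanishing is stable under \emph{specialization}, not generization. The claimed ``hard part''---upgrading a fiberwise characterization to a closed condition---is therefore a red herring; openness is the natural and easy conclusion.

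This slip is ultimately benign for the statement you are proving, since $\mathfrak{M}\hookrightarrow\mathfrak{M}_X$ being an \emph{open} immersion transmits all four listed properties (Artin, locally of finite type, locally quasi-separated, separated diagonal) just as a closed immersion would. That is exactly the paper's one-sentence argument: cite Lieblich for $\cD=\Db(X)$ and observe that lying in $\cD$ is an open condition in families. So the conclusion stands, but the justification you gave for the substack being locally closed needs to be corrected, and once corrected it simplifies.
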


To be precise, in \cite{Lie:Mod} only the case $\cD=\Db(X)$ is considered; for the extension to non-commutative smooth projective varieties, simply observe that the property of an object in $\Db(X)$ to be contained in $\cD$ is open. Recall also that a stack is locally quasi-separated if it admits a Zariski covering with substacks which are quasi-separated.

Consider the open substack $\mathfrak{M}_\mathrm{Spl}$ of $\mathfrak{M}$ parameterizing simple objects. Recall that an object $E$ is \emph{simple} if $\Hom(E,E)\cong\K$. This is again an Artin stack, locally of finite type, locally quasi-separated and with separated diagonal. One can take another functor
\[
\underline{M}_\mathrm{Spl}\colon\mathbf{Sch}\to\mathbf{Set}
\]
obtained from $\mathfrak{M}_\mathrm{Spl}$ by forgetting the groupoid structure and quotienting by the equivalence relation obtained by tensoring by pull-backs of line bundles on $B$. A previous result by Inaba \cite{In:Moduli} ensures that $\underline{M}_\mathrm{Spl}$ is represented by an algebraic space $M_\mathrm{Spl}$ which is locally of finite type over $\K$.

\subsubsection*{Bridgeland moduli spaces}

Assume now we have a Bridgeland stability condition $\sigma=(\cA,Z)\in\mathrm{Stab}_\Lambda(\cD)$, and let $\vv\in\Lambda$.
To be precise, we also need to choose a phase $\phi\in\R$ such that $Z(\vv)\in \R_{>0}\,\exp(i\pi\phi)$.
We denote by $\mathfrak{M}_\sigma(\cD,\vv,\phi)$ the substack of $\mathfrak{M}$ parameterizing $\sigma$-semistable objects in $\cD$ with class $\vv$ and phase $\phi$. Often we will use the simplified notation $\mathfrak{M}_\sigma(\cD,\vv)$.

A priori it does not follow from the definition of Bridgeland stability condition that this is an open substack of finite type over $\K$.\footnote{This condition of openness and boundedness of stability should probably be assumed in the definition of Bridgeland stability condition; see indeed \cite{KS:stability,BLMNPS:families}.}
Also, even if this is satisfied, a priori it is not clear that a \emph{good moduli space} (in the sense of Alper \cite{Alp:good}) exists; a positive result on this direction is due to Alper, Halpern-Leistner and Heinloth \cite{AHLH}.
Still in the above assumptions, if the class $\vv$ is primitive in $\Lambda$ and the stability condition $\sigma$ generic with respect to $\vv$, then there are no properly semistable objects and so a good moduli space $M_\sigma(\cD,\vv)$ exists, as a subspace of $M_\mathrm{Spl}$ of finite type over $\K$.

It is a key result by Toda \cite{Toda:K3} that the stability conditions constructed by tilting in Proposition \ref{prop:tiltstab} do satisfy openness and boundedness.

More generally, for a K3 surface $S$, still by \cite{Toda:K3}, this is true for every stability condition in $\mathrm{Stab}^\dagger(\Db(S))$.
Moreover, by \cite{BM:projectivity,MYY}, if $\vv\in\widetilde{H}_{\mathrm{alg}}(S,\Z)$ is a non-zero vector and $\sigma$ is a stability condition which is generic with respect to $\vv$, then $M_\sigma(\Db(S),\vv)$ exists as a projective variety.
If $\vv$ is primitive with $\vv^2+2\geq0$, then $M_\sigma(\Db(S),\vv)$ is a non-empty smooth projective hyperk\"ahler manifold of dimension $\vv^2+2$ which is deformation equivalent to a Hilbert scheme of points on a K3 surface (this is (K3.5) in the introduction; the condition $\vv^2+2\geq0$ is also necessary for non-emptyness).

We will not state here the actual result \cite[Theorem 1.3]{BM:projectivity}, since we will state the analogous version for non-commutative K3 surfaces arising from cubic fourfolds in Theorem \ref{thm:YoshiokaMain} below.
On the other hand, we do need the result on K3 surfaces to prove the result for cubic fourfolds. Hence we give a very short idea of the proof: there exists a Fourier-Mukai partner of $S$ (which may be twisted) such that $M_\sigma(\Db(S),\vv)$ becomes a moduli space of Gieseker semistable (twisted) vector bundles. And so the result follows directly from the sheaf case in \cite{Yo:ModAbVar}.

\subsubsection*{The Kuznetsov component of a cubic fourfold}

The main result for non-commutative K3 surfaces associated to cubic fourfolds is the following theorem from \cite{BLMNPS:families}.
We assume in this section for simplicity that the base field is $\C$; while essentially all results hold true more generally, the fact that moduli spaces are projective relies on an analytic result.

\begin{theo}\label{thm:YoshiokaMain}
Let $W$ be a cubic fourfold.
Then
\[
\widetilde{H}_{\mathrm{Hodge}}(\Ku(W),\Z)=\widetilde{H}_{\mathrm{alg}}(\Ku(W),\Z).
\]
Moreover, assume that $\mathbf{v}\in\widetilde{H}_{\mathrm{alg}}(\Ku(W),\Z)$ is a non-zero primitive vector and let $\sigma\in\mathrm{Stab}^\dagger(\Ku(W))$ be a stability condition on $\Ku(W)$ that is generic with respect to $\mathbf{v}$.
Then
\begin{enumerate}
\item\label{item:YoshiokaMain1} $M_{\sigma}(\Ku(W),\vv)$ is non-empty if and only if $\vv^2+2\geq0$. Moreover, in this case, it is a smooth projective irreducible holomorphic symplectic variety  of dimension $\mathbf{v}^2 + 2$, deformation-equivalent to a Hilbert scheme of points on a K3 surface.

\item\label{item:YoshiokaMain2} If $\vv^2\geq 0$, then there exists a natural Hodge isometry \[\theta\colon H^2(M_\sigma(\Ku(W),\vv),\Z)\xrightarrow{\quad\sim\quad}\begin{cases}\vv^\perp & \text{if }\vv^2>0\\ \vv^\perp/\Z\vv & \text{if } \vv^2=0,\end{cases}\]
where the orthogonal is taken in $\widetilde{H}(\Ku(W),\Z)$.
\end{enumerate}
\end{theo}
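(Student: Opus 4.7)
The natural strategy is to reduce to the case of K3 surfaces by deformation. The base case is that of Pfaffian cubic fourfolds: by Proposition \ref{prop:KuznetsovPfaffian} we have a Fourier-Mukai equivalence $\Ku(W)\cong\Db(S)$ with an honest K3 surface $S$, which (by the discussion of Section \ref{subsec:Mukai} and Lemma \ref{lem:TopKthandK3s}) identifies the Mukai Hodge structure on $\widetilde H(\Ku(W),\Z)$ with that of $\widetilde H(S,\Z)$ and identifies the distinguished connected component $\mathrm{Stab}^\dagger(\Ku(W))$ from Theorem \ref{thm:stabconn} with Bridgeland's $\mathrm{Stab}^\dagger(\Db(S))$ from Example \ref{ex:K3}. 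For K3 surfaces, parts (\ref{item:YoshiokaMain1}) and (\ref{item:YoshiokaMain2}), as well as the equality of Hodge and algebraic classes, are the main theorems of the Mukai--O'Grady--Yoshioka--Bayer--Macr\`i theory quoted as (K3.2) and (K3.5) in the introduction. So the theorem is known in this base case.

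To promote this to arbitrary cubic fourfolds I would work in families. Remark \ref{rmk:quadricinfamilies} already indicates that the conic-fibration construction of Section \ref{subsec:ConicFibrations} spreads out over a suitable base $B$ parametrizing cubics together with a generic line; a relative form of the Bogomolov inequality (Theorem \ref{thm:Bogomolov}) and of the inducing procedure of Proposition \ref{prop:inducestability} then yields a family of stability conditions $\sigma/B$ on the relative Kuznetsov component $\Ku(\cW/B)$. Using Theorem \ref{thm:Lieblich} applied to $\Ku(\cW/B)\subset\Db(\cW)$, one builds a relative moduli stack; one needs to verify openness and boundedness of $\sigma$-stability in families, and then properness of the relative moduli space $\cM_{\sigma/B}(\Ku(\cW/B),\vv)\to B$ via a valuative criterion for stable objects. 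Smoothness of the fibres of expected dimension $\vv^2+2$ follows from the Serre duality $S_{\Ku(W)}=[2]$ by a Mukai-type argument: the Calabi--Yau property gives a non-degenerate symplectic form on $\Ext^1$ and kills obstructions modulo trace. The Pfaffian locus meets every connected component of $B$ relevant to our situation (it corresponds to Hassett's divisor $\cC_{14}$, which is dense in a precise sense along the Hodge loci); a smooth connected base $B'\subset B$ can then be chosen on which $\vv$ stays a Hodge (in fact algebraic) class and which contains a Pfaffian fibre. Smoothness and properness of the relative moduli space over $B'$ transfer the hyperk\"ahler property and deformation-equivalence class from the Pfaffian fibre to the general fibre, proving (\ref{item:YoshiokaMain1}).

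For the Mukai isometry (\ref{item:YoshiokaMain2}) I would use a (quasi-)universal family $\cE\in\Db(M\times W)$, landing in $\Db(M)\boxtimes\Ku(W)$, and define $\theta$ as the composition of the Mukai homomorphism $\Phi_{\cE}^H$ with the projection to $\vv^\perp$ (or $\vv^\perp/\Z\vv$ in the isotropic case). This is a Hodge isometry on the Pfaffian fibre by the K3 theory, and deformation invariance of cohomology along the smooth proper family constructed above extends the isometry property to every cubic. Finally, the identity $\widetilde H_\mathrm{Hodge}(\Ku(W),\Z)=\widetilde H_\mathrm{alg}(\Ku(W),\Z)$ drops out: given a primitive Hodge class $\vv$ with $\vv^2\geq -2$, the non-empty moduli space produced by (\ref{item:YoshiokaMain1}) exhibits a stable object of class $\vv$, hence $\vv\in K_{\mathrm{num}}(\Ku(W))$; a short lattice decomposition (writing an arbitrary Hodge class as a sum of classes of the previous form) handles the remaining cases. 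The main obstacle is the properness of $\cM_{\sigma/B}(\Ku(\cW/B),\vv)\to B$: unlike for K3 surfaces, where one can transport to Gieseker stability on a Fourier--Mukai partner, here one must argue directly with Bridgeland stability on a non-commutative category, and uniform semistable-reduction over the family is the substantial technical content hidden in the above sketch (this is where the bulk of \cite{BLMNPS:families} is spent).
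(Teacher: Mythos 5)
Your proposal follows essentially the same strategy as the paper: deform $W$ along a Noether-Lefschetz path to a Pfaffian cubic where $\Ku\cong\Db(S)$ for a K3 surface $S$, build a relative moduli space of Bridgeland-stable objects over that base using the families machinery of \cite{BLMNPS:families}, and transfer the K3-surface results of (K3.2)/(K3.5), with the symplectic form coming from Serre duality $S_{\Ku(W)}=[2]$. You are also right that the substantial technical content (openness, boundedness, properness of the relative moduli space) is carried by \cite{BLMNPS:families}.

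One genuine gap: you claim that ``smoothness and properness of the relative moduli space over $B'$ transfer the hyperk\"ahler property and deformation-equivalence class from the Pfaffian fibre to the general fibre, proving (1).'' This is not automatic. The relative moduli space is a priori only a proper smooth algebraic space over the base, so a general fibre is Moishezon but there is no a priori reason it is projective (or even K\"ahler) --- without that, the classification of compact hyperk\"ahler manifolds and the deformation-equivalence to $\mathrm{Hilb}^n(\mathrm{K3})$ do not apply. The paper handles this with a separate argument: the universal family defines a numerical Cartier divisor class $\ell_\sigma$ on $M_\sigma(\Ku(W),\vv)$ which is nef by the Positivity Lemma of \cite{BM:projectivity}; a positive multiple of $\ell_\sigma$ is big by \cite{P:deg} (Perego's K\"ahlerness/bigness result, which is precisely designed for this Moishezon-but-not-a-priori-projective situation); and then a positive multiple of $\ell_\sigma$ is ample by the Base Point Free Theorem. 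This step is what promotes the proper algebraic space to a projective hyperk\"ahler manifold, and it is missing from your sketch.
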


\begin{rema}\label{rmk:nonprim}
It would certainly be very interesting to consider the case when $\vv=m\vv_0$, for some $m>1$ (i.e., $\vv$ is not primitive). Then Theorem \ref{thm:YoshiokaMain} shows that $M_{\sigma}(\Ku(W),\mathbf{v})$ is non-empty (i.e., it contains the class of a semistable object) if and only if $\mathbf{v}_0^2\geq-2$ (see \cite[Theorem~2.6]{BM:walls}), for $\sigma$ a $\mathbf{v}$-generic stability condition.
On the other hand, by \cite{AHLH}, $M_{\sigma}(\Ku(W),\mathbf{v})$ admits a good moduli space. If we can prove that $M_\sigma(\Ku(X),\vv)$ is also normal, then we can deduce further that $M_\sigma(\Ku(W),\vv)$ is an irreducible proper algebraic space and either $\dim M_\sigma(\Ku(W),\mathbf{v})=\mathbf{v}^2+2$ and the stable locus is non-empty, or $m>1$ and $\mathbf{v}^2\leq0$.
\end{rema}

The proof of (1), which is the only part of the statement we focus on, is mainly based on a deformation argument which we can sketch as follows. Suppose that $\mathbf{v}$ is such that $\mathbf{v}^2\geq-2$. It is not difficult to see that one can construct a family $\cW\to C$ of smooth cubic fourfolds over a smooth curve $C$ such that $W\cong\cW_p$, for some point $p\in C$ while $\Ku(\cW_q)\cong\Db(S)$, for some other point $q\in C$. To find $q$, it is enough to deform $W$ to one of the divisors mentioned in Section \ref{subsec:CubicFourfolds}, corresponding to cubic fourfolds with homologically associated K3 surface. In view of Proposition \ref{prop:KuznetsovPfaffian}, we can simply consider the Noether-Lefschetz divisor containing Pfaffian cubic fourfolds.

If we assume further that $\mathbf{v}$ is contained in $\widetilde{H}_\mathrm{Hodge}(\Ku(\cW_c),\Z)$, for all $c\in C$, that we can consider the relative moduli space of Bridgeland (semi)stable objects $M(\mathbf{v})\to C$ in the Kuznetsov components such that $M(\mathbf{v})_p\cong M_{\sigma}(\Ku(W),\vv)$ while $M(\mathbf{v})_q$ is a moduli space of stable objects in $\Ku(\cW_q)$. This morphism turns out to be smooth and proper. Thus, to prove that $M(\mathbf{v})_p$ is non-empty, we just need to prove that $M(\mathbf{v})_q\neq\emptyset$. Since $\Ku(\cW_q)\cong\Db(S)$, this follows from the analogous statement for K3 surfaces mentioned before \cite[Theorem 1.3]{BM:projectivity}.
Notice also that, since $\widetilde{H}_\mathrm{Hodge}(\Db(S),\Z)=\widetilde{H}_\mathrm{alg}(\Db(S),\Z)$, the previous deformation argument implies the analogous statement holds for $\Ku(W)$ as well.
%Indeed, this follows directly for vectors with self-intersection $\geq-2$.
%In general, we simply observe that we can write any vector as difference of two vectors with positive square.

The fact that $M_{\sigma}(\Ku(W),\vv)$ is symplectic follows from the fact that $\Ku(W)$ is a non-commutative K3 surface. Indeed, the tangent space of $M_{\sigma}(\Ku(W),\vv)$ at $E$ is identified to $\Hom(E,E[1])$ and Serre duality for $\Ku(W)$ yields a non-degenerate skew-symmetric pairing
\[
\Hom(E,E[1])\times\Hom(E,E[1])\to\Hom(E,E[2])\cong\Hom(E,E)\cong\C.
\]
The last isomorphism follows from the fact that $E$ is stable. Grothendieck-Riemann-Roch allows us to compute the dimension of $\Hom(E,E[1])$ and thus the dimension of $M_\sigma(\Ku(W),\vv)$.
The closedness of this symplectic form follows as in the case for K3 surfaces (proved in \cite[Theorem 3.3]{Inaba2} and \cite{KM:sympl}).

To prove that this symplectic manifold is irreducible and projective (and that the resulting variety is deformation-equivalent to a Hilbert scheme of points), we use a general fact in \cite{BM:projectivity}. The stability condition $\sigma$ induces a nef line bundle on $M_\sigma(\Ku(W),\vv)$ as follows.
Let $E$ be the (quasi-)universal family in $\Db(M_\sigma(\Ku(W),\vv)\times W)$. Since $M_\sigma(\Ku(W),\vv)$ is a moduli space of stable objects in $\Ku(W)$, $E$ is a family of objects in $\Ku(W)$. We then define the numerical Cartier divisor $\ell_\sigma\in\NS(M_\sigma(\vv))_\R$ via the following assignment:
\[
C\mapsto \ell_\sigma. C:=\Im\left(-\frac{Z(\vv(\Phi_E(\cO_C)))}{Z(\vv)}\right),
\]
for every curve $C\subseteq M_\sigma(\Ku(W),\vv)$. The Positivity Lemma of \cite{BM:projectivity} implies that $\ell_\sigma$ is nef. A careful application of the main result in \cite{P:deg} implies that a positive multiple of $\ell_\sigma$ is big. Thus the Base Point Free Theorem (see \cite[Theorem 3.3]{KollarMori}) implies that a positive multiple of $\ell_\sigma$ is ample.

To make this rigorous one needs to develop a new theory of stability conditions in families and this is what is carried out in \cite{BLMNPS:families}. In particular, this includes the crucial construction of the relative moduli spaces with respect to a family of stability conditions.

\begin{rema}\label{rmK:GMmoduli}
Assume we know that, for a Gushel-Mukai fourfold $X$, the Kuznetsov component $\Ku(X)$ carries a stability condition which behaves nicely in family (see Remark \ref{rmK:GM}). Then Theorem \ref{thm:YoshiokaMain} holds for $X$ as well. Indeed, the same proof applies using again a degeneration to divisors in the moduli space of Gushel-Mukai fourfolds parameterizing fourfolds whose Kuznetsov component is equivalent to the derived category of a K3 surface.
\end{rema}

As a consequence, one can construct $20$-dimensional locally complete families of hyperk\"ahler manifolds. Indeed, take a family $\mathcal{W} \to S$ of cubic fourfolds. Let $\vv$ be a primitive section of the local system given by the Mukai lattices $\widetilde{H}(\Ku(\mathcal{W}_s),\mathbb{Z})$ of the fibers over $s \in S$, such that $\vv$ stays
algebraic on all fibers. Assume that for $s \in S$ very general, there exists a stability
condition $\sigma_s \in \mathrm{Stab}^\dagger(\Ku(\mathcal{W}_s)$ that is generic with respect to $\vv$, and
such that the associated central charge $Z \colon
\widetilde{H}_{\mathrm{alg}}(\Ku(\mathcal{W}_s),\Z)\to \mathbb{C}$ is
monodromy-invariant.

\begin{theo} \label{thm:Msigmarelative}
%\begin{enumerate}
%\item\label{enum:algspace}
%There exists a finite cover $\widetilde{S} \to S$, an algebraic space %$\widetilde{M}(\mathbf{v})$, and a proper morphism
%$\widetilde{M}(\mathbf{v}) \to \widetilde{S}$ that makes $\widetilde{M}(\mathbf{v})$ a relative moduli space over
%$\widetilde{S}$: the fiber over $s \in \widetilde{S}$ is a moduli space
%$M_{\sigma_s}(\Ku(\mathcal{W}_s),\mathbf{v})$ of stable objects in the Kuznetsov category of the corresponding cubic fourfold.
%\item
There exists a non-empty open subset $S^0 \subset S$ and a variety $M^0(\vv)$ with a projective morphism $M^0(\vv) \to S^0$ that makes $M^0(\vv)$ a relative moduli space over $S^0$: the fiber over $s \in S^0$ is a moduli space $M_{\sigma_s}(\Ku(\mathcal{W}_s),\mathbf{v})$ of stable objects in the Kuznetsov category of the corresponding cubic fourfold.
%\end{enumerate}
\end{theo}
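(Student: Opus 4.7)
The plan is to promote the pointwise data $(\sigma_s, \vv)$ to a family of stability conditions over a suitable open subset of $S$ and then construct the relative moduli space by fiberwise invoking Theorem \ref{thm:YoshiokaMain}. First, I would use Remark \ref{rmk:quadricinfamilies}: the construction of $\sigma_{\alpha,-1}^0$ through the conic fibration from a line $L \subset W$ not contained in a plane (Section \ref{subsec:ConicFibrations}) works in families as soon as the relative Fano variety of lines in $\cW/S$ admits a section avoiding lines contained in planes. Since the variety of lines in a smooth cubic fourfold is $4$-dimensional by \cite{BD:cubic} and the lines contained in a plane form a closed subset of strictly smaller dimension, such a section exists \'etale-locally on $S$. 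After shrinking $S$ and passing to an \'etale cover $\tilde S \to S^1$ for some open $S^1\subseteq S$, we thus obtain a family of weak stability conditions on $\Db(\P^3_{\tilde S}, \cB_0)$, and the relative version of Theorem \ref{thm:Bogomolov} (the key technical point) provides the support property fiberwise.

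Applying Proposition \ref{prop:inducestability} in families, where the hypotheses hold uniformly because they are open conditions, we obtain a family of Bridgeland stability conditions $\underline{\sigma}^{\tilde S}$ on the relative Kuznetsov component $\Ku(\cW_{\tilde S}/\tilde S)$. The monodromy-invariance of $Z$ then allows us to descend this family from $\tilde S$ to $S^1$, using the descent formalism in the theory of families of stability conditions developed in \cite{BLMNPS:families}; this descent step requires identifying $\underline{\sigma}^{\tilde S}$ with the pullback of $\sigma_s$ at very general points, which is possible because both lie in the connected component $\mathrm{Stab}^\dagger(\Ku(\cW_s))$ of Theorem \ref{thm:stabconn} and have matching central charges, so that the corresponding period points differ by a deck transformation fixed by the monodromy.

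Given such a family $\underline{\sigma}$ on $\Ku(\cW_{S^1}/S^1)$, I would extract a further open subset $S^0 \subset S^1$ on which $\underline{\sigma}$ remains $\vv$-generic: this is an open condition, as walls for $\vv$ form a locally finite collection of real codimension-one submanifolds, and the monodromy-invariance of $Z$ prevents walls from appearing over a neighborhood of the very general point. Lieblich's Theorem \ref{thm:Lieblich}, applied to the admissible subcategory $\Ku(\cW_{S^0}/S^0) \subset \Db(\cW_{S^0})$, produces the Artin stack parameterizing families of objects; openness and boundedness of $\underline{\sigma}$-semistability (which transfer from the K3 case via the degeneration argument of Theorem \ref{thm:YoshiokaMain}) cut out a substack of finite type. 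Since $\vv$ is primitive and $\underline{\sigma}$ is $\vv$-generic, there are no strictly semistable objects, so by Inaba's theorem one obtains an algebraic space $M^0(\vv) \to S^0$ of finite type whose fibers are exactly $M_{\sigma_s}(\Ku(\cW_s),\vv)$.

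The remaining step is to show that $M^0(\vv) \to S^0$ is projective. For this I would apply the relative Positivity Lemma of \cite{BM:projectivity,BLMNPS:families} to the family $\underline{\sigma}$: it yields a line bundle $\ell_{\underline{\sigma}}$ on $M^0(\vv)$ which is relatively nef, and, arguing as in the proof of Theorem \ref{thm:YoshiokaMain}\eqref{item:YoshiokaMain1} combined with \cite{P:deg}, a positive power of $\ell_{\underline{\sigma}}$ is relatively big. Together with fiberwise smoothness (Theorem \ref{thm:YoshiokaMain}) and a relative version of the Base Point Free Theorem, this produces relative projectivity. The main obstacle throughout is the construction of the family $\underline{\sigma}$ itself, i.e.\ the descent from $\tilde S$ to $S^1$ together with openness and boundedness of semistability in families: this is precisely the content of the family-theoretic framework of \cite{BLMNPS:families} and underpins every other step of the argument.
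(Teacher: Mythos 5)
The paper does not actually prove Theorem \ref{thm:Msigmarelative}; after setting up the hypotheses it simply states the result and, implicitly, leaves the full argument to \cite{BLMNPS:families} (the last paragraph of the sketch of Theorem \ref{thm:YoshiokaMain} says exactly this). So a strict comparison with ``the paper's own proof'' is not possible. Your outline is, however, a plausible and internally consistent reconstruction of what such a proof must look like, and it uses the same ingredients the paper gestures at: the relative conic-fibration construction of Remark \ref{rmk:quadricinfamilies}, the inducing Proposition \ref{prop:inducestability} in families, the covering map $\eta$ of Theorem \ref{thm:stabconn}, and the relative Positivity Lemma plus \cite{P:deg} for projectivity.

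The one step you should be more careful about is the descent from the \'etale cover $\tilde S$. The family $\underline\sigma^{\tilde S}$ you build from the conic fibration depends on a choice of line, and there is no reason for its central charge to coincide with the prescribed $Z$ at the very general point $s$, nor for the two pullbacks to $\tilde S\times_{S^1}\tilde S$ to agree on the nose. What one actually needs is: (a) to act by the $\widetilde{\GL}_2^+(\R)$-action and a path in $\mathrm{Stab}^\dagger$ to bring $\underline\sigma^{\tilde S}$ to have central charge $Z$ at $s$ (this uses that both lie in the same connected component), and then (b) to check that the resulting section of the covering $\eta\colon\mathrm{Stab}^\dagger\to\cP_0^+$ over the monodromy-invariant locus of $Z$ glues. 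Your phrase ``the corresponding period points differ by a deck transformation fixed by the monodromy'' gestures at (b) but does not yet establish it; this is precisely where the machinery of families of stability conditions in \cite{BLMNPS:families} is doing real work, and you are honest in flagging it. Similarly, ``openness and boundedness transfer from the K3 case via degeneration'' is a statement that itself requires the families framework rather than following from Theorem \ref{thm:YoshiokaMain}. Neither of these is an error so much as an acknowledged gap, and the paper's own presentation is no more detailed on these points.
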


As we mentioned above, this has the following nice application.

\begin{coro}\label{cor:locfam20dim}
For any pair $(a,b)$ of coprime integers, there is a unirational locally complete $20$-dimensional family, over an open subset of the moduli space of cubic fourfolds, of polarized smooth projective irreducible holomorphic symplectic manifolds of dimension $2n+2$, where $n=a^2-ab+b^2$. 
The polarization has divisibility $2$ and degree either $6n$ if $3$ does not divide $n$, or $\frac{2}{3}n$ otherwise.
\end{coro}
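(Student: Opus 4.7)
For coprime integers $(a,b)$, the plan is to consider the Mukai vector $\vv := a\llambda_1 + b\llambda_2 \in A_2 \subseteq \widetilde{H}_\mathrm{alg}(\Ku(W),\Z)$, with $\llambda_1, \llambda_2$ as in Remark \ref{rmk:A2}. The $A_2$-pairing immediately gives $\vv^2 = 2(a^2 - ab + b^2) = 2n$, and coprimality ensures that $\vv$ is primitive in $A_2$; Remark \ref{rmk:A2} together with the Local Torelli theorem then implies that for $W$ very general $\widetilde{H}_\mathrm{alg}(\Ku(W),\Z)=A_2$, so $\vv$ remains primitive in the algebraic Mukai lattice on a non-empty open subset $\cC^\circ$ of the $20$-dimensional moduli space $\cC$ of cubic fourfolds. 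Since $\llambda_1,\llambda_2$ are constructed from algebraic objects (projections of $\cO_L(1), \cO_L(2)$), the class $\vv$ is defined globally on any \'etale cover of $\cC^\circ$ carrying a section of the relative Fano variety of lines.

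Next I would apply Theorem \ref{thm:Msigmarelative} to this choice of $\vv$: using Theorems \ref{thm:main1}--\ref{thm:stabconn} together with the family-of-stability-conditions formalism of \cite{BLMNPS:families}, and possibly shrinking $\cC^\circ$, one obtains a $\vv$-generic family of stability conditions on the Kuznetsov components, and hence a projective relative moduli space $M^\circ(\vv) \to \cC^\circ$. By Theorem \ref{thm:YoshiokaMain}\eqref{item:YoshiokaMain1}, every fiber is a smooth projective irreducible holomorphic symplectic manifold of dimension $\vv^2+2 = 2n+2$, deformation equivalent to a Hilbert scheme of points on a K3 surface. Unirationality is immediate: $\cC$ itself is unirational (as a GIT quotient of an open subset of the affine space of cubic forms on $\P^5$ by $\mathrm{PGL}_6$), and hence so is $\cC^\circ$. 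Local completeness in the moduli of polarized hyperk\"ahler manifolds will follow from the compatibility of period maps: by Theorem \ref{thm:YoshiokaMain}\eqref{item:YoshiokaMain2} the period of $M_\sigma(\Ku(W),\vv)$ is determined by the Hodge structure on $\vv^\perp \subset \widetilde{H}(\Ku(W),\Z)$, which in turn is determined by the period of $W$ via \eqref{eqn:perpend}.

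For the polarization data, the plan is to analyse the ample class $\ell_\sigma$ constructed after Theorem \ref{thm:YoshiokaMain}. Via the Hodge isometry $\theta$ of Theorem \ref{thm:YoshiokaMain}\eqref{item:YoshiokaMain2}, this class corresponds to an element of $\vv^\perp \cap \widetilde{H}_\mathrm{alg}(\Ku(W),\Z)$, which for $W$ very general equals $\vv^\perp \cap A_2$; the latter is rank one, generated by a primitive vector $\ww$ determined by the relation $(2a-b)c = (a-2b)d$. An elementary computation (separating the cases $3\nmid n$ and $3\mid n$, equivalently $a+b\not\equiv 0\pmod 3$ and $a+b\equiv 0\pmod 3$) shows that $\ww = (a-2b)\llambda_1 + (2a-b)\llambda_2$ with $\ww^2 = 6n$ in the first case, and $\ww = \frac{1}{3}\bigl((a-2b)\llambda_1 + (2a-b)\llambda_2\bigr)$ with $\ww^2 = \frac{2n}{3}$ in the second. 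A calculation using the central charge of $\sigma$ will identify $\theta(\ell_\sigma)$ with $\ww$ (up to sign), and a short lattice analysis of the embedding $\vv^\perp \cap A_2 \hookrightarrow \vv^\perp$ will give the claimed Beauville--Bogomolov divisibility~$2$.

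The hardest part will be Theorem \ref{thm:Msigmarelative} itself and the underlying families-of-stability-conditions machinery of \cite{BLMNPS:families}: producing a $\vv$-generic family of stability conditions over $\cC^\circ$ requires controlling the wall-and-chamber structure in families. A secondary obstacle is the identification of $\theta(\ell_\sigma)$ with the specific element $\ww$ and the derivation of divisibility $2$, which involves both the Bayer--Macr\`i construction of the ample class and a computation inside the rank-$23$ lattice $\vv^\perp$.
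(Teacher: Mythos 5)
Your plan is correct and matches the paper's implicit argument: Corollary \ref{cor:locfam20dim} is stated as an immediate application of Theorem \ref{thm:Msigmarelative} and Theorem \ref{thm:YoshiokaMain}, with no further proof supplied, and your outline fills in exactly the natural details — the choice $\vv=a\llambda_1+b\llambda_2$ with $\vv^2=2n$, monodromy-invariance via Remark \ref{rmk:A2}, unirationality from $\cC$, local completeness from \eqref{eqn:perpend}, and the identification of the very-general Picard group with $\vv^\perp\cap A_2=\Z\ww$. Your case distinction and degree computations are also correct, since $3\mid n\Leftrightarrow 3\mid(a+b)\Leftrightarrow\gcd(a-2b,2a-b)=3$, giving $\ww^2=6n$ or $\tfrac{2n}{3}$ accordingly; and the deferred divisibility claim does check out, as a direct glue-vector computation in $\widetilde{\Lambda}$ relative to $A_2\oplus A_2^\perp$ shows $(\ww,\vv^\perp)=2\Z$ in both cases, consistent with the known instances $\vv=\llambda_1$ (Fano of lines, $q=6$, div $2$) and $\vv=2\llambda_1+\llambda_2$ (LLSvS eightfold, $q=2$, div $2$).
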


In this framework we recover some of the classical families of hyperk\"ahler manifolds associated to cubic fourfolds. This is the content of some of the applications discussed below.

\subsection{Applications}
\label{subsec:applications}

In the remaining part of these lecture notes, we want to focus on some geometric applications of Theorem \ref{thm:YoshiokaMain} and examples.

\subsubsection*{End of the proof of Theorem \ref{thm:AT}}

The easy implication in Theorem \ref{thm:AT} was proved at the end of Section \ref{subsec:Mukai}. Let us now show that if there is a primitive embedding $U\hookrightarrow\widetilde{H}_\mathrm{alg}(\Ku(W),\Z)$, then there is a K3 surface $S$ and an equivalence $\Db(S)\cong\Ku(W)$.

Under our assumption, there is $\mathbf{w}\in\widetilde{H}_{\mathrm{alg}}(\Ku(W),\Z)$ such that $\mathbf{w}^2=0$. Pick a stability condition $\sigma\in\mathrm{Stab}^\dagger(\Ku(W))$ which is generic with respect to $\mathbf{w}$. By Theorem \ref{thm:YoshiokaMain}, the moduli space $M_\sigma(\Ku(W),\mathbf{w})$ of $\sigma$-stable objects in $\Ku(W)$ is non empty, and in fact a K3 surface $S$. Such a moduli space comes with a quasi-universal family $E$ yielding a fully faithful functor $\Phi_E\colon\Db(S,\alpha)\to\Db(W)$, for some $\alpha\in\mathrm{Br}(S)$. Since $S$ parametrizes stable objects in $\Ku(W)$, the functor $\Phi_E$ factors through $\Ku(W)$ and thus provides a fully faithful functor $\Phi_E\colon\Db(S,\alpha)\hookrightarrow\Ku(W)$. As $\Ku(W)$ is connected, this functor is actually an equivalence. Notice that this already proves the non-trivial implication in the generalization of Theorem \ref{thm:AT} in Remark \ref{rmk:twistedK3}.

Since $U$ embeds in $\widetilde{H}_\mathrm{alg}(\Ku(W),\Z)$, we have further a  vector $\mathbf{v}$ such that $(\vv,\mathbf{w})=1$. A standard argument shows that, under these assumptions, the quasi-universal family $E$ is actually universal. Thus $\alpha$ is trivial and we get an equivalence $\Db(S)\cong\Ku(W)$.

\begin{rema}\label{rmk:ratagain}
Theorem \ref{thm:AT} also shows that Kuznetsov's categorical conjectural condition for rationality (i.e., $\Ku(W)\cong\Db(S)$) matches the more classical Hodge theoretical one due to Harris and Hassett \cite{Has:special} (i.e., $W$ has a Hodge theoretically associated K3 surface). At the moment, the list of divisors parameterizing rational cubic fourfolds and verifying Harris-Hassett-Kuznetsov prediction is short but interesting (see \cite{RS:cubics} for recent developments and \cite{NS:StableRationalitySpecialize,KT:RationalitySpecialize} for very recent and interesting results about specialization for stably rationality and rationality).
\end{rema}

\subsubsection*{The integral Hodge conjecture for cubic fourfolds}

The rational Hodge conjecture for cubic fourfolds was proved by Zucker in \cite{Z:Hodge} (see also \cite{CM:Hodge}). On the other hand, on cubic fourfolds the Hodge conjecture holds for integral coefficients as well; this is due to Voisin \cite[Theorem 18]{Voi:aspects} and can be reproved directly as a corollary of Theorem \ref{thm:YoshiokaMain} as follows (see \cite{BLMNPS:families}).\footnote{The argument was also suggested to us by Claire Voisin.}

\begin{prop}[Voisin]\label{prop:intHodge}
The integral Hodge conjecture holds for any cubic fourfold $W$.
\end{prop}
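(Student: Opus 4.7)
The plan is to reduce the integral Hodge conjecture for $W$ to the equality $\widetilde{H}_\mathrm{Hodge}(\Ku(W),\Z)=\widetilde{H}_\mathrm{alg}(\Ku(W),\Z)$ that forms the first assertion of Theorem~\ref{thm:YoshiokaMain}. First I would note that the class $H^2$ is visibly algebraic, and using the orthogonal decomposition $H^4(W,\Q)=H^4_\mathrm{prim}(W,\Q)\oplus\Q H^2$, together with the fact that any $\beta\in H^4(W,\Z)\cap H^{2,2}(W)$ differs from its primitive projection by a rational multiple of $H^2$, it suffices to produce an algebraic integer representative for every integer Hodge class $\beta\in H^4_\mathrm{prim}(W,\Z)\cap H^{2,2}(W)$, allowing the correction by an integer multiple of $H^2$.

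The next step is to lift $\beta$ to the Mukai lattice of the Kuznetsov component. By Remark~\ref{rmk:A2}, there is a Hodge isometry $\langle\llambda_1,\llambda_2\rangle^\perp\cong H^4_\mathrm{prim}(W,\Z)(-1)$, where the orthogonal complement is taken inside $\widetilde{H}(\Ku(W),\Z)$. Under this isometry, $\beta$ corresponds to a class $\tilde\beta\in\widetilde{H}_\mathrm{Hodge}(\Ku(W),\Z)$. By Theorem~\ref{thm:YoshiokaMain}, we have $\tilde\beta\in\widetilde{H}_\mathrm{alg}(\Ku(W),\Z)=K_\mathrm{num}(\Ku(W))$, so there exists an object $F\in\Ku(W)\subset\Db(W)$ with numerical class $[F]=\tilde\beta$.

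To recover an integer algebraic representative of $\beta$ on $W$, I would exploit that $F$, being an object of $\Db(W)=\Db(\coh W)$, has well-defined integer Chern classes $c_i(F)\in H^{2i}(W,\Z)$ that are automatically algebraic. A direct computation on the cubic fourfold yields $c_1(W)=3H$ and $c_2(W)=6H^2$, so every homogeneous component of $\sqrt{\td_W}$ lies in the subring of $H^*(W,\Q)$ generated by the hyperplane class. Chasing the identity $\vv(F)=\ch(F)\sqrt{\td_W}=\tilde\beta$ through the explicit isometry of \cite[Proposition~2.3]{AT:cubic}, and combining it with $c_2(F)=-\ch_2(F)+c_1(F)^2/2$, one expresses $\beta$ as an integer $\Z$-linear combination of $c_2(F)$, $c_1(F)^2$, and $H^2$, each of which is an integer algebraic class on $W$.

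The hard part will be the last step: verifying that the isometry $A_2^\perp\cong H^4_\mathrm{prim}(W,\Z)(-1)$, composed with the inclusion $\widetilde{H}(\Ku(W),\Z)\hookrightarrow H^*(W,\Q)$ induced by the Mukai vector, is compatible with the integer lattice $K_\mathrm{num}(W)$ in a way that ensures the correction terms lie in the $\Z$-span of $H^2$ (rather than merely in $\Q H^2$). This integrality matching is the only technical issue; the conceptual content of the argument, namely the promotion of Hodge classes to numerical classes, is entirely provided by Theorem~\ref{thm:YoshiokaMain}.
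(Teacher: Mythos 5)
Your overall strategy is the right one: Theorem~\ref{thm:YoshiokaMain} supplies the equality $\widetilde{H}_\mathrm{Hodge}(\Ku(W),\Z)=\widetilde{H}_\mathrm{alg}(\Ku(W),\Z)$, and this is exactly what both you and the paper use to promote a Hodge class to something represented by a complex of sheaves and hence by algebraic cycles. However, the way you execute the reduction contains a genuine gap at the very first step, in addition to the one you flag at the end.

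The initial reduction to $H^4_\mathrm{prim}(W,\Z)$ does not work integrally. You write an integral Hodge class $v$ as $v_p+\lambda H^2$ with $v_p$ primitive, but this decomposition is only valid over $\Q$: the index of $H^4_\mathrm{prim}(W,\Z)\oplus\Z H^2$ in $H^4(W,\Z)$ is $3$ (both summands have discriminant $3$, and $H^4(W,\Z)$ is unimodular), so for a general integral Hodge class $v$ neither $v_p$ nor $\lambda$ need be integral. Thus ``it suffices to handle primitive classes, allowing an integer multiple of $H^2$'' is false as stated; your reduction proves the integral Hodge conjecture only up to $3$-torsion. Since the whole point is precisely the integrality (the rational case is classical), this cannot be waved away.

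The paper's proof avoids this entirely by not separating off the primitive part. Instead it uses the Atiyah--Hirzebruch result that every class $v\in H^4(W,\Z)\cap H^{2,2}(W)$ lifts to some $w\in K_\mathrm{top}(W)$ with $\vv(w)=v$ plus terms in $H^{\geq 6}$. One then projects $w$ into $K_\mathrm{top}(\Ku(W))$ by subtracting an explicit integer combination $a_0[\cO_W]+a_1[\cO_W(1)]+a_2[\cO_W(2)]$; the projection $w'$ is a Hodge class in $\widetilde{H}(\Ku(W),\Z)$, hence algebraic by Theorem~\ref{thm:YoshiokaMain}, so $w'=\vv(E)$ for $E\in\Ku(W)$. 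Forming $F:=E\oplus\cO_W^{\oplus|a_0|}[\epsilon(a_0)]\oplus\cO_W(1)^{\oplus|a_1|}[\epsilon(a_1)]\oplus\cO_W(2)^{\oplus|a_2|}[\epsilon(a_2)]$ gives $c_2(F)=v$ directly, with all coefficients integral by construction. The correction is done in K-theory by the exceptional line bundles, not by $\Z H^2$, which is what makes the integrality automatic; and there is no passage through $H^4_\mathrm{prim}$ at all, so no $\Z/3\Z$ obstruction. This is both shorter and avoids the ``hard part'' you anticipate: all the lattice-theoretic bookkeeping is absorbed into the single observation that the projection $K_\mathrm{top}(W)\to K_\mathrm{top}(\Ku(W))$ is defined over $\Z$ and differs from the identity by integer multiples of the three exceptional classes.
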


\begin{proof}
Consider a class $v\in H^4(W,\Z)\cap H^{2,2}(W)$. By \cite[Section 2.5]{AH:Ktop} (see also \cite[Theorem 2.1 (3)]{AT:cubic}), there exists $w\in K_\mathrm{top}(W)$ such that $\vv(w)=v+\widetilde{v}$, where $\widetilde{v}\in H^6(W,\Q)\oplus H^8(W,\Q)$.

Take the projection $w'$ of $w$ to $\widetilde{H}(\Ku(W),\Z)$ (induced by the projection functor). Then $w$ differs from $w'$ by a linear combination with integral coefficients $w'=w+a_0[\cO_W]+a_1[\cO_W(1)]+a_2[\cO_W(2)]$ in $K_\mathrm{top}(W)$.
Since the projection preserves the Hodge structure, $w'$ is actually in $\widetilde{H}_\mathrm{Hodge}(\Ku(W),\Z) = \widetilde{H}_\mathrm{alg}(\Ku(W),\Z)$, by Theorem \ref{thm:YoshiokaMain}.

Let $E\in\Ku(W)$ be such that $\vv(E)=w'$ and set
\[
F:=E\oplus\cO_W^{\oplus |a_0|}[\epsilon(a_0)]\oplus\cO_W(1)^{\oplus |a_1|}[\epsilon(a_1)]\oplus\cO_W(2)^{\oplus |a_2|}[\epsilon(a_2)],
\]
where for an integer $a\in\Z$, we define $\epsilon(a)=0$ (resp., $=1$) if $a\geq0$ (resp., $a<0$).
Then $c_2(F)=v$, which is therefore algebraic.
\end{proof}

\begin{rema}\label{rmk:GMintegralHodge}
By Remark \ref{rmK:GMmoduli}, if one could prove that the Kuznetsov component of a GM fourfold carries Bridgeland stability conditions, then the theory of moduli spaces would allow us to repeat verbatim the same argument above and prove the integral Hodge conjecture for GM fourfolds.
\end{rema}

\subsubsection*{The Fano variety of lines and the Torelli theorem}

In the seminal paper \cite{BD:cubic}, Beauville and Donagi showed that the Fano variety of lines $F(W)$ of a cubic fourofold $W$ is a smooth projective hyperk\"ahler manifold of dimension $4$. Moreover, $F(W)$ is deformation equivalent to the Hilbert scheme of length-$2$ zero-dimensional subschemes of a K3 surface. The embedding $F(W)$ inside the Grassmannian of lines in $\P^5$ endowes $F(W)$ with a privileged ample polarization induced by the Pl\"ucker embedding.

The study of $F(W)$ as a moduli space of stable objects was initiated in \cite{MS:Fano} for cubic fourfolds containing a plane and satisfying an additional genericity condition. But the techniques discussed here allow us to prove complete results. Indeed, for any cubic fourfold $W$, in the notation of Theorem \ref{thm:stabconn}, fix a stability condition $\sigma\in\mathrm{Stab}^\dagger(\Ku(W))$ such that $\eta(\sigma)\in (A_2)_\C\cap\cP\subseteq\cP_0^+$. We then get the following general result, which is \cite[Theorem 1.1]{lpz}.

\begin{theo}[Li-Pertusi-Zhao]\label{thm:Fano}
In the assumptions above, the Fano variety of lines in $W$ is isomorphic to the moduli space $M_\sigma(\Ku(W),\llambda_1)$. Moreover, the ample line bundle $\ell_\sigma$ on $M_\sigma(\Ku(W),\llambda_1)$ is identified with a multiple of the Pl\"ucker polarization by this isomorphism.
\end{theo}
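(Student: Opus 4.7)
The plan is to define a natural morphism $f\colon F(W)\to M_\sigma(\Ku(W),\llambda_1)$ by $L\mapsto P_L:=\delta(\cO_L(1))$ and to show it is an isomorphism identifying the Pl\"ucker polarization with a positive multiple of $\ell_\sigma$. By Remark \ref{rmk:A2} the Mukai vector of $P_L$ equals $\llambda_1$ for every line $L$; since the projection functor $\delta\colon\Db(W)\to\Ku(W)$ is of Fourier-Mukai type by Lemma \ref{lem:projectionFM}, applying it relatively to the universal line $\mathcal{L}\subset F(W)\times W$ produces a family of objects of $\Ku(W)$ with constant class $\llambda_1$. Resolving $\cO_L(1)$ against the exceptional collection $\langle\cO_W,\cO_W(1),\cO_W(2)\rangle$ gives an explicit two-term representative of $P_L$ from which one reads off $\Hom(P_L,P_L)=\K$ and $\Hom(P_{L_1},P_{L_2})=0$ for $L_1\neq L_2$. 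This is enough to obtain a morphism from $F(W)$ to the Lieblich algebraic space of simple objects which is injective on closed points.

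The main step is to verify that every $P_L$ is in fact $\sigma$-stable. I would first pick a single line $L_0\subset W$ not contained in a plane and reduce the question to a computation inside the non-commutative threefold $(\P^3,\cB_0)$ attached to the conic fibration of Section \ref{subsec:ConicFibrations}. By Proposition \ref{prop:twistedP3} there is a fully faithful embedding $\Psi\circ\sigma^*\colon\Ku(W)\hookrightarrow \Db(\P^3,\cB_0)$, and the stability condition $\sigma$ produced in Theorem \ref{thm:main1} is induced via Proposition \ref{prop:inducestability} from the weak stability condition $\sigma_{\alpha,-1}^0$ on $(\P^3,\cB_0)$. For each line $L\subset W$ one computes $\Psi(\sigma^*P_L)$ explicitly, bounds its twisted Chern character $\ch_{\cB_0}^{-1}$, and then uses the non-commutative Bogomolov-Gieseker inequality of Theorem \ref{thm:Bogomolov} to rule out every potential destabilizing subobject inside $\coh^{0}_{\alpha,-1}(\P^3,\cB_0)$. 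This yields $\sigma_{\alpha,-1}^0$-stability of $\Psi(\sigma^*P_L)$ and hence $\sigma$-stability of $P_L$ by Proposition \ref{prop:inducestability}. Because $\llambda_1$ is primitive, the set of $\sigma$-(semi)stable objects of class $\llambda_1$ is constant on the chamber $(A_2)_\C\cap\cP$ of Theorem \ref{thm:main1}, so the conclusion is independent of the particular $\sigma$ chosen in the statement. I expect this verification to be the main obstacle: it requires controlling, uniformly in $L$, the slopes of all potential destabilizing subobjects of $\Psi(\sigma^*P_L)$ through both tilts of $\coh(\P^3,\cB_0)$.

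Once $f\colon F(W)\to M_\sigma(\Ku(W),\llambda_1)$ is in hand, both source and target are smooth projective irreducible holomorphic symplectic fourfolds deformation equivalent to $\mathrm{Hilb}^2$ of a K3 surface (by Beauville-Donagi and by Theorem \ref{thm:YoshiokaMain}, using $\llambda_1^2+2=4$). Injectivity on closed points makes $f$ quasi-finite, hence finite since $F(W)$ is proper; dimension forces $f$ to be dominant, so $f$ is a finite birational morphism onto a smooth projective variety and therefore an isomorphism by Zariski's main theorem. For the polarization part, I would apply the defining formula
\[
\ell_\sigma\!\cdot\!C \;=\; \Im\!\left(-\frac{Z(\vv(\Phi_{\mathcal{P}}(\cO_C)))}{Z(\llambda_1)}\right),
\]
with $\mathcal{P}=\delta(\cO_{\mathcal{L}}(1))$ the universal family of projections, and expand the numerator via Grothendieck-Riemann-Roch. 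The resulting class in $H^2(F(W),\R)$ lies in $\theta(\llambda_1^\perp)$ and, under the Beauville-Donagi identification of $H^2(F(W),\Z)$ with a sublattice of $\widetilde{H}(\Ku(W),\Z)\cap\llambda_1^\perp$, is orthogonal to the image of $H^4_\mathrm{prim}(W,\Z)$; the Pl\"ucker class has the same property, and the subspace of such classes is one-dimensional, so the two agree up to a scalar. The multiplicative constant (and its positivity) is pinned down by evaluating both classes against a single test curve, for instance the curve of lines through a general point of $W$.
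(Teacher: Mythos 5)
Your overall scaffolding matches the paper's: produce a flat family of objects of $\Ku(W)$ with Mukai vector $\llambda_1$ over $F(W)$, get a morphism to the Lieblich/Bridgeland moduli space, observe both sides are smooth projective hyperk\"ahler fourfolds, and conclude by Zariski's main theorem. However, the crucial stability verification is handled very differently, and this is where your proposal has a genuine gap. The paper (under the genericity hypothesis that $\widetilde{H}(\Ku(W),\Z)$ has no $(-2)$-classes) shows $F_L$ is $\sigma$-stable for \emph{every} $\sigma\in\mathrm{Stab}^\dagger(\Ku(W))$ by a short lattice-theoretic argument: if $F_L$ were unstable there would be a triangle $A\to F_L\to B$ of stable factors with $\dim\Ext^1(A,A)=\dim\Ext^1(B,B)=2$, forcing $\vv(A)^2=\vv(B)^2=0$ and hence $(\vv(A)-\vv(B))^2=-2$, which is excluded. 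Your plan, by contrast, is to push $P_L$ through $\Psi\circ\sigma^*$ into $\Db(\P^3,\cB_0)$ and rule out destabilizers by hand via the non-commutative Bogomolov inequality across two tilts. You yourself flag this as ``the main obstacle,'' and rightly so: it is precisely the hard computational core of the theorem, and as written it is not carried out. Moreover it would at best establish stability for the \emph{particular} $\sigma$ built in Theorem \ref{thm:main1}; passing from that $\sigma$ to the $\sigma$ in the statement (with $\eta(\sigma)\in(A_2)_\C\cap\cP$) requires a wall-crossing argument, and your assertion that the (semi)stable locus with class $\llambda_1$ ``is constant on the chamber $(A_2)_\C\cap\cP$'' is not justified --- you have not shown this region avoids walls for $\llambda_1$, and for special cubics with extra algebraic classes it need not be a single chamber a priori.

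Two secondary points. First, the claim that one can ``read off'' $\Hom(P_{L_1},P_{L_2})=0$ for $L_1\neq L_2$ from a two-term resolution is suspicious when $L_1$ and $L_2$ meet; for the injectivity of the classifying morphism one only needs $P_{L_1}\not\cong P_{L_2}$, and once stability is in place the full $\Hom$-vanishing follows automatically, so the logic is cleaner the other way around. Second, on the polarization: the paper simply defers to a direct computation via the Positivity Lemma and Addington's formula, whereas you propose a Hodge-theoretic uniqueness argument inside the one-dimensional lattice $\llambda_1^\perp\cap A_2$. That route is plausible (the Pl\"ucker class is indeed orthogonal to the image of $H^4_{\mathrm{prim}}$), but you still need to show $\ell_\sigma$ lands in the algebraic part and is orthogonal to the primitive cohomology, which again ultimately rests on the computation you are trying to avoid. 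The upshot: the lattice argument the paper uses is not just a shortcut --- it is the idea that makes the proof go through without the delicate slope estimates you are proposing.
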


\begin{proof}[Sketch of the proof]
Following \cite[Appendix A]{BLMS:inducing}, we outline the proof under the genericity assumption that $\widetilde{H}(\Ku(W),\Z)$ does not contain $(-2)$-classes. This means that there is no class $v\in\widetilde{H}(\Ku(W),\Z)$ such that $v^2=-2$. This will be enough for the application to the Torelli Theorem for cubic fourfold.

Let $L$ be a line in $W$. Following \cite{KM:sympl}, consider the kernel of the evaluation map
\[
F_L := \Ker \left( \cO_W^{\oplus 4} \onto I_L(1) \right),
\]
which is a torsion-free Gieseker-stable sheaf. A direct computation shows that
\[
\Hom(F_L,F_L[i])=0\qquad\Hom(F_L,F_L)\cong\C\qquad\Hom(F_L,F_L[1])\cong\C^4,
\]
for $i<0$. Moreover, $\vv(F_L)=\llambda_1$ and $\vv(F_L)^2=2$.

The point is that $F_L$ needs to be $\sigma$-stable for any $\sigma\in\mathrm{Stab}^\dagger(\Ku(W))$, under our genericity assumptions. Indeed, if this is not the case, then one can show that there must exist a distinguished triangle
\[
A\to F_L\to B,
\]
where $A$ and $B$ are $\sigma$-stable and $\dim\Hom(A,A[1])=\dim\Hom(B,B[1])$. Moreover, a direct computation shows that such dimensions are equal to $2$. This means that $\vv(A)^2=\vv(B)^2=0$ and $(\vv(A)+\vv(B))^2=2$. But then $(\vv(A)-\vv(B))^2=-2$. This is a contradiction

The mapping $L\mapsto F_L$ yields an embedding $F(W)\hookrightarrow M_\sigma(\Ku(W),\llambda_1)$. By Theorem \ref{thm:YoshiokaMain}, the latter space is a smooth projective hyperk\"ahler manifold of dimension $4$. Hence $F(W)\cong M_\sigma(\Ku(W),\llambda_1)$.

We omit here the discussion about the polarization $\ell_\sigma$: it is a straightforward computation (which uses \cite[Lemma 9.2]{BM:projectivity} and \cite[Equation (6)]{Add:TwoConj}).
\end{proof}

We are now ready to answer Question \ref{ques:derTorelli} in the case of cubic fourfolds.

\begin{theo}[Huybrechts-Rennemo]\label{thm:CategoricalTorelli}
Let $W_1$ and $W_2$ be smooth cubic fourfolds.
Then $W_1\cong W_2$ if and only if there is an equivalence
$\Phi \colon \Ku(W_1) \to \Ku(W_2)$ such that $O_{\Ku(W_2)}\circ\Phi=\Phi\circ O_{\Ku(W_1)}$.\footnote{It is actually enough to assume that the action of $\Phi$ on $\widetilde{H}_{\mathrm{alg}}$ commutes with the action of the degree-shift functor.}
\end{theo}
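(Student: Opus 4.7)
The forward implication is immediate: an isomorphism $f\colon W_1\isomto W_2$ induces a Fourier-Mukai equivalence $f^*\colon \Db(W_2)\isomto\Db(W_1)$ which respects the canonical semiorthogonal decomposition, hence restricts to an equivalence $\Ku(W_2)\isomto\Ku(W_1)$. Since $f^*\circ(\cO_{W_2}(1)\otimes\blank)=(\cO_{W_1}(1)\otimes\blank)\circ f^*$, and since the degree-shift functor is defined intrinsically in terms of tensoring by the hyperplane bundle followed by projection, the restricted equivalence commutes with $O_{\Ku(W_i)}$.

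For the converse, suppose $\Phi\colon\Ku(W_1)\to\Ku(W_2)$ is a Fourier-Mukai equivalence commuting with the degree-shift functors. Then the induced map $\Phi_H\colon\widetilde{H}(\Ku(W_1),\Z)\isomto\widetilde{H}(\Ku(W_2),\Z)$ is a Hodge isometry of Mukai lattices (Section \ref{subsec:Mukai}) that intertwines the two actions of $(O_{\Ku(W_i)})_H$. By Example \ref{ex:O}, the isometry $(O_{\Ku(W_i)})_H$ cyclically permutes $\llambda_1,\llambda_2,-\llambda_1-\llambda_2$ in $A_2$ and acts as the identity on $A_2^\perp\subset\widetilde{H}(\Ku(W_i),\Z)$. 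A direct computation shows that the $\Q$-subspace of vectors fixed by $(O_{\Ku(W_i)})_H$ is exactly $A_2^\perp\otimes\Q$. Consequently, $\Phi_H$ sends $A_2^\perp\otimes\Q$ to $A_2^\perp\otimes\Q$, and by taking orthogonal complements with respect to the Mukai pairing it also preserves $A_2\otimes\Q$. Since both $A_2$ and $A_2^\perp$ are primitive sublattices of the Mukai lattice, $\Phi_H$ restricts to integral Hodge isometries on each of them.

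Using the Hodge isometry $A_2^\perp\cong H^4_\mathrm{prim}(W_i,\Z)(-1)$ from equation \eqref{eqn:perpend}, the restriction of $\Phi_H$ to $A_2^\perp$ yields a Hodge isometry
\[
\psi\colon H^4_\mathrm{prim}(W_1,\Z)\isomto H^4_\mathrm{prim}(W_2,\Z).
\]
We extend $\psi$ to an isometry $\widetilde\psi\colon H^4(W_1,\Z)\isomto H^4(W_2,\Z)$ by declaring $\widetilde\psi(H_1^2)=H_2^2$, using the orthogonal decomposition $H^4(W_i,\Z)=H^4_\mathrm{prim}(W_i,\Z)\oplus\Z\cdot H_i^2$. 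This is a Hodge isometry preserving the square of the hyperplane class, so by the Torelli theorem for cubic fourfolds (C.1)---proved independently in Theorem \ref{thm:Torelli} below as a consequence of the techniques of this section---we conclude $W_1\cong W_2$.

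The substantive step is the computation of the $O$-fixed subspace of the Mukai lattice, combined with the integrality argument showing that the induced isometry of $A_2^\perp$ is defined over $\Z$; once this is established, the reduction to Voisin's Torelli theorem is formal. One should note that the $A_2$-part of $\Phi_H$ is automatically a rotation in the centralizer of a $\Z/3$-rotation in the isometry group of $A_2$, which is the cyclic group of order $6$, so it matters only that $\Phi_H$ preserves the $A_2^\perp$-part with its Hodge structure.
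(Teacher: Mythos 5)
Your argument takes a genuinely different route from the paper, but as written it has a circularity problem. The paper proves Theorem~\ref{thm:CategoricalTorelli} by an entirely Torelli-free argument: it normalizes $\Phi$ so that $\Phi_H(\llambda_1)=\llambda_1$, transports a stability condition $\sigma$ with $\eta(\sigma)\in(A_2)_\C\cap\cP$, applies Theorem~\ref{thm:Fano} to identify both moduli spaces $M_{\sigma_i}(\Ku(W_i),\llambda_1)$ with the Fano varieties of lines $F(W_i)$ (with Pl\"ucker polarization), and then invokes Chow's theorem to recover $W_1\cong W_2$. It then \emph{uses} Theorem~\ref{thm:CategoricalTorelli} as the key step in its proof of Theorem~\ref{thm:Torelli}. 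You instead run the implication the other way: you deduce the categorical Torelli statement from the classical one. That is logically legitimate only if you cite Voisin's original theorem \cite{Voi:Torelli}, not Theorem~\ref{thm:Torelli} as proved in this paper. Your claim that Theorem~\ref{thm:Torelli} is ``proved independently\ldots as a consequence of the techniques of this section'' is incorrect---in the paper it is a corollary of the very result you are proving.

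Assuming you replace the citation by Voisin's original Torelli theorem, the mathematics of your reduction is sound, and it is arguably slicker: intertwining with $O_H$ forces $\Phi_H$ to preserve the $\Q$-span of the $O_H$-fixed vectors, which you correctly compute to be $A_2^\perp\otimes\Q$ (the action on $A_2\otimes\Q$ is a $\Z/3$-rotation with trivial fixed space, and $O_H$ is the identity on $A_2^\perp$ by Example~\ref{ex:O}); by orthogonality and primitivity of $A_2$ and $A_2^\perp$ you get integral restrictions, and \eqref{eqn:perpend} then yields the Hodge isometry of primitive middle cohomologies. This generalizes the ``very general'' case of Theorem~\ref{thm:derNVCorelli} to all $W$, with the $O$-commutativity doing the work of the genericity hypothesis. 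The trade-offs are: the paper's proof via Theorem~\ref{thm:Fano} and Chow's theorem works over any field of characteristic $\neq 2$, and in fact \emph{reproves} Voisin's Torelli theorem rather than assuming it; yours is restricted to $\C$ and treats Voisin as a black box.

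Two small additional points. First, $H^4(W,\Z)\neq H^4_\mathrm{prim}(W,\Z)\oplus\Z H^2$ as lattices: since $(H^2,H^2)=3$, this direct sum is only an index-$3$ sublattice of the unimodular lattice $H^4(W,\Z)$, so your extension $\widetilde\psi$ does not exist by mere juxtaposition and would require a gluing argument on discriminant groups. Fortunately this step is unnecessary---Theorem~\ref{thm:Torelli} is already stated in terms of $H^4_\mathrm{prim}$, and Voisin's original formulation in terms of the full $H^4$ can be deduced from it by the standard Nikulin gluing as in Remark~\ref{rmk:Nikulin}---so you should simply delete the extension paragraph. Second, the forward direction is fine and essentially coincides with the paper's implicit reasoning.
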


\begin{proof}[Sketch of the proof]
This is proved in \cite[Corollary 2.10]{HR:cubics}, by using the Jacobian ring.
We present a sketch of a different proof, by using Theorem \ref{thm:Fano}: the advantage being that this holds over arbitrary characteristics $\neq2$.
It follows from Example \ref{ex:O} that up to composing $\Phi$ with a suitable power of the autoequivalence $O_{\Ku(W_2)}$ defined in Section \ref{subsec:NCCY} and, possibly, with the shift by $1$, we can assume without loss of generality that $\Phi_H\colon\widetilde{H}(\Ku(W_1),\Z)\to\widetilde{H}(\Ku(W_2),\Z)$ is such that $\Phi_H(\llambda_1)=\llambda_1$. Let $\sigma_1$ be any stability condition in $\mathrm{Stab}^\dagger(\Ku(W_1))$ and let $\sigma_2:=\Phi(\sigma_1)$. Then $\Phi$ induces a bijection between the moduli spaces $M_{\sigma_1}(\Ku(W_1),\llambda_1)$ and $M_{\sigma_2}(\Ku(W_2),\llambda_1)$. A more careful analysis, based on the same circle of ideas as in \cite[Section 5.2]{BMMS:3folds}, shows that we can replace $\Phi$ with a Fourier-Mukai functor with the same properties. Hence, one can show that such a bijection can be replaced by an actual isomorphism of smooth projective varieties preserving the ample polarizations $\ell_{\sigma_1}$ and $\ell_{\sigma_2}$ (here we use the same approach as in \cite[Section 5.3]{BMMS:3folds}).

If we pick $\sigma_1\in\mathrm{Stab}^\dagger(\Ku(W_1))$ and $\eta(\sigma_1)\in(A_2)_\C\cap\cP$ then, by using the results in \cite{BLMNPS:families}, it can be proved that we can choose the functor $\Phi$ in such a way that $\sigma_2\in\mathrm{Stab}^\dagger(\Ku(W_2))$ and $\eta(\sigma_2)\in(A_2)_\C\cap\cP$. Theorem \ref{thm:Fano} implies that
\[
M_{\sigma_i}(\Ku(W_i),\llambda_1)\cong F(W_i)
\]
with this isomorphism sending $\ell_{\sigma_i}$ to the Pl\"ucker polarization. Summarizing, we have an isomorphism $F(W_1)\cong F(W_2)$ preserving the Pl\"ucker polarization. A classical result by Chow (see \cite[Proposition 4]{Charles:Torelli}), implies that $W_1\cong W_2$.
\end{proof}

As a consequence, we present the recent proof of the Torelli theorem for cubic fourfolds by Huybrechts and Rennemo \cite{HR:cubics}. This result was originally proved by Voisin in \cite{Voi:Torelli} and subsequently reproved by Loojienga in \cite{Looijenga:cubics}. Based on the Torelli theorem for hyperk\"ahler manifolds \cite{Verbitsky:torelli}, Charles \cite{Charles:Torelli} gave an elementary proof relying on the Fano variety of lines. The new approach combines Theorem \ref{thm:CategoricalTorelli} and the Derived Torelli Theorem for twisted K3 surfaces (Theorem \ref{thm:derTorelli}). Notice that here we assume the local injectivity of the period map for cubic fourfolds which is a classical result (see, for example, \cite[Section 6.3.2]{Voi:book2}).

\begin{theo}[Voisin]\label{thm:Torelli}
Two smooth complex cubic fourfolds $W_1$ and $W_2$ are isomorphic if and only if there exists a Hodge isometry $H^4_{\mathrm{prim}}(W_1,\Z)\cong H^4_{\mathrm{prim}}(W_2,\Z)$.
\end{theo}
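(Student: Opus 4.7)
The only if direction is immediate: any isomorphism induces a Hodge isometry of primitive cohomology. For the converse, given a Hodge isometry $f \colon H^4_{\mathrm{prim}}(W_1,\Z) \to H^4_{\mathrm{prim}}(W_2,\Z)$, the identification \eqref{eqn:perpend} turns $f$ into a Hodge isometry between the orthogonal complements $A_2^\perp$ inside the Mukai lattices $\widetilde{H}(\Ku(W_i),\Z)$. By Remark \ref{rmk:Nikulin}, this extends to a Hodge isometry $\widetilde{f} \colon \widetilde{H}(\Ku(W_1),\Z) \to \widetilde{H}(\Ku(W_2),\Z)$. Post-composing with the action of a suitable power of $O_{\Ku(W_2)}$ on the Mukai lattice, which cyclically permutes $\llambda_1, \llambda_2, -\llambda_1-\llambda_2$ by Example \ref{ex:O}, and, if needed, with the orientation-reversing isometry of Remark \ref{rmk:orient}, we may arrange that $\widetilde{f}(\llambda_i) = \llambda_i$ for $i = 1, 2$ and that $\widetilde{f}$ preserves the orientation of positive $4$-planes.

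The strategy is then to realize $\widetilde{f}$ as the cohomological action of a Fourier-Mukai equivalence $\Phi \colon \Ku(W_1) \to \Ku(W_2)$. Since such a $\Phi$ necessarily fixes $\llambda_1$ and $\llambda_2$ on Mukai lattices, it commutes with the degree-shift functors $O_{\Ku(W_i)}$ at the cohomological level, and Theorem \ref{thm:CategoricalTorelli} then delivers the desired isomorphism $W_1 \cong W_2$.

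To construct $\Phi$, I would proceed by deformation to the locus of cubic fourfolds admitting an associated twisted K3 surface. Concretely, choose a smooth pointed curve $(C, 0)$ and a simultaneous deformation $\mathcal{W}_i \to C$ with $(\mathcal{W}_i)_0 = W_i$, arranged so that $\widetilde{f}$ extends as a flat section of the local system of Mukai lattices and so that at some $t \in C$ both $(\mathcal{W}_i)_t$ lie on a Hassett divisor where the twisted variant of Theorem \ref{thm:AT} in Remark \ref{rmk:twistedK3} applies: we then get equivalences $\Ku((\mathcal{W}_i)_t) \cong \Db(S_i,\alpha_i)$ for twisted K3 surfaces $(S_i,\alpha_i)$. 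The specialization $\widetilde{f}_t$ becomes an orientation-preserving Hodge isometry $\widetilde{H}(S_1,\alpha_1,\Z) \cong \widetilde{H}(S_2,\alpha_2,\Z)$, which by the derived Torelli theorem for twisted K3 surfaces (Theorem \ref{thm:derTorelli}) lifts to a Fourier-Mukai equivalence $\Db(S_1,\alpha_1) \cong \Db(S_2,\alpha_2)$, hence to a Fourier-Mukai equivalence $\Ku((\mathcal{W}_1)_t) \cong \Ku((\mathcal{W}_2)_t)$ with Mukai action $\widetilde{f}_t$. Applying Theorem \ref{thm:CategoricalTorelli} at $t$ yields $(\mathcal{W}_1)_t \cong (\mathcal{W}_2)_t$.

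The main obstacle is descending this isomorphism to $t = 0$. My plan is to invoke the classical local injectivity of the period map for cubic fourfolds: since the Hodge isometries $\widetilde{f}_s$ exist flatly over $C$, the period points of $(\mathcal{W}_1)_s$ and $(\mathcal{W}_2)_s$ coincide in $\mathcal{D}/\Gamma$ for every $s \in C$; combining this equality of periods with the existence of an isomorphism at $s = t$ and a rigidity argument in the Kuranishi family of each cubic fourfold, one spreads the isomorphism from $t$ to a neighborhood and then, by connectedness of $C$, throughout, obtaining in particular $W_1 \cong W_2$. The delicate technical points are the construction of a deformation with the prescribed properties, which reduces to a density statement for Hassett divisors within the appropriate moduli of Hodge-isometric pairs, and the careful formulation of the spreading step using local Torelli together with the fact that the automorphisms of each fiber act discretely on its period.
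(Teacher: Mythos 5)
Your proposal follows essentially the same outline as the paper's proof: convert the primitive Hodge isometry into a Hodge isometry of Mukai lattices using \eqref{eqn:perpend} and Nikulin's theorem (and possibly an orientation-reversing twist from Remark~\ref{rmk:orient}), deform to the locus where $\Ku(W)\cong\Db(S,\alpha)$ with no $(-2)$-classes in $\widetilde{H}_{\mathrm{alg}}$, realize the isometry there by a Fourier--Mukai equivalence via derived Torelli for twisted K3 surfaces, apply Theorem~\ref{thm:CategoricalTorelli}, and descend. The essential difference is in the final descent step. The paper works over the whole local deformation space $\mathrm{Def}(W_1)$ and uses the density of the locus $D$ (Remark~\ref{rmk:twistedK3} together with \cite[Lemma 3.22]{HMS:generic}) plus \emph{separatedness of the moduli space of cubic fourfolds}: the two classifying maps $\mathrm{Def}(W_1)\to\cC$ agree on the dense subset $D$, hence agree everywhere and in particular at the origin. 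You instead restrict to a one-parameter family $C$ and invoke local injectivity of the period map together with an unspecified ``rigidity and spreading'' argument. This is vaguer and actually harder to make precise than the separatedness argument, for two reasons. First, the set $D$ is a countable union of (pieces of) divisors; a curve through the origin need not a priori hit $D$, so you must justify the existence of a suitable $C$ (possible, but it requires an argument). Second, the spreading step you sketch -- isomorphism at one $t\in C$ plus equality of periods implies isomorphism in a neighborhood -- is in effect a form of the separatedness/properness-of-the-diagonal statement, so you are rederiving in a longer and less transparent way what the paper obtains in one line. A smaller imprecision: you cite Theorem~\ref{thm:derTorelli} to produce a Fourier--Mukai equivalence with \emph{prescribed} Hodge action $\widetilde f_t$, whereas that theorem as stated is only an existence statement; one needs the stronger surjectivity onto orientation-preserving Hodge isometries (as in \cite[Section 4.2]{HR:cubics}, which the paper cites precisely for this point). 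Correcting these two points would bring your argument in line with the paper.
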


\begin{proof}[Sketch of proof]
Let $\phi\colon H^4_{\mathrm{prim}}(W_1,\Z)\xrightarrow{\simeq} H^4_{\mathrm{prim}}(W_2,\Z)$
be a Hodge isometry.
By \cite[Proposition 3.2]{HR:cubics}, it induces a Hodge isometry $\phi'\colon \widetilde{H}(\Ku(W_1),\Z)\xrightarrow{\simeq}\widetilde{H}(\Ku(W_2),\Z)$ that preserves the natural orientation. This can be explained in the following way. By equation \eqref{eqn:perpend} in Remark \ref{rmk:A2}, $H^4_{\mathrm{prim}}(W_i,\Z)\cong\langle\llambda_1,\llambda_2\rangle^\perp$. But $A_2=\langle\llambda_1,\llambda_2\rangle$ has discriminant group $\Z/3\Z$ (see again Remark \ref{rmk:A2}). Thus, by Remark \ref{rmk:Nikulin}, we see that $\phi$ extends to a Hodge isometry $\widetilde{H}(\Ku(W_1),\Z)\cong\widetilde{H}(\Ku(W_2),\Z)$. If the latter isometry is orientation preserving, then we are done. Otherwise, we compose with an orientation reversing Hodge isometry (see Remark \ref{rmk:orient}). So we may assume that $\phi'$ is orientation preserving since the very beginning.

A general deformation argument based on \cite{Huy:cubics} shows that $\phi'$ extends to a local deformation $\mathrm{Def}(W_1)\cong\mathrm{Def}(W_2)$.
The set $D\subset\mathrm{Def}(W_1)$ of points corresponding to cubic fourfolds $W$ such that $\Ku(W)\cong\Db(S,\alpha)$, for $(S,\alpha)$ a twisted K3 surface, and $\widetilde{H}_{\mathrm{alg}}(\Ku(W),\Z)$ does not contain $(-2)$-classes is dense in the moduli space (this follows from Remark \ref{rmk:twistedK3} and \cite[Lemma 3.22]{HMS:generic}).
As explained in \cite[Section~4.2]{HR:cubics}, for any $t\in D$ there is an orientation preserving Hodge isometry $\phi_t\colon\widetilde{H}(\Ku(W_t),\Z)\to\widetilde{H}(\Ku(W''_t),\Z)$ which lifts to an equivalence $\Phi_t\colon\Ku(W_t)\to\Ku(W''_t)$ such that $O_{\Ku(W''_t)}\circ\Phi=\Phi\circ O_{\Ku(W_t)}$. 
Theorem \ref{thm:CategoricalTorelli} implies that $W_t\cong W''_t$, for any $t\in D$. Since the moduli space of cubic fourfolds is separated, this yields $W_1\cong W_2$.
\end{proof}

\begin{rema}\label{rmk:TorelliGM}
One should not expect that the same argument used in the proof of Theorem \ref{thm:Torelli} might work for GM fourfolds. Indeed, in this case the period map is known to have positive dimensional fibers (see \cite[Theorem 4.4]{DIM:GM}).
\end{rema}

\subsubsection*{Twisted cubics}

We can analyze further the relation between the geometry of moduli spaces of rational curves inside cubic fourfolds and the existence of interesting hyperk\"ahler varieties associated to the cubic.

The next case would be to study moduli of conics inside a cubic $W$. But any such conic would be contained in a plane cutting out a residual line on $W$. On the other hand, for a line $L$ in $W$, we can take all projective planes in $\P^5$ passing through $L$. The residual curve in each of these planes is then a conic. In conclusion, the moduli space of conics would be (at least birationally) a $\P^3$-bundle over $F(W)$. Thus there is no interesting new hyperk\"ahler manifold associated to conics.

Contrary to this, the case of rational normal curves of degree $3$ is extremely interesting and has been studied in \cite{LLSvS:TwistedCubics}. We can summarize their work in the following way. Let us start with a cubic fourfold $W$ which does not contain a plane. The irreducible component $M_3(W)$ of the Hilbert scheme containing twisted cubic curves is a smooth projective variety of dimension $10$.

The curves in $M_3(W)$ always span a $\P^3$, so there is a natural morphism from $M_3(W)$ to the Grassmannian $\mathrm{Gr}(3,\P^5)$ of three-dimensional projective subspaces in $\P^5$.
This morphism induces a fibration $M_3(W) \to Z'(W)$, which turns out to be a $\P^2$-fiber bundle.
With some further work, the authors of \cite{LLSvS:TwistedCubics} prove that the variety $Z'(W)$ is also smooth and projective of dimension $8$.

The geometric nature of $Z'(W)$ is difficult to describe but roughly speaking, $Z'(W)$ is constructed as a moduli space of determinantal representations of cubic surfaces in $W$ (see \cite{B1,dolgachev}).
Finally, in $Z'(W)$ there is an effective divisor coming from non-CM twisted cubics on $W$. This divisor can be contracted and after this contraction we get a new variety denoted by $Z(W)$ and which is a smooth projective hyperk\"ahler manifold of dimension $8$. It contains the cubic fourfold $W$ as a Lagrangian submanifold and $Z'(W)$ is actually the blow-up of $Z(W)$ in $W$.

An approach to the study of this hyperk\"ahler manifold by homological methods was initiated in \cite{LLMS:TwistedCubics} (see also \cite{SS:twisted}).
The main result is the following and shows that the whole picture in \cite{LLSvS:TwistedCubics} has a neat modular interpretation.

\begin{theo}\label{thm:twistcub}
Let $W$ be a smooth cubic fourfold not containing a plane and with hyperplane class $H$.
\begin{enumerate}
\item\label{item:main1} Let $\mathbf{v}_1=\left(0,0,H^2,0,-\frac{1}{4}H^4\right)$. 
Then $Z'(W)$ is isomorphic to an irreducible component of the moduli space of Gieseker stable sheaves on $W$ with Chern character $\mathbf{v}_1$.
\item Let $\mathbf{v}_2=\left(3,0,-H^2,0,\frac{1}{4}H^4\right)$. Then: 
\begin{enumerate}
\item[{\rm (2a)}]\label{item:main2a} $Z'(W)$ is isomorphic to an irreducible component of the moduli space of Gieseker stable torsion free sheaves on $W$ with Chern character $\mathbf{v}_2$.
\item[{\rm (2b)}]\label{item:main2b} If $W$ is very general, both $Z(W)$ and $Z'(W)$ are isomorphic to an irreducible component of the moduli space of tilt-stable objects on $\Db(W)$ with Chern character $\mathbf{v}_2$.
The contraction $Z'(W)\to Z(W)$ is realized as a wall-crossing contraction in tilt-stability.
\item[{\rm (2c)}]\label{item:main2c} If $W$ is very general, then $Z(W)$ is isomorphic to a moduli space of Bridgeland stable objects in $\Ku(W)$ with Chern character $\mathbf{v}_2$.
\end{enumerate}
\end{enumerate}
\end{theo}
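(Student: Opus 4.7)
The overall strategy is to realize the LLSvS contraction $Z'(W) \to Z(W)$ as a wall-crossing in the spaces of tilt- and Bridgeland stability conditions on $\Db(W)$, after constructing explicit families of sheaves parameterized by $Z'(W)$. For a CM twisted cubic $C \subset W$ spanning a $\mathbb{P}^3$, I set $Y_C := W \cap \mathrm{span}(C)$, which is a cubic surface containing $C$. Adjunction on $Y_C$ gives $C^2 = 1$ and $C \cdot H_{Y_C} = 3$, so the line bundle $L_C := \cO_{Y_C}(2H_{Y_C} - C)$ has $\chi(L_C) = 3$ and base-point-free linear system (which explains the $\mathbb{P}^2$-fibration $M_3(W) \to Z'(W)$). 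I define $E_C := (i_{Y_C})_* L_C$; a direct Grothendieck--Riemann--Roch computation using $\cN_{Y_C/W} = \cO_{Y_C}(H_{Y_C})^{\oplus 2}$ gives $\ch(E_C) = \vv_1$, and since $E_C$ is rank-one torsion-free on the irreducible surface $Y_C$ it is Gieseker stable for every polarization. For (2a) I take the evaluation $\cO_W^{\oplus 3} \to E_C$ coming from the three sections of $L_C$ and set $F_C$ to be its kernel: then $\ch(F_C) = 3\ch(\cO_W) - \vv_1 = \vv_2$, while torsion-freeness and Gieseker stability of $F_C$ follow from those of $E_C$ via a routine slope comparison for potential destabilizing subsheaves. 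Both assignments $C \mapsto E_C$ and $C \mapsto F_C$ extend across the non-CM divisor of $Z'(W)$ to give injective morphisms $Z'(W) \hookrightarrow M_W(\vv_i)$; since both sides are $10$-dimensional smooth projective varieties, each image is open in, hence a connected component of, its Gieseker moduli.

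For (2b) I would pass to tilt stability $\sigma_{\alpha,\beta}$ on $\Db(W)$ with respect to $H$ (Section \ref{subsec:Tilt}) and verify that $F_C$ remains $\sigma_{\alpha,\beta}$-stable in a chamber adjacent to the heart $\coh^\beta(W)$ containing $\cO_W$. The numerical walls for $\vv_2$ in the $(\alpha,\beta)$-plane form a locally finite family constrained by the Bogomolov-type inequality of Proposition \ref{prop:tiltstab}; since $W$ is very general we have $\widetilde{H}_\mathrm{alg}(\Ku(W),\mathbb{Z}) = A_2$, and consequently the only algebraic classes available to appear in a destabilizing short exact sequence come from $\cO_W$, $\cO_W(H)$, $\cO_W(2H)$ and their residues under the projection $\delta_{\Ku(W)}$. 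A direct discriminant calculation then isolates a single totally semistable wall at which $F_C$ is destabilized by a subobject of class $[\cO_W]$; this occurs exactly for those $[C]$ lying in the non-CM divisor of $Z'(W)$, where the evaluation $\cO_W^{\oplus 3} \to E_C$ fails to be injective on global sections. The wall-crossing at this locus therefore realizes a divisorial contraction whose exceptional locus matches the non-CM divisor and whose image contains $W$ as a Lagrangian, identifying this wall-crossing with the LLSvS map $Z'(W) \to Z(W)$.

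For (2c), crossing this wall allows me to apply Proposition \ref{prop:inducestability} to the exceptional collection $\cO_W, \cO_W(H), \cO_W(2H)$ and induce a Bridgeland stability condition $\sigma \in \mathrm{Stab}^\dagger(\Ku(W))$ generic with respect to the projected class $\vv_2^{\Ku} := u(\vv_2) \in \widetilde{H}_\mathrm{alg}(\Ku(W),\mathbb{Z})$. Theorem \ref{thm:YoshiokaMain} then ensures that $M_\sigma(\Ku(W), \vv_2^{\Ku})$ is a smooth projective irreducible holomorphic symplectic manifold of dimension $(\vv_2^{\Ku})^2 + 2 = 8$, and the projection functor $\delta_{\Ku(W)}$ sends the tilt-stable moduli after wall-crossing isomorphically onto this Bridgeland moduli, producing $Z(W) \cong M_\sigma(\Ku(W), \vv_2^{\Ku})$. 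The main obstacle, where I expect the bulk of the technical work to lie, is the precise wall analysis in (2b): one must verify that among all numerical walls for $\vv_2$ in tilt stability exactly one is actually realized, and that the corresponding destabilization coincides set-theoretically with the non-CM divisor. The ``very general'' hypothesis is indispensable here because it reduces the algebraic Mukai lattice to $A_2$ and thereby suppresses spurious walls that would otherwise appear from additional algebraic classes; treating arbitrary cubic fourfolds would demand tracking further walls and, most likely, a separate deformation argument along the Noether-Lefschetz loci.
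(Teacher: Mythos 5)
Your proposal follows the same overall route as the cited reference {\rm [LLMS:TwistedCubics]} (the paper does not itself prove Theorem~\ref{thm:twistcub}, it quotes it from there): build $E_C = (i_{Y_C})_* \cI_{C/Y_C}(2H)$ and its evaluation kernel $F_C$, check Gieseker stability, pass to tilt stability, cross a single wall, and project into $\Ku(W)$. The Chern character computation and the identification $\cO_{Y_C}(2H_{Y_C}-C)=\cI_{C/Y_C}(2H_{Y_C})$ (with $C^2=1$, $h^0=3$, and base-point-free $|2H_{Y_C}-C|$) are correct and match the construction there. Two points, however, deserve correction.

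First, a numerical slip: $Z'(W)$ has dimension $8$, not $10$ (the $10$-dimensional space is $M_3(W)$, the $\P^2$-bundle over $Z'(W)$); a quick check confirms $\mathrm{ext}^1(E_C,E_C)=h^0(Y_C,\cN_{Y_C/W})=8$. More importantly, asserting that ``both sides are smooth projective of the same dimension, hence the image is open'' is not a substitute for the actual argument: one must compute $\Ext^2(F_C,F_C)$ and check unobstructedness (or injectivity on tangent spaces) to conclude that $Z'(W)\to M_W(\vv_i)$ is an open immersion onto a reduced component. This is a genuine verification in {\rm [LLMS:TwistedCubics]}, not a formal consequence of equidimensionality.

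Second, and more substantively, part (2c) as written has a gap. You propose to obtain $\sigma\in\mathrm{Stab}^\dagger(\Ku(W))$ by applying Proposition~\ref{prop:inducestability} directly to a double-tilt weak stability condition on $\Db(W)$ and the exceptional collection $\cO_W,\cO_W(H),\cO_W(2H)$. This is exactly the inducing scheme that fails in general on a fourfold: condition (3) of Proposition~\ref{prop:inducestability} (no objects in $\cA\cap\Ku(W)$ with vanishing central charge) and, more seriously, the support property are not known without a Bogomolov--Gieseker inequality on $W$, which is open. This is precisely why the paper constructs stability conditions on $\Ku(W)$ not on $\Db(W)$ itself but by passing to the conic fibration and the non-commutative threefold $(\P^3,\cB_0)$, where the needed inequality (Theorem~\ref{thm:Bogomolov}) can be proved. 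In {\rm [LLMS:TwistedCubics]} the analogue of (2c) is handled for very general $W$ by restricting the double tilt to $\Ku(W)$ and exploiting the rank-$2$ algebraic Mukai lattice $A_2$ to verify the support property by hand --- a substantially more delicate argument than a bare appeal to Proposition~\ref{prop:inducestability} --- and it is not immediate (nor claimed there) that the resulting stability condition lies in $\mathrm{Stab}^\dagger(\Ku(W))$ in the sense of Theorem~\ref{thm:stabconn}; that refinement is Theorem~\ref{thm:twistLPZ}, due to Li--Pertusi--Zhao. You should either invoke Theorem~\ref{thm:main1} for existence and then match the tilt-stable moduli against a suitably chosen $\sigma\in\mathrm{Stab}^\dagger(\Ku(W))$, or carry out the support-property analysis on $A_2$ directly as in the original reference.
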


Part (2c) of the above result, together with Theorem \ref{thm:YoshiokaMain}, implies that $Z(W)$ is deformation equivalent to a Hilbert scheme of $4$ points on a K3 surface. This result was first proved by Addington and Lehn in \cite{AL:8fold}.

The last part of Theorem \ref{thm:twistcub} has been recently improved in \cite[Theorem 1.2]{lpz}.

\begin{theo}[Li-Pertusi-Zhao]\label{thm:twistLPZ}
Let W be a smooth cubic fourfold not containing a plane and let $\sigma\in\mathrm{Stab}^\dagger(\Ku(W))$ such that $\eta(\sigma)\in (A_2)_\C\cap\cP\subseteq\cP_0$. The smooth projective hyperk\"ahler eightfold $Z(W)$ is isomorphic to the moduli space $M_\sigma(\Ku(W),2\llambda_1+\llambda_2)$.
\end{theo}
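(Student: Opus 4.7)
The plan is to mirror the approach used in the proof sketch of Theorem~\ref{thm:Fano} for the Fano variety of lines, combined with a wall-crossing argument in the $A_2$-chamber. The strategy has three ingredients: (i) match numerical invariants, (ii) construct a modular morphism $Z(W)\to M_\sigma(\Ku(W),2\llambda_1+\llambda_2)$, (iii) show it is an isomorphism.

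First, I would verify at the numerical level that the class $2\llambda_1+\llambda_2 \in K_\mathrm{num}(\Ku(W))$ corresponds, via the projection functor $\delta \colon \Db(W) \to \Ku(W)$, to objects built from twisted cubics of Chern character $\mathbf{v}_2=(3,0,-H^2,0,\tfrac14 H^4)$ from Theorem~\ref{thm:twistcub}. Using the $A_2$ intersection form, one computes $(2\llambda_1+\llambda_2)^2 = 4\cdot 2 -4 + 2 = 6$, which via Theorem~\ref{thm:YoshiokaMain} yields a moduli space of the expected dimension $8=\dim Z(W)$.

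Second, given a twisted cubic $C\subset W$, I would associate the object $F_C := \delta(E_C) \in \Ku(W)$, where $E_C$ is the natural Gieseker-stable torsion-free sheaf of Chern character $\mathbf{v}_2$ provided by Theorem~\ref{thm:twistcub}(2a) (this is the construction of \cite{LLMS:TwistedCubics}). The assignment $C \mapsto F_C$ gives a morphism $Z'(W) \to M_\sigma(\Ku(W),2\llambda_1+\llambda_2)$; the divisorial contraction $Z'(W) \to Z(W)$ should then factor through this morphism, since inside the $A_2$-chamber the primitive class $2\llambda_1+\llambda_2$ admits no strictly semistable objects, forcing $S$-equivalent sheaves on $Z'(W)$ to map to the same point.

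Third, for the target to be well-defined and to get a candidate isomorphism, I would prove that each $F_C$ is actually $\sigma$-stable for every $\sigma \in \mathrm{Stab}^\dagger(\Ku(W))$ with $\eta(\sigma) \in (A_2)_\C \cap \cP$. By the Positivity Lemma of \cite{BM:projectivity} and Theorem~\ref{thm:YoshiokaMain}, the moduli space $M_\sigma(\Ku(W),2\llambda_1+\llambda_2)$ is a smooth projective hyperk\"ahler variety of dimension~$8$, deformation-equivalent to a Hilbert scheme of $4$ points on a K3 surface; hence a bijective modular morphism from $Z(W)$ will automatically be an isomorphism by Zariski's main theorem. Bijectivity can be argued by combining Theorem~\ref{thm:twistcub}(2c) for very general $W$ with a deformation argument in the universal family of cubics not containing a plane, using the theory of relative moduli spaces in families of stability conditions from \cite{BLMNPS:families}: both $Z(W)$ and $M_\sigma(\Ku(W),2\llambda_1+\llambda_2)$ deform in smooth projective families over the moduli of cubic fourfolds, and the modular morphism constructed above deforms as well, so an isomorphism at the very general point spreads out.

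The technical heart, and main obstacle, is the wall-analysis: one must show that there is no wall for the class $2\llambda_1+\llambda_2$ inside the chamber $(A_2)_\C \cap \cP$. This requires enumerating possible decompositions $2\llambda_1+\llambda_2 = \alpha+\beta$ in $\widetilde{H}_\mathrm{alg}(\Ku(W),\Z)$ with $\alpha,\beta$ having the same phase at some $\sigma$. Using that $\eta(\sigma)\in (A_2)_\C$, the central charge $Z$ factors through projection onto $A_2$, so any numerical wall is detected by the $A_2$-projections of $\alpha$ and $\beta$; a case analysis on the possible $A_2$-components (finitely many, by $(2\llambda_1+\llambda_2)^2=6$ and the Bogomolov-type inequality $\alpha^2,\beta^2\geq -2$) rules them out. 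This is the step which I expect to consume essentially all of the work.
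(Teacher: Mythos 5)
The paper does not give a proof of this theorem --- it is cited directly from \cite[Theorem~1.2]{lpz} --- so there is no argument in the source to compare against line by line. I therefore assess the proposal on its own merits and against the sketch the paper gives for the closely analogous Theorem~\ref{thm:Fano}.

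Two points are problematic. First, the reason you give for the morphism $Z'(W) \to M_\sigma(\Ku(W),2\llambda_1+\llambda_2)$ factoring through the contraction $Z'(W) \to Z(W)$ is internally inconsistent: you invoke $S$-equivalence of strictly semistable objects while simultaneously asserting that the primitive class $2\llambda_1+\llambda_2$ admits no strictly semistable objects at the chosen $\sigma$. The actual reason the divisor of non-CM twisted cubics collapses is a genuine geometric statement about the projected objects --- this is essentially the content of Theorem~\ref{thm:twistcub}(2b) --- not a formal consequence of numerics. Second, and more seriously, the ``wall analysis'' you put at the technical heart does not address the real difficulty. Any two $\sigma$ with $\eta(\sigma)\in(A_2)_\C\cap\cP$ in the same connected component and orientation class are related by the $\widetilde{\mathrm{GL}}_2^+(\R)$-action, so their moduli spaces coincide for free; enumerating decompositions $2\llambda_1+\llambda_2=\alpha+\beta$ adds nothing to that. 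What you actually need, and never establish, is that the objects $F_C$ are $\sigma$-stable for \emph{some} (hence all) such $\sigma$, and for \emph{every} $W$ not containing a plane rather than only the very general $W$. That direct stability argument is the analogue of the $(-2)$-class computation in the proof of Theorem~\ref{thm:Fano}, and it is exactly the improvement of \cite{lpz} over Theorem~\ref{thm:twistcub}(2c). The proposal to obtain it instead by deforming from the very general case via \cite{BLMNPS:families} is circular as stated: it requires both matching the unspecified stability condition of Theorem~\ref{thm:twistcub}(2c) with the present $\sigma$ (up to $\widetilde{\mathrm{GL}}_2^+(\R)$) and spreading stability from a dense open set of cubics to all cubics without a plane, which together constitute the statement you are trying to prove.
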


It is quite natural to ask what happens to the moduli space $M_\sigma(\Ku(W),2\llambda_1+\llambda_2)$ when $W$ degenerates to a cubic fourfold containing a plane. In this situation, any $\P^3$ containing the plane cuts a non-integral surface of degree $3$. Hence some objects in the moduli space are properly semistable. In \cite{Ouchi:Cubics}, Ouchi described a moduli space of stable objects in $\Ku(W)$, for $W$ a cubic fourfold containing a plane, such that $W$ embeds into it as a Lagrangian submanifold.

\end{document}